\documentclass[a4paper,10pt,reqno]{article}

\usepackage{graphicx}
\usepackage{mathtools}
\usepackage[english]{babel}
\usepackage{color}
\usepackage{amsthm}
\usepackage{amssymb,todonotes}
\usepackage{amsfonts}
\usepackage[T1]{fontenc}
\usepackage{tikz-cd}
\usepackage{bm}
\usepackage[utf8]{inputenc}
\usepackage{bbm}
\usepackage[hidelinks]{hyperref}
\usepackage{cleveref} 
\usepackage{bbold}
\usepackage{todonotes}
\usepackage{orcidlink}

\usepackage[left=2cm,right=2cm,top=2cm,bottom=2cm]{geometry}

\theoremstyle{definition}
\newtheorem{definition}{Definition}[section]
\theoremstyle{plain}
\newtheorem{theorem}[definition]{Theorem}
\theoremstyle{plain}

\theoremstyle{definition}

\theoremstyle{plain}
\newtheorem{cor}[definition]{Corollary}
\theoremstyle{plain}
\newtheorem{prop}[definition]{Proposition}
\theoremstyle{plain}
\newtheorem{lemma}[definition]{Lemma}
\theoremstyle{definition}
\newtheorem{rem}[definition]{Remark}
\theoremstyle{definition}

\counterwithin{equation}{section}

\newcommand{\norm}[1]{\left\lVert #1 \right\rVert} 

\def \N{\mathbb N}
\def \Z{\mathbb Z}
\def \Q{\mathbb Q}
\def \R{\mathbb R}
\def \C{\mathbb C}

\def \P{{\mathbb P}}
\def \pa{{\partial}}
\def \O{\mathcal{O}}
\def \T{\mathbb{T}}

\def \E{\mathbb{E}}
\def \A{\mathcal{A}}
\def \FL{\mathcal{F}L}
\def \B{\mathcal{B}}
\def \H{\mathcal{H}}
\def \M{\mathcal{M}}
\def \J{\mathcal{J}}

\def \k{\bm{k}}
\def \j{\bm{j}}

\def \tc{\mathtt{c}}
\def \tr{\mathtt{r}}
\def \tu{\mathtt{u}}
\def \cR{\mathcal{R}}
\def \tn{\mathtt{n}}
\def \tN{\mathtt{N}}

\title{ {\bf Almost global large deviations principle for the KdV equation}}

\date{}

\author{Riccardo Berforini D'Aquino\footnote{International School for Advanced Studies (SISSA), Via Bonomea 265, 34136, Trieste, Italy. \newline
	\textit{Emails:} \texttt{rberfori@sissa.it}, \texttt{rgrandei@sissa.it}}\, \orcidlink{0009-0006-0436-6167}, 
\ Ricardo Grande$^*$\ \orcidlink{0000-0002-1521-6790}}

\begin{document}

\maketitle
\begin{abstract}
We study extreme wave formation for the Korteweg–de Vries equation on the torus with random initial data of average size $\varepsilon$. 
We establish a large deviations principle for the supremum of the solution over arbitrarily long polynomial timescales $t\leq \varepsilon^{-n}$ for any fixed $n\in\N$. This identifies the leading-order asymptotics of the probability of observing unusually large amplitudes.

In this integrable setting, the dynamics evolves on invariant tori where Fourier moduli are almost conserved, ruling out mechanisms for extreme wave formation based on resonant energy exchange. 
As a result, large amplitudes can only arise through coherent structures or dispersive focusing, which corresponds to the quasi-synchronization of many phases. 
We show that the latter is dominant in the weakly nonlinear regime.

Our approach combines a Birkhoff normal form analysis with probabilistic arguments, exploiting the stability of the integrable dynamics to control the probability of phase quasi-synchronization over long timescales.
\end{abstract}

\tableofcontents

\section{Introduction} \label{section: introduction}

In dispersive wave systems, different modes propagate at different speeds, yet spatially localized structures with large peaks can still form. Such events, often referred to as extreme or rogue waves, correspond to rare deviations from typical amplitudes and are attributed to a combination of linear and nonlinear effects, whose precise role remains unclear \cite{onorato}.

\smallskip

In this work, we adopt a probabilistic viewpoint to study extreme wave formation under random initial data. Following the framework of Dematteis, Grafke, Onorato and Vanden-Eijnden \cite{dematteis2,dematteis}\footnote{Their work was based on the cubic NLS equation and the Dysthe equation on $\T$, but the general framework applies to more complex models of water wave dynamics.
In particular, the one-dimensional Dysthe equation can be transformed into a KdV-type equation \cite{GKS,saut}.}, we characterize the most likely mechanisms leading to such events by establishing a large deviations principle for the supremum of the solution.

\smallskip

One mechanism, known as \emph{nonlinear focusing}, attributes the formation of extreme waves to nonlinear energy transfer across modes. In this scenario, large amplitudes arise through interactions between waves of different wavenumbers. This mechanism was recently shown to be dominant for the beating NLS equation on $\T$ \cite{grande2}.
A second mechanism, known as \emph{dispersive focusing}, attributes extreme wave formation to constructive interference between modes. In this case, large amplitudes arise when phases simultaneously synchronize at a given point. This mechanism was shown to drive extreme wave formation for the cubic NLS equation and the gravity water waves system on $\T$ over nonlinear timescales \cite{bertimasperograndestaffilani,grande}.

\smallskip

These works study rogue wave formation for Gaussian initial data of typical size $\varepsilon$, and are limited to  timescales $t \ll \varepsilon^{-3}$. In this paper, we characterize extreme wave formation for the KdV equation on $\T$ over arbitrarily long polynomial timescales $t \leq \varepsilon^{-n}$ for any fixed $n \in \N$. While we focus on the KdV equation, the same approach applies to the cubic NLS equation.

\smallskip

In this integrable setting, mechanisms based on resonant energy exchanges are incompatible with the dynamics, since solutions evolve on invariant tori determined by the initial Fourier moduli, which are (to leading order) conserved. As a result, large amplitudes can only arise through coherent structures associated with the integrable flow or through constructive interference between modes.

\smallskip

Identifying which of these mechanisms is most likely remains challenging, as it requires propagating precise statistical information for solutions of nonlinear PDEs over long timescales. In particular, the initial data are typically not distributed according to an invariant Gibbs measure of the underlying equation; instead, the probability measure is often supported on high-regularity data obtained empirically from direct measurements in the sea \cite{data1,data2} or constructed to match standard wave spectra \cite{jonswap}.

\smallskip

This leads to the problem of understanding the out-of-equilibrium evolution of the tails of such measures under the flow of nonlinear PDEs. This question has been the subject of significant recent work, including developments in wave turbulence theory \cite{denghani,DIP,grandehani} and quasi-invariant measures \cite{ohtzvetkov,Tzvetkov}. While the latter approach can yield upper and lower bounds for the tails of the evolved measure, they are not yet sharp enough to establish a large deviations principle, which is needed to fully characterize extreme wave formation.

\smallskip

To establish a sharp large deviations principle over such long timescales, we study the probability that the phases of a large number of modes quasi-synchronize. This event lies at the heart of dispersive focusing, which we show to be the dominant mechanism of extreme wave formation for this integrable dynamics. A key difficulty is to control the time evolution of the probability of this quasi-synchronization event. Despite the complete integrability of the system, the Fourier phases are highly nonlinear functions of the initial Fourier moduli and phases, and tracking their exact distribution seems out of reach. Our approach bypasses this obstacle by constructing an explicitly measurable subset of this event, combining random fixed point arguments, stability theory for integrable Hamiltonian systems, and a normal form construction which may be of independent interest.

\subsection{Main results} \label{intro: main results}

Consider the Korteweg–De Vries \cite{korteweg} equation (KdV):
\begin{equation} \label{intro: kdv}
u_t + u_{xxx} + uu_x = 0, \quad x \in \T.
\end{equation}
The KdV equation \eqref{intro: kdv} is a Hamiltonian system,
\begin{equation} \label{intro: hamiltonian}
\partial_t u = \partial_x \nabla H(u), \qquad H(u) = \frac{1}{2} \int_{\T} ( u_x^2 - \frac{1}{3} u^3  ) \, \mathrm{d}x,
\end{equation}
where $\nabla$ denotes the $L^2$-gradient. Beyond the Hamiltonian, this equation enjoys infinitely many conservation laws, including the average $\int_{\T} u(x) \, \mathrm{d}x$ and the mass $\int_{\T} u(x)^2 \, \mathrm{d}x$. The conserved average allow us to consider solutions to \eqref{intro: kdv} evolving in the Sobolev space of real-valued functions with zero average $\dot{H}^s(\T,\R)$, cf. \eqref{def:hs}. 

\smallskip

The KdV equation is globally well-posed in
$L^2(\T)$, which follows from a local well-posedness result and the conserved mass \cite{babin,erdogan}. See also \cite{wellposednesskdv} for the sharp global well-posedness result in $H^s(\T)$ for $s \ge -\frac12$. 

\medskip

We consider \eqref{intro: kdv} with a zero-mean, random initial datum
\begin{equation} \label{intro: random data}
u_{\varepsilon}^{\omega}(0,x) = \varepsilon \sum_{k \in \Z \smallsetminus \{ 0 \}} c_k \eta_k^{\omega} e^{ikx},
\end{equation}
where $(\eta_k^{\omega})_{k \in \N}$ are i.i.d standard, complex, Gaussian r.v. \eqref{intro: complex gaussian}, $\overline{\eta_k^{\omega}} = \eta_{-k}^{\omega}$ and $(c_k)_{k \in \N}$ is a sequence of real coefficients, with
\begin{equation} \label{intro: coefficients}
c_k = c_{-k}\geq 0, \qquad \sum_{k \in \N} |k|^{2s} c_k^2 < +\infty, \qquad \forall s \ge 0.
\end{equation}
In particular, $u_0^{\omega}(x)$ is itself a centered, real Gaussian r.v. with variance $2\varepsilon^2 \sum_{k\in\N} c_k^2$.

\smallskip

The condition \eqref{intro: coefficients} guarantees that $u_{\varepsilon}^{\omega}(0)\in\bigcap_{s\geq 0} \dot{H}^s(\T,\R)$ a.s., which implies the a.s.~existence of a global solution $u_{\varepsilon}^{\omega}(t,x)$ to \eqref{intro: kdv}. Indeed, note that 
\begin{equation} \label{intro: average norm} 
\E \norm{u_{\varepsilon}^{\omega}(0,x)}_{\dot{H}^s}^2 = 2 \varepsilon^2 \sum_{k \in \N} |k|^{2s} c_k^2 ,
\end{equation}
which is finite by \eqref{intro: coefficients}.

\medskip

The main result of this manuscript is a large deviations principle for the sup-norm of the solution to the KdV equation over arbitrarily long polynomial timescales:

\begin{theorem} \label{thm:main_intro}
Consider the KdV equation  \eqref{intro: kdv} with random initial datum \eqref{intro: random data}. Let $u_{\varepsilon}^{\omega}(t,x)$ be the corresponding (a.s. global) unique solution.  Fix $\tn \in \N$, $\lambda > 0$, and $\delta \in (0,1)$. Then we have
\begin{equation} 
\lim_{\varepsilon\to 0^{+}} \sup_{|t|\leq \varepsilon^{-\tn}} \left \lvert \varepsilon^{2\delta} \log \P \left( \sup_{x \in \T} u_{\varepsilon}^{\omega}(t,x) \ge \lambda \varepsilon^{1-\delta} \right) +  \frac{\lambda^2}{4 \sum_{k\in\N} c_k^2} \right \rvert = 0.
\end{equation}
In particular, for any $t=t(\varepsilon)\leq \varepsilon^{-\tn}$ we have the following large deviations principle:
\begin{equation} \label{eq: limit ldp intro}
\lim_{\varepsilon \to 0^{+}} \varepsilon^{2\delta} \log \P \left( \sup_{x \in \T} u_{\varepsilon}^{\omega}(t,x) \ge \lambda \varepsilon^{1-\delta} \right) = - \frac{\lambda^2}{4 \sum_{k\in\N} c_k^2}.
\end{equation}

\end{theorem}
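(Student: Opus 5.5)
We split the statement into an upper bound and a lower bound on $\P(\sup_x u_\varepsilon^\omega(t,x)\ge\lambda\varepsilon^{1-\delta})$, uniform in $|t|\le\varepsilon^{-\tn}$. Both bounds rest on a Birkhoff normal form for KdV near zero. Take the standard Birkhoff map $\Phi$, close to the identity: $\Phi(u)=u+O(\|u\|^2)$ in $\dot H^s$ for $s$ large. In the coordinates $z=\Phi(u)$ the truncated Hamiltonian depends only on the actions $|z_k|^2$, up to a remainder of order $\|z\|^{N}$ with $N=N(\tn)$ large.

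We work on a good set $G_\varepsilon=\{\|u_\varepsilon^\omega(0)\|_{\dot H^s}\le \varepsilon^{1-\delta'}\}$ with $\delta<\delta'<1$. By the Gaussian tail bound built on \eqref{intro: average norm}, its complement has probability at most $\exp(-c\varepsilon^{-2\delta'})$, which is negligible at the scale $\varepsilon^{-2\delta}$. On $G_\varepsilon$ the remainder is of size $\varepsilon^{N(1-\delta')}$. A bootstrap/Gronwall argument then gives, for $|t|\le\varepsilon^{-\tn}$,
\begin{equation*}
z_k(t)=z_k(0)\,e^{i\theta_k(t)}+r_k(t),\qquad \theta_k(t)=t\,\omega_k(I(0)),
\end{equation*}
with $\|r\|_{\dot H^s}\le\varepsilon^{M}$ for any prescribed $M$, once $N$ is large. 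Going back to $u$ gives
\begin{equation*}
u(t,x)=\varepsilon\sum_k c_k|\eta_k|e^{i(\phi_k+\theta_k(t)+kx)}+O(\varepsilon^{2(1-\delta')})
\end{equation*}
in $L^\infty$. Here $\phi_k=\arg\eta_k$, and the frequencies $\omega_k(I)=-k^3+O(|I|)$ depend only on the moduli.

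\textbf{Upper bound.} We have $\sup_x u\le \varepsilon\sum_k c_k|\eta_k|\cdot 2+O(\varepsilon^{2-2\delta'})$, and the error is $o(\varepsilon^{1-\delta})$ when $\delta'<1/2+\delta/2$. So it suffices to bound $\P(2\sum_k c_k|\eta_k|\ge(\lambda-o(1))\varepsilon^{-\delta})$. This is time-independent because the moduli are conserved to leading order. Take $E|\eta_k|^2=1$, so that $u_0(x)$ has variance $2\varepsilon^2\sum c_k^2$. The variable $\sum c_k|\eta_k|$ is a Lipschitz function of a Gaussian vector with Lipschitz constant $(\sum c_k^2/2)^{1/2}$ in real coordinates (a complex standard $\eta$ has real variance $1/2$ per component). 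Gaussian concentration then gives $\log\P\le-\frac{\lambda^2\varepsilon^{-2\delta}}{4\sum c_k^2}(1+o(1))$. If $\sum c_k^2=\infty$ weighted issues arise with tails; in that case we truncate at $|k|\le K$ and use \eqref{intro: coefficients} to control the tail, which is itself a Gaussian Lipschitz bound.

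\textbf{Lower bound.} This is the main obstacle. We construct an explicit event $A_\varepsilon$ whose probability can be computed, built around the optimal profile $\eta_k\approx \mu c_k$ with $\mu=\lambda\varepsilon^{-\delta}/(2\sum c_k^2)$, and chosen so that at time $t$ and at $x=0$ all phases $\phi_k+\theta_k(t)$ are nearly $0$ for $|k|\le K$. The difficulty is that $\theta_k(t)$ depends on the moduli, and through the normal form on all moduli, with enormous factors of $t$. We handle this by conditioning on the moduli $\{|\eta_k|\}$, which lie in a small box around $\mu c_k$. Given the moduli, $\theta_k(t)$ is a deterministic number. The phases $\phi_k$ are independent of the moduli and uniform, so the event $\{|\phi_k+\theta_k(t)|<\gamma,\ |k|\le K\}$ has conditional probability exactly $(\gamma/\pi)^{K}$. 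This factor is independent of $\varepsilon$ and therefore negligible at the exponential scale $\varepsilon^{-2\delta}$. The remainder $r$ must be handled without randomness, since on the good set its bound is deterministic; we therefore ensure that $A_\varepsilon\subset G_\varepsilon$ together with the bootstrap applies, so that no fixed point argument on the random data is needed beyond that.

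On $A_\varepsilon$ we get $u(t,0)\ge \varepsilon(2\mu\sum_{k\le K}c_k^2\cos\gamma-\text{tail})-o(\varepsilon^{1-\delta})$. The tail over $|k|>K$ is controlled by requiring $|\sum_{|k|>K}c_k\eta_k\cdots|$ to be small, which holds with probability close to $1$. The Gaussian density of the moduli box gives $\log\P(A_\varepsilon)\ge -\mu^2\sum c_k^2(1+o(1))=-\frac{\lambda^2\varepsilon^{-2\delta}}{4\sum c_k^2}(1+o(1))$. Finally we let $\gamma\to0$ and $K\to\infty$ after $\varepsilon\to0$. Uniformity in $t$ holds because every estimate depends on $t$ only through the bound $|t|\le\varepsilon^{-\tn}$.
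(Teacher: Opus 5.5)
Your upper bound is fine and is essentially the paper's argument. The Fourier moduli of the approximate solution are quasi-conserved, so $\sup_x u\le\norm{u_\varepsilon^\omega(0)}_{\FL^{0,1}}+o(\varepsilon^{1-\delta})$ off a negligible set, and Gaussian concentration for the Lipschitz function $\sum_k c_k|\eta_k|$ is a valid substitute for \Cref{prop: grande}. Your normal form step is only asserted. A finite-order Birkhoff normal form for KdV with remainder control in $\dot H^s$ is not off the shelf because of the derivative loss; that is what Section~2 of the paper constructs. You could instead justify it by citing the Kappeler--P\"oschel coordinates.

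\textbf{The gap is in the lower bound.} You claim that, conditionally on the moduli $\{|\eta_k|\}$, the shift $\theta_k(t)=t\,\omega_k(I(0))$ is deterministic, so the synchronization event has conditional probability $(\gamma/\pi)^K$. But $I(0)=(|\Phi(u_\varepsilon^\omega(0))_j|^2)_j$ are the actions of the normal-form variable, not $\varepsilon^2c_j^2|\eta_j|^2$. Because the Birkhoff map is nonlinear, $I_j(0)-\varepsilon^2c_j^2|\eta_j|^2$ contains cubic terms coming from $\Phi_3$, such as $\mathrm{Re}\bigl(\overline{u_j}\sum_{k_1+k_2=j}u_{k_1}u_{k_2}/(k_1k_2)\bigr)$, and these depend on the initial phases. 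The discrepancy is only of size $\norm{u_\varepsilon^\omega(0)}_{\dot H^s}^3$. Multiplied by $|t|\le\varepsilon^{-\tn}$, however, it is $\gg 1$ once $|t|\gg\norm{u_\varepsilon^\omega(0)}_{\dot H^s}^{-3}$. This leaves two options, and both fail.
\begin{itemize}
\item If $\theta_k$ is built from the true actions, then for fixed moduli the map $\vec\phi\mapsto\vec\phi+t\,\vec\omega(I(0;\vec\phi))$ is a nonlinear map of the phases with Lipschitz constant of order $|t|\varepsilon^{c}\gg1$. The phases $\phi_k+\theta_k(t)$ are then neither uniform nor independent, and you have no lower bound at all on the synchronization probability.
\item If $\theta_k$ is built from $(|u_k(0)|^2)_k$, then your approximation of $u(t)$ is false beyond times of order $\norm{u_\varepsilon^\omega(0)}^{-3}$. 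This is exactly why earlier results stop at $t\ll\varepsilon^{-3}$.
\end{itemize}
Your remark that no fixed point argument is needed is precisely where the proof breaks.

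\textbf{The missing ingredient is the random fixed point construction of \Cref{prop: properties of random fixed point}.}
\begin{itemize}
\item Freeze the $\sigma$-algebra $\mathcal{G}$ of all moduli and high phases. View $\mathcal{T}(\vec\phi,\omega)=-t\,\vec\theta(\varepsilon\widetilde{\tu}_0^\omega(\cdot;\vec\phi))$ as a continuous, $2\pi$-periodic map of the first $M$ phases.
\item A random Brouwer theorem gives a $\mathcal{G}$-measurable $\vec\phi^{*,\omega}$ at which the phases synchronize exactly.
\item The Lipschitz bound $|\mathcal{T}_j(\vec\phi,\omega)-\mathcal{T}_j(\underline{\vec\phi},\omega)|\le M|t|\varepsilon^{\frac{10}{7}(1-\delta)}\norm{\vec\phi-\underline{\vec\phi}}_{\ell^\infty}$ shows that the box $\mathcal{N}(\beta)$ of radius $\beta=M^{-1}\varepsilon^{\tn+\frac12}$ around $\vec\phi^{*,\omega}$ lies in the quasi-synchronization set. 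This bound rests on the Lipschitz constant $2$ of $\Phi^{-1}$, and on the approximate phases being linear in $t$ and depending only on the initial datum.
\item The factorization $\P(\mathcal{A}\cap\mathcal{N}(\beta))=(\beta/\pi)^M\P(\mathcal{A})$ for $\mathcal{A}\in\mathcal{G}$ then replaces your conditional-probability claim.
\end{itemize}
The cost is $M\log(\pi/\beta)\sim\varepsilon^{-\delta}\log(1/\varepsilon)=o(\varepsilon^{-2\delta})$. This is negligible only because $\beta$ is polynomially, not exponentially, small in $t$. With your fixed $K$ the same scheme also works, and Fubini over the low phases would even let you skip the measurable selection. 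Some version of it is indispensable, though.
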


Let us make some comments.

\begin{itemize}
    \item[1.] {\sc Timescales}: Previous works on large deviations principles for dispersive equations have reached timescales $t\ll \varepsilon^{-3}$ \cite{grande,grande2,bertimasperograndestaffilani}. To the best of our knowledge, this is the first result to achieve arbitrarily long polynomial timescales. To reach these times, we exploit the integrability of the KdV equation in order to track the probability that a large collection of nonlinear phases of $u_{\varepsilon}^{\omega}(t)$ quasi-synchronize, building on the novel approach introduced in \cite{bertimasperograndestaffilani}.

    \item[2.] {\sc Dispersive focusing}: As a byproduct of our proof, we show in \Cref{thm: intro dispersive focusing} below that the dominant mechanism of formation of extreme waves in the KdV equation in the weakly nonlinear regime is \emph{dispersive focusing}, i.e. the quasi-synchronization of a large number of phases of the Fourier modes giving rise to constructive interference. This is in contrast to \cite{grande2}, where energy exchange between Fourier modes is the main mechanism. 

    \item[3.] {\sc Regularity}:  
    Our result only requires a finite amount of regularity. In particular, hypothesis \eqref{intro: coefficients} can be weakened, as we do in \Cref{thm: almost global ldp intro}. However, we didn't try to optimize the minimal regularity $s$, which depends on the normal form procedure (see \Cref{intro:BNF} below), and we leave this question for future work.   

    \item[4.] {\sc Growth of $L^{\infty}$-norm}: Let us highlight that we do not study the growth of the $L^{\infty}$-norm of individual trajectories. Instead, \Cref{thm:main_intro} and \Cref{thm: intro dispersive focusing} characterize the most likely initial data which give rise to large solutions to \eqref{intro: kdv} at time $t$. Large solutions are, in turn, defined by comparison with the average initial size which is $\O(\varepsilon)$, cf.~\eqref{intro: average norm}.

    \item[5.] {\sc Comparison with \cite{grande}}: In \cite{grande}, the authors establish a large deviations principle analogous to \eqref{eq: limit ldp intro} for the cubic NLS equation on $\T$, valid over timescales $t = \mathcal{O}(\varepsilon^{-2(1-\delta)})$ for a.s.~smooth initial data. We expect that the methods developed in this manuscript can be used to extend the results of \cite{grande} to arbitrary polynomial timescales $t \sim \varepsilon^{-\tn}$ for any $\tn \in \N$. Notably, the KdV equation presents additional challenges due to the presence of derivatives in the nonlinear term, which requires working in high-regularity spaces to avoid loss of derivatives. By contrast, our approach may require significantly weaker regularity assumptions in the NLS setting.
\end{itemize}

As previously announced, the second main result of this manuscript is the identification of the main mechanism behind the formation of an extreme wave.
Consider the a.s. global solution to \eqref{intro: kdv}:
\begin{equation} \label{intro: exact solution}
u_{\varepsilon}^{\omega}(t,x) = 2 \sum_{k \in \N} |u_k^{\omega}(t)| \cos\left(\psi_k^{\omega}(t)+ kx\right),
\end{equation}
where $\psi_k^{\omega}(t)$, $k \in \N$, are the phases of the Fourier coefficients of the solution at time $t$. Let $\tN \in \N$, $\delta\in (0,1)$ and define the set 
\begin{equation} \label{intro: fasi piccole}
\mathfrak{P}(\tN,\delta,\varepsilon) = \left \{ \omega \in \Omega \, | \, \left \lvert \psi_k^{\omega}(t) \ \mbox{mod}\ 2\pi \right \rvert \le \varepsilon^{ \frac{1-\delta}{15}}, \, \, \forall k \in \{1,\ldots, \tN\}\ \mbox{such that}\ c_k>0\right \},  
\end{equation}
i.e. the event that the first $N$ phases are very close to zero. We note that the contributions towards the $L^{\infty}$-norm from Fourier coefficients $u_k^{\omega}(t)$ such that $u_k^{\omega}(0)=0$, cf. \eqref{intro: random data}, are negligible on account of the quasi-conservation of the Fourier moduli by the KdV dynamics (cf.~\eqref{eq: approx fasi lineari}). As a result, phases associated to such Fourier coefficients are ``free''.

The following result shows that the main mechanism for the formation of extreme waves is the quasi-synchronization of the phases.

\begin{theorem}[Dispersive Focusing] \label{thm: intro dispersive focusing}
Under the same assumptions of \Cref{thm:main_intro} we have 
\begin{equation} \label{intro: dispersive focusing}
\lim_{\varepsilon\to 0^{+}} \sup_{|t|\leq \varepsilon^{-\tn}} \left \lvert  \varepsilon^{2\delta} \log \P \left( \left \{ \sup_{x \in \T} u_{\varepsilon}^{\omega}(t,x) \ge \lambda \varepsilon^{1-\delta} \right \} \cap \mathfrak{P}(\tN,\delta,\varepsilon) \right) + \frac{\lambda^2}{4 \sum_{k\in\N} c_k^2} \right \rvert = 0.
\end{equation}
\end{theorem}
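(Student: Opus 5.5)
The upper bound is immediate. The event in \eqref{intro: dispersive focusing} is contained in $\{\sup_x u_\varepsilon^\omega(t,x)\ge\lambda\varepsilon^{1-\delta}\}$, so \Cref{thm:main_intro} gives
\[
\limsup \varepsilon^{2\delta}\log\P(\cdots)\le -\frac{\lambda^2}{4\sum_k c_k^2},
\]
uniformly in $|t|\le\varepsilon^{-\tn}$. All of the work is in the matching lower bound. For that, I would exhibit an explicit measurable set $A_\varepsilon(t)\subset\{\sup_x u\ge\lambda\varepsilon^{1-\delta}\}\cap\mathfrak P(\tN,\delta,\varepsilon)$ whose probability is at least $\exp\bigl(-(\lambda^2/(4\sum c_k^2)+o(1))\varepsilon^{-2\delta}\bigr)$, uniformly in $t$.

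The construction goes through the Birkhoff normal form described in the introduction. After a near-identity symplectic change of variables $\Phi$, defined on a ball of radius $\varepsilon^{1-\delta'}$ in a high $\dot H^s$ norm, the transformed dynamics reads
\[
z_k(t)=z_k(0)\,e^{i\theta_k(t)}+\text{remainder},
\]
where the moduli $|z_k|$ are conserved up to errors $\varepsilon^{M}$ over times $\varepsilon^{-\tn}$, with $M$ large, and the frequencies $\omega_k(|z|^2)=k^3+\O(|z|^2)$ depend only on the actions. Hence
\[
\psi_k(t)=\arg z_k(0)+t\,\omega_k(|z(0)|^2)+\text{small},
\]
which is the approximation \eqref{eq: approx fasi lineari}. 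For the set $A_\varepsilon(t)$, I would take the initial moduli close to the optimal profile $|\eta_k|\approx \mu c_k$ with $\mu=\lambda\varepsilon^{-\delta}/(2\sum c_j^2)$, for all $k\le K(\varepsilon)$ with $K$ growing slowly, and the remaining modes of typical size. The initial phases are then required to satisfy $|\arg\eta_k+t\,\omega_k(\cdot)\ \mathrm{mod}\ 2\pi|\le\varepsilon^{(1-\delta)/15}/2$ for $k\le K$. At $x=0$, $u$ is then approximately $2\varepsilon\mu\sum c_k^2\cos(\text{small})\ge\lambda\varepsilon^{1-\delta}(1-o(1))$. To obtain the threshold exactly, one inflates $\lambda$ by $(1+\eta)$ and lets $\eta\to0$ at the end. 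Finally, $\mathfrak P$ holds automatically, since $\tN$ is fixed and $\tN\le K$ for small $\varepsilon$.

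The main obstacle is that the phase constraint involves $\omega_k$, which depends on all the moduli, including the tail. Conditioning on the moduli fixes $\omega_k$, however, and the phases are independent uniform variables given the moduli. Each phase then lies in a prescribed arc of length $\sim\varepsilon^{(1-\delta)/15}$ with conditional probability $\sim\varepsilon^{(1-\delta)/15}$, independently across $k$. This costs a factor $\exp(-CK\log\varepsilon^{-1})$, which is $o(\varepsilon^{-2\delta})$ in the exponent provided $K\ll\varepsilon^{-2\delta}/\log\varepsilon^{-1}$. Given that, the Gaussian cost of the moduli is $\mu^2\sum c_k^2=\lambda^2\varepsilon^{-2\delta}/(4\sum c_k^2)$, and the result follows by Fubini.

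Two smaller points remain. First, one must account for the difference between the phases of $z_k$ and those of $u_k=(\Phi^{-1}z)_k$. Because $\Phi$ is $\varepsilon^{2(1-\delta)}$-close to the identity, this discrepancy is at most $\varepsilon^{1-\delta}/|z_k|$, which is below $\varepsilon^{(1-\delta)/15}$ as long as $\mu c_k\gtrsim\varepsilon^{-\delta}\varepsilon^{14(1-\delta)/15}$. This is why $K$ is truncated to modes with $c_k$ not too small. Second, the contribution of the tail $k>K$ to $\sup_x u$ must be negligible; it can be discarded with probability $1-o(1)$ by the $\dot H^s$ bounds, which justifies the choice of $K$.
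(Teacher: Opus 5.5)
\textbf{Gap: the phase constraint does not factor across modes.} Your upper bound is correct. The rest of the bookkeeping is also fine: the tube of moduli around the optimal profile, discarding the tail, and controlling $\arg u_k(t)$ through a lower bound on the moduli of the first $\tN$ modes (the paper does this with $\mathfrak{R}(\tN,\kappa,\varepsilon)$ and \eqref{inclusion}).

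The problem is the central probabilistic step: ``conditioning on the moduli fixes $\omega_k$, and the phases are independent uniform given the moduli.'' The frequencies are $\theta_k(\vec J)$ with $\vec J=(|\Phi^{-1}(u_\varepsilon^\omega(0))_j|^2)_j$, i.e.\ the actions of the \emph{transformed} datum. Since $\Phi^{-1}$ is nonlinear and mixes modes, $J_k$ depends on phase combinations of the original Gaussians. Already the leading correction to $J_k-|u_k(0)|^2$, which is cubic and comes from $G_3$, involves $\phi_{k_1}+\phi_{k_2}-\phi_k$ with $k_1+k_2=k$. So fixing $(R_j)_j$ does not fix $\omega_k$.

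Conditioning instead on the moduli of $z(0)=\Phi^{-1}(u_\varepsilon^\omega(0))$ does not help either. There is no reason for the phases of $z(0)$ to be independent and uniform given its moduli, since $\Phi^{-1}$ commutes with spatial translations but not with independent rotations of individual modes. In any case, your constraint is imposed on $\arg\eta_k$, not on $\arg z_k(0)$.

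Nor is the phase dependence negligible. The map $\vec\phi\mapsto t\,\theta(\cdot)$ is only known to be Lipschitz with constant $M|t|\varepsilon^{\frac{10}{7}(1-\delta)}$, cf.\ \eqref{lipschitz}. The phase-dependent part of $t\vec J$ is genuinely $\gg 1$ once $t\gg\varepsilon^{-3}$; this is exactly why the paper notes that the approximation $\vec J\approx(|u_k(0)|^2)_k$ breaks down there. Hence for $|t|\sim\varepsilon^{-\tn}$ the event $\{|\phi_k+t\omega_k(\vec\phi)\ \mathrm{mod}\ 2\pi|\le a,\ k\le K\}$ is a strongly coupled nonlinear condition on $\vec\phi$. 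Moving one $\phi_j$ across an arc of length $a$ can sweep every $t\omega_k$ through many periods. Its conditional probability is not $\prod_k (a/\pi)$, and your argument does not even show the event is nonempty.

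\textbf{What the paper uses instead.} Work conditionally on $\mathcal G=\sigma((R_j)_j,(\phi_j)_{j>M})$, on the good set $\mathcal B(\delta,\varepsilon,s)$.
\begin{enumerate}
\item The map $\vec\phi\mapsto\mathcal T(\vec\phi,\omega)=-t\,\theta(\varepsilon\widetilde{\tu}_0^\omega(\cdot;\vec\phi))$ is continuous and $2\pi$-periodic. A random Brouwer fixed point therefore gives a $\mathcal G$-measurable $\vec\phi^{*,\omega}$ at which the first $M$ phases synchronize exactly.
\item The Lipschitz bound above grows only linearly in $t$, because $\theta$ depends only on the integrable actions at time $0$. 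It shows that the neighbourhood $\mathcal N(\beta)$ with $\beta=M^{-1}\varepsilon^{\tn+1/2}$ still lies in the quasi-synchronization set.
\item The factorization $\P(\mathcal A\cap\mathcal N(\beta))=(\beta/\pi)^M\P(\mathcal A)$ for $\mathcal A\in\mathcal G$ replaces your product of arcs.
\end{enumerate}
A deterministic Brouwer argument for each fixed value of the conditioning variables, followed by Fubini on the jointly measurable set, would serve equally well. The price $M\log(\pi/\beta)=O(\varepsilon^{-\delta}\log\varepsilon^{-1})$ is still $o(\varepsilon^{-2\delta})$, so your final exponent accounting survives once this step is supplied. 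As written, however, the lower bound does not follow.
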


\Cref{thm:main_intro} and \Cref{thm: intro dispersive focusing} are proved in \Cref{subsection: probability}, see \Cref{thm: almost global ldp intro} and \Cref{thm: dispersive focusing}, respectively. We outline the main ideas of the proof below.

\subsection{Ideas of the proof} \label{intro: ideas of the proof}

\Cref{thm: intro dispersive focusing} essentially follows from the proof of \Cref{thm:main_intro}, so we only discuss the latter. The proof of \Cref{thm:main_intro} relies on the construction of a sequence of approximate solutions to the KdV dynamics \eqref{intro: kdv}-\eqref{intro: random data} via a Birkhoff normal form procedure. These approximations allow us to track the probability that a large number of Fourier modes quasi-synchronize. Ensuring that this probability does not decrease too quickly over long timescales is a key idea of the proof.

\smallskip

In order to derive an approximate solution to the KdV dynamics, we use the Birkhoff normal form procedure to construct a close-to-the-identity local diffeomorphism $\Phi : B_s(0,\varepsilon) \subseteq \dot{H}^s \longrightarrow \dot{H}^s$ such that
\begin{equation} \label{intro: normal form}
\H \circ \Phi (v) = \sum_{m=1}^{\lfloor \frac{r}{2}\rfloor} \widehat{\H}_{2m}(\mathfrak{J}) + \mathtt{R}_r (v), \qquad \mathfrak{J}=(|v_k|^2)_{k \in \Z \smallsetminus \{ 0 \}}.
\end{equation}
where $\H$ is the KdV Hamiltonian \eqref{intro: hamiltonian} in Fourier coordinates, and $\mathtt{R}=\mathtt{R}_r$ is a remainder. 
The number of normal form steps $r\geq 3$ is chosen to ensure that the desired timescales $t\sim\varepsilon^{-\tn}$ can be reached, while the minimal regularity $s$ depends on $r$ through the normal form procedure, cf.~\eqref{intro: parameters}. We detail the main ideas in the construction of $\Phi$ in \Cref{intro:BNF}.

\smallskip

In the new coordinates $v = \Phi^{-1}(u)$, the KdV equation \eqref{intro: kdv} in Fourier variables reads
\begin{equation}\label{intro:newdyn}
\partial_t v_k(t) = i\, \theta_k(\mathfrak{J}(t))\, v_k(t) + X_{\mathtt{R}}(v)_k,\qquad \theta_k (\mathfrak{J}) = \frac{k}{\pi}\, \sum_{m=1}^{\lfloor \frac{r}{2}\rfloor} \pa_{\mathfrak{J}_k} \widehat{\H}_{2m}(\mathfrak{J}), \qquad k \in \Z \smallsetminus \{ 0 \},
\end{equation}
As justified in \Cref{intro:BNF}, we are able to neglect $X_{\mathtt{R}}(v)$ which leads to the approximate solution
\begin{equation} \label{intro: approx solution in new var}
v_k(t)\approx e^{i t\,\theta_k(\mathfrak{J}(0))} v_k(0),  \qquad k \in \Z \smallsetminus \{ 0 \}.
\end{equation}
Neglecting $X_{\mathtt{R}}(v)$ is highly nontrivial, as it may be unbounded in $\dot{H}^s$. 
In \eqref{intro:approx_proof} we establish the approximation \eqref{intro: approx solution in new var} by finding an \emph{a priori} bound which yields control of $v$ in a high norm (see \eqref{intro:bound1}--\eqref{intro:bound2}), allowing us to justify \eqref{intro: approx solution in new var} in a low norm.

\subsubsection{Non-Gaussian approximation}

Although the approximation \eqref{intro: approx solution in new var} depends solely on the initial datum $v(0)=\Phi^{-1}(u(0))$, the complexity of the nonlinear change of coordinates $\Phi$ makes it extremely difficult to characterize its probability distribution. However, using the fact that $\Phi$ is close-to-the-identity (cf. \eqref{intro: close to id}), we can replace $v_k(0)$ by $u_k(0)$. This leads to the following approximation of the KdV dynamics:
\begin{equation} \label{intro: approssimazione bella}
u_{\mathrm{app},\varepsilon}^{\omega}(t,x) = 2 \varepsilon \sum_{k \in \N} c_k \, R_k^{\omega} \cos \left ( \phi_k^{\omega} + t \,\theta_k( \vec{J}^{\omega}) + kx \right ), \qquad \vec{J}^{\omega} = ( |\Phi^{-1} (u_{\varepsilon}^{\omega}(0))_k|^2 )_{k\in\N},
\end{equation}
where we used the explicit formulae for $u_k(0) = \varepsilon c_k \eta_k^{\omega}$ from \eqref{intro: random data}, and expressed each complex Gaussian in polar coordinates as $\eta_k^{\omega}=R_k^{\omega} e^{i\phi_k^{\omega}}$, where $R_k^{\omega}$ is Rayleigh-distributed and $\phi_k^{\omega}$ is uniformly distributed on $[0,2\pi)$.

A big advantage of the approximation \eqref{intro: approssimazione bella} is that the distributions of the moduli are explicit. This allows us to estimate
\begin{equation}\label{intro:upper_bound}
\lambda \varepsilon^{1-\delta}\leq \sup_{x\in\T} u^{\omega}_{\varepsilon}(t,x) \stackrel{\eqref{intro: approssimazione bella}}{\leq} 2\varepsilon\,\sum_{k\in\N} c_k R_k^{\omega} + o(\varepsilon^{1-\delta})
\end{equation}
up to a set of negligible probability, cf.~\eqref{eq: almost global ldp - lower bound}. The inequality \eqref{intro:upper_bound} leads to sharp upper bounds for the left-hand side of \eqref{eq: limit ldp intro} over the desired timescales, see \Cref{thm: almost global ldp}.

\smallskip

Obtaining sharp lower bounds for $\sup_{x\in\T} u^{\omega}_{\varepsilon}(t,x)$ is one of the main challenges in this paper. While the nonlinear phases in \eqref{intro: approssimazione bella} can be ignored in the upper bound, they play a fundamental role in the lower bound. In particular, note the phases $\phi_k^{\omega} + t \,\theta_k( \vec{J}^{\omega})$ are in general not uniformly distributed in $[0,2\pi)$ on account of the highly nonlinear dependency of $\vec{J}^{\omega}$ (via $\Phi^{-1}$) on $(\phi_k^{\omega})_{k\in\N}$ and $(R_k^{\omega})_{k\in\N}$.

This issue was avoided in \cite{grande} by approximating\footnote{Note that for small timescales $t\ll \varepsilon^{-2}$, one can approximate $\vec{J}^{\omega}\approx 0$ in \eqref{intro: approssimazione bella}. In this case, $\theta_k(0)$ is precisely the dispersive relation $k^3$ (resp. $k^2$ in NLS) and the approximation is the linear evolution, which trivially preserves the Gaussianity of the initial data.
} 
\begin{equation}\label{intro:phase_trick}
\vec{J}^{\omega} = ( |\Phi^{-1} (u_{\varepsilon}^{\omega}(0))_k|^2 )_{k\in\N} \approx ( |(u_{\varepsilon}^{\omega}(0)_k|^2 )_{k\in\N}
\end{equation}
which led to the approximation 
\begin{equation} \label{intro: brutta}
\widetilde{u}_{\mathrm{app},\varepsilon}^{\omega}(t,x) = 2 \varepsilon \sum_{k \in \N} c_k \, R_k^{\omega} \cos \left ( \phi_k^{\omega} + t \,\theta_k( \varepsilon\vec{R}^{\omega}) + kx \right ), \qquad \vec{R}^{\omega} = ( R_k^{\omega})_{k\in\N}.
\end{equation}
Exploiting that $\vec{R}^{\omega}$ is independent of $\vec{\phi}^{\omega}$, one can prove that $\widetilde{u}_{\mathrm{app},\varepsilon}^{\omega}(t,x)$ is Gaussian, which can be used to derive sharp lower bounds.  

\smallskip

However, the approximation \eqref{intro: brutta} is only valid over ``short'' timescales\footnote{
This timescale is due to the fact that $\norm{\Phi^{-1}(u)-u} \approx \norm{u}^2$, cf.~\eqref{eq: close to the identity - thm ham}, but may be longer 
if this difference is cubic or quartic. The degree of this error in turn depends on the degree of the nonlinearity in the PDE.
}, 
since $|t \vec{J}^{\omega} - t \varepsilon\vec{R}^{\omega}| \gg 1$ for timescales 
$t\gg \varepsilon^{-3}$. 
In order to reach timescales $t \sim \varepsilon^{-\tn}$ for any $\tn\in\N$, we do not attempt to track the probability distribution of the nonlinear phases \eqref{intro: approssimazione bella}. Instead, we adapt a novel idea in \cite{bertimasperograndestaffilani} based on studying the probability of quasi-synchronization of many phases.

\subsubsection{Quasi-synchronization and large deviations} 

For 
$M\in\N$, we define the quasi-synchronization event:
\begin{equation}\label{intro:quasisync_big}
\mathfrak{M}(t;\varepsilon, M) := \{ \omega\in\mathbb{\Omega} \mid |\varphi_k (t; \omega, \vec{\phi}^{\omega}) \ \mbox{mod}\ 2\pi| \leq \varepsilon \quad \mbox{for}\ k=1,\ldots,M \}.
\end{equation}
with
\[
\varphi_k (t; \omega, \vec{\phi}) = \phi_k + t \,\theta_k \left( (|\Phi^{-1} (u_{\varepsilon}^{\omega}(0; \vec{\phi}))_j|^2)_{j\in\N}\right),\qquad \vec{\phi} \in \R^M,
\]
where $u_{\varepsilon}^{\omega}(0; \vec{\phi})$ is the ``partially randomized'' initial datum whose first $M$ modes have deterministic phases $\vec{\phi}$, while all moduli and the rest of the phases remain random, cf.~\eqref{def: partially rid}. Note that, when we plug in the uniformly distributed phases $\vec{\phi}^{\omega}$, the nonlinear phases $\varphi_k ( t; \omega, \vec{\phi}^{\omega})$ in \eqref{intro:quasisync_big} are precisely those in \eqref{intro: approssimazione bella}.

\smallskip

For $M\gg 1$, to be fixed later, and $\omega \in \mathfrak{M}(t;\varepsilon ,M)$, note that we can essentially invert \eqref{intro:upper_bound}, namely
\begin{equation}\label{intro:lb}
\sup_{x\in\T} u^{\omega}_{\varepsilon}(t,x) \stackrel{\eqref{intro: approssimazione bella}}{\geq} 2\varepsilon\, (1-\varepsilon )\,\sum_{k=1}^{M} c_k R_k^{\omega} + o(\varepsilon^{1-\delta}+ M^{1-s})
\end{equation}
up to a set of negligible probability, cf.~\eqref{eq: almost global ldp - lower bound}. Note that we exploit the decay of the coefficients $c_k$, cf.~\eqref{intro: coefficients}, to control the terms with $k>M$ with high probability. In particular, we fix $M=\lfloor \varepsilon^{-\delta} \rfloor$ and $s\geq \delta^{-1}$ so that $o(M^{1-s})=o(\varepsilon^{1-\delta})$ in \eqref{intro:lb}.

\smallskip

The key obstacle is guaranteeing that $\P(\mathfrak{M}(t;\varepsilon, M))$ is not too small\footnote{Compared with the probability of an extreme wave $\P ( \sup_{x\in\T} u^{\omega}_{\varepsilon}(t,x) \geq \lambda \varepsilon^{1-\delta})$ which, as shown via the upper bound \eqref{intro:upper_bound}, is at most exponential in $-\varepsilon^{-2\delta}$.} as time passes, which is highly nontrivial since $\varphi(t)$ is a non-explicit, nonlinear function of $\vec{\phi}^{\omega}$. 
To do so, we first construct a single random variable $\vec{\phi}^{*,\omega}$, via a Brouwer fixed point argument, such that $\varphi (t;\omega, \vec{\phi}^{*,\omega})=0$. Then we construct a neighborhood around this fixed point 
\begin{equation}\label{intro:nhood}
    \mathcal{N}(\beta):= \left \{ \omega\in\mathbb{\Omega} \, \Big | \, \norm{\vec{\phi}^{*,\omega} - \vec{\phi}^{\omega}}_{\ell^{\infty}} < \beta \right \} \subseteq \mathfrak{M}(t; \varepsilon , M)
\end{equation}
with $\beta=\beta(t,\varepsilon,M)\in (0,\pi)$ well-chosen so that the inclusion holds. An advantage of this neighborhood is that we can prove that $\P(\mathcal{N}(\beta))= (\beta/\pi)^{M}$, cf.~\Cref{prop: properties of random fixed point}. Moreover, we prove a key \emph{factorization property}: for any event $\mathcal{A}$ in the $\sigma$-algebra $\mathcal{G}=\sigma((R_j^{\omega})_{j \in \N}, (\phi_j^{\omega})_{j>M})$,
\begin{equation}\label{intro:facprop}
    \P( \mathcal{A}\cap \mathcal{N}(\beta)) = \P( \mathcal{A}) \, \P(\mathcal{N}(\beta)) = \left(\frac{\beta}{\pi}\right)^{M}\, \P( \mathcal{A}) 
\end{equation}
despite the fact that $\mathcal{N}(\beta)$ depends on the vector $\vec{\phi}^{*,\omega}$, which is $\mathcal{G}$-measurable.

\medskip

The factorization property \eqref{intro:facprop}, together with \eqref{intro:lb}, allows us to obtain an explicit lower bound:
\begin{align}
        \P \left ( \sup_{x\in\T} u^{\omega}_{\varepsilon}(t,x) \geq \lambda \varepsilon^{1-\delta} \right ) & \geq \P\left( \left\{\sup_{x\in\T} u^{\omega}_{\varepsilon}(t,x) \geq \lambda \varepsilon^{1-\delta}\right\}\cap \mathcal{N}(\beta)\right) \nonumber \\
        & \stackrel{\eqref{intro:lb},\eqref{intro:facprop}}{\approx} \P\left( 2\sum_{k=1}^{M} c_k R_k^{\omega} \geq \lambda \varepsilon^{-\delta} + o(\varepsilon^{-\delta})\right) \, \P (\mathcal{N}(\beta))\nonumber\\
        & \stackrel{M=\lfloor \varepsilon^{-\delta} \rfloor}{\geq} \exp\left( -\frac{\lambda^2\varepsilon^{-2\delta}}{4\sum_{k\in\N} c_k^2} + \varepsilon^{-\delta}\,\log\frac{\beta}{\pi} + o(\varepsilon^{-2\delta})\right),\label{intro:endgame}
\end{align}
where we used sharp lower bounds for the tails of the random variable $\sum_{k=1}^{M} c_k R_k^{\omega}$, cf.~\Cref{prop: grande} -- see \Cref{thm: almost global ldp - lower bound} for the full details.

\smallskip

Crucially, we prove in \Cref{thm: almost global ldp - lower bound} that we can choose $\beta \sim t^{-1}$ for $t\geq 1$ while guaranteeing the inclusion property \eqref{intro:nhood}. This guarantees that the right-hand side in \eqref{intro:endgame} is not too small over timescales $t\sim\varepsilon^{-\tn}$ for any $\tn\in\N$.

While a similar strategy was used in \cite{bertimasperograndestaffilani}, the size of the neighborhood \eqref{intro:nhood} depended on the Lipschitz constant of the nonlinear flow. As such, $\beta$ decays exponentially in $t$ which, in view of \eqref{intro:endgame}, is an obstacle to reaching long timescales $t\sim \varepsilon^{-\tn}$. In this work, however, we prove that $\beta$ depends only on the Lipschitz constant of the flow of an integrable system\footnote{In our case, this integrable system is not KdV but \eqref{intro:newdyn} without the term $X_{\mathtt{R}}$. In fact, our strategy will yield a large deviations principle for a system which is only integrable up to order $2m$, up to timescales $t \ll \varepsilon^{-m}$.} in \emph{action-angle coordinates}. As a result, $\beta$ decays only as $t^{-1}$, which is the key to reaching the arbitrarily long polynomial timescales of \Cref{thm:main_intro}.

\subsubsection{Birkhoff normal form}\label{intro:BNF}

\paragraph{Formal transformations.} Let us now explain the Birkhoff normal form procedure we use to construct the change of coordinates $\Phi$ in \eqref{intro: normal form}. Recall the KdV Hamiltonian \eqref{intro: hamiltonian} in Fourier coordinates and the Poisson bracket:
\begin{equation}
\H(u) = \H_2(u) + \H_3(u) = \pi \sum_{k \in \Z^*} k^2 |u_k|^2 - \frac{\pi}{3} \sum_{\k \in \M_3} u^{\k}, \qquad \{ F , G \} = \sum_{k \in \Z^*} \frac{ik}{2\pi} \frac{\pa F}{\pa u_k} \frac{\pa G}{\pa u_{-k}},
\label{intro: kdv hamiltonian in fourier}
\end{equation}
where $\k \in \M_3$ means $\k =(k_1,k_2,k_3)\in(\Z\setminus\{0\})^3$ with $k_1+k_2+k_3=0$ and $u^{\k}=u_{k_1} u_{k_2} u_{k_3}$
(see \Cref{subsec:notation} for the notation).
To construct $\Phi$, we make use of the full KdV hierarchy (see \cite{lax,lax2,magri,rangointero} and \Cref{lem:first_integrals}), i.e. a countable sequence of conserved quantities in involution, which can be written as:
\begin{equation}
F^{(j)}(u) = \sum_{n=2}^{j+2} F^{(j)}_n(u) = \pi \sum_{k \in \Z^*} k^{2j} |u_k|^2 + \sum_{n=3}^{j+2} \sum_{\k \in \M_n} f_{\k}^{(j)} u^{\k}, \qquad \{ F^{(j)} , F^{(l)} \} = 0 \qquad \forall \, j,l \in \N,
\label{intro: first integrals in fourier}
\end{equation}
where $F^{(1)}=\mathcal{H}$. We construct  $\Phi$ as a composition of several changes of coordinates $\Phi_n$, $n\geq 3$. More precisely, each transformation is written as  $\Phi_n=\Phi_n^{(1)} \circ \ldots \circ \Phi_n^{(n-1)}$ where each $\Phi_n^{(l)}$ puts the $l$-th Hamiltonian of the KdV hierarchy into Birkhoff normal form at order $n$. In particular, $\Phi_n^{(l)}$ is the time-1 flow of an auxiliary Hamiltonian $G_n^{(l)} (u) = \sum_{\k \in \M_n} g_{\k}^{(l)} u^{\k}$ of degree $n$ solving the homological equation:
\begin{equation}\label{intro:hom_eq}
F_n^{(l)}(u)+ \{ F_2^{(l)}, G_n^{(l)} \}(u) = \widehat{F}_n^{(l)}(u)=\sum_{\k \in \M_n} \widehat{f}_{\k}^{(l)} u^{\k} 
\end{equation}
where, using the fact that $ \{ F_2^{(l)}, G_n^{(l)} \}(u)=-i \sum_{\k \in \M_n} \Omega_{l}(\k) g_{\k}^{(l)} u^{\k}$ (cf.~\eqref{eq: facts about poisson bracket} below), we have that
\begin{equation}\label{intro:choice_G}
\widehat{f}_{\k}^{(l)}=0 \qquad \mbox{if}\quad \Omega_{l}(\k)=k_1^{2l+1} + \ldots + k_n^{2l+1} \neq 0, \qquad \mbox{by choosing}\quad  g_{\k}^{(l)} = \frac{f_{\k}^{(l)}}{i\,\Omega_{l}(\k)}\, \mathbbm{1}_{\Omega_{l}(\k)\neq 0}.
\end{equation}

Although each transformation is tailored to a single Hamiltonian $F^{(l)}$ in the hierarchy, the key observation is that it also eliminates non-resonant terms with respect to $F^{(l)}_2$ in the other Hamiltonians, \emph{without generating new terms}. This is a consequence of the fact that Hamiltonians in the KdV hierarchy Poisson-commute, cf.~\eqref{intro: first integrals in fourier} and \cite[Theorem~G.2]{kappeler}. 
More precisely, assuming the KdV hierarchy is in integrable normal form up to order $n-1$, the transformed Hamiltonians still Poisson-commute $\{\tilde{F}^{(j)},\tilde{F}^{(l)}\}=0$ and the terms of degree $n$ must vanish for any $j\in\N$, i.e.
\begin{equation} \label{intro: magic formula}
\{ F_2^{(j)} , \widetilde{F}_n^{(l)} \} + \{ \widetilde{F}_n^{(j)} , F_2^{(l)} \} = 0 \quad \implies \quad \Omega_{l}(\k)\, \widetilde{f}_{\k}^{(j)} =  \Omega_{j}(\k) \,\widetilde{f}_{\k}^{(l)}.
\end{equation}
Using \eqref{intro: magic formula}, one immediately obtains
\begin{equation} \label{intro:hom_eq2}
    \widetilde{F}_n^{(j)}(u) + \{ F_2^{(j)} , G_n^{(l)} \}(u) = \sum_{\k \in \M_n} \widetilde{f}_{\k}^{(j)} \mathbbm{1}_{\Omega_{l}(\k) = 0} \ u^{\k}.
\end{equation}
After applying all the transformations $\Phi_n=\Phi_n^{(1)} \circ \ldots \circ \Phi_n^{(n-1)}$, the only remaining terms will be supported in 
\begin{equation}
    \Omega_{0}(\k) = \ldots = \Omega_{n-1}(\k) =0, \qquad \Omega_{l}(\k)\ \mbox{in \eqref{def: resonant relation}}.
\end{equation}  
A Vandermonde argument (cf.~\Cref{cor: system}) shows that all such terms are integrable, i.e. they depend only on $(|u_j|^2)_{j\in\N}$. We highlight that the cubic NLS equation on $\T$ possesses a hierarchy of first integrals similar to \eqref{eq: first integrals in fourier} but with quadratic part of the form $\sum_{k\in\Z^{*}} k^{j} |u_k|^2$ for each $j\in\N\cup \{0\}$, see for instance \cite[Chapter 3]{akns} for explicit formulae for the AKNS hierarchy\footnote{A completely integrable system of two coupled Schr\"odinger-type PDEs on with unknowns $p$ and $q$ which reduces to the cubic NLS equation when $\overline{p}=q$.}. As a result, the arguments presented above naturally extend to the NLS hierarchy. See also \cite{feola,feola2} for a similar idea applied to the Degasperis-Procesi equation.

\paragraph{Resonant normal form.} 
The main obstacle to making the above steps into a rigorous theorem is closing the Birkhoff normal form procedure in a common space $\dot{H}^s(\T)$. When trying to do so, one encounters two key challenges:
\begin{itemize}
\item[(i)] A first problem is showing that the maps $\Phi_n^{(l)}$ generated by $G_n^{(l)}$ are well-defined in a common space, which requires compensating for the loss of derivatives induced by the Poisson bracket \eqref{intro: kdv hamiltonian in fourier}. This is easy to achieve for $\Phi_3$ and $\Phi_4$, since the polynomials $\Omega(\k)$ with $\k \in \M_n$, $n=3,4$, in \eqref{intro:choice_G} can be factorized, cf.~\Cref{subsection: cases 3 and 4}. However, this strategy fails for $n\geq 5$, as a factorization of $\Omega(\k)$ with $\k \in \M_n$ is not available.
\item[(ii)] A second problem is showing that the higher order terms produced by taking Poisson brackets of each Hamiltonian with $G_n^{(l)}$ can be controlled in a common space $\dot{H}^s(\T)$, a significantly more challenging task than (i).
\end{itemize}

In order to overcome these difficulties, we implement a resonant normal form procedure inspired by the work of Bernier-Gr\'ebert \cite{bernier}. Roughly speaking, the key idea is to truncate the support of the auxiliary Hamiltonian $G_n^{(l)}$ to 
\begin{equation}\label{intro:J}
   \k\in\mathcal{M}_n \quad \mbox{such that} \quad \dfrac{\norm{\k}_{\ell^{\infty}}^{2l-1}}{|\Omega_l(\k)|} \le N\qquad  \mbox{for some}\ N\geq 1 \ \mbox{to be fixed later}.
\end{equation}
The exponent $2l-1$ reflects both the derivative loss from the Poisson bracket and the growth of the coefficients $f_{\k}^{(l)}$ in \eqref{intro: first integrals in fourier}, which are polynomial in $\k$ of degree at most $2l-2$ (see \Cref{lem:first_integrals}). This extends \cite{bernier}, where the coefficients are bounded, and only \eqref{intro:J} with $l=1$ is needed.

The truncation in \eqref{intro:J} readily overcomes problem (i) above, yielding a local diffeomorphism $\Phi_n^{(l)}$ defined in a small ball $B_s(0,\varepsilon) \subseteq \dot{H}^s(\T)$ satisfying
\begin{equation} \label{intro: close to id}
    \norm{\Phi_n^{(l)} (u) - u}_{\dot{H}^s} \lesssim N^{n-3}\, \norm{u}_{\dot{H}^s}^{n-1}\leq N \,\norm{u}_{\dot{H}^s}^2\quad \mbox{with}\ N \ll \varepsilon^{-1},
\end{equation}
see \Cref{lemma: transformations preserve Hs} below. In order to overcome problem (ii), we introduce the spaces of Hamiltonians $F(u)$ such that
\begin{equation}\label{intro:space_coeff}
 \exists\ m \geq 3 \quad \mbox{such that}\quad F(u) = \sum_{\k\in\mathcal{M}_m} c_{\k} u^{\k} \qquad \mbox{with}\quad \norm{\frac{c_{\bm{k}}}{\norm{\k}_{\ell^{\infty}}^{2j-2}}}_{\ell^{\infty}} < \infty.
\end{equation}
Thanks to the truncation \eqref{intro:J}, these spaces are closed under Poisson brackets with $G_n^{(l)}$ for any $j\in\N$ (see \Cref{lemma: stabilità parentesi poisson}), which is the key to solving (ii) above.

\smallskip

Unfortunately, this truncation induces important changes to the formal computations we did before. In particular, some terms from the Hamiltonians $F_n^{(j)}$ are not removed by the transformation $\Phi_n^{(l)}$, $l=1,\ldots, n-1$, as $G_n^{(l)}$ does not fully solve the homological equation \eqref{intro:hom_eq2}. This, in turn, modifies the computation in \eqref{intro: magic formula}, as the left-hand side of the equation now includes additional terms coming from Poisson brackets between potentially non-integrable terms, which have not been removed by the transformations $\Phi_3,\ldots,\Phi_{n-1}$. Controlling the contribution of such additional terms is one of the main difficulties addressed in \Cref{prop: magic formula}.

All in all, we apply this resonant normal form procedure a finite number $r \ge 3$ of times, which puts in approximate normal form the first $r-1$ Hamiltonians of the KdV hierarchy up to degree $r$, see \eqref{intro: normal form}. Note that $\mathtt{R}_r(v)$ also includes terms of degree $n \leq r$ which were not removed by the transformations $\Phi_3,\ldots,\Phi_{r}$ and which are supported in the complement of \eqref{intro:J} for some $l\in \{1,\ldots,n-1\}$.

\medskip

Let us mention an alternative normal form procedure by Kappeler and P\"oschel \cite{kappeler}, who use the Lax pair of the KdV equation \eqref{intro: kdv} to construct action-angle variables. 
Here we follow a different strategy, showing that a finite number $r$ of first integrals suffices to carry out $r$ steps of Birkhoff normal form procedure. 
This method is potentially applicable to other Hamiltonian systems, even in the absence of complete integrability, provided that the available conserved quantities satisfy suitable structural properties. On the other hand, the normal form of Kappeler–P\"oschel may allow one to lower the minimal regularity $s$ required in \Cref{thm:main_intro}, a direction we plan to investigate in future work.

\paragraph{Dynamical consequences.} We now study the KdV dynamics in the new coordinates $v=\Phi^{-1} (u)$ using the transformed Hamiltonian \eqref{intro: normal form}, which in Fourier variables reads as \eqref{intro:newdyn}. The main obstacle is that we do not know whether $X_{\mathtt{R}}(v)$ maps $\dot{H}^s(\T)$ into itself. Since the well-posedness of \eqref{intro:newdyn} follows from that of the KdV equation, it suffices to control the $\dot{H}^s$-norm of the solution $v$ to \eqref{intro:newdyn}, which solves:
\begin{equation}\label{intro:norm_v}
    \pa_t \norm{v(t)}_{\dot{H}^s}^2 = \{ \norm{\cdot}_{\dot{H}^s}^2, \mathtt{R}_r \} (v(t))=  \{ \norm{\cdot}_{\dot{H}^s}^2, \mathtt{R}_r^{(1)} \} (v(t)) + \{ \norm{\cdot}_{\dot{H}^s}^2, \mathtt{R}_r^{(2)} \} (v(t)) 
\end{equation}
where
\begin{itemize}
    \item $\mathtt{R}_r^{(1)}$ are remainder terms of degree at least $r+1$. Using the truncation \eqref{intro:J}, we show that 
    \begin{equation}\label{intro:apriori1}
         \{ \norm{\cdot}_{\dot{H}^s}^2, \mathtt{R}_r^{(1)} \} (v(t)) \lesssim_{s,r} ( N \norm{v(t)}_{\dot{H}^s})^{r+1}.
    \end{equation}
    \item $\mathtt{R}_r^{(2)}$ is made of remainder terms of degree $n$, $5\leq n\leq r$, supported on indices $\k$ in the complement of \eqref{intro:J} for some $l=1,\ldots,n-1$. In \Cref{teorema multiindici rimasti}, we show that such indices $\k$, when ordered such that $|k_1|\geq \ldots \geq |k_n|$, satisfy one of two possible properties: 
    \begin{equation}\label{intro:two_cases}
    \mbox{either} \quad k_1=-k_2 \quad \mbox{and} \quad |k_1|\geq N^{\frac{1}{2r}},\qquad \mbox{or}\quad |k_3|\geq N^{\frac{1}{2r}}. 
    \end{equation}
    Monomials satisfying the first option can be written as $v^{\k} = |v_{k_1}|^2 v^{\k'}$, where $\k'$ is the vector $\k$ without the first two indices. When taking the Poisson bracket between this monomial and the $\dot{H}^s$-norm, we obtain 
    \begin{equation}\label{intro:apriori2}
        \left \lvert \{\norm{v}_{\dot{H}^s}^2, v^{\k}\} \right \rvert =
        |v_{k_1}|^2 \left \lvert \{ \norm{v}_{\dot{H}^s}^2 , v^{\k'} \} \right \rvert \stackrel{\eqref{intro:two_cases}}{\le} N^{-\frac{s}{2r}} \norm{v}_{\dot{H}^s}^2\, \left \lvert \{ \norm{v}_{\dot{H}^s}^2 , v^{\k'} \} \right \rvert  \lesssim N^{-\frac{s}{2r}} \norm{v}_{\dot{H}^s}^{n}.
    \end{equation}
Monomials satisfying the second option in \eqref{intro:two_cases}, give rise to terms in  $ \{ \norm{\cdot}_{\dot{H}^s}^2, \mathtt{R}_r^{(2)} \}$ with at least three large indices $|k_1|\geq |k_2|\geq |k_3|\geq N^{\frac{1}{2r}}$. We use two of these indices among which to split $2s+1$ derivatives, while the third index is used to gain a factor of $N^{-\frac{s}{2r}}$, cf.~\Cref{prop: controllo resti}.
\end{itemize}

The \emph{a priori} bounds \eqref{intro:apriori1}--\eqref{intro:apriori2}, combined with \eqref{intro:norm_v}, yield the quasi-conservation of $\norm{v(t)}_{\dot{H}^s}$ over arbitrarily long polynomial timescales: 
\begin{equation} \label{intro:bound1}
   \norm{v(t)}_{\dot{H}^s}^2 \lesssim \norm{v(0)}_{\dot{H}^s}^2  + t \, (N\,\norm{v}_{L^{\infty}([0,t],\dot{H}^s)})^{r+1} + t\, \,N^{-\frac{s}{2r}}\, \sum_{n=5}^r N^{n-3} \, \norm{v}_{L^{\infty}([0,t],\dot{H}^s)}^{n} \lesssim \varepsilon^2 
\end{equation}
after choosing
\begin{equation} \label{intro: parameters}
N\ll \varepsilon^{-1/2}, \qquad s \gtrsim r^2 \qquad  \mbox{and} \quad t\ll \varepsilon^{-r/4}.
\end{equation}

\smallskip

Using similar bounds to \eqref{intro:apriori1}--\eqref{intro:apriori2}, we can control the vector field $X_{\mathtt{R}}$ in $\dot{H}^1(\T)$ despite the loss of derivatives:
\begin{equation} \label{intro:bound2}
    \norm{X_{\mathtt{R}}(v(t))}_{\dot{H}^1} \lesssim  (N\,\norm{v(t)}_{\dot{H}^2})^r + \sum_{n=5}^r N^{n-3} \,\norm{v(t)}_{\dot{H}^2}^{n-2} \,\norm{v(t)}_{\dot{H}_{|k| \ge N^{1/2r}}^2} \lesssim 
    \varepsilon^{\frac{r}{2}} + N^{-\frac{s}{2r}} \norm{v(t)}_{\dot{H}^s}
    \stackrel{\eqref{intro:bound1}}{\lesssim} \varepsilon^{\frac{r}{2}}.
\end{equation}

This bound allows us to justify the approximation \eqref{intro: approx solution in new var} by integrating \eqref{intro:newdyn}:
\begin{equation}\label{intro:approx_proof}
    v_k(t) \stackrel{\eqref{intro:bound2}}{\approx} e^{i\int_0^t \theta_k (\tau)\, d\tau} v_k(0) \approx e^{i t\, \theta_k (0)} v_k(0),
\end{equation}
where in the second approximation we exploit the quasi-conservation of the moduli $|v_k(t)|$ in time, as well as the fact that $\theta_k$ depends only on these moduli\footnote{
Let us highlight that the fact that $\theta_k$ is real valued alone is not sufficient to justify this approximation. We critically use the integrability of $\widehat{\mathcal{H}}_{2m}$ in \eqref{intro: normal form}. The approximation \eqref{intro: approx solution in new var} is, in turn, the key to being able to choose $\beta$ in \eqref{intro:nhood} as $\O(t^{-1})$.
}.












\subsection{Notation}\label{subsec:notation}

\paragraph{Sets and indices.} We denote by $\N$ the set of positive natural numbers
$\N = \{ 1,2,3,...\}$ and by $\Z^* =\Z\setminus\{0\}$ the set of nonzero integer numbers. Given two sets $A$ and $B$, we denote by $A\sqcup B$ their disjoint union, i.e. $A\cup B$ such that $A\cap B=\emptyset$. We denote by $\lfloor x\rfloor$ and $\lceil x\rceil$ the floor and ceiling functions of $x\in\R$.

For $n \ge 3$ and $l \in \N \cup \{0\}$, we will consider the following sets of indices:
\begin{align}
\M_n &= \{ \bm{k} = (k_1,...,k_n) \in (\Z^*)^n \, | \, k_1+\ldots+k_n=0 \}, \label{def: indices zero momentum} \\
\widetilde{\M}_n &= \frac{\M_n}{\mbox{Sym}(n)}, \quad \text{where $\mbox{Sym}(n)$ is the symmetric group}, \label{def: indices zero momentum quotient}  \\
\mathcal{D}_n &= \{ \bm{k}  \in \mathcal{M}_n\, | \, |k_1| \ge |k_2| \ge \ldots |k_n| \}, \label{def: ordered indices zero momentum}  \\
\mathcal{R}_n^{l} &= \{ \bm{k} \in \mathcal{M}_n\, | \, k_1^{2l+1}+\ldots+k_n^{2l+1}=0 \}, \label{def: resonant indices zero momentum} 
\end{align}
and we set
\[
\mathcal{M} = \bigcup_{n \ge 3} \mathcal{M}_n, \qquad \qquad \mathcal{D} = \bigcup_{n \ge 3} \mathcal{D}_n, \qquad \qquad \widetilde{\M} = \bigcup_{n \ge 3} \widetilde{\M}_n.
\]
Given $\k \in \M_n$, $n\geq 3$, we denote by $\mu_3(\k)$ the third largest (in absolute value) index in $\k$. For $M>0$, we define the sets:
\begin{align}
\M_n^{(\mu_3 \ge M)} & = \{ \k \in \M_n \, | \, \mu_3(\k) \ge M \}, \label{insieme terzo indice grande} \\
\M_n^{(I \ge M)} & = \{ \k \in \M_n \, | \, \exists \, j \neq l \quad \text{s.t.} \quad k_j = -k_l,\ |k_j| \ge M \}. \label{insieme azione grande} 
\end{align}

We will denote by $\#\bm{k}$ the length of the vector $\bm{k}$, \textit{i.e.} $\#\bm{k} = n$ if $\bm{k} \in (\Z^*)^n$. 
Moreover, if $\bm{k} = (k_1,\ldots,k_n)$, we will write $u^{\bm{k}}=u_{k_1}\ldots u_{k_n}$. For two vectors $\k,\j\in\M_n$, we write $\k\sim\j$ if $[\k]=[\j]$ in $\widetilde{\M}_n$, i.e. there exists $\sigma\in\mbox{Sym}(n)$ such that $k_l=j_{\sigma (l)}$ for all $l=1,\ldots,n$.

Finally, we introduce the quantities:
\begin{equation}
\Omega_l(\bm{k}) = k_1^{2l+1}+\ldots+k_n^{2l+1}.
\label{def: resonant relation}
\end{equation}

\paragraph{Functional spaces.} For $1\le p < \infty$, we denote by $\ell^p(\Z)$ the Banach space of sequences
\[
\ell^p(\Z)=\biggl \{ x=(x_k)_k \subseteq \mathbb{C} \, \biggl | \, \sum_{k \in \Z}|x_k|^p < \infty \biggr \}
\]
with the usual norm $\norm{x}_{\ell^p}=\left(\sum_{k \in \Z}|x_k|^p \right)^{\frac{1}{p}}$, and the usual generalization to $p=\infty.$ 

We consider functions $f$ defined on the torus $\T \cong \frac{\R}{2\pi \Z}$, which may be identified with $2\pi$-periodic functions $f: \R \rightarrow \C.$ 
For $s\in\R$ and $1\leq p\leq \infty$, we denote by $\FL^{s,p}(\T)$ the Banach space of measurable functions $f:\T \rightarrow \C$ whose Fourier coefficients $(f_k)_{k\in\Z}$ satisfy
\[
\norm{\langle k \rangle^s f_k}_{\ell^p_k} < \infty,
\]
where $\langle k \rangle=\max\{ 1,|k|\}$ is the  Japanese bracket. 
In the case $p=2$ these spaces coincide with the usual Sobolev spaces $H^s(\T)$, which we identify with the space of sequences 
\[
H^s(\T) = \left \{ u = \sum_{k \in \Z} u_k e^{ikx} \, \bigg| \, \norm{u}_{H^s}^2 = \sum_{k \in \Z} \langle k \rangle^{2s} |u_k|^2 < \infty \right \}.
\]
In the case of $s=0$, the scalar product in $L^2(\T,\R)$ is given by 
\[
\langle f,g \rangle_{L^2} = \int_{\T} f(x)g(x) \, \mathrm{d} x.
\]
Since we often work with mean zero, real-valued functions, we introduce the subspace
\begin{equation} \label{def:hs}
\dot{H}^s(\T,\R) = \{ u \in H^s \, | \, u_0=0 \quad \text{and} \quad u_{-k} = \overline{u_k} \quad \forall k \in \Z \}\quad \mbox{with norm}\quad
\norm{u}_{\dot{H}^s}^2 = \sum_{k \in \Z^*} |k|^{2s} |u_k|^2.
\end{equation}
Moreover, we write $B_s(0,r)$ for the ball in $\dot{H}^s(\T,\R)$ centered at the origin and with radius $r>0$.
Similarly, we introduce the space $\dot{\FL}^{s,p}(\T,\R)$ of mean-zero real valued functions, with norm
\begin{equation} \label{def: norm flsp}
\norm{u}_{\dot{\FL}^{s,p}}^p = \sum_{k \in \Z^*} |k|^{ps} |u_k|^p.
\end{equation}

Finally, we often write $u_x$ instead of $\partial_x u$, meaning the partial derivative of $u$ with respect to $x$.

\paragraph{Inequalities.} We  write $a \lesssim b$ if there exists a positive constant $C$ such that $a \le Cb$. If the constant depends on some other quantity $p$, we will write $a \lesssim_p b.$ When the constant $C$ is explicitly written, it may change from line to line. We also employ the notation $f(\varepsilon)=\O( g(\varepsilon))$ if $f(\varepsilon)\lesssim g(\varepsilon)$, and $f(\varepsilon)=o(g(\varepsilon))$ if $\lim_{\varepsilon\rightarrow 0^{+}} \frac{f(\varepsilon)}{g(\varepsilon)} =0$.

Given functions $u,v:\T \rightarrow \C$, we will often use the Young convolution inequality:
\begin{equation}
\norm{uv}_{\FL^{0,r}} \le \norm{u}_{\FL^{0,p}} \norm{v}_{\FL^{0,q}} \qquad \text{where} \qquad 1+\frac{1}{r} = \frac{1}{p} + \frac{1}{q}.
\label{young convolution inequality}
\end{equation}

We also recall the algebra property of $H^s(\T)$ when $s > \frac{1}{2}$, 
\[
\norm{uv}_{H^s} \lesssim_s \norm{u}_{H^s} \norm{v}_{H^s}.
\]

Finally, we will often use the embedding of $H^s(\T)$ into $\FL^{0,1}(\T)$ which follows from the Cauchy-Schwarz inequality:
\begin{equation}
\norm{u}_{\FL^{0,1}} = \sum_{k \in \Z} |u_k| \le \left ( \sum_{k \in \Z} \frac{1}{\langle k \rangle^{2s}} \right )^{\frac{1}{2}} \left ( \sum_{k \in \Z} \langle k \rangle^{2s} |u_k|^2 \right )^{\frac{1}{2}} \lesssim_s \norm{u}_{H^s}.
\label{eq: embedding hs in fl01}
\end{equation}

\paragraph{Random variables.} Given a random variable $X$, i.e. a measurable function $X :(\mathbb{\Omega},\mathcal{F}) \to (\R, \mathcal{B}(\R) )$ over a probability space 
$(\mathbb{\Omega}, \mathcal{F}, \P)$, we denote by $\sigma(X):=\{ X^{-1}(B) \colon B \in \mathcal{B}(\R)\}$ the $\sigma$-algebra generated by $X$. Given two random variables $X$ and $Y$, we write $\sigma(X,Y)$ for the smallest $\sigma$-algebra containing $\sigma(X) \cup \sigma(Y)$. 

Given a sub-$\sigma$-algebra $\mathcal{G} \subseteq \mathcal{F}$ and a random variable $X\in L^1(\mathbb{\Omega},\mathcal{F}, \P)$, we denote by $\E[X^{\omega}\, \lvert \, \mathcal{G}]$ the conditional expectation of $X$ with respect to $\mathcal{G}$.

A standard complex Gaussian is a random variable of the form 
\begin{equation} \label{intro: complex gaussian}
\eta^{\omega} = \mathrm{Re}(\eta^{\omega}) + i \, \mathrm{Im}(\eta^{\omega}), 
\end{equation}
where $\mathrm{Re}(\eta^{\omega})$ and $\mathrm{Im}(\eta^{\omega})$ are independent, real, centered Gaussian r.v. with variance $1/2$. We will often write $\mathrm{Re}(\eta^{\omega}),\mathrm{Im}(\eta^{\omega})\sim \mathcal{N}_{\R}(0,1/2)$
and $\eta^{\omega}\sim \mathcal{N}_{\C}(0,1)$. In particular,
\begin{equation} \label{intro: valori attesi}
\E [\eta^{\omega}] = 0, \qquad \E \left [ |\eta^{\omega}|^2 \right ] = 1, \qquad \E[(\eta^{\omega})^2] = 0.
\end{equation}



\paragraph{Acknowledgements.} We thank Joackim Bernier, Alberto Maspero and Michela Procesi for many useful comments. This research was backed by the GNAMPA-INdAM and the European Union. Views and opinions expressed are however those of the authors only and do not necessarily reflect those of the European Union or the European Research Council. Neither the European Union nor the granting authority can be held responsible for them.

\section{Birkhoff normal form for the KdV hierarchy} \label{section: normal form}

\subsection{Preliminaries} \label{subsection: preliminaries}

In this section we study the periodic 
KdV equation with real-valued initial datum: 
\begin{equation}\label{eq: kdv}
\begin{cases}
u_t + u_{xxx} + uu_x = 0, \quad x \in \T \\
u |_{t=0}= u(0) \in H^s(\T),  \quad s \ge 0.
\end{cases}
\end{equation}
from a deterministic point of view.
The KdV equation admits the following formally conserved quantities:
\begin{equation}\label{eq:conserved_L2}
\int_{\T} u(t,x) \, \mathrm{d}x \quad \mbox{and}\quad 
\int_{\T} u(t,x)^2 \, \mathrm{d}x.
\end{equation}

Using the conserved $L^2$-norm, one can show that the KdV equation admits a unique real-valued global solution in $H^s(\T)$ for $s \ge 0$,  see \cite[Chapter~3]{erdogan} and the references therein for a detailed presentation of the well-posedness theory.

Moreover, using the conserved mean \eqref{eq:conserved_L2}, it suffices to study the KdV flow for zero-mean initial data. From now on, we can thus consider $u(0) \in \dot{H}^s(\T)$, and the global-in-time solution will have zero mean.

The KdV equation is a Hamiltonian system which can be written as
\begin{equation}
\pa_t u = \pa_x \nabla_{L^2} H, \qquad \mbox{with}\qquad H(u) = \frac{1}{2} \int_{\T} ( u_x^2 - \frac{1}{3} u^3  ) \, \mathrm{d}x.
\label{eq: kdv in hamiltonian form}
\end{equation}
where $\nabla_{L^2} H$ denotes the $L^2$-gradient of $H$, which represents the Fr\'echet derivative of $H$ with respect to the standard scalar product in $L^2(\T)$. In the phase space $\dot{H}^s(\T)$ we will use the Fourier coefficients  as coordinates
\begin{equation}
u_k(t) = \frac{1}{2\pi} \int_{\T} u(t,x) e^{-ikx} \, \mathrm{d}x, \quad k \in \Z^*, \quad u_0 = 0, \quad \overline{u_k}=u_{-k}
\end{equation}
where the last equality follows from the fact that $u$ is real-valued.
In the Fourier setting, the Hamiltonian in \eqref{eq: kdv in hamiltonian form} reads \eqref{intro: kdv hamiltonian in fourier}.
The Poisson tensor in \eqref{eq: kdv in hamiltonian form} is associated to the Poisson bracket \eqref{intro: kdv hamiltonian in fourier}
which is (at least formally) bilinear, antisymmetric and satisfies the Jacobi identity.
We will prove that it is well-defined on the set of formal polynomials (cf. \Cref{def:formal_pol}) and meaningful if $u$ belongs to some $\dot{H}^s$ for $s$ sufficiently large (cf. \Cref{lemma: facts about poisson bracket}).

All in all, the KdV equation \eqref{eq: kdv} can be written in Fourier variables as
\begin{equation}
\pa_t u_k = \{ u_k , \H \} = i k^3 u_k - \frac{ik}{2} \sum_{k_1+k_2=k} u_{k_1} u_{k_2}.
\end{equation}

\paragraph{First integrals.}
As in any autonomous Hamiltonian system, the Hamiltonian is conserved along its own flow.
Moreover, all the functionals $F$ that Poisson-commute with the Hamiltonian $\H$, \textit{i.e.} $\{ F, \H \} = 0,$ are preserved by the flow.
Such functionals are called first integrals, or simply conserved quantities. 
It is well-known \cite{lax, magri, gardner, miura} that the KdV equation has infinitely many conserved quantities of the form
\begin{equation} \label{eq: pre first integrals}
F^{(j)}(u) 
=\int_{\T} P^{(j)}(u,\pa_x u,\ldots,\pa_x^{j} u) \, \mathrm{d} x,
\end{equation}
where $P^{(j)}$ is a polynomial (whose monomials are of degree $\ge 2$) with rational coefficients. In particular, $F^{(1)}=H$, cf.~\eqref{eq: kdv in hamiltonian form}.
These quantities, known as KdV hierarchy, are in involution, i.e.
\begin{equation}
\{ F^{(j)} , F^{(l)} \} = 0 \qquad \forall \, j,l \in \N.
\label{eq: first integral in involution}
\end{equation}
This will be crucial, as we will use the full KdV hierarchy in order to put the KdV Hamiltonian $\H$ in integrable normal form (cf.~\Cref{def: functions in normal form}) up to a small remainder.
In the following lemma, which is proven in \Cref{proof lemma first integrals}, we collect some elementary albeit important properties of the KdV hierarchy.

\begin{lemma}[Structure of the KdV hierarchy] \label{lem:first_integrals} 
For each $j \in \N$ we have 
\begin{equation}
F^{(j)}(u) = \frac{1}{2} \int_{\T} \left [ \left ( \pa_x^j u \right )^2 + \sum_{n=3}^{j+2} P_n(u,\pa_x u,\ldots,\pa_x^{j-1} u) \right ] \, \mathrm{d}x,
\label{eq: first integrals}
\end{equation}
where $P_n$ is a polynomial of degree $n$ with coefficients in $\Q$, which in Fourier can be written as
\begin{equation}
F^{(j)}(u) = \sum_{n=2}^{j+2} F^{(j)}_n(u) = \pi \sum_{k \in \Z^*} k^{2j} |u_k|^2 + \sum_{n=3}^{j+2} \sum_{\k \in \M_n} c_{\k}^{(j)} u^{\k}
\label{eq: first integrals in fourier}
\end{equation}
where for $n \ge 3$ and $\k \in \M_n$, $c_{\k}^{(j)} = c(k_1,\ldots,k_n)^{(j)}$ is a polynomial in $n$ variables with rational coefficients and of degree at most $2j-2$.
Moreover, the reality condition \eqref{def: reality condition} holds. Each term $F^{(j)}_n(u)$ will be called homogeneous $j$-formal polynomial of degree $n$ (cf. \Cref{def:formal_pol}).
\end{lemma}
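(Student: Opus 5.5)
The plan is to start from the classical structure of the KdV hierarchy and read off its Fourier form. Concretely, we take $F^{(j)}$ to be the conserved densities produced by the Lenard recursion (equivalently, by the bi-Hamiltonian structure of Magri, with operators $\pa_x$ and $\pa_x^3 + \frac{2}{3}u\pa_x + \frac13 u_x$, suitably normalized). The first step is to prove, by induction on $j$, that the density $P^{(j)}$ in \eqref{eq: pre first integrals} is a differential polynomial with rational coefficients. The induction should also show that $P^{(j)}$ is homogeneous with respect to the KdV scaling, in which $u$ has weight $2$ and $\pa_x$ has weight $1$, with total weight $2j+2$. This grading is preserved by the recursion: the operator $\pa_x^{-1}\bigl(\pa_x^3 + \frac23 u\pa_x + \frac13 u_x\bigr)$ raises the weight by exactly $2$, and $u$ and $\pa_x$ enter with those weights. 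We normalize the quadratic part to be $\frac12 (\pa_x^j u)^2$ after integrating by parts, using the fact that the linear recursion is just multiplication by $\pa_x^2$ at the quadratic level.

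The second step derives the constraints on the monomials from the weight count. A monomial of degree $n$ in $u$ and its derivatives, carrying $d$ derivatives in total, has weight $2n+d = 2j+2$. Hence $d = 2j+2-2n$, which forces $n \le j+1$ when $d\ge 0$. The upper range $n\le j+2$ in \eqref{eq: first integrals} is therefore certainly satisfied; in fact the bound $n\le j+1$ is sharper, and the statement's range allows harmless slack. For $n\ge 3$ we get $d\le 2j-4\le 2j-2$. Moreover, at most $j-1$ derivatives can fall on any single factor when $n\ge3$. If some factor carried $j$ or more derivatives, we would integrate by parts to move derivatives onto the others. More simply, $d\le 2j-4$ shows that at most one factor can carry $\geq j-1$ derivatives, and integration by parts balances the derivatives so that each factor carries at most $j-1$.

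The third step passes to Fourier coefficients. We substitute $u=\sum_k u_k e^{ikx}$ and integrate over $\T$. A monomial $\prod_{m=1}^n \pa_x^{a_m}u$ becomes $2\pi\sum_{\k\in\M_n}\prod_m (ik_m)^{a_m}\,u^{\k}$, where the zero-momentum constraint comes from $\int_\T e^{i(k_1+\dots+k_n)x}dx$. The resulting coefficient is a polynomial in $\k$ of degree $\sum a_m = d\le 2j-2$, and we may symmetrize it over $\mbox{Sym}(n)$. We need its coefficients to be rational rather than involving $i$ or $\pi$. Since $d$ is even, the power $i^d=\pm1$ is real. The overall factor $\pi$ can either be absorbed by the normalization or tolerated; this deserves a check against the convention in \eqref{intro: kdv hamiltonian in fourier}, where $\H_3$ carries a factor $\pi/3$, so the statement's ``rational'' should be read modulo a common factor $\pi$. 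The quadratic term gives $\frac12\cdot 2\pi\sum_k k^{2j}|u_k|^2=\pi\sum k^{2j}|u_k|^2$. The reality condition follows because $F^{(j)}$ is real on real $u$ and $c_{-\k}=\overline{c_{\k}}$: each coefficient is $i^d$ times a real polynomial of parity $d$ in $\k$.

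The main obstacle is the first step. We must establish rigorously that the Lenard recursion yields exact differential polynomials with rational coefficients and the stated homogeneity, rather than nonlocal expressions. Here we would rely on the classical results cited (\cite{lax,magri,gardner,miura}). Alternatively, we could use the Gardner–Miura transform, $u = w + \epsilon w_x + c\epsilon^2 w^2$ with a suitable constant $c$, and expand the conserved quantity $\int w$ in powers of $\epsilon$. This makes the polynomiality, the rationality and the grading transparent, with the weight bookkeeping carried out at each order of $\epsilon$.
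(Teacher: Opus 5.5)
Your route is essentially the paper's. Your weight is exactly twice the paper's rank \eqref{def_rank}: a monomial of degree $n$ with $d$ derivatives has rank $n+d/2$. The paper likewise takes the recursion \eqref{eq:recursiveF} from the literature and propagates homogeneity through it, using \Cref{lemma: scaling} to split an identity between differential polynomials by degree and by number of derivatives. It then counts derivatives, integrates by parts and passes to Fourier variables.

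\textbf{The weight count is wrong.} With $u$ of weight $2$ and $\pa_x$ of weight $1$, the density of $F^{(j)}$ has weight $2j+4$, not $2j+2$. The quadratic term $\frac12(\pa_x^j u)^2$ that you normalize to has weight $2j+4$. Likewise $F^{(1)}=\H=\frac12\int(u_x^2-\frac13u^3)$ consists of two monomials of weight $6$. Your value corresponds to an indexing in which $\int u^2$ is the first member of the hierarchy. This makes several of your intermediate claims false:
\begin{itemize}
\item $n\le j+1$ fails already for $u^3$ in $F^{(1)}$, where $n=j+2$.
\item $d\le 2j-4$ for $n\ge 3$ fails too. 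Running \eqref{eq:recursiveF} once gives $F^{(2)}=\int\bigl(\frac12u_{xx}^2-\frac56uu_x^2+\frac5{72}u^4\bigr)$, whose cubic term has $d=2=2j-2$ and whose quartic term has $n=j+2$.
\item Your count contradicts your own quadratic normalization, since it would force $d=2j-2$ when $n=2$.
\end{itemize}
So the ranges in the statement are sharp, not slack.

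\textbf{The fix.} From $2n+d=2j+4$ you get $d=2j+4-2n$, hence $n\le j+2$ and $d\le 2j-2$ for $n\ge3$. Since $d$ is still even, your treatment of $i^d$, of rationality and of the reality condition survives.

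The derivative-balancing step must be redone with the correct bound. Now $d\le 2j-2$ allows two factors with $j-1$ derivatives each, which is harmless. But it still shows that at most one factor $\pa_x^r u$ has $r\ge j$, with the others carrying at most $2j-2-r$ derivatives in total. Integrating by parts $r-j+1$ times, as the paper does, produces $\pa_x^{j-1}u$ times $\pa_x^{r-j+1}$ of the remaining factors. There each factor carries at most $(2j-2-r)+(r-j+1)=j-1$ derivatives.

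\textbf{On the factor $\pi$.} Your remark is correct, and the paper's proof glosses over it. The Fourier coefficients are $2\pi i^d$ times polynomials with rational coefficients; for instance $\H_3$ in \eqref{intro: kdv hamiltonian in fourier} has coefficient $-\pi/3$. So ``rational coefficients'' should be read up to the global factor $\pi$. Only the degree bound $2j-2$ is used later, in the spaces $Y^l_n$.
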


\paragraph{Functional setting and formal polynomials.}

Taking into account that $\H=F^{(1)}$, the coefficients $c_{\k}^{(1)}$ are bounded uniformly in $\k$, cf. \eqref{intro: kdv hamiltonian in fourier} and \eqref{eq: first integrals in fourier}. As a result,
\[
c_{\k}^{(1)} \in \{ \bm{c} = (c_{\k})_{\k} \in \M_3 \, \lvert \, c_{-\k}=\overline{c_{\k}} \quad \text{and} \quad \norm{\bm{c}}_{\ell^{\infty}} < \infty \}.
\]
This boundedness property of the coefficients does not hold for the whole KdV hierarchy. 
However, \Cref{lem:first_integrals} guarantees that $c_{\k}^{(l)}$, $l\in\N$, is a polynomial in $\k$ of degree at most $2l-2$. 
In order to exploit this property, for $n \ge 3$ and $l \in \N$, we define
\begin{equation}
\begin{split}
Y_n^{l} = \bigg \{ \bm{c} = (c_{\bm{k}})_{{\bm{k}} \in \mathcal{M}_n} \, & \bigg | \, c_{-\bm{k}}= \overline{c_{\bm{k}}} \, \, \text{and} \, \, \norm{\bm{c}}_{Y^l} = \norm{\frac{c_{\bm{k}}}{\norm{\k}_{\ell^{\infty}}^{2l-2}}}_{\ell^{\infty}} < \infty \bigg \}.
\end{split}
\label{def: functional space}
\end{equation}
In particular, $\bm{c}^{(l)} \in Y^l$ for all $l \in \N$.

In \Cref{prop: justification normal form} we will perform canonical transformations which modify the structure of the whole KdV hierarchy in order to put it in normal form.
The spaces \eqref{def: functional space} will be preserved by such transformations, in the sense that the new coefficients of $F_n^{(j)}$ will belong again to $Y_n^j.$ 

\begin{definition}[Formal polynomials] \label{def:formal_pol}
Given $l\in\N$, an $l$-\emph{formal polynomial} $F$ is a Hamiltonian of the form:
\begin{equation}\label{def:formal_polynomial}
 F(u)= F_2(u) + F_{\geq 3}(u)=\sum_{k \in \Z^*} b_k |u_k|^2 + \sum_{\k \in \M} c_{\k} u^{\k} 
 \end{equation}
such that there exist $M>0$ such that
\begin{equation}\label{M_cond}
\sup_{k\in\Z^{*}} \frac{|b_k|}{|k|^{2l}} <\infty,\qquad 
 \sup_{n \ge 3} \norm{ (c_{\k} M^{-\#{\k}})_{\#{\k} = n} }_{Y_n^l} < \infty.
 \end{equation}
$F_2(u)$ is called homogeneous $l$-formal polynomial of degree 2, or simply quadratic  $l$-formal polynomial, while for $n \ge 3$ each $F_n(u) = \sum_{\k \in \M_n} c_{\k} u^{\k}$ is called homogeneous  $l$-formal polynomial of degree $n$.\footnote{In \cite[Definition~1]{polynomials}, a definition of homogeneous polynomials between Banach spaces is given: notice that $F_n$, $n \ge 2$, in \eqref{def:formal_polynomial} is a homogeneous polynomial according to that definition.}
If $F$ is a real valued function, it means that its coefficients in \eqref{def:formal_polynomial} satisfy the \emph{reality condition}
\begin{equation} \label{def: reality condition}
\overline{b_k} = b_{-k} \quad \forall k \in \Z^*, \qquad \qquad \overline{c_{\k}} = c_{-\k} \quad \forall \k \in \M.
\end{equation}
\end{definition}

\begin{rem}
    We note the class of $l$-formal polynomials is closed under addition and scalar multiplication, as is the class of homogeneous $l$-formal polynomials of a fixed degree.
\end{rem}

The next lemma shows that condition \eqref{M_cond}
guarantees that formal polynomials of the form \eqref{def:formal_polynomial} are smooth functionals on a small ball of $\dot{H}^s$.
This lemma generalizes \cite[Lemma~3.2]{bernier}
for the functionals in \eqref{eq: first integrals in fourier} whose coefficients depend polynomially in $\k$. In particular, \cite[Lemma~3.2]{bernier} corresponds to the case $l=1$ in \eqref{M_cond}.

\begin{lemma} \label{lemma: convergence of hamiltonian function}
Let $F$ be an $l$-formal polynomial (cf. \Cref{def:formal_pol}), $l\in\N$, with coefficients $\bm{c}$ satisfying \eqref{M_cond} for some $M>0$. For any $s \ge l$, $F(u)$ is a smooth function on $B_s \left ( 0, \frac{1}{c_s M} \right )$ where $c_s = \left ( \sum_{j \in \Z^*} \frac{1}{j^{2s}} \right )^{\frac{1}{2}} \le \frac{\pi}{\sqrt{3}}$.
Here, $B_s(0,r)$ is the ball of $\dot{H}^s$ centered at the origin and with radius $r > 0$, cf. \Cref{subsec:notation}.
\end{lemma}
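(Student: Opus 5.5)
We prove that the series defining $F$ converges absolutely and locally uniformly on the ball. Smoothness then follows from the standard fact that a convergent series of bounded homogeneous polynomials with summable norms is real-analytic, hence $C^\infty$. The plan is to bound each homogeneous piece $F_n$ by a bounded $n$-linear form on $\dot{H}^s$ with norm at most $C\,(c_sM)^n$. The resulting geometric series converges for $\norm{u}_{\dot{H}^s}<\frac{1}{c_sM}$.

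\textbf{Quadratic part.} Since $|b_k|\lesssim |k|^{2l}$ and $s\ge l$, we have
\[
|F_2(u)|\le \sum_k |b_k||u_k|^2\lesssim \norm{u}_{\dot{H}^l}^2\le \norm{u}_{\dot{H}^s}^2 .
\]
This is a bounded quadratic form.

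\textbf{Degree $n\ge 3$.} Let $C_0$ denote the supremum in \eqref{M_cond}. Then $|c_{\k}|\le C_0 M^{n}\norm{\k}_{\ell^\infty}^{2l-2}$, and we need to control
\[
\sum_{\k\in\M_n} \norm{\k}_{\ell^\infty}^{2l-2}\,|u_{k_1}|\cdots|u_{k_n}|.
\]
Bound $\norm{\k}_{\ell^\infty}^{2l-2}\le \sum_{i}|k_i|^{2l-2}$. By symmetry, it then suffices to treat the term where the weight sits on $k_1$. The main step is to place the $2l-2$ derivatives well. Here the zero-momentum condition helps: $|k_1|\le \sum_{i\ge2}|k_i|$, so $|k_1|^{2l-2}\lesssim_{n,l}|k_1|^{l-1}\,|k_{2}|^{l-1}$ up to relabeling. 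This estimate only holds on the region where $|k_2|$ is the second largest, so I would decompose according to which index is second largest; this introduces only combinatorial factors depending on $n$.

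Dropping the constraint $\sum k_i=0$ after using it once, the sum becomes a convolution. We estimate it with Young's inequality \eqref{young convolution inequality}, using two $\ell^2$ factors and the remaining factors in $\ell^1$ (i.e., $\FL^{0,1}$):
\[
\sum_{\k\in\M_n}|k_1|^{l-1}|u_{k_1}|\,|k_2|^{l-1}|u_{k_2}|\prod_{i\ge3}|u_{k_i}| \le \norm{u}_{\dot{H}^{l-1}}^2\,\norm{u}_{\FL^{0,1}}^{n-2}.
\]
Then \eqref{eq: embedding hs in fl01} gives $\norm{u}_{\FL^{0,1}}\le c_s\norm{u}_{\dot{H}^s}$ with exactly the constant $c_s$. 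Since $\norm{u}_{\dot{H}^{l-1}}\le \norm{u}_{\dot{H}^s}$ (mean zero, $|k|\ge1$), we obtain
\[
|F_n(u)|\le C_0\,K(n)\,M^n c_s^{n-2}\norm{u}_{\dot{H}^s}^n .
\]
Similarly, the associated symmetric multilinear form is bounded by the same expression with $\norm{u}^n$ replaced by the product of the norms of its arguments.

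\textbf{Main obstacle.} The difficulty is the factor $K(n)$. The combinatorics of $\norm{\k}_{\ell^\infty}^{2l-2}\lesssim (n\max_{i\ne1}|k_i|)^{2l-2}$ and of choosing the two largest indices produce at most a polynomial factor, roughly $n^{2l}$. This does not destroy geometric convergence at the sharp radius $\frac{1}{c_sM}$, since $\sum_n n^{2l}(c_sM\rho)^n<\infty$ for $\rho<\frac{1}{c_sM}$. I would therefore track this factor carefully rather than absorb it into a constant. A cleaner option, which avoids splitting derivatives altogether, is to use $|k_1|\le \sum_{i\ge2}|k_i|$ repeatedly. Each factor $|k_i|^{2l-2}$ can then be absorbed into an $\ell^1$-type weighted norm $\norm{u}_{\dot{\FL}^{2l-2,1}}\lesssim \norm{u}_{\dot{H}^{s}}$, but that requires $s>2l-\frac32$. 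So the splitting $|k_1|^{l-1}|k_2|^{l-1}$ with two $\ell^2$ factors is the route needed to reach $s\ge l$.

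Finally, on any closed subball the series of derivatives converges uniformly, because the $m$-th derivative of $F_n$ is bounded by $\binom{n}{m}$ times the multilinear bound. Hence $F\in C^\infty$ on $B_s\bigl(0,\frac{1}{c_sM}\bigr)$. The bound $c_s\le \pi/\sqrt3$ follows from $\sum_{j\in\Z^*}j^{-2s}\le 2\zeta(2)=\pi^2/3$ for $s\ge1$.
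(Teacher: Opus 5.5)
Your proposal is correct and follows essentially the same route as the paper: bound $F_2$ by $\norm{u}_{\dot{H}^l}^2$, use the $Y_n^l$ bound on the coefficients, use zero momentum to split the weight $\norm{\k}_{\ell^\infty}^{2l-2}$ into $|k_a|^{l-1}|k_b|^{l-1}$ on two distinct indices, apply Young's inequality with two $\ell^2$ factors and $n-2$ factors in $\dot{\FL}^{0,1}$, then conclude via \eqref{eq: embedding hs in fl01} and polynomial-in-$n$ growth, with smoothness from analogous derivative bounds. The only differences are cosmetic: the paper does the splitting with Jensen's inequality $|k_1|^{l-1}\le (n-1)^{l-2}\sum_{i\ge 2}|k_i|^{l-1}$ plus symmetry rather than decomposing by the second-largest index, and your factor $\binom{n}{m}$ for the $m$-th derivative should be $n!/(n-m)!$, which is harmless.
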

\begin{proof}
Note that $F_2(u)$ in \eqref{def:formal_polynomial} is absolutely convergent with series bounded by $\O(\norm{u}_{\dot{H}^l}^2)$ on account of \eqref{M_cond}. As a result, $F_2(u)$ is well-defined for $u\in\dot{H}^s$ since $s\geq l$.

Since $\M = \cup_{n \ge 3} \M_n$, we may write $F_{\geq 3} (u) = \sum_{n \ge 3} F_n(u) = \sum_{\k \in \M} c_{\k} u^{\k}$, which is a function at least cubic in $u$.
The homogeneous polynomial $F_n(u) = \sum_{\k \in \M_n} c_{\k} u^{\k}$ satisfies
\begin{align}
|F_n(u)| & \le \sum_{\k \in \M_n} |c_{\k}| |u^{\k}| = M^n \sum_{\k \in \M_n} \frac{|c_{\k}|M^{-\#\k}}{\norm{\k}_{\ell^{\infty}}^{2l-2}} \norm{\k}_{\ell^{\infty}}^{2l-2} |u^{\k}| \label{key_bound_Y} \\
&\le  M^n\, \norm{ (c_{\k} M^{-\#{\k}})_{\#{\k} = n} }_{Y_n^l}  \,\sum_{\k \in \M_n} \norm{\k}_{\ell^{\infty}}^{2l-2} |u^{\k}|  \le n C M^n \sum_{ \substack{ \k \in \M_n \\ |k_1| \ge |k_2|,\ldots,|k_n|} } |k_1|^{2l-2} |u^{\k}| \nonumber
\end{align}
with $C= \sup_{n \ge 3} \norm{ (c_{\k} M^{-\#{\k}})_{\#{\k} = n} }_{Y_n^l}$, where the multiplication by $n$ is due to the ordering $|k_1|\geq |k_2|,\ldots, |k_n|$. 
Indeed, the sum in $\k\in\M_n$ can be split into $n$ sums depending on which component $k_j$ is maximal. Each of these sums coincides with the sum over $k_1$ by a change of variables.

Since $l\geq 1$, the Jensen inequality and the definition of $\M_n$ \eqref{def: indices zero momentum} yield
\begin{equation}
|k_1|^{l-1} = |k_2+\ldots+k_n|^{l-1} \le (n-1)^{l-2} \left ( |k_2|^{l-1} + \ldots + |k_n|^{l-1} \right )
\label{eq: jensen inequality}
\end{equation}
which we will use frequently. 
Therefore
\[
\begin{split}
|F_n(u)| &\le C n (n-1)^{l-2} M^n \sum_{ \substack{ \k \in \M_n \\ |k_1| \ge |k_2|,\ldots,|k_n|} } |k_1|^{l-1} \left ( |k_2|^{l-1}+\ldots+|k_n|^{l-1} \right ) |u^{\k}| \\
&\le C n (n-1)^{l-1} M^n \sum_{ \substack{ \k \in \M_n \\ |k_1| \ge |k_2|,\ldots,|k_n|} } |k_1|^{l-1}|k_2|^{l-1} |u^{\k}|,
\end{split}
\]
where in the second sum we used the symmetry.
By the Young convolution inequality \eqref{young convolution inequality},
\[
\begin{split}
\sum_{ \substack{ \k \in \M_n \\ |k_1| \ge |k_2|,\ldots,|k_n|} } |k_1|^{l-1}|k_2|^{l-1} |u^{\k}| & \le \norm{ \left ( |\pa_x|^{l-1} u \right )^2 \cdot u^{n-2}}_{\dot{\FL}^{0,\infty}} \le \norm{u}_{\dot{H}^{l-1}}^2 \norm{u}_{\dot{\FL}^{0,1}}^{n-2}. \\
\end{split}
\]
where $|\pa_x|$ is the Fourier multiplier operator with symbol $|k|$.
By \eqref{eq: embedding hs in fl01}, 
\begin{equation}\label{Fn_growth}
|F_n(u)| \le C n^l \left ( c_s M \norm{u}_{\dot{H}^s} \right )^n
\end{equation}
and the thesis follows from the convergence criterion for power series.

Finally, the smoothness of $F$ follows from the following estimate (which replaces \eqref{Fn_growth}):
\[
\norm{\mathrm{d}F_n(u)}_{\mathcal{L}(\dot{H}^s,\R)} \leq C c_s M n^{l+1}\,\left ( c_s M \norm{u}_{\dot{H}^s} \right )^{n-1}.
\]
The series in $n$ has thus the same radius of convergence as that of $F$. Higher-order derivatives are bounded analogously.
\end{proof}

\begin{cor} \label{rmk: homogeneity argument}
Let $F$ be an $l$-formal polynomial (cf. \Cref{def:formal_pol}), $l\in\N$, such that 
\begin{equation}
F(u)= \sum_{n\geq 2} F_n(u) =\sum_{k \in \Z^*} b_k |u_k|^2 + \sum_{\k \in \M} c_{\k} u^{\k} = 0 
\end{equation}
for all $u\in B_s (0,\frac{1}{c_s M})$ (cf.~\Cref{lemma: convergence of hamiltonian function}). Then $b_k=-b_{-k}$ for all $k\in\Z^*$ and 
\begin{equation}
    \tc ( [\k] )= \sum_{\j\sim\k} c_{\j}=0 \qquad \mbox{for all}\ \k\in\M \ \ \mbox{(cf.~\eqref{def: indices zero momentum quotient})}.
\end{equation}
\end{cor}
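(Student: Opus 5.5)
The plan is a homogeneity argument followed by an identification of the coefficients of each homogeneous piece through suitably chosen test functions.

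\emph{Separating degrees.} Fix $u\in B_s(0,\frac{1}{c_sM})$ and consider $\tau\mapsto F(\tau u)=\sum_{n\ge2}\tau^n F_n(u)$ for real $|\tau|\le 1$. By \eqref{Fn_growth}, this power series in $\tau$ converges absolutely on a neighbourhood of $[-1,1]$ (indeed for $|\tau|<(c_sM\norm{u}_{\dot H^s})^{-1}$). It vanishes identically there, so the uniqueness of power series coefficients gives $F_n(u)=0$ for every $n\ge2$ and every $u$ in the ball. Since each $F_n$ is homogeneous of degree $n$, $F_n$ then vanishes on all of $\dot H^s$.

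\emph{Identifying coefficients.} Next I read off the coefficients by testing on finitely supported $u$. Fix $n$ and a class $[\k]\in\widetilde\M_n$. Let $S=\{\pm k_1,\dots,\pm k_n\}$ and take $u=\sum_{j\in S} z_j e^{ijx}$ with $z_{-j}=\overline{z_j}$.

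Then $F_n(u)=\sum_{\j\in\M_n\cap S^n}c_{\j}u^{\j}$ is a genuine polynomial in the finitely many real variables $\mathrm{Re}\,z_j,\mathrm{Im}\,z_j$ for $j\in S\cap\N$. Equivalently, it is a polynomial in the independent variables $z_j,\overline{z_j}$ for $j>0$.

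Grouping the terms by monomials, two index vectors give the same monomial $u^{\j}=\prod_j z_j^{a_j}$ exactly when they are permutations of each other, i.e. $\j\sim\k$. So the coefficient of $u^{\k}$ is precisely $\tc([\k])=\sum_{\j\sim\k}c_{\j}$.

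Since $F_n(u)=0$ for all such $u$, and monomials in $z_j,\overline{z_j}$ (with $z_j$ ranging over $\C$, i.e. over $\R^2$) are linearly independent, we get $\tc([\k])=0$.

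\emph{The quadratic part.} For $n=2$ the same reasoning applies to $F_2(u)=\sum_k b_k|u_k|^2=\sum_{k>0}(b_k+b_{-k})|z_k|^2$. Testing with a single mode $u=z e^{ikx}+\bar z e^{-ikx}$ gives $b_k+b_{-k}=0$.

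\emph{Main obstacle.} The only delicate point is bookkeeping: $u^{\j}$ with $\j\sim\k$ all coincide, while the reality constraint $u_{-j}=\overline{u_j}$ identifies the monomials for $\j$ and for $-\j$-related variables. I will handle this by writing everything in terms of $z_j,\overline{z_j}$ for $j>0$ and noting that distinct multisets $[\k]$ yield distinct monomials $\prod_{j>0}z_j^{a_j}\overline{z_j}^{b_j}$, where $a_j=\#\{l:k_l=j\}$ and $b_j=\#\{l:k_l=-j\}$. Linear independence of such monomials, as polynomial functions on $\C^{m}\cong\R^{2m}$, then finishes the proof. The restriction to a finite-dimensional subspace also avoids any convergence issues, since only finitely many $\j$ contribute.
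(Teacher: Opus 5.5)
Your proof is correct and follows essentially the same route as the paper: you isolate each homogeneous component by scaling (uniqueness of the coefficients of the power series in $\tau$ is the same argument as the paper's minimal-degree contradiction with $u=\epsilon v$), then restrict to finitely many modes and read off $\tc([\k])$ as a polynomial coefficient. Your explicit bookkeeping with $z_j,\overline{z_j}$ on the symmetric support $S=\{\pm k_1,\dots,\pm k_n\}$ makes rigorous a step the paper only asserts, namely that distinct classes $[\k]$ give linearly independent monomials once the reality constraint $u_{-j}=\overline{u_j}$ is imposed.
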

\begin{proof}
By contradiction, let $n$ be the smallest integer for which $F_n \neq 0$.
Then 
\begin{equation}\label{contr_ub}
F_n(u) = - \sum_{m > n} F_m(u) \quad \stackrel{\eqref{Fn_growth}}{\implies} \quad |F_n(u)| \lesssim_n \norm{u}_{\dot{H}^s}^{n+1}.
\end{equation}
If $n=2$, there must exists some $b_k \neq 0$. Consider $u$ such that $u_k=u_{-k}=\epsilon$ and $u_j = 0$ if $j \neq {\pm k}$. By \eqref{contr_ub},
\[
|F_2 (u)|= |b_k+b_{-k}|\,\epsilon^2 \lesssim \epsilon^3,
\]
which implies that $b_k+b_{-k}=0$. If $n \ge 3$, we use \eqref{def: indices zero momentum quotient} to write
\[
F_n (u)= \sum_{\k \in \M_n} c_{\k}\, u^{\k} = \sum_{[\k] \in \widetilde{\M}_n} \tc ([\k])\, u^{\k}, \qquad \mbox{where}\  \tc ([\k])= \sum_{\j\sim\k} c_{\j}.
\]
Since $F_n \neq 0$, there exists $[\k]$ such that $\tc([\k]) \neq 0$. Define 
\[
\mathfrak{A} = \{ \j\in  \M_n\mid j_1,\ldots,j_n\in \{ k_1,\ldots,k_n\}\}, \quad \widetilde{\mathfrak{A}} = \mathfrak{A}/\mbox{Sym}(n).
\]
Consider $u$ in the subspace of $\dot{H}^s(\T)$ given by $u_r= 0$ if $r\notin \{ k_1,\ldots,k_n\}$ which we identify with $\R^m$, $m=|\{ k_1,\ldots,k_n\}|\leq n$. Then
\[
F_n (u) = \sum_{[\j] \in \widetilde{\mathfrak{A}}} \tc ([\j])\, u^{\j}
\]
is a homogeneous polynomial of degree $n$ in $m$ variables. 
Since $F_n\neq 0$, there exists some $v \in \R^m$ such that $F_n (v)\neq 0$. Setting $u=\epsilon v$, and using \eqref{contr_ub}, we find
\[
|F_n (v)|\, \epsilon^{n} \lesssim \epsilon^{n+1}.
\]
The contradiction follows by taking $\epsilon\rightarrow 0$.
\end{proof}

\begin{rem}\label{diffuk_Frechet}
As a consequence of \Cref{lemma: convergence of hamiltonian function}, a formal polynomial $F$ is smooth and 
\[
\frac{\pa F}{\pa u_k}(u) = \mathrm{d}F(u)[e^{ikx}] \qquad\implies \qquad
\{ u_k , F \} = \frac{ik}{2\pi} \mathrm{d}F(u)[e^{-ikx}] \in \C
\]
is well-defined for any $k \in \Z^*$.
\end{rem}

Next we introduce a special class of $l$-formal polynomials which Poisson-commute (see \Cref{lemma: facts about poisson bracket} (iii) below).

\begin{definition} \label{def: functions in normal form}
Given an $l$-formal polynomial  $F = F(u)$ (cf.~\Cref{def:formal_pol}), we say that it is \emph{integrable} or in \textit{integrable normal form}, if $F = F(I)$, where $I = (I_k)_{k\in\Z^*} = (|u_k|^2)_{k\in\Z^*}.$ 
\end{definition}


The following result shows that Poisson brackets between formal polynomials are well-defined in a ball of $\dot{H}^s$ for sufficiently large $s\geq 1$. 
Moreover we prove that the Poisson bracket between a formal polynomial and the quadratic part $F_2^{(j)}$ of a Hamiltonian of the KdV hierarchy \eqref{eq: first integrals in fourier} is almost the same formal polynomial but where the coefficients $g_{\k}$ are multiplied by $-i \Omega_{j}(\k)$, where $\Omega_j(\k)$ is \eqref{def: resonant relation}.
Finally we prove that integrable formal polynomials Poisson-commute.

\begin{lemma} \label{lemma: facts about poisson bracket}
For $j,l\in\N$, let $F$ be a $j$-formal polynomial and $G$ an $l$-formal polynomial
\[
F(u) = F_2(u) + F_{\ge 3}(u) = \sum_{k \in \Z^*} a_k |u_k|^2 + \sum_{\k \in \M} f_{\k} u^{\k} \quad \mbox{and} \quad G(u) = G_2(u) + G_{\ge 3}(u) = \sum_{k \in \Z^*} b_k |u_k|^2 + \sum_{\k \in \M} g_{\k} u^{\k}
\]
satisfying \eqref{M_cond} with $M_F$ and $M_G$, respectively.
Then we have the following facts:

\medskip

(i) The function $\{ F,G \}(u)$ is a $(j+l+1)$-formal polynomial with $M_{\{F,G \}} > \max ( M_F , M_G )$, cf.~\eqref{M_cond}.



\smallskip

(ii) If $F_2(u) = \pi \sum_{k \in \Z^*} |k|^{2j} |u_k|^2$, then
\begin{equation}\label{eq: facts about poisson bracket}
\{ F_2,G_{\ge 3} \}(u) = - i \sum_{n \ge 3} \sum_{\k \in \M_n} g_{\k}\, \Omega_j(\k) u^{\k},
\end{equation}
where $\Omega_j(\k)$ is \eqref{def: resonant relation}, and this Poisson bracket is smooth in $B_s(0, \frac{1}{c_s M_G})$ for $s \ge j+l-\frac{1}{2}$.

\smallskip

(iii) Letting $I=(I_k)_{k\in\Z^*}=(|u_k|^2)_{k\in\Z^*}$, if $F$ and $G$ are integrable (cf. \Cref{def: functions in normal form})
then $\{ F,G \} = 0$.
\end{lemma}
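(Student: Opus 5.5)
The plan is to prove the three facts by direct computation on monomials, using \Cref{lemma: convergence of hamiltonian function} for convergence. By bilinearity, $\{F,G\}$ splits into $\{F_2,G_2\}$, $\{F_2,G_{\ge3}\}$, $\{F_{\ge3},G_2\}$ and $\{F_{\ge3},G_{\ge3}\}$.

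We start with the quadratic–quadratic and quadratic–monomial brackets. Since $\partial_{u_k}|u_m|^2 = \bar u_m\delta_{km}+u_m\delta_{k,-m}$ and $\bar u_m=u_{-m}$, we get $\partial_{u_k}F_2=(a_k+a_{-k})u_{-k}$. For a monomial $u^{\k}$, $\partial_{u_{-k}}u^{\k}=\sum_{i:k_i=-k}u^{\k}/u_{-k}$. Hence
\[
\{F_2,u^{\k}\}=\sum_{i}\frac{i(-k_i)}{2\pi}\,(a_{k_i}+a_{-k_i})\,u^{\k}.
\]
For $a_k=\pi|k|^{2j}$ this equals $-i\sum_i k_i^{2j+1}u^{\k}=-i\Omega_j(\k)u^{\k}$, which gives \eqref{eq: facts about poisson bracket}. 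Similarly $\{F_2,G_2\}$ is a sum of terms $\propto k\,(a_k+a_{-k})(b_k+b_{-k})\,u_{-k}u_k$ that cancel under $k\mapsto -k$, so it vanishes.

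For the cubic-and-higher bracket, we pair $\k\in\M_n$ with $\j\in\M_m$ at a contracted index. The result is
\[
\{u^{\k},u^{\j}\}=\sum_{a,b:\,k_a=-j_b}\frac{ik_a}{2\pi}\,u^{\k\setminus k_a}u^{\j\setminus j_b},
\]
and the new index vector $(\k\setminus k_a,\j\setminus j_b)$ lies in $\M_{n+m-2}$ by zero momentum.

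The resulting coefficient is $\sum \frac{ik}{2\pi}f_{\k}g_{\j}$, summed over the (infinitely many) contracted pairs, so we must check convergence and the $Y^{j+l+1}$ bound. We have $|f_{\k}g_{\j}|\le \|\k\|_{\ell^\infty}^{2j-2}\|\j\|_{\ell^\infty}^{2l-2}M_F^nM_G^m$, and the contracted index $k$ is bounded by the remaining indices through zero momentum, with the constant from \eqref{eq: jensen inequality}. This gives a bound of order $\|\text{new}\|^{2j+2l-3}$ times polynomial factors in $n,m$, which is within $\|\cdot\|^{2(j+l+1)-2}$.

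The main obstacle is that the sum over the contracted index is infinite for fixed outer indices. The coefficient-level statement should therefore be handled at the level of functions: one estimates $\sum_k |k|\,|\partial_{u_k}F_n|\,|\partial_{u_{-k}}G_m|$ via the Young inequality \eqref{young convolution inequality} and \eqref{eq: embedding hs in fl01}, exactly as in \Cref{lemma: convergence of hamiltonian function}. The derivatives are split so that at most $j+l-\tfrac12$ of them fall on any single factor in $\ell^2$. This yields absolute convergence on $B_s$ for large $s$, and defines the coefficients of $\{F,G\}$ as absolutely convergent rearrangements. The combinatorial factors $nm$ and polynomial growth in $n+m$ are absorbed by enlarging $M$ to any $M_{\{F,G\}}>\max(M_F,M_G)$. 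For (ii) the same estimate with $|\Omega_j(\k)|\lesssim n\|\k\|_{\ell^\infty}^{2j+1}$, split as $j+l-\tfrac12$ derivatives on each of two factors, gives smoothness for $s\ge j+l-\tfrac12$.

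For (iii), write $F=F(I)$ and $G=G(I)$. By the chain rule $\partial_{u_k}F=\sum_m \partial_{I_m}F\,\partial_{u_k}I_m$, and $I_m=I_{-m}$, so $\partial_{u_k}F=\Phi_k u_{-k}$ with $\Phi_k=\partial_{I_k}F+\partial_{I_{-k}}F$ even in $k$; likewise $\partial_{u_{-k}}G=\Psi_k u_k$. Then
\[
\{F,G\}=\sum_k\frac{ik}{2\pi}\,\Phi_k\Psi_k|u_k|^2,
\]
whose summand is odd in $k$. The sum therefore vanishes, and it converges absolutely by the bounds above.
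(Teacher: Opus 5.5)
Your proofs of (i) and (ii) follow the paper's route. You use the same four-way splitting of $\{F,G\}$ and the same direct computation producing $-i\Omega_j(\k)$. The cancellation of $\{F_2,G_2\}$ under $k\mapsto -k$ is also the paper's. For $\{F_{\ge 3},G_{\ge 3}\}$ you contract one index of $\k$ against one of $\k'$, control the contracted index with \eqref{eq: jensen inequality}, obtain absolute convergence from Young's inequality, and absorb the polynomial factors in $n,m$ into any $M_{\{F,G\}}>\max(M_F,M_G)$, exactly as the paper does.

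One point needs correcting: the ``main obstacle'' you identify does not exist. Fix an ordered output vector $\bar{\k}\in\M_{n+m-2}$, with the paper's convention that its first $n-1$ entries come from $\k$, and fix the contraction positions. Zero momentum then forces the contracted index to equal $-\sum_{i\le n-1}\bar{k}_i$, so it is determined, not merely bounded, by the other indices. This is why the paper can write $d_{\bar{\k}}$ in \eqref{dk parentesi FG} as a finite sum over positions.

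Consequently each coefficient of $\{F,G\}$ receives only polynomially many contributions in $\#\bar{\k}$. Your termwise bound, of order $\max(M_F,M_G)^{n+m}\norm{\bar{\k}}_{\ell^{\infty}}^{2j+2l-3}$ times powers of $n,m$, is then directly the $Y^{j+l+1}_{\#\bar{\k}}$ estimate. You still need the Young/Tonelli estimate at the level of functions, but only to justify reordering the sums by Fubini, as in the paper.

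This distinction matters. If a coefficient really were an infinite sum, absolute convergence of $\{F,G\}(u)$ on a ball would not by itself give the uniform weighted $\ell^{\infty}$ bound that defines a $(j+l+1)$-formal polynomial. So the workaround you describe would not have closed (i); what closes it is the finiteness above.

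For (iii) your argument differs slightly. You apply the chain rule to $F(I)$ and $G(I)$ and note that the summand $\frac{ik}{2\pi}\Phi_k\Psi_k|u_k|^2$ (your notation) is odd in $k$. The paper instead uses absolute convergence and bilinearity to reduce to monomials $\{I^{\k},I^{\k'}\}$, then the Leibniz rule to reduce to $\{I_{k_1},I_{k_2}\}=0$. Both are valid. Yours gets the cancellation in one line for the full series. The paper's keeps every step at the level of single monomials, so it never differentiates a series in the actions.
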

\begin{proof}
(ii) By \Cref{lemma: convergence of hamiltonian function}, for $s\geq \max\{j,l\}$, $F(u)$ is a smooth function of $u$ in $B_s \left ( 0, \frac{1}{c_j M_F} \right )$ and $G(u)$ is a smooth function of $u$ in $B_s \left ( 0, \frac{1}{c_l M_G} \right )$, the series being absolutely convergent. 
In particular, $\pa_{u_k} F$ and $\pa_{u_k} G$ are well defined, cf. \Cref{diffuk_Frechet}. We therefore have:
\begin{equation} \label{eq:tonelli1}
\{ F_2,G_{\ge 3} \} (u) = \sum_{r \in \Z^*} ir^{2j+1}\overline{u_r} \frac{\pa G_{\ge 3}}{\pa u_{-r}} = \sum_{r \in \Z^*} ir^{2j+1} \overline{u_r} \sum_{n \ge 3} \sum_{\k \in \M_n} \left [ \delta(k_1=-r) + \ldots + \delta(k_n=-r) \right ] g_{\k} \frac{u^{\k}}{u_{-r}}.
\end{equation}
We check that this series is absolutely convergent using Tonelli's theorem.
Using the fact that $u_{-r}=\overline{u_r}$, we obtain
\[
\begin{split}
& \sum_{r \in \Z^*} \sum_{n \ge 3} \sum_{\k \in \M_n}  |r|^{2j+1} \left [ \delta(k_1=-r) + \ldots + \delta(k_n=-r) \right ] |g_{\k}| |u^{\k}| \\
& \lesssim \sum_{n \ge 3} M_G^n \sum_{\k \in \M_n} \left ( |k_1|^{2j+1} + \ldots + |k_n|^{2j+1} \right ) \frac{|g_{\k}|M_G^{-n}}{\norm{\k}_{\ell^{\infty}}^{2l-2}} \norm{\k}_{\ell^{\infty}}^{2l-2} |u^{\k}| \\
& \lesssim \sup_{n \ge 3} \norm{ (g_{\k} M_G^{-\#{\k}})_{\#{\k} = n} }_{Y_n^l} \,\sum_{n \ge 3} M_G^n \sum_{\k \in \M_n} n \norm{\k}_{\ell^{\infty}}^{2j+2l-1} |u^{\k}|
\end{split}
\]
which is analogous to \eqref{key_bound_Y}. Thus, arguing as in \Cref{lemma: convergence of hamiltonian function}, the series above is absolutely convergent in $B_s(0,\frac{1}{c_s M_G})$ for $s \ge j+l-\frac{1}{2}.$
It follows that \eqref{eq: facts about poisson bracket} holds by Fubini's theorem.  In particular, $\{ F_2,G_{\ge 3} \} (u)$ is a $(j+l)$-formal polynomial. 

\medskip

\noindent (i) Firstly, notice that
\begin{equation}\label{eq:PoissonFG}
\{ F,G \} = \{ F_2+F_{\ge 3} , G_2+G_{\ge 3} \} = \{ F_2,G_2 \} + \{ F_2,G_{\ge 3} \} + \{ F_{\ge 3},G_2 \} + \{ F_{\ge 3},G_{\ge 3} \}. 
\end{equation}
Our goal is to show that each of these terms is well-defined for $u$ in $B_s \left ( 0, \frac{1}{c_s M_{\{F,G \}}} \right )$ for $s$ sufficiently large and that they are formal polynomials. Note that the terms $\{ F_2,G_{\ge 3} \}$ and $\{ F_{\ge 3},G_2 \}$ are well-defined $(j+l)$-formal polynomials by point (ii) above and \eqref{M_cond}.

We start with the first term on the right-hand side of \eqref{eq:PoissonFG}. Note that:
\[
\{ F_2,G_2 \} = \sum_{k \in \Z^*} \frac{ik}{2\pi} \, |u_k|^2 \, (a_k +a_{-k} ) ( b_{-k}+ b_{k} ) =0
\]
by symmetry $k\mapsto -k$. Since $|a_k|\lesssim |k|^{2j}$ and $|b_k|\lesssim |k|^{2l}$ (cf.~\eqref{M_cond}) the series above is absolutely convergent for $u\in \dot{H}^s$ with 
\begin{equation}\label{conds1}
s\geq j+l+1/2.
\end{equation}
In particular, $\{ F_2,G_2 \} $ is trivially a $(j+l+1)$-formal polynomial.

\smallskip

Finally, we tackle the last term on the right-hand side of \eqref{eq:PoissonFG}.
\begin{equation}\label{poissonF3G3}
\begin{split}
& \{ F_{\ge 3},G_{\ge 3} \}(u) = \left \{  \sum_{\k \in \M} f_{\k} u^{\k} ,  \sum_{\k' \in \M} g_{\k'} u^{\k'} \right \} \\
& = \sum_{r \in \Z^*} \frac{ir}{2\pi} \left ( \sum_{\k \in \M} \left ( \delta(k_1=r)+\ldots+\delta(k_{\#\k}=r) \right ) f_{\k} \right ) \left ( \sum_{\k' \in \M} \left ( \delta(k_1'=-r)+\ldots+\delta(k_{\#\k'}'=-r) \right ) g_{\k} \right ) \frac{u^{\k} u^{\k'}}{|u_r|^2}.
\end{split}
\end{equation}
We next show that the series converges absolutely. To do so, we note that for $\k,\k'\in\M$
\begin{equation}\label{eq:A_poisson}
\begin{split}
\sum_{r\in\Z^*} |r|\,  [\delta(k_1=r) + \ldots + \delta(k_{\#\k}=r) ] \, &[\delta(k_1'=-r) + \ldots + \delta(k_{\#\k'}'=-r)] \frac{\lvert u^{\k} u^{\k'} \rvert}{|u_r|^2}\\
& = \sum_{\alpha=1}^{\#\k} \sum_{\beta=1}^{\#\k'} |k_{\alpha}|^{\frac{1}{2}} |k_{\beta}'|^{\frac{1}{2}} \delta(k_{\alpha}=-k_{\beta}') \frac{\lvert u^{\k} u^{\k'} \rvert}{\lvert u_{k_\alpha} u_{k_\beta'} \rvert},
\end{split}
\end{equation}
We are ready to prove the absolute convergence of \eqref{poissonF3G3}, using Tonelli's theorem:
\begin{align}
\eqref{poissonF3G3} & \stackrel{\eqref{eq:A_poisson}}{\lesssim} \sum_{\k \in \M} \sum_{\k' \in \M}  |f_{\k}| |g_{\k}|\,\sum_{\alpha=1}^{\#\k} \sum_{\beta=1}^{\#\k'} |k_{\alpha}|^{\frac{1}{2}} |k_{\beta}'|^{\frac{1}{2}} \delta(k_{\alpha}=-k_{\beta}') \frac{\lvert u^{\k} u^{\k'} \rvert}{\lvert u_{k_\alpha} u_{k_\beta'} \rvert}\nonumber \\
& \stackrel{\eqref{M_cond}}{\lesssim} \sum_{\k \in \M} \sum_{\k' \in \M} M_F^{\#\k} M_G^{\#\k'} \norm{\k}_{\ell^{\infty}}^{2j-2} \norm{\k'}_{\ell^{\infty}}^{2l-2} \, \sum_{\alpha=1}^{\#\k} \sum_{\beta=1}^{\#\k'} |k_{\alpha}|^{\frac{1}{2}} |k_{\beta}'|^{\frac{1}{2}} \delta(k_{\alpha}=-k_{\beta}') \frac{\lvert u^{\k} u^{\k'} \rvert}{\lvert u_{k_\alpha} u_{k_\beta'} \rvert} \nonumber \\
& \lesssim \sum_{n,m\geq 3} M_F^n M_G^m  \, \sum_{\alpha=1}^n \sum_{\beta=1}^m 
\sum_{\substack{\k\in \M_n \\ \k' \in \M_m} } \norm{\k}_{\ell^{\infty}}^{2j-\frac32} \norm{\k'}_{\ell^{\infty}}^{2l-\frac32}\,
\delta(k_{\alpha}=-k_{\beta}') \frac{\lvert u^{\k} u^{\k'} \rvert}{\lvert u_{k_\alpha} u_{k_\beta'} \rvert}. \label{eq:FGabs_pre} \end{align}
We now note that the inner sum in $\k\in \M_n$ and $\k' \in \M_m$ is independent of $\alpha$ and $\beta$ on account of the permutation invariance of $\M_n,\M_m$. 
As a result,
\begin{equation}\label{eq:FGabs}
\eqref{eq:FGabs_pre}\lesssim \sum_{\k \in \M} \sum_{\k' \in \M} \#\k \#\k' M_F^{\#\k} M_G^{\#\k'} \delta(k_1=-k_1') \norm{\k}_{\ell^{\infty}}^{2j-\frac{3}{2}} \norm{\k'}_{\ell^{\infty}}^{2l-\frac{3}{2}} \frac{|u^{\k} u^{\k'}|}{|u_{k_1}|^2}.
\end{equation}
We now define the new vector $\bar{\k} = (k_2,\ldots,k_n,k_2',\ldots,k_m')$, and note that $\forall \k \in \M_n$,
\begin{equation}\label{eq:jensen2}
   \frac{\norm{\k}_{\ell^{\infty}}}{\max ( |k_2|,\ldots,|k_{n}| ) }
\stackrel{\eqref{eq: jensen inequality}}{\le}\ (n-1).
\end{equation}
As a result,
\[
\begin{split}
\eqref{eq:FGabs} \stackrel{\eqref{eq:jensen2}}{\lesssim} & \sum_{\alpha \ge 4} \sum_{ \substack{ n,m \ge 3 \\ n+m-2=\alpha } } n^{2j-\frac12} m^{2l-\frac12} M_F^n M_G^m \sum_{\bar{\k} \in \M_{\alpha}} \max ( |\bar{k}_1|,\ldots,|\bar{k}_{\alpha}| )^{2j+2l-3} |u^{\bar{\k}}|\\
\lesssim& \sum_{\alpha \ge 4} \alpha^{2j+2l-1} \max( M_F,M_G)^{\alpha+2}\sum_{ \substack{ n,m \ge 3 \\ n+m-2=\alpha } } \sum_{\bar{\k} \in \M_{\alpha}} \norm{\bar{\k}}_{\ell^{\infty}}^{2j+2l-3} |u^{\bar{\k}}| \\
\lesssim& \sum_{\alpha \ge 4} \sum_{\bar{\k} \in \M_{\alpha}} \alpha^{2j+2l} \max( M_F,M_G)^{\alpha+2} \norm{\bar{\k}}_{\ell^{\infty}}^{2j+2l-3} |u^{\bar{\k}}| .
\end{split}
\]
which is analogous to \eqref{key_bound_Y}. Thus, arguing as in \Cref{lemma: convergence of hamiltonian function}, the series is absolutely convergent for $u\in B_s \left ( 0, \frac{1}{c_s M_{\{F,G \}}} \right )$ with $M_{\{F,G\}} > \max ( M_F,M_G )$ and 
\begin{equation}\label{conds2}
s\geq j+l-\frac32.
\end{equation}

Having established the absolute convergence of the series in \eqref{poissonF3G3}, we may exchange the order of summation and conclude that $\{ F_{\ge 3},G_{\ge 3} \}(u)$ is a $(j+l-1)$-formal polynomial of the form:
\begin{align}
\{ F_{\ge 3},G_{\ge 3} \}(u) & = \sum_{\alpha\geq 4} \sum_{\bar{\k}\in\mathcal{M}_{\alpha}} d_{\bar{\k}} u^{\bar{\k}}, \quad \mbox{where} \label{dk parentesi FG}\\
d_{\bar{\k}}  = \frac{-i}{2\pi}\!\sum_{\gamma = 1}^n \sum_{\beta=1}^m \big (\sum_{j=1}^{n-1} \bar{k}_j\big )\cdot & f(\bar{k}_1,\ldots,\bar{k}_{\gamma-1},\!\!\!\sum_{j=n}^{n+m-2} \!\!\!\!\bar{k}_j,\bar{k}_{\gamma},\ldots,\bar{k}_{n-1}) 
g(\bar{k}_n,\ldots,\bar{k}_{n+\beta-2},\!\sum_{j=1}^{n-1} \bar{k}_j,\bar{k}_{n+\beta-1},\ldots,\bar{k}_{n+m-2}).\nonumber
\end{align}
The proof is concluded by choosing the largest integer among \eqref{conds1}, \eqref{conds2} and point (ii). 
\medskip

\noindent (iii) By point (i), $\{ F,G \}(u)$ is a smooth function in some open ball of $\dot{H}^s$ for $s$ sufficiently large. 
By the absolute convergence of the series defining $\{ F,G \}(u)$ and the bilinearity of the Poisson bracket, it suffices to prove that
\[
\{ I^{\k},I^{\k'} \} = 0
\]
for any $\k \in (\Z^*)^n$ and $\k' \in (\Z^*)^m$. Moreover, by the Leibniz rule:
\[
\{ I^{\k},I^{\k'} \} = \sum_{i=1}^n\sum_{j=1}^m \{ I_{k_i}, I_{k_j'}\} \prod_{\alpha\neq i,\ \beta\neq j} I_{k_{\alpha}} I_{k_{\beta}'}.
\]
Therefore it suffices to prove that 
for any $k_1,k_2 \in \N$,
\[
\{ u_{k_1} \overline{u}_{k_1} , u_{k_2} \overline{u}_{k_2} \} = 0,
\]
which follows from a straightforward computation using \eqref{intro: kdv hamiltonian in fourier}.
\end{proof}

\begin{rem} \label{rem: derivatives poisson bracket}
Note that in general the Poisson bracket loses derivatives, in the sense that any $j_1,j_2$-formal polynomials $F$ and $G$ are well defined in a ball around the origin of $\dot{H}^s(\T)$ if $s = \max ( j_1,j_2 )$, while a priori $\{ F,G \}$ is not.
This is an important obstacle to the convergence of the standard Birkhoff normal form procedure \eqref{eq: taylor formula for Hnew integral remainder}, as it requires an infinite amount of Poisson brackets.
\end{rem}

The key idea is that these brackets are not between \emph{any} formal polynomials, but between the KdV Hamiltonian and the (approximate) solution of a homological equation, see \eqref{claim} below. These appoximate solutions will be chosen in such a way to avoid the loss of derivatives, see \Cref{lemma: stabilità parentesi poisson}.

\subsection{Birkhoff normal form} \label{section: rigorous theory}

In this section we construct a symplectic transformation $\Phi$ in a neighborhood of the origin of $\dot{H}^s$, for $s>0$ large enough, such that the KdV Hamiltonian \eqref{intro: kdv hamiltonian in fourier}  in the new variables is, up to some remainders, in normal form. 

\Cref{thm: normal form for ham} below is a corollary of \Cref{lemma: orders 3 and 4}, \Cref{lemma: h4} and the more general \Cref{prop: justification normal form} below (see also \Cref{rmk: improved normal form}), which put the whole KdV hierarchy in normal form. 
The rest of this section is devoted to the proof of \Cref{thm: normal form for ham}, which has interesting implications on the dynamics, as explained in \Cref{subsection: dynamics}.

\begin{theorem} \label{thm: normal form for ham}
Let $r \ge 3$, $s \ge 2r$. There exists $N_0=N_0(r)$ and a constant $c(r,s)>0$ such that for all $N\geq N_0$ and all $\epsilon>0$ such that $\epsilon\leq c(r,s) N^{-1}$ the following holds.
There exists an invertible symplectic map $\Phi : B_s(0,\epsilon) \rightarrow B_s(0,2\epsilon)$ with inverse $\Phi^{-1} : B_s(0,\epsilon/2) \rightarrow B_s(0,\epsilon)$
which are close to the identity in the sense that
\begin{equation}
\norm{\Phi^{\pm 1}(u)-u}_{\dot{H}^s} \lesssim_{r,s} N \norm{u}_{\dot{H}^s}^2,
\label{eq: close to the identity - thm ham}
\end{equation}
and Lipschitz-continuous:
\begin{equation} \label{eq: lipschitzianity}
\norm{\Phi^{\pm 1}(u) - \Phi^{\pm 1}(\underline{u})}_{\dot{H}^s} \le 2 \norm{u-\underline{u}}_{\dot{H}^s}.
\end{equation}
Moreover, 
\begin{equation}
\H \circ \Phi = \sum_{n = 1}^{\lfloor \frac{r}{2} \rfloor } \widehat{\H}_{2n} + R^{(\mu_3 \ge N^{1/2r})} + R^{(I \ge N^{1/2r})} + R_{\ge r+1},
\label{hamiltoniana dopo tutte le trasformazioni - thm ham}
\end{equation}
where
\begin{align}
\widehat{\H}_2(u) &= \pi \sum_{k \in \Z^*} k^{2} |u_k|^2, \label{parte quadratica thm ham}\\
\widehat{\H}_4(u) &= -\frac{\pi}{12} \sum_{k \in \Z^*} \frac{|u_k|^4}{k^2}, \label{parte quartica - thm ham} \\
\widehat{\H}_{2n}(u) &= \widehat{\H}_{2n}(I) = \sum_{\k \in (\Z^*)^n} b_{\k} I^{\k} \qquad \text{where $I_k = |u_k|^2$ and $|b_{\k}| \lesssim_n N^{2n-3}$}, \label{parte integrabile 2 - thm ham} \\
R^{(\mu_3 \ge N^{1/2r})}(u) &= \sum_{ \substack{ \k \in \mathcal{D} \\ 5 \le \#\k \le r \\ |k_3| \ge N^{\frac{1}{2r}}} } c_{\k} u^{\k} \qquad \text{with $|c_{\k}| \lesssim_{\#\k} N^{\#\k-3}$}, \label{terzo indice grande - thm ham} \\
R^{(I \ge N^{1/2r})}(u) &= \sum_{m = \lceil N^{\frac{1}{2r}} \rceil}^{+\infty} \, \, \sum_{ \substack{ \k \in \mathcal{D} \\ 3 \le \#\k \le r-2} } c_{m,\k} I_m u^{\k} \qquad \text{with $|c_{m,\k}| \lesssim_{\#\k} N^{\#\k-1}$}, \label{azione grande - thm ham} \\
R_{\ge r+1}(u) &= \sum_{ \substack{ \k \in \M \\ \#\k \ge r+1 } } c_{\k} u^{\k} \qquad \text{where $|c_{\k}| \le \rho^{\#\k} N^{\#\k-3}$ with $\rho \lesssim_r 1$} \label{resto grande - thm ham}
\end{align}
and all the Hamiltonian terms have real coefficients and satisfy the reality condition \eqref{def: reality condition}, cf. \Cref{rmk: reality condition} below.
\end{theorem}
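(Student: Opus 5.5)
We construct $\Phi$ as a finite composition $\Phi=\Phi_3\circ\Phi_4\circ\cdots\circ\Phi_r$, with $\Phi_n=\Phi_n^{(1)}\circ\cdots\circ\Phi_n^{(n-1)}$. Each $\Phi_n^{(l)}$ is the time-one flow of an auxiliary Hamiltonian
\[
G_n^{(l)}(u)=\sum_{\k\in\M_n} g^{(l)}_{\k}u^{\k},\qquad g^{(l)}_{\k}=\frac{\widetilde f^{(l)}_{\k}}{i\Omega_l(\k)}\,\mathbbm{1}_{\Omega_l(\k)\neq 0,\ \norm{\k}_{\ell^\infty}^{2l-1}\le N|\Omega_l(\k)|},
\]
where $\widetilde f^{(l)}$ are the current degree-$n$ coefficients of the $l$-th transformed first integral from \Cref{lem:first_integrals}. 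The orders $n=3,4$ are treated separately, since there $\Omega_1(\k)$ factorizes (e.g. $\Omega_1=3k_1k_2k_3$ on $\M_3$). In that case no truncation is needed, the generator has bounded coefficients, and one recovers the explicit quartic term $\widehat\H_4=-\frac{\pi}{12}\sum|u_k|^4/k^2$ by a direct computation of $\frac12\{\H_3,G_3\}$ restricted to resonant indices. We only need $\H=F^{(1)}$ at the end, but the whole hierarchy $F^{(1)},\ldots,F^{(r-1)}$ is transformed simultaneously, so that the involution $\{\widetilde F^{(j)},\widetilde F^{(l)}\}=0$ can be used at each step.

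The analytic steps run in the following order.

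\textbf{(a) Well-posedness of the flows.} Using the truncation, $|g_{\k}|\lesssim N\,\norm{\k}_{\ell^\infty}^{2l-2}/\norm{\k}_{\ell^\infty}^{2l-1}$ times the $Y^l$ norm. This makes $X_{G_n^{(l)}}$ bounded (in fact Lipschitz) on $B_s(0,\epsilon)$, with $\norm{X_G(u)}_{\dot H^s}\lesssim N^{n-3}\norm{u}_{\dot H^s}^{n-1}$, by the same Young/Jensen argument as in \Cref{lemma: convergence of hamiltonian function}. Standard ODE estimates then give existence of the time-one flow, the closeness \eqref{eq: close to the identity - thm ham}, the Lipschitz bound \eqref{eq: lipschitzianity} (since $N\epsilon\le c$ is small), and invertibility, with the radii $\epsilon,2\epsilon,\epsilon/2$ tracked through finitely many compositions.

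\textbf{(b) Stability of the coefficient spaces.} We show that Poisson brackets with the truncated $G_n^{(l)}$ map $Y^j_m$ into $Y^j_{m+n-2}$, with a factor $N$ per bracket. This is \Cref{lemma: stabilità parentesi poisson}, proved from formula \eqref{dk parentesi FG}: the truncation exactly compensates the derivative $\frac{ik}{2\pi}$ and the degree $2l-2$. Consequently, the Lie series $F\circ\Phi_n^{(l)}=\sum_k\frac{1}{k!}\mathrm{ad}_G^kF$ converges in the class of formal polynomials. The terms of degree $\ge r+1$ then give $R_{\ge r+1}$ with $|c_{\k}|\le\rho^{\#\k}N^{\#\k-3}$.

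\textbf{(c) The combinatorial core (main obstacle).} We must show that, after all steps, the degree-$n$ part of $\H$ is integrable plus terms supported where $\Omega_l(\k)=0$ for $l\le n-1$, or where the truncation failed. The idea is to apply the commutation identity \eqref{intro: magic formula} to the transformed hierarchy: if $\widetilde f^{(l)}_{\k}$ is killed because $\Omega_l(\k)\ne0$, then $\widetilde f^{(j)}_{\k}=\Omega_j(\k)\widetilde f^{(l)}_{\k}/\Omega_l(\k)$ is killed too, by \eqref{intro:hom_eq2}. The difficulty is that the truncation leaves lower-order non-integrable leftovers, whose brackets pollute this identity at degree $n$. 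We handle this by carrying, as an induction hypothesis, that all leftovers lie in the sets $\M^{(\mu_3\ge N^{1/2r})}$ or $\M^{(I\ge N^{1/2r})}$; both are stable under brackets, so the polluting terms stay in these classes. For the truncated-out indices we then prove \Cref{teorema multiindici rimasti}: if $\norm{\k}^{2l-1}_{\ell^\infty}>N|\Omega_l(\k)|$ for some $l$ and $\k$ is not fully resonant, then either $|k_3|\ge N^{1/2r}$ or $k_1=-k_2$ with $|k_1|\ge N^{1/2r}$. The argument: if $|k_3|<N^{1/2r}$ and $k_1\neq-k_2$, then $|\Omega_l(\k)|\gtrsim|k_1|^{2l}$-ish, via $k_1^{2l+1}+k_2^{2l+1}$ with $k_1+k_2$ small but nonzero. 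Finally, fully resonant indices, with $\Omega_0=\cdots=\Omega_{n-1}=0$, are permutations of pairs $(k,-k)$ by the Vandermonde argument of \Cref{cor: system}, and they give $\widehat\H_{2n}(I)$ with $|b_{\k}|\lesssim N^{2n-3}$. The degree bookkeeping ($N^{\#\k-3}$, $N^{\#\k-1}$) follows by counting brackets, and the reality condition is preserved because each $G$ satisfies it.
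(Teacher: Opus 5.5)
There is a genuine gap in step (c), and it comes from the support you give the generators for $n\ge 5$. In \Cref{prop: magic formula}, the degree-$n$ part of $\{F^{(j)},F^{(l)}\}=0$ contains more than $\{F_2^{(j)},F_n^{(l)}\}+\{F_n^{(j)},F_2^{(l)}\}$. It also contains the brackets $\{F_a^{(j)},F_b^{(l)}\}$, $3\le a,b\le n-1$, of the non-integrable leftovers. So the relation $\Omega_l(\k)\widetilde f^{(j)}=\Omega_j(\k)\widetilde f^{(l)}$ (on symmetrized coefficients) holds only off $\M_n^{(\mu_3\ge N^{1/2n})}\cup\M_n^{(I\ge N^{1/2n})}$, cf.~\eqref{magic formula}.

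Your $G_n^{(l)}$ is supported on all non-resonant $\k$ with $\norm{\k}_{\ell^\infty}^{2l-1}\le N|\Omega_l(\k)|$, polluted ones included. On those indices it does not solve the $j$-th homological equation for $j\ne l$, so your sentence ``$\widetilde f^{(j)}_{\k}$ is killed too'' is false there. What survives in $F_n^{(j)}+\{F_2^{(j)},G_n^{(l)}\}$ contains the term $-\Omega_j(\k)\widetilde f^{(l)}_{\k}/\Omega_l(\k)$. Its a priori size is $N\norm{\k}_{\ell^\infty}^{2j}\norm{\widetilde f^{(l)}}_{Y^l}$, two powers of $\norm{\k}_{\ell^\infty}$ beyond $Y^j$.

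For $\H$ ($j=1$) and $l\ge 2$, this gives coefficients growing like $\norm{\k}_{\ell^\infty}^2$, which contradicts \eqref{terzo indice grande - thm ham}--\eqref{azione grande - thm ham}. It also breaks your step (b). The Lie series of the quadratic part $F_2^{(j)}$ is not covered by the stability lemma; it is controlled only through the exact homological equation, which is hypothesis \eqref{hom_eq prop} of \Cref{conv taylor series}. Saying that the pollution ``stays in these classes'' controls its support, not its size, and the size is what fails here.

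The paper closes this by removing $\M_n^{(\mu_3\ge N^{1/2n})}\cup\M_n^{(I\ge N^{1/2n})}$ from the support of $G_n^{(l)}$, see \eqref{def: set JnlN}. Then \Cref{prop: magic formula} yields \eqref{claim} exactly for every $j\le r-1$, and the polluted indices are left untouched as remainders. \Cref{cor: azione grande o tre indici grandi} guarantees that every non-fully-resonant index outside all $\mathcal{J}_{n,l,N}$ lies in those sets anyway.

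If you prefer to keep your larger support, you must supply the missing estimate. On a polluted class $[\k]$ in the support, the symmetrized leftover equals $i\,\tilde e^{(j,l)}([\k])/\Omega_l(\k)$, where $\tilde e^{(j,l)}$ is the degree-$n$ part of $\sum_{a+b=n+2,\ a,b\ge 3}\{F_a^{(j)},F_b^{(l)}\}$. There $|\tilde e^{(j,l)}|\lesssim_n \norm{\k}_{\ell^\infty}^{2j+2l-3}N^{n-4}$ and $|\Omega_l(\k)|\ge \norm{\k}_{\ell^\infty}^{2l-1}/N$. Hence the leftover has $Y^j$-norm $\lesssim N^{n-3}$, which is exactly what the induction needs.

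A minor slip in (a): the vector-field bound is $N^{n-2}\norm{u}_{\dot{H}^s}^{n-1}$, not $N^{n-3}$, because the truncation costs a factor $N$ on top of $\norm{\widetilde f^{(l)}}_{Y^l}\lesssim N^{n-3}$. This is harmless since $N\epsilon\le c$.
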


\begin{rem}\label{rk: megaremark sotto 210}
Let us make some comments.
\begin{itemize}
    \item The transformation $\Phi$ is defined in a neighborhood of the origin of $\dot{H}^s(\T)$ of maximal radius $\sim_{r,s} N^{-1}$. 
    Equation \eqref{eq: close to the identity - thm ham} tells us that $\Phi$ is close to the identity provided that $N$ is not too large.
    In \Cref{teorema: approssimazione con fasi lineari in t - con eps}, we will carefully fix a value $N$ in order to obtain valuable information on the dynamics of the KdV equation over long timescales.
    
    \item It is possible to weaken the condition $s\geq 2r$ to $s\geq 1$. In particular, $\Phi$ also maps $B_{\sigma}(0,\epsilon)$ to $B_{\sigma}(0,2\epsilon)$ for any $1\leq \sigma\leq s$. We did not optimize the regularity since our approach will require a large $s$ in order to control the dynamics over long timescales, cf. \Cref{teorema: approssimazione con fasi lineari in t - con eps}.
    See \Cref{rk: low regularity} for additional details on how to lower the regularity. 
    
    \item The formal polynomials \eqref{terzo indice grande - thm ham}-\eqref{azione grande - thm ham} are supported on $\M_n^{(\mu_3 \ge N^{\frac{1}{2r}})} \cup \M_n^{(I \ge N^{\frac{1}{2r}})}$, cf. \eqref{insieme terzo indice grande}-\eqref{insieme azione grande}. 
    They are remainders if $N$ is sufficiently large, as proved in \Cref{prop: controllo resti}.
    In particular, they are almost negligible for the dynamics, cf. \Cref{thm: dynamics in the new variables}.
    \item Larger values of $r$ lead to good approximations over longer timescales (cf. \Cref{teorema: approssimazione con fasi lineari in t - con eps}), but require more regularity $s\geq 2r$ to be well-defined. 
    For fixed $r$, $R_{\ge r+1}$ in \eqref{resto grande - thm ham} is a 1-formal polynomial of order $\ge r+1$, which is small in a neighborhood of the origin. Moreover, the flow of $R_{\ge r+1}$ almost preserves the $\dot{H}^s$-norm over long timescales, cf. \Cref{prop: controllo resti}.
    
    \item The terms \eqref{parte integrabile 2 - thm ham} are integrable (cf. \Cref{def: functions in normal form}). By \Cref{lemma: facts about poisson bracket}-(iii), they Poisson-commute with the $\dot{H}^s$-norm.
    This means that, if there were no remainders, the $\dot{H}^s$-norm of the flow of the Hamiltonian would be conserved.
    In particular all the Fourier moduli would be conserved by the flow and the dynamics would affect only the phases.
    We will see that this is approximately true, at least for polynomial timescales $T \sim \norm{u_0}_{\dot{H}^s}^{-m}$, for arbitrary $m \in \N$ and initial datum $u_0$ sufficiently regular and small.
\end{itemize}
\end{rem}

\medskip

The mapping $\Phi$ in \Cref{thm: normal form for ham} is the composition of several maps 
\begin{equation}\label{eq:Phi_comp}
    \Phi=\Phi_3 \circ \Phi_{4} \circ \ldots \circ \Phi_r, \qquad r\geq 3.
\end{equation}
Following the Birkhoff normal form procedure, each transformation $\Phi_n$ is the composition of time-1 flows associated to auxiliary Hamiltonians, so that the dynamics in the new coordinates is simpler. 

\paragraph{The normal form.}\label{subsection: the normal form}
Let us explain the basic idea to construct the transformations $\Phi$ in \eqref{eq:Phi_comp}. 
Given coordinates $v\in \dot{H}^s$, for some $s\geq 0$, the coordinates $u=u(\xi)|_{\xi=1}=\Phi_G^{\xi}(v)|_{\xi=1}$ are the solution to the initial-value problem:
\[
\begin{cases}
\frac{\mathrm{d}}{\mathrm{d} \xi} u_k = \{ u_k,G \} \\
u_k (\xi)|_{\xi=0} = v_k
\end{cases}
\]
where $G$ is an appropriately chosen Hamiltonian, and where we use the letter $\xi$ to indicate the auxiliary time of the flow of $G$, to avoid confusion with the time $t$ of the KdV equation.

The derivatives of such transformations are related to the Poisson bracket in the following way:
\[
\frac{\mathrm{d}^{\alpha}}{\mathrm{d}\xi^{\alpha}}(\H \circ \Phi_G^{\xi}) =  \{ \{ \{ \H, \underbrace{G \}, G \}\ldots\}}_{\alpha\ \mathrm{times}} \circ \Phi_G^{\xi} = \mathrm{ad}_G^{\alpha} \H \circ \Phi_G^{\xi}, \qquad \alpha\in\N,
\]
where $\mathrm{ad}_{G} = \{ \cdot, G \}$.
This allows us to compute the Taylor formula up to order $m\in\N$ centered at $\xi = 0$, which we evaluate at $\xi=1$:
\begin{equation}
\H \circ \Phi_G^1 = \H + \sum_{\alpha=0}^{m} \frac{1}{\alpha!}  \,\mathrm{ad}_G^{\alpha} \H + \frac{1}{m!} \, \int_0^1 (1-\xi)^m \,\mathrm{ad}_{G}^{m+1} \H \circ \Phi_G^{\xi} \, \mathrm{d} \xi,
\label{eq: taylor formula for Hnew integral remainder}
\end{equation}
provided that $\Phi_G^{\xi}$ is well defined up to $\xi=1$. 

The rigorous justification of \eqref{eq: taylor formula for Hnew integral remainder} for the transformations in \eqref{eq:Phi_comp} follows from \Cref{lemma: transformations preserve Hs}. 
The convergence of the Taylor series -- including bounds on the remainder -- are some of the key elements in the proof of \Cref{thm: normal form for ham}, cf. \Cref{conv taylor series} and \Cref{prop: justification normal form}.

The transformation $\Phi_G$ will be proved to be invertible in a neighborhood around the origin of $\dot{H}^s(\T)$ (cf. \Cref{prop: justification normal form}), and it preserves the Poisson bracket, a property which will greatly simplify our computations: for two formal polynomials $F_1$ and $F_2$ we have:
\begin{equation}
\{ F_1,F_2 \} \circ \Phi_G^{\xi} = \{ F_1 \circ \Phi_G^{\xi}, F_2 \circ \Phi_G^{\xi} \}. 
\label{eq: hamiltonian flows preserve poisson structure}
\end{equation}
This follows from the fact that $\Phi_G^{\xi}$ is symplectic, together with the convergence of the Taylor series \eqref{eq: taylor formula for Hnew integral remainder} and elementary properties of formal polynomials, see \Cref{subsection: preliminaries}.

\medskip

Let us briefly explain how to construct the maps $\Phi_n$ in \eqref{eq:Phi_comp}. The maps $\Phi_3$ and $\Phi_4$, which are time-1 flows associated to auxiliary Hamiltonians $G_3$ and $G_4$, are simple because the resonant set $\cR_n^1$, $n=3,4$, is explicit since the resonant relation $\Omega (\k)$ in \eqref{def: resonant relation} factorizes for any $\k \in \M_n$, see \Cref{lemma: orders 3 and 4}. In particular, the Hamiltonians $G_n$, $n=3,4$, are explicit and gain derivatives, which compensates the loss of derivatives in the Poisson bracket \eqref{intro: kdv hamiltonian in fourier}.

For $n\geq 5$, the resonant relation $\Omega (\k)$ does not factorize, and compensating the loss of derivatives in \eqref{intro: kdv hamiltonian in fourier} becomes a major obstacle. 
In order to overcome this, we construct 
\begin{equation}\label{eq:Phin}
    \Phi_n=\Phi_n^{(1)} \circ \ldots \circ \Phi_n^{(n-1)},
\end{equation} 
where each $\Phi_n^{(l)}$ is a time-1 flow of a Hamiltonian $G_n^{(l)}$ in \eqref{eq: other lie transforms}, $l=1,\ldots,n-1$. This Hamiltonian is supported on a special set of indices $\mathcal{J}_{n,l,N}$ defined in \eqref{def: set JnlN}, in such a way that $\Phi_n$ cancels the monomials of degree $n$ in the Hamiltonian $\H \circ \Phi_1\circ\ldots\circ\Phi_{n-1}$ supported on the set $\mathcal{J}_{n,l,N}$. The rest of this subsection is devoted to proving two key properties of these transformations:

\begin{itemize}
    \item $\Phi_n$ does not cancel any monomials in $\H\circ \Phi_1\circ\ldots\circ\Phi_{n-1}$ supported on 
    \begin{equation}\label{eq:complement of J-indices}
    \mathfrak{J}_{n,N} = \M_n \setminus \left ( \bigcup_{l = 1}^{n-1} \mathcal{J}_{n,l,N} \right ) \ .
    \end{equation}
    In particular, we shall prove that all terms supported on $\mathfrak{J}_{n,N}$ are either integrable \eqref{parte integrabile 2 - thm ham} or correspond to remainders \eqref{terzo indice grande - thm ham}-\eqref{azione grande - thm ham}, cf. \Cref{teorema multiindici rimasti}.
    
    \item The choice of set of indices $\mathcal{J}_{n,l,N}$ guarantees that the Hamiltonian vector field associated to $G_n^{(l)}$ maps $\dot{H}^s$ into itself (cf. \Cref{lemma: transformations preserve Hs}), and that the Poisson bracket of $G_n^{(l)}$ with formal polynomials does not lose derivatives (cf. \Cref{lemma: stabilità parentesi poisson}).
\end{itemize}

We start by constructing the sets $\mathcal{J}_{n,l,N}$ for all $n\geq 3$. As mentioned before, the cases $n=3,4$ are simpler and can be treated directly, cf. \Cref{subsection: cases 3 and 4}. However, since our construction is necessary for $n\geq 5$, we prefer to present all cases simultaneously.

\paragraph{The support of the maps $\Phi_n$.} 
For $n \ge 3$, $l \in \N$, and $N \gg 1$ to be fixed later, we define:
\begin{equation} \label{def: set JnlN}
\mathcal{J}_{n,l,N} = \left \{ \k \in \M_n \setminus \mathcal{R}_n^l \, \bigg | \,  \dfrac{\norm{\k}_{\ell^{\infty}}^{2l-1}}{|\Omega_l(\k)|} \le N \right \} \setminus \left (  \M_n^{(\mu_3 \ge N^{\frac{1}{2n}})} \cup \M_n^{(I \ge N^{\frac{1}{2n}})} \right ), 
\end{equation}
where the resonant relation $\Omega_l(\k)$ is defined in \eqref{def: resonant relation}, and the sets of indices $\M_n$, $\mathcal{R}_n$, etc. are defined in \eqref{def: indices zero momentum}, \eqref{def: resonant indices zero momentum}, \eqref{insieme terzo indice grande} and \eqref{insieme azione grande}.
Notice that for $l = 1$, the set \eqref{def: set JnlN} is a subset of the set $\mathcal{J}_{n,N}$ in \cite[Definition 3.5]{bernier}. 
For $l>1$, this set is a generalization of \cite[Definition 3.5]{bernier} which allows more general resonant relations $\Omega_l (\k)$ and polynomial growth in the coefficients (instead of $\ell^{\infty}$).

\smallskip

Our first result is a characterization of the set of indices $\mathfrak{J}_{n,N}$ in \eqref{eq:complement of J-indices}.
Let 
\begin{equation} \label{def: normal form indices}
\cR_n = \bigcap_{l=1}^{n-1} \cR_n^l.
\end{equation}

\begin{theorem} \label{teorema multiindici rimasti}
Let $n \ge 3$, $l \in \{ 1, \ldots, n-1 \}$. There exists $N_0=N_0(n)$ such that for all $N\geq N_0$,
\begin{equation}\label{eq:multiindici rimasti}
\mathfrak{J}_{n,N} = \M_n \setminus \left ( \bigcup_{l = 1}^{n-1} \mathcal{J}_{n,l,N} \right ) = \cR_n \sqcup \mathfrak{C}_{n,N}
\end{equation}
where $\mathfrak{C}_{n,N} \subseteq \M_n^{(\mu_3 \ge N^{\frac{1}{2n}})} \cup \M_n^{(I \ge N^{\frac{1}{2n}})}$ and $\cR_n=\emptyset$ if $n$ is odd. Moreover, if $n$ is even and
$\k \in \cR_n$, then the corresponding monomial is integrable (cf. \Cref{def: functions in normal form}), i.e. $u^{\k} = I^{\k'}$ for some $\k' \in (\Z^*)^{\frac{n}{2}}$.
\end{theorem}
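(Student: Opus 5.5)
Take $\k\in\mathfrak{J}_{n,N}$ and suppose $\k\notin \M_n^{(\mu_3 \ge N^{1/2n})}\cup\M_n^{(I \ge N^{1/2n})}$. Then, by \eqref{def: set JnlN}, membership in $\mathfrak{J}_{n,N}$ means that for every $l\in\{1,\ldots,n-1\}$ either $\Omega_l(\k)=0$ or $|\Omega_l(\k)| < N^{-1}\norm{\k}_{\ell^\infty}^{2l-1}$. We will show that for $N\ge N_0(n)$ the second alternative never occurs, so $\k\in\cR_n$. Since $\cR_n$ is disjoint from $\mathfrak{C}_{n,N}:=\mathfrak{J}_{n,N}\setminus\cR_n$ by definition, this gives the decomposition. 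Afterwards we analyse $\cR_n$ through a Vandermonde argument.

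\textbf{Step 1: a lower bound on $|\Omega_l(\k)|$.} Order $\k$ so that $|k_1|\ge\cdots\ge|k_n|$. Since $\mu_3(\k)<N^{1/2n}$, the entries $k_3,\ldots,k_n$ have modulus less than $N^{1/2n}$, and zero momentum gives $|k_1+k_2|\le (n-2)N^{1/2n}$. We treat two cases.

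If $k_1+k_2=0$, then $|k_1|<N^{1/2n}$ because $\k\notin\M_n^{(I\ge N^{1/2n})}$. Hence $\norm{\k}_{\ell^\infty}^{2l-1}<N^{(2l-1)/2n}<N$ for $l\le n-1$. Because $\Omega_l(\k)$ is an integer, $\Omega_l(\k)\ne 0$ forces $|\Omega_l(\k)|\ge 1>N^{-1}\norm{\k}^{2l-1}_{\ell^\infty}$. Thus the small-divisor alternative is impossible.

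If $k_1+k_2=a\neq 0$, take $l=1$. We have
\[
k_1^3+k_2^3 = a\,(k_1^2-k_1k_2+k_2^2)\gtrsim |a|\,|k_1|^2 \quad\text{when } |k_1|\gg |a|,
\]
while $|k_3^3+\ldots+k_n^3|\lesssim n N^{3/2n}$. If $|k_1|\ge C_n N^{1/2n}$ this gives $|\Omega_1(\k)|\gtrsim |k_1|^2\ge N^{-1}|k_1|$, so the small-divisor alternative fails for $l=1$. Since $\Omega_1(\k)\neq0$ as well, $\k\notin\mathfrak{J}_{n,N}$, a contradiction. If instead $|k_1|\lesssim_n N^{1/2n}$, all entries are $O(N^{1/2n})$, and the integer argument of the first case applies once $N\ge N_0(n)$: we need $\norm{\k}^{2l-1}_{\ell^\infty}\lesssim N^{(2n-3)/2n}<N$. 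Combining both cases, every $\k\in\mathfrak{J}_{n,N}$ outside the two large sets satisfies $\Omega_l(\k)=0$ for all $l$, so it lies in $\cR_n$.

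\textbf{Step 2: structure of $\cR_n$.} Let $\k\in\cR_n$. It satisfies $\sum_i k_i^{2l+1}=0$ for $l=0,\ldots,n-1$, where $l=0$ is the momentum condition. Group equal values: let $x_1,\ldots,x_p$ be the distinct values of $|k_i|$, and let $m_j\in\Z$ be the signed multiplicity of $x_j$ (occurrences of $+x_j$ minus occurrences of $-x_j$). Then $\sum_j m_j x_j^{2l+1}=0$ for $l=0,\ldots,n-1$ with $p\le n$. Dividing by the $x_j$, this is a Vandermonde system in the distinct positive numbers $x_j^2$, so all $m_j=0$. Consequently each value $k$ occurs exactly as often as $-k$, so $u^{\k}=\prod |u_{k}|^2=I^{\k'}$ with $\k'\in(\Z^*)^{n/2}$, and $n$ must be even. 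For $n$ odd this gives $\cR_n=\emptyset$.

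\textbf{Main obstacle.} The delicate point is Step 1 for $l=1$ when the two largest indices are nearly opposite: we must check that the cubic gain $|a||k_1|^2$ dominates both the tail $nN^{3/2n}$ and the threshold $N^{-1}|k_1|$. This comparison fixes the constants $C_n$ and $N_0(n)$. The exponent $1/2n$ in the excluded sets is exactly what makes the remaining small-index regime fall under the integrality argument.
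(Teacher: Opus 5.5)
Your Step~2 is correct. It is also slicker than the paper's proof of \Cref{cor: system}: grouping entries by absolute value and inverting the Vandermonde matrix in the distinct numbers $x_j^2$ makes all signed multiplicities vanish at once, with no induction on a weighted system. The gap is in Step~1, case $k_1+k_2\neq 0$.

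\textbf{Where Step 1 fails.} The claim that $|k_1|\ge C_nN^{1/(2n)}$ forces $|\Omega_1(\k)|\gtrsim |k_1|^2$ does not follow. The tail $k_3^3+\dots+k_n^3$ can be of order $|k_3|^3$, i.e.\ up to $\sim N^{3/(2n)}$. That is much larger than $|k_1|^2\sim C_n^2N^{1/n}$ at the bottom of your range, and the tail can cancel $k_1^3+k_2^3$. The comparison only works once $|k_1|^2\gg nN^{3/(2n)}$, i.e.\ $|k_1|\gtrsim N^{3/(4n)}$.

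Below that threshold nothing in your argument rules out $\Omega_1(\k)=0$. You then have to use some $l\ge 2$ with $\Omega_l(\k)\neq 0$. Integrality gives $|\Omega_l(\k)|\ge 1\ge N^{-1}\norm{\k}_{\ell^\infty}^{2l-1}$ only when $\norm{\k}_{\ell^\infty}^{2l-1}\le N$. For $l=n-1$ and $|k_1|\sim N^{3/(4n)}$ this would require $\frac{3(2n-3)}{4n}\le 1$, which fails for every $n\ge 5$.

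So the window $N^{1/(2n-3)}\ll|k_1|\lesssim N^{3/(4n)}$, nonempty exactly when $n\ge5$, is covered by neither of your subcases. Your closing remark, that the exponent $\frac1{2n}$ places the small-index regime under the integrality argument, is therefore not right.

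\textbf{The missing idea.} Run the two-largest-modes comparison with the index $l$ for which $\Omega_l(\k)\neq0$, not always with $l=1$. This is what \Cref{lemma algebrico bernier}, \Cref{secondo lemma algebrico bernier} and \Cref{cor: azione grande o tre indici grandi} do, and the paper deduces the theorem from them together with \Cref{cor: system}. Concretely:
\begin{itemize}
\item If $k_1k_2>0$, then $|k_1|\le|k_1+k_2|\le(n-2)|k_3|<(n-2)N^{1/(2n)}$, and integrality works for all $l\le n-1$.
\item If $k_1k_2<0$ and $k_1+k_2\neq0$, then
\[
|k_1^{2l+1}+k_2^{2l+1}|=|k_1+k_2|\sum_{j=0}^{2l}|k_1|^{2l-j}|k_2|^{j}\ge |k_1|^{2l},
\]
while $|k_3^{2l+1}+\dots+k_n^{2l+1}|<(n-2)N^{\frac{2l+1}{2n}}$. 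Hence $|\Omega_l(\k)|\ge\frac12|k_1|^{2l}\ge N^{-1}|k_1|^{2l-1}$ as soon as $|k_1|^{2l}\ge 2(n-2)N^{\frac{2l+1}{2n}}$, and integrality covers $|k_1|^{2l-1}\le N$.
\end{itemize}
The two regimes overlap for $N\ge N_0(n)$ because $\frac{2l+1}{4ln}<\frac{1}{2l-1}$, i.e.\ $4l^2-1<4ln$, which holds for $l\le n-1$. With this per-$l$ version, your Step~1 shows that every $\k\in\mathfrak{J}_{n,N}$ outside $\M_n^{(\mu_3\ge N^{1/(2n)})}\cup\M_n^{(I\ge N^{1/(2n)})}$ has $\Omega_l(\k)=0$ for all $l$, hence lies in $\cR_n$.
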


\Cref{teorema multiindici rimasti} directly follows from \Cref{cor: system} and \Cref{cor: azione grande o tre indici grandi} below.

\smallskip

Firstly,  we study the set $\cR_n$. By \eqref{def: indices zero momentum}, \eqref{def: resonant indices zero momentum} and \eqref{def: resonant relation},
\[
\k \in \cR_n \qquad \iff \qquad \Omega_l(\k)=0 \quad  \mbox{for all}\ l=0,1,\ldots ,n-1.
\]
Thanks to the following theorem, which is proven in \Cref{app: proof system}, we can conclude that if $\k \in \cR_n$, then the corresponding monomial is integrable (cf.~\Cref{def: functions in normal form}). See also \cite[Section~4.1]{feola} and \cite[Proposition~3.4]{feola2} for similar results in the context of the Degasperis–Procesi equation.

\begin{theorem} \label{cor: system}
Let $n\in\N$. Consider the system of equations
\[
\begin{cases}
k_1+k_2+\ldots+k_n &= 0 \\
k_1^3+k_2^3+\ldots+k_n^3 & =0 \\
& \vdots \\
k_1^{2n-1}+k_2^{2n-1}+\ldots+k_n^{2n-1} & =0,
\end{cases}
\]
with the additional assumption that $k_j \neq 0$ for all $j=1,\ldots, n$. Then, we have the following:
\begin{itemize}
    \item if $n$ is odd, there is no solution;
    \item if $n=2m$ is even, all  solutions $(k_1,k_2,\ldots,k_{2m})$ are paired, meaning that there exists a permutation $\sigma \in \mbox{Sym}(2m)$ such that $k_{\sigma(2j-1)}+k_{\sigma(2j)}=0$ for $j=1,\ldots,m$.
\end{itemize} 
\end{theorem}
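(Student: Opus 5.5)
The plan is to convert the system of odd power sums into a statement about a signed measure on the positive integers, and then use a Vandermonde argument.

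Group the entries by absolute value. Let $a_1,\ldots,a_p$ be the distinct values among $|k_1|,\ldots,|k_n|$; these are positive since $k_j\neq 0$. For each $a_q$ let
\[
m_q=\#\{j: k_j=a_q\}-\#\{j: k_j=-a_q\}\in\Z .
\]
Since $k^{2l+1}$ is odd in $k$, the $l$-th equation reads $\sum_{q=1}^p m_q\, a_q^{2l+1}=0$ for $l=0,\ldots,n-1$.

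Next, divide out a common factor. Write $b_q=a_q^2$, which are distinct positive numbers. The equations become
\[
\sum_{q=1}^p (m_q a_q)\, b_q^{\,l}=0,\qquad l=0,\ldots,n-1 .
\]
Since $p\le n$, we may keep only the equations with $l=0,\ldots,p-1$. This is a homogeneous linear system in the unknowns $x_q=m_q a_q$, with the $p\times p$ Vandermonde matrix $(b_q^{\,l})$. Its determinant $\prod_{q<q'}(b_{q'}-b_q)$ is nonzero because the $b_q$ are distinct. Hence $x_q=0$, and since $a_q\neq 0$, every $m_q=0$.

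It remains to read off the conclusion. The condition $m_q=0$ says that for each absolute value $a_q$, the entry $a_q$ and the entry $-a_q$ occur the same number of times. Thus the multiset $\{k_1,\ldots,k_n\}$ splits into pairs $\{a,-a\}$, which gives the required permutation $\sigma$ with $k_{\sigma(2j-1)}+k_{\sigma(2j)}=0$. In particular $n$ must be even, so for odd $n$ there is no solution.

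The main step is the reduction to distinct $b_q$, which is needed so that the Vandermonde matrix is invertible. Collapsing the entries with equal modulus into signed multiplicities is what makes this work. The equations with $l=p,\ldots,n-1$ are not needed. The only care required is to note that $p\le n$, so enough equations are available.
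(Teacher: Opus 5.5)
Your proof is correct, and it takes a more direct route than the paper's. The paper proves a weighted generalization, Theorem~\ref{thm: system for resonant 2}, with positive integer weights $\alpha_j$, by induction on $n$. It reads the system as saying that the nonzero vector $(\alpha_1,\ldots,\alpha_n)$ lies in the kernel of the matrix $(k_j^{2l+1})$, so this matrix is singular. The factorization $k_1\cdots k_n\prod_{i<j}(k_i-k_j)(k_i+k_j)$ of its determinant then yields a single coincidence $k_i=\pm k_j$. The paper merges those two entries: it adds weights if $k_i=k_j$, subtracts them if $k_i=-k_j$ with unequal weights, and deletes both if $k_i=-k_j$ with equal weights. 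It then applies the induction hypothesis to the smaller system, using two base cases. The weights are there only so that the induction closes after merging.

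You perform all these mergings at once by passing to signed multiplicities $m_q$, which may be zero or negative. This reverses the role of the Vandermonde matrix. In the paper it is singular because it has a nonzero kernel vector, which forces repeated moduli. In your argument it is built on the distinct values $b_q=a_q^2$, so it is invertible and forces the coefficient vector $(m_q a_q)_q$ to vanish.

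Your version has several advantages. It needs no induction or case analysis. It shows that only the first $p$ equations are used, where $p$ is the number of distinct moduli. It does not use integrality, so it works for nonzero reals. It also gives the paper's weighted statement verbatim, by replacing counts with weighted counts $m_q=\sum_{k_j=a_q}\alpha_j-\sum_{k_j=-a_q}\alpha_j$. The paper's formulation offers nothing comparable here, beyond keeping all weights positive at every step.
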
 

\smallskip

Next we define the set $\mathfrak{C}_{n,N}$ in \eqref{eq:multiindici rimasti} as
\begin{equation}
\mathfrak{C}_{n,N} =
\left ( \M_n^{(\mu_3 \ge N^{\frac{1}{2n}})} \cup \M_n^{(I \ge N^{\frac{1}{2n}})} \right ) \setminus \cR_n.
\end{equation}
\Cref{cor: azione grande o tre indici grandi} below, proved in \Cref{app: resti}, guarantees that $\mathfrak{J}_{n,N}\setminus \cR_n = \mathfrak{C}_{n,N}$, thereby proving \eqref{eq:multiindici rimasti}. 



\smallskip

\begin{prop} \label{cor: azione grande o tre indici grandi}
Let $n \ge 3$, $l \in \{ 1,\ldots,n-1 \}$ and $\k \in \mathcal{D}_n \setminus \mathcal{R}_n^{l}$. 
There exists $N_0=N_0(n)$ such that for all $N\geq N_0$, if
\[
\left \lvert \frac{k_1^{2l-1}}{k_1^{2l+1}+\ldots+k_n^{2l+1}} \right \rvert \ge N,
\]
then either there exists $\k' \in \M_{n-2}$ such that 
\begin{equation}
u^{\k} = |u_a|^2 u^{\k'} \quad \text{where} \quad a \ge N^{\frac{1}{2n}}
\label{eq: azione grande}
\end{equation}
or 
\begin{equation}
|k_3| \ge N^{\frac{1}{2n}}.
\label{eq: terzo indice grande}
\end{equation}
\end{prop}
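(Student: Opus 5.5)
We argue by contraposition: assume $|k_3| < N^{\frac{1}{2n}}$ and that no pair $k_j=-k_i$ with $|k_j|\ge N^{\frac{1}{2n}}$ occurs. We show that then $|\Omega_l(\k)| \gtrsim_n |k_1|^{2l-1}\cdot(\text{something} \gg N^{-1})$, contradicting the hypothesis for $N\ge N_0(n)$. Since $\k\in\mathcal{D}_n$, all of $k_3,\ldots,k_n$ are bounded by $K:=N^{\frac{1}{2n}}$. The momentum condition then gives $|k_1+k_2|\le (n-2)K$. Write $k_2=-k_1+m$ with $m:=-(k_3+\ldots+k_n)$, so $|m|\le (n-2)K$.

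\textbf{Case $m\neq 0$.} Expand
\[
k_1^{2l+1}+k_2^{2l+1}=k_1^{2l+1}-(k_1-m)^{2l+1}=(2l+1)m\,k_1^{2l}+\O_l(m^2 k_1^{2l-1}),
\]
while $|k_3^{2l+1}+\ldots+k_n^{2l+1}|\le (n-2)K^{2l+1}$. Two subcases arise.

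If $|k_1|\ge C_n K^{2}$ for a large constant $C_n$ (with $|k_1|\ge |m|$ automatically large), the leading term dominates. This gives $|\Omega_l(\k)|\gtrsim |k_1|^{2l}$, hence
\[
\frac{|k_1|^{2l-1}}{|\Omega_l(\k)|}\lesssim |k_1|^{-1}\le 1<N,
\]
a contradiction. Care is needed because $m^2k_1^{2l-1}$ versus $m k_1^{2l}$ only requires $|k_1|\gg |m|$, and the tail $K^{2l+1}$ versus $|k_1|^{2l}$ requires $|k_1|\gg K^{(2l+1)/(2l)}$; both hold when $|k_1|\ge C_nK^2$.

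If $|k_1|< C_nK^2$, every index is bounded by $C_nK^2=C_nN^{1/n}$. Since $\k\notin\mathcal{R}_n^l$, $\Omega_l(\k)$ is a nonzero integer, so $|\Omega_l(\k)|\ge1$ and
\[
\frac{|k_1|^{2l-1}}{|\Omega_l(\k)|}\le (C_nN^{1/n})^{2l-1}\le C_n^{2n} N^{(2n-3)/n}.
\]
This is not automatically $<N$, so the plain integrality bound is too weak here. This is the main obstacle.

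The fix is to use the exponent budget more cleverly: choose thresholds so the bounded region is $|k_1|\lesssim N^{\frac{1}{2l-1}-}$. That is, pick the subcase split at $|k_1|\ge C_n K^{2}$ only if $K^{2(2l-1)}\ll N$. Since $2l-1\le 2n-3$ and $K=N^{\frac1{2n}}$, we get $K^{2(2l-1)}=N^{\frac{2l-1}{n}}$, which exceeds $N$ when $l>\frac{n+1}{2}$. So instead, in the expansion regime we only need $|k_1|\gg K^{(2l+1)/(2l)}$ and $|k_1|\gg |m|\sim K$, i.e. $|k_1|\ge C_nK^{1+\frac1{2l}}\le C_nK^{3/2}$. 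In the bounded regime we then get
\[
|k_1|^{2l-1}\le C_n K^{\frac32(2l-1)}=C_nN^{\frac{3(2l-1)}{4n}}<N
\]
since $2l-1\le 2n-3<\frac{4n}{3}\cdot$… this fails for large $l$, so I expect to refine further. The plan is to treat the Taylor remainder exactly: $k_1^{2l+1}-(k_1-m)^{2l+1}$ has all terms of the same sign as $m k_1^{2l}$ when $|k_1|\ge|m|$ and $k_1,m$ have compatible signs. In general it is $\ge c\,|m|\,|k_1|^{2l}$ whenever $|k_1|\ge 2|m|$, by monotonicity of $x\mapsto x^{2l+1}$. Then domination by the tail only needs $|k_1|^{2l}\gg K^{2l+1}$, i.e. $|k_1|\ge C_nK^{1+\frac{1}{2l}}$. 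Refining the bounded regime is the delicate step, and I would attempt to show that there too a convexity/mean value argument gives $|\Omega_l|\gtrsim |k_1|^{2l}$ unless $|k_1|\lesssim K^{1+1/(2l)}$, bounding $|k_1|^{2l-1}\lesssim N^{\frac{2l+1}{2n}\cdot\frac{2l-1}{2l}}<N$, which holds because $(2l+1)(2l-1)<4ln$ for $l\le n-1$.

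\textbf{Case $m=0$.} Then $k_2=-k_1$, so $u^{\k}=|u_{k_1}|^2u^{\k'}$ with $\k'=(k_3,\ldots,k_n)\in\M_{n-2}$. If $|k_1|\ge K$ we are in \eqref{eq: azione grande}. Otherwise all indices are below $K$, and integrality gives
\[
\frac{|k_1|^{2l-1}}{|\Omega_l(\k)|}\le K^{2l-1}<N,
\]
again a contradiction.
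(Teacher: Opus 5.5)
Your final argument, once the exploratory detours are discarded, is correct and uses the same ingredients as the paper, only run contrapositively. These ingredients are: integrality $|\Omega_l(\k)|\ge 1$; a lower bound $|k_1^{2l+1}+k_2^{2l+1}|\gtrsim_l |k_1|^{2l}$ when $k_1+k_2\neq 0$; the tail bound $(n-2)|k_3|^{2l+1}$; and the exponent inequality $4l^2-1<4ln$. For the lower bound, you use the mean value theorem in the regime $|k_1|\ge 2|k_1+k_2|$, whereas the paper uses the exact factorization when $k_1k_2<0$ and the bound $|k_1|\le (n-2)|k_3|$ when $k_1k_2>0$. The paper argues directly: integrality forces $|k_1|\ge N^{1/(2l-1)}$, and then either $k_2=-k_1$ or $|k_3|^{2l+1}\gtrsim_n |k_1|^{2l}$.

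Your split at $|k_1|\sim C_nK^{1+\frac{1}{2l}}$ (with $C_n\ge 2(n-2)$) already closes the case $m\neq 0$, because the bounded regime needs nothing beyond integrality. So the $C_nK^2$ and $K^{3/2}$ attempts, and the ``delicate step'' caveat, should simply be removed.
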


\Cref{cor: azione grande o tre indici grandi} is a generalization of Lemma 3.7 and Corollary 3.8 in \cite{bernier} to the various dispersive relations in the KdV hierarchy.

\paragraph{The generators of the map $\Phi_n$.}

The Hamiltonian functions we use to generate symplectic transformations $\Phi_n$ in \eqref{eq:Phin} are of the form
\begin{equation}
G_n^{(l)} = \sum_{\bm{k} \in \mathcal{J}_{n,l,N}} \frac{\widetilde{c}_{\k}^{(l)}}{i \Omega_l(\bm{k})} u^{\bm{k}},
\label{eq: other lie transforms}
\end{equation}
for $n \ge 3$ and $l =1,\ldots,n-1.$ 
In \eqref{eq: other lie transforms} the coefficients $\widetilde{c}_{\k}^{(l)}$ are the ones appearing in 
\[
\widetilde{F}_n^{(l)} = \left ( F^{(l)} \circ \Phi_{3}\circ\ldots\circ \Phi_{n-1} \circ \Phi_n^{(1)} \circ \ldots \circ \Phi_n^{(l-1)} \right )_n, 
\]
which is the homogeneous term of degree $n$ in $F^{(l)}$ after composing with the previous transformations (cf.~\eqref{eq:Phin} and \eqref{vera Gn} below). 
By \Cref{lemma: facts about poisson bracket}-(ii) and \eqref{eq: taylor formula for Hnew integral remainder}, it is clear that $G_n^{(l)}$ is constructed to erase some monomials from $\widetilde{F}_n^{(l)}$. 
In fact, we will prove that this transformation actually removes some monomials from $\widetilde{F}_n^{(j)}$ for every $1\leq j\leq r-1$. 
This key observation allows us to put the first $r-1$ first integrals of the KdV hierarchy in normal form simultaneously (cf. \Cref{prop: justification normal form}).

The first result we need is the well-posedness of the Hamiltonian flow associated with $G_n^{(l)}$ \eqref{eq: other lie transforms}, $n \ge 3$ and $l \in \{1,\ldots,n-1\}$,
at least up to time 1 in a neighborhood of the origin of $\dot{H}^s$.
This result is a generalization of \cite[Lemma 3.6]{bernier} with some important differences: 
\begin{enumerate}
\item[(i)] the coefficients $g_{\k}$ in \eqref{eq:G} do not need to be bounded (in fact they can grow polynomially in $\k$, thanks to the functional space $Y_n^l$ cf. \eqref{def: functional space});
\item[(ii)] $\Omega_l(\k)$ in \eqref{eq:G} can be any of the resonant relations in the KdV hierarchy; and
\item[(iii)] we obtain precise bounds on the Lipschitz constant of the Hamiltonian flow associated with $G$.
\end{enumerate}

The latter point is crucial to \Cref{prop: properties of random fixed point}, which is at the heart of the main result in this manuscript.

\begin{lemma} \label{lemma: transformations preserve Hs}
Let $N, s\ge 1$, $n \ge 3$ and $l \in \{ 1,2,\ldots,n-1\}$.
If 
\begin{equation}\label{eq:G}
G = \sum_{\bm{k} \in \mathcal{J}_{n,l,N}} \frac{g_{\bm{k}}}{i \Omega_l(\bm{k})} u^{\bm{k}}
\end{equation}
with $\bm{g} \in Y_n^{l}$, then its vector field $X_G = \partial_x \nabla G$ maps $\dot{H}^s$ into itself and 
\[
\norm{X_G(u)}_{\dot{H}^s} \lesssim_{n,s} N \norm{\bm{g}}_{Y_n^{l}} \norm{u}_{\dot{H}^s}^{n-1}.
\]
As a consequence, there exists $\epsilon_0 \gtrsim_{n,s} [N \norm{\bm{g}}_{Y_n^{l}}]^{-\frac{1}{n-2}}$ such that for any $\epsilon \le \epsilon_0$ and any $\xi \in [0,2]$, $G$ generates a Hamiltonian flow $\Phi_G^{\xi} : B_s(0,\epsilon) \rightarrow \dot{H}^s$ which is close to the identity in the sense that
\begin{equation}
\norm{\Phi_G^{\xi}(u)-u}_{\dot{H}^s} \lesssim_{n,s} N \norm{\bm{g}}_{Y_n^{l}} \norm{u}_{\dot{H}^s}^{n-1}.
\label{eq: close to the identity 1}
\end{equation}
Finally, for $\xi \in [0,2]$, $\Phi_G^{\xi}$ is Lipschitz-continuous and
\begin{align}
\norm{\Phi_{G}^{\xi=1}(u_1)-\Phi_{G}^{\xi=1}(u_2)}_{\dot{H}^s} \le \norm{u_1-u_2}_{\dot{H}^s} \exp \left ( C(n,s) N \norm{\bm{g}}_{Y_n^l} (\norm{u_1}_{\dot{H}^s}+\norm{u_2}_{\dot{H}^s})^{n-2} \right ) 
\label{eq: lipschitz continuity of phi}.
\end{align}
\end{lemma}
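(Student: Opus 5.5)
We plan to write the vector field explicitly and estimate it in $\dot{H}^s$ using the two defining restrictions of $\mathcal{J}_{n,l,N}$. By \eqref{intro: kdv hamiltonian in fourier},
\[
X_G(u)_r = \{u_r,G\} = \frac{ir}{2\pi}\frac{\pa G}{\pa u_{-r}} = \frac{r}{2\pi}\sum_{\k\in\mathcal{J}_{n,l,N}} \frac{g_{\k}}{\Omega_l(\k)} \sum_{\alpha=1}^n \delta(k_\alpha=-r)\frac{u^{\k}}{u_{k_\alpha}} .
\]
On $\mathcal{J}_{n,l,N}$ we have $|g_{\k}|/|\Omega_l(\k)| \le \norm{\bm g}_{Y_n^l}\norm{\k}_{\ell^\infty}^{2l-2}/|\Omega_l(\k)| \le N\norm{\bm g}_{Y_n^l}\norm{\k}_{\ell^\infty}^{-1}$. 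Since $|r|=|k_\alpha|\le \norm{\k}_{\ell^\infty}$, the factor $r$ is absorbed. Hence $|X_G(u)_r| \lesssim_n N\norm{\bm g}_{Y^l_n}\sum_{\k'\in(\Z^*)^{n-1},\ \sum k'_j = r} |u^{\k'}|$, which is a convolution of $n-1$ copies of $|u_k|$.

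Next we take $\dot H^s$ norms. Since $|r|^s\lesssim_n \max_j|k_j'|^s$, we place the weight on the largest factor. Young's inequality \eqref{young convolution inequality} then gives $\norm{X_G(u)}_{\dot H^s}\lesssim_{n,s} N\norm{\bm g}_{Y^l_n}\norm{u}_{\dot H^s}\norm{u}_{\dot{\FL}^{0,1}}^{n-2}$. The embedding \eqref{eq: embedding hs in fl01} bounds this by $\norm{u}_{\dot H^s}^{n-1}$, using $s\ge1$. The same computation applied to the differential $\mathrm d X_G(u)[h]$ replaces one factor $u$ by $h$, giving $\norm{\mathrm dX_G(u)}_{\mathcal L(\dot H^s)}\lesssim N\norm{\bm g}_{Y^l_n}\norm{u}_{\dot H^s}^{n-2}$. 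In particular, $X_G$ is locally Lipschitz on $\dot H^s$. The reality condition shows that $X_G(u)$ stays in the real subspace. We observe that the exclusions $\M^{(\mu_3\ge\cdot)},\M^{(I\ge\cdot)}$ in \eqref{def: set JnlN} are not needed here; only the ratio bound matters.

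For the flow, Cauchy–Lipschitz in the Banach space $\dot H^s$ gives local existence. A bootstrap then extends it to $\xi\in[0,2]$. Suppose $\norm{\Phi^\xi_G(u)}_{\dot H^s}\le 2\epsilon$ on $[0,\xi_*]$. Integrating, we get $\norm{\Phi^\xi_G(u)-u}_{\dot H^s}\le 2C N\norm{\bm g}_{Y^l_n}(2\epsilon)^{n-1}\le\epsilon$, provided $\epsilon^{n-2}\lesssim (N\norm{\bm g})^{-1}$. This defines $\epsilon_0$, closes the bootstrap, and yields \eqref{eq: close to the identity 1} with constant adjusted. For \eqref{eq: lipschitz continuity of phi}, we differentiate $w(\xi)=\Phi^\xi_G(u_1)-\Phi^\xi_G(u_2)$. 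The mean value theorem along the segment between the trajectories gives $\pa_\xi\norm{w}\le CN\norm{\bm g}\sup\norm{\cdot}^{n-2}\norm{w}$. Since the trajectories stay comparable in norm to their initial data, the sup is bounded by $\lesssim(\norm{u_1}+\norm{u_2})^{n-2}$. Gronwall up to $\xi=1$ then gives the exponential bound.

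The main obstacle is the derivative count in the first step. The Poisson bracket costs one derivative $r$, and $g_{\k}$ grows like $\norm{\k}^{2l-2}$. We need exactly the gain $\norm{\k}^{2l-1}/|\Omega_l|\le N$ to beat both, and we must check that putting $|r|^s$ on the largest remaining index is legitimate, since $r=-k_\alpha$ may itself be the largest index. It is legitimate: $|r|\le (n-1)\max|k'_j|$ by the zero-momentum condition, as in \eqref{eq:jensen2}.
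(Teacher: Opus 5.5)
Your proposal is correct and follows the paper's proof step by step. The shared steps are: the explicit vector field; the $\mathcal{J}_{n,l,N}$ ratio bound, which absorbs both the derivative lost in the Poisson bracket and the $\norm{\k}_{\ell^\infty}^{2l-2}$ growth of $g_{\k}$; zero momentum to move $|r|^s$ onto an input factor; Young's inequality with the $\dot{\FL}^{0,1}$ embedding; and then Cauchy--Lipschitz, a bootstrap, and Gr\"onwall using the bound on the differential. The only cosmetic difference is that the paper runs the bootstrap with threshold $2\norm{u}_{\dot H^s}$ instead of $2\epsilon$, which gives the $\norm{u}_{\dot H^s}^{n-1}$ form of \eqref{eq: close to the identity 1} directly.
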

\begin{proof}
By \Cref{lemma: convergence of hamiltonian function}, $G$ is a smooth function in $\dot{H}^s$, $s \ge 1$. Indeed, for $\k \in \mathcal{J}_{n,l,N}$, we have 
$\left \lvert \frac{g_{\k}}{\Omega_l(\k)} \right \rvert \le N \norm{\bm{g}}_{Y_n^l}$. 

\medskip

\noindent {\sc Vector Field:} Next we compute the vector field generated by $G$. Setting $\k =(\k',k_{n})$ with $\k'=(k_1,\ldots,k_{n-1})$, 
\[
\begin{split}
X_G(u)_{\tr} =&\ \{ u_{\tr} , G \} = \frac{\tr}{2\pi} \sum_{(\k',-{\tr}) \in \mathcal{J}_{n,l,N}} \left [  \frac{g(-\tr,k_1,\ldots,k_{n-1})}{\Omega_l(-\tr,k_1,\ldots,k_{n-1})}+ \ldots + \frac{g(k_1,\ldots,k_{n-1},-\tr)}{\Omega_l(k_1,\ldots,k_{n-1},-\tr)} \right ] u^{\bm{k'}}.
\end{split}
\]
Therefore
\[
\begin{split}
\norm{X_G(u)}_{\dot{H}^s}^2 \le&\ \frac{1}{(2\pi)^2} \sum_{\tr \in \Z^*} |\tr|^{2s+2} \bigg \lvert \sum_{(\bm{k}',-\tr) \in \J_{n,l,N}} \left [  \frac{g(-\tr,k_1,\ldots,k_{n-1})}{\Omega_l(-\tr,k_1,\ldots,k_{n-1})}+\ldots + \frac{g(k_1,\ldots,k_{n-1},-\tr)}{\Omega_l(k_1,\ldots,k_{n-1},-\tr)} \right ] u^{\bm{k'}} \bigg \rvert^2 \\
 &\ \hspace{-0.65cm} \stackrel{\eqref{def: functional space}}{\le} \frac{n^2}{(2\pi)^2} \norm{\bm{g}}_{Y_n^l}^2 \sum_{\tr \in \Z^*} |\tr|^{2s} \left ( \sum_{(\bm{k}',-\tr) \in \J_{n,l,N}} \left \lvert \frac{\max(|k_1|,\ldots,|k_{n-1}|,|\tr|)^{2l-1}}{\Omega_l((\bm{k}',-\tr))} \right \rvert \left \lvert u^{\k'} \right \rvert  \right )^2 \\
 &\ \hspace{-0.7cm} \stackrel{\eqref{def: set JnlN}}{\le} \frac{n^2 N^2 \norm{\bm{g}}_{Y_n^l}^2}{(2\pi)^2} \sum_{\tr \in \Z^*} |\tr|^{2s} \left \lvert \sum_{k_1+\ldots+k_{n-1}=\tr} |u_{k_1}|\ldots|u_{k_{n-1}}|  \right \rvert^2\\
 &\ \hspace{-0.7cm} \stackrel{\eqref{eq: jensen inequality}}{\le} \frac{n^{2+2s}N^2 \norm{\bm{g}}_{Y_n^l}^2}{(2\pi)^2} \sum_{\tr \in \Z^*} \left \lvert \sum_{k_1+\ldots+k_{n-1}=\tr} |k_1|^s |u_{k_1}|\ldots|u_{k_{n-1}}|  \right \rvert^2.
\end{split}
\]
By the Young convolution inequality \eqref{young convolution inequality} and the fact that $s > \frac{1}{2}$ ,
\begin{equation}\label{eq: est_vector field}
\norm{X_G(u)}_{\dot{H}^s}^2 \lesssim n^{2s+2} N^2 \norm{\bm{g}}_{Y_n^l}^2 \norm{u}_{\dot{H}^s}^2 \norm{u}_{\dot{\FL}^{0,1}}^{2(n-2)} \stackrel{\eqref{eq: embedding hs in fl01}} {\lesssim} c_0^{2(n-2)} N^2 n^{2s+2}  \norm{\bm{g}}_{Y_n^l}^2 \norm{u}_{\dot{H}^s}^{2n-2}.
\end{equation}
A similar argument yields
\begin{equation}
\norm{\mathrm{d} X_G(u)}_{\mathcal{L}(\dot{H}^s)}^2 \lesssim c_0^{2(n-2)} n^{2s+4} N^2 \norm{\bm{g}}_{Y_n^l}^2 \norm{u}_{\dot{H}^s}^{2n-4}.
\label{eq: stima sul differenziale}
\end{equation}
By the Cauchy-Lipschitz Theorem the flow $\Phi_G^{\xi}$ is locally well defined in $\dot{H}^s$.

\medskip

\noindent {\sc Close to the Identity:} To prove \eqref{eq: close to the identity 1} we use a bootstrap argument. 
Let 
\[
I = \left \{ \xi \in [0,2] \, | \, \sup_{\xi' \in [0,\xi]} \norm{\Phi_G^{\xi'}(u)}_{\dot{H}^s} \le 2\norm{u}_{\dot{H}^s} \right \}.
\]
This set is clearly closed and $0 \in I$. It suffices to show that it is open: if $\xi \in I$, then
\begin{equation} \label{eq: stima close to the identity}
\norm{\Phi_G^{\xi}(u)-u}_{\dot{H}^s} \le \int_0^{\xi} \norm{X_G(\Phi_G^{\xi'}(u))}_{\dot{H}^s} \, \mathrm{d} \xi' \stackrel{\eqref{eq: est_vector field}}{\lesssim_{n,s}} N \norm{\bm{g}}_{Y_n^l} \xi \sup_{\xi' \in [0,\xi]} \norm{\Phi_G^{\xi'}(u)}_{\dot{H}^s}^{n-1}  \lesssim_{n,s} N \norm{\bm{g}}_{Y_n^l} \norm{u}_{\dot{H}^s}^{n-1}.
\end{equation}
If we take 
\[
\norm{u}_{\dot{H}^s} \lesssim_{n,s} \left ( N \norm{\bm{g}}_{Y_n^l} \right )^{-\frac{1}{n-2}} = \epsilon_0,
\]
then we can arrange $\norm{\Phi_G^{\xi}(u)-u}_{\dot{H}^s}<\norm{u}_{\dot{H}^s}$.
Therefore, $\sup_{\xi' \in [0,\xi]} \norm{\Phi_G^{\xi'}(u)}_{\dot{H}^s} <2\norm{u}_{\dot{H}^s}$, which implies that $\xi+\delta\in I$ for $\delta>0$ sufficiently small -- thus $I$ is open. Therefore $I=[0,2]$ and \eqref{eq: stima close to the identity} is valid $\forall \xi \in [0,2]$, yielding \eqref{eq: close to the identity 1}. 

\medskip

\noindent {\sc Lipschitz Property:}
For $\xi \in [0,2]$,
\[
\begin{split}
\Phi_{G}^{\xi}(u_1)-\Phi_{G}^{\xi}(u_2) = u_1 - u_2 + \int_0^{\xi} [ X_G(\Phi_{G}^{\xi'}(u_1)) - X_G(\Phi_{G}^{\xi'}(u_2)) ] \, \mathrm{d} \xi'
\end{split}
\] 
and so
\[
\begin{split}
& \norm{\Phi_{G}^{\xi}(u_1)-\Phi_{G}^{\xi}(u_2)}_{\dot{H}^s} 
\!\le \norm{u_1-u_2}_{\dot{H}^s}\! +\! \int_0^{\xi}\!\! \sup_{\norm{w}_{\dot{H}^s} \le 2(\norm{u_1}_{\dot{H}^s}+\norm{u_2}_{\dot{H}^s})}\!\! \norm{\mathrm{d} X_G(w)}_{\mathcal{L}(\dot{H}^s)} \norm{\Phi_{G}^{\xi'}(u_1)-\Phi_{G}^{\xi'}(u_2)}_{\dot{H}^s} \! \mathrm{d} \xi'
\end{split}
\]  using the fact that $\norm{\Phi_{G}^{\xi}(u_j)}_{\dot{H}^s} \le 2 \norm{u_j}_{\dot{H}^s}$ for $\xi \in [0,2]$ and $j \in \{ 1,2 \}.$
The Gronwall inequality and \eqref{eq: stima sul differenziale} yield \eqref{eq: lipschitz continuity of phi}.
\end{proof}

\begin{rem} \label{rmk: reality condition}
If the coefficients $g_{\k}$ in \eqref{eq:G} satisfy the reality condition \eqref{def: reality condition} then, by \eqref{def: resonant relation}, $\dfrac{g_{\k}}{i \Omega_l(\k)}$ satisfies the reality condition and hence $G$ is real valued.
It is clear by the definition \eqref{intro: kdv hamiltonian in fourier} that if two formal polynomials $F$ and $G$ satisfy the reality condition, then their Poisson bracket satisfies it.
Since in this manuscript we will only use formal polynomials from the KdV hierarchy (see \Cref{lem:first_integrals}) and we will transform them using symplectic transformations generated by the flow of some real valued Hamiltonian $G$ \eqref{eq:G}, then by \eqref{eq: taylor formula for Hnew integral remainder} we immediately see that all the formal polynomials we consider satisfy the reality condition \eqref{def: reality condition}.
\end{rem}

Our next result allows us to control the Poisson bracket between a homogeneous  $m$-formal polynomial and a homogeneous $l$-formal polynomial supported on the set of indices \eqref{def: set JnlN}, such as the Hamiltonian $G_n^{(l)}$ in \eqref{eq: other lie transforms} (see \Cref{prop: justification normal form}).
Thanks to the choice of the set $\mathcal{J}_{n,l,N}$, we don't lose any derivatives, as anticipated by \Cref{rem: derivatives poisson bracket}.
The proof is a generalization of \cite[Lemma 3.9]{bernier}.

\begin{lemma} \label{lemma: stabilità parentesi poisson}
Let $N \ge 2$, $m \in \N$, $s \ge m$ and $n,r \ge 3$.
Let $l \in \{ 1,2,\ldots,n-1\}$. 
Consider $u \in \dot{H}^s(\T)$ and
\[
P(u) = \sum_{\k \in \mathcal{M}_r} c_{\k} u^{\k}, \qquad G(u) = \sum_{\k \in \mathcal{J}_{n,l,N}} \frac{g_{\k}}{i \Omega_l(\k)} u^{\k},
\]
with $\bm{g} \in Y_n^l$ and $\bm{c} \in Y_r^m$. 
Then
\[
\{ P,G \}(u) = \sum_{\bar{\k} \in \mathcal{M}_{n+r-2}} d_{\bar{\k}} u^{\bar{\k}}
\]
where $\bm{d} \in Y_{n+r-2}^m$ and 
\begin{equation}\label{d_est}
\norm{\bm{d}}_{Y_{n+r-2}^m} \le \frac{1}{2\pi} N n^{2m-1} r \norm{\bm{g}}_{Y_n^l} \norm{\bm{c}}_{Y_r^m}.
\end{equation}
\end{lemma}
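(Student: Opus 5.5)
We compute $\{P,G\}$ directly from the Poisson bracket \eqref{intro: kdv hamiltonian in fourier}, as in \eqref{poissonF3G3}--\eqref{dk parentesi FG}. By bilinearity, each pair of monomials $c_{\k}u^{\k}$, $\k\in\M_r$, and $\frac{g_{\k'}}{i\Omega_l(\k')}u^{\k'}$, $\k'\in\J_{n,l,N}$, contributes only when some $k_\gamma=-k'_\beta=:\rho$. In that case it produces $\frac{i\rho}{2\pi}\, c_{\k}\,\frac{g_{\k'}}{i\Omega_l(\k')}\, u^{\bar\k}$, where $\bar\k\in\M_{n+r-2}$ collects the remaining $r-1$ indices of $\k$ followed by the remaining $n-1$ indices of $\k'$. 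Zero momentum gives $\rho=-\sum(\text{remaining indices of }\k)$, so $\rho$ is determined by $\bar\k$. Summing over the positions $\gamma\in\{1,\ldots,r\}$ and $\beta\in\{1,\ldots,n\}$ gives the explicit coefficient $d_{\bar\k}$, the analogue of \eqref{dk parentesi FG}. Absolute convergence, which justifies rearranging the series, follows exactly as in \Cref{lemma: facts about poisson bracket}(i) once the pointwise bound below is in hand. The reality condition is inherited, as noted in \Cref{rmk: reality condition}.

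The core of the proof is the pointwise estimate on each summand. By the definition of $Y^l_n$ and of $\J_{n,l,N}$ in \eqref{def: set JnlN},
\[
\left|\frac{g_{\k'}}{\Omega_l(\k')}\right| \le \norm{\bm g}_{Y^l_n}\frac{\norm{\k'}_{\ell^\infty}^{2l-2}}{|\Omega_l(\k')|}\le N\norm{\bm g}_{Y^l_n}\,\norm{\k'}_{\ell^\infty}^{-1}.
\]
Since $|\rho|\le\norm{\k'}_{\ell^\infty}$, this factor of $1/\norm{\k'}_{\ell^\infty}$ absorbs the derivative $\rho$ from the bracket. This cancellation is exactly why the truncation in \eqref{def: set JnlN} uses the exponent $2l-1$. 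Each summand is then bounded by $\frac{1}{2\pi}N\norm{\bm g}_{Y^l_n}\norm{\bm c}_{Y^m_r}\norm{\k}_{\ell^\infty}^{2m-2}$.

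It remains to convert $\norm{\k}_{\ell^\infty}$ into $\norm{\bar\k}_{\ell^\infty}$. Every entry of $\k$ other than $\rho$ is an entry of $\bar\k$. For the entry $\rho$ itself, \eqref{eq:jensen2} applied to $\k'\in\M_n$ gives $|\rho|\le (n-1)\max_{j\ne\beta}|k'_j|\le (n-1)\norm{\bar\k}_{\ell^\infty}$. Hence $\norm{\k}^{2m-2}_{\ell^\infty}\le n^{2m-2}\norm{\bar\k}^{2m-2}_{\ell^\infty}$. We chose to bound $\rho$ through the $n-1$ remaining entries of $\k'$ rather than those of $\k$ so that the constant depends on $n$ and not on $r$, which matches the shape of \eqref{d_est}.

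Finally we count the contributions to a fixed $\bar\k$. There are at most $r$ choices of $\gamma$ and $n$ choices of $\beta$, and the pair $(\gamma,\beta)$ fixes $\rho$ and hence the two monomials. This gives at most $rn$ terms and yields \eqref{d_est}, with the $Y^m$ weight $\norm{\bar\k}^{2m-2}_{\ell^\infty}$ and constant $\frac{1}{2\pi}N n^{2m-1}r$. The main points requiring care are this bookkeeping of the constants and the absorption of $\rho$ by the $\J_{n,l,N}$ truncation. Everything else is routine, mirroring the proof of \Cref{lemma: facts about poisson bracket}(i).
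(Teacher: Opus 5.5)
Your proposal is correct and follows the paper's proof essentially step by step. You use the same reparametrization by $\bar\k\in\M_{n+r-2}$, the same absorption of the bracket's factor $\rho$ by the $\J_{n,l,N}$ truncation (giving $|\rho\, g_{\k'}/\Omega_l(\k')|\le N\norm{\bm g}_{Y^l_n}$), the same Jensen step on the $\k'$ side producing the factor $(n-1)^{2m-2}$ as in \eqref{est_c}, and the same count of at most $rn$ contributions, so you arrive at the constant $\frac{1}{2\pi}N r\, n (n-1)^{2m-2}\le \frac{1}{2\pi}N n^{2m-1} r$ of \eqref{d_est}.
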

\begin{proof}
We have that
\[
\begin{split}
\{ P,G \}(u) =&\ \sum_{j \in \Z^*} \frac{ij}{2\pi} \sum_{\alpha = 1}^r \sum_{\beta=1}^n \sum_{ \substack{ \k \in \mathcal{M}_r \\ \k' \in \mathcal{J}_{n,l,N} \\ k_{\alpha} = j = -k'_{\beta}}} \frac{c_{\k} g_{\k'}}{i \Omega_l(\k')} \frac{u^{\k} u^{\k'}}{u_{k_{\alpha}}u_{k_{\beta}'}} = - \frac{i}{2\pi} \sum_{\alpha = 1}^r \sum_{\beta=1}^n \sum_{ \substack{\k \in \mathcal{M}_r \\ \k' \in \mathcal{J}_{n,l,N} \\ k_{\alpha} = -k'_{\beta}}} k_{\beta}' \frac{c_{\k} g_{\k'}}{i \Omega_l(\k')} \frac{u^{\k} u^{\k'}}{u_{k_{\alpha}}u_{k_{\beta}'}}.
\end{split}
\]
As in \eqref{dk parentesi FG}, we can reparametrize the latter sum using a variable $\bar{\k}\in\mathcal{M}_{n+r-2}$
\[
\begin{split}
\{ P,G \}(u) =&\ - \frac{1}{2\pi} \sum_{ \substack{ \bar{\k} \in \mathcal{M}_{n+r-2} \\ (\bar{k}_r,\ldots,\bar{k}_{r+n-2},\sum_{j=1}^{r-1} \bar{k}_j) \in \J_{n,l,N} } } \sum_{\alpha = 1}^r \sum_{\beta=1}^n c(\bar{k}_1,\ldots,\bar{k}_{\alpha-1},\sum_{j=r}^{r+n-2} \bar{k}_j,\bar{k}_{\alpha},\ldots,\bar{k}_{r-1}) \\
& \cdot \big (\sum_{j=1}^{r-1} \bar{k}_j\big )\,\frac{g(\bar{k}_r,\ldots,\bar{k}_{r+\beta-2},\sum_{j=1}^{r-1} \bar{k}_j,\bar{k}_{r+\beta-1},\ldots,\bar{k}_{r+n-2})}{\Omega_l(\bar{k}_r,\ldots,\bar{k}_{r+\beta-2},\sum_{j=1}^{r-1} \bar{k}_j,\bar{k}_{r+\beta-1},\ldots,\bar{k}_{r+n-2})} u^{\bar{\k}}
=: \sum_{ \bar{\k} \in \mathcal{M}_{n+r-2} } d_{\bar{\k}} u^{\bar{\k}}
\end{split}
\]
renaming each time the indices. 
Finally 
\begin{equation}\label{d_est_c}
|d_{\bar{\k}}| \le \frac{1}{2\pi} Nn \norm{\bm{g}}_{Y_n^l} \sum_{\alpha = 1}^r | c(\bar{k}_1,\ldots,\bar{k}_{\alpha-1},\bar{k}_r+\ldots+\bar{k}_{r+n-2},\bar{k}_{\alpha},\ldots,\bar{k}_{r-1}) |
\end{equation}
by the definition of $\J_{n,l,N}$. 
Note that 
\begin{equation} \label{est_c}
\begin{split}
\frac{| c(\bar{k}_1,\ldots,\bar{k}_{\alpha-1},\bar{k}_r+\ldots+\bar{k}_{r+n-2},\bar{k}_{\alpha},\ldots,\bar{k}_{r-1}) |}{\max ( |\bar{k}_1|,\ldots,|\bar{k}_{r-1}|,|\bar{k}_r+\ldots+\bar{k}_{r+n-2}| )^{2m-2}} & \frac{\max ( |\bar{k}_1|,\ldots,|\bar{k}_{r-1}|,|\bar{k}_r+\ldots+\bar{k}_{r+n-2}| )^{2m-2}}{\max ( |\bar{k}_1|,\ldots,|\bar{k}_{r+n-2}| )^{2m-2} } \\
\stackrel{\eqref{eq: jensen inequality}}{\le}&\ (n-1)^{2m-2} \norm{\bm{c}}_{Y_r^m}.
\end{split}
\end{equation}
Bounds \eqref{d_est_c}-\eqref{est_c} yield \eqref{d_est}.
\end{proof}

\paragraph{Main normal form theorem for the KdV hierarchy.}
Now we are ready for the most important result of this section. 
We prove that \Cref{thm: normal form for ham} holds actually for all $F^{(j)}$ in the KdV hierarchy \eqref{eq: first integrals in fourier}, for $1 \le j \le r-1$.
In particular we provide a symplectic transformation $\Phi$, invertible in a neighborhood of the origin of $\dot{H}^s(\T)$, $s \ge 2r$, close to the identity (in the sense of \eqref{eq: close to the identity}), and we give a precise estimate of the coefficients of the transformed Hamiltonian.
The proof is a generalization of \cite[Proposition 3.10, Theorem 2]{bernier}.
The main difference is that we have to prove that the KdV hierarchy is transformed simultaneously, since we use it to construct each transformation (cf. \Cref{prop: magic formula}).
Since $F^{(j)}$ has not bounded coefficients, we have to work in our more general functional setting (cf. \eqref{def: functional space}).

\begin{theorem} \label{prop: justification normal form}
Let $r \ge 3$, $s \ge 2r$. There exists $N_0=N_0(r)$ and a constant $c(r,s)>0$ such that for all $N\geq N_0$ and all $\epsilon>0$ such that $\epsilon\leq c(r,s) N^{-1}$ the following holds.


There exists an invertible symplectic map $\Phi : B_s(0,\epsilon) \rightarrow B_s(0,2\epsilon)$ with inverse $\Phi^{-1} : B_s(0,\epsilon/2) \rightarrow B_s(0,\epsilon)$
which are close to the identity in the sense that
\begin{equation}
\norm{\Phi^{\pm 1}(u)-u}_{\dot{H}^s} \lesssim_{r,s} N \norm{u}_{\dot{H}^s}^2, 
\label{eq: close to the identity}
\end{equation}
and Lipschitz-continuous:
\begin{equation} \label{eq: lip_const}
\norm{\Phi^{\pm 1}(u)-\Phi^{\pm 1}(\underline{u})}_{\dot{H}^s} \le 2 \norm{u-\underline{u}}_{\dot{H}^s}.
\end{equation}
Moreover on $B_s(0,\epsilon)$ we have, for $j=1,\ldots, r-1$,
\begin{equation}
F^{(j)} \circ \Phi = \sum_{\alpha = 1}^{\lfloor \frac{r}{2} \rfloor } \widehat{F}_{2\alpha}^{(j)} + R^{(j),(\mu_3 \ge N^{1/2r})} + R^{(j),(I \ge N^{1/2r})} + R_{\ge r+1}^{(j)},
\label{hamiltoniana dopo tutte le trasformazioni}
\end{equation}
where
\begin{align}
\widehat{F}_2^{(j)}(u) &= \pi \sum_{k \in \Z^*} k^{2j} |u_k|^2,\tag*{(\ref{hamiltoniana dopo tutte le trasformazioni}a)} \label{parte F2}\\
\widehat{F}_{2\alpha}^{(j)}(u) &= \sum_{\k \in (\Z^*)^{\alpha}} b_{\k}^{(j)} I^{\k} \quad \text{where $I=(I_k)_{k \in \Z^*}=(|u_k|^2)_{k \in \Z^*}$ and $\frac{|b_{\k}^{(j)}|}{\max ( |k_1|,\ldots,|k_{\alpha}| )^{2j-2} } \lesssim_{\alpha} N^{2\alpha-3}$},\tag*{(\ref{hamiltoniana dopo tutte le trasformazioni}b)} \label{parte integrabile 2} \\
R^{(j),(\mu_3 \ge N^{1/2r})}(u) &= \sum_{ \substack{ \k \in \mathcal{D} \\ 3 \le \#\k \le r \\ |k_3| \ge N^{\frac{1}{2r}}} } c_{\k}^{(j)} u^{\k} \quad \text{with $\frac{|c_{\k}^{(j)}|}{\max ( |k_1|,\ldots,|k_{\#\k}| )^{2j-2} } \lesssim_{\#\k} N^{\#\k-3}$}, \tag*{(\ref{hamiltoniana dopo tutte le trasformazioni}c)$_r$} \label{terzo indice grande} \\
R^{(j),(I \ge N^{1/2r})}(u) &= \sum_{m = \lceil N^{\frac{1}{2r}} \rceil}^{+\infty} \, \, \sum_{ \substack{ \k \in \mathcal{D} \\ 1 \le \#\k \le r-2} } c_{m,\k}^{(j)} I_m u^{\k} \quad \text{with $\frac{|c_{m,\k}^{(j)}|}{\max ( |m|,|k_1|,\ldots,|k_{\#\k}| )^{2j-2} } \lesssim_{\#\k} N^{\#\k-1}$}, \tag*{(\ref{hamiltoniana dopo tutte le trasformazioni}d)$_r$}\label{azione grande} \\
R_{\ge r+1}^{(j)}(u) &= \sum_{ \substack{ \k \in \M \\ \#\k \ge r+1 } } c_{\k}^{(j)} u^{\k} \quad \text{where $\frac{|c_{\k}^{(j)}|}{\max ( |k_1|,\ldots,|k_{\#\k}| )^{2j-2} } \le \rho^{\#\k} N^{\#\k-3}$ for some $\rho \lesssim_r 1$}, \tag*{(\ref{hamiltoniana dopo tutte le trasformazioni}e)$_r$} \label{resto grande}
\end{align}
and all the Hamiltonians have real coefficients and satisfy \eqref{def: reality condition}.
\end{theorem}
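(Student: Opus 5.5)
We will argue by induction on the degree $n=3,\ldots,r$, constructing $\Phi=\Phi_3\circ\cdots\circ\Phi_r$ with $\Phi_n=\Phi_n^{(1)}\circ\cdots\circ\Phi_n^{(n-1)}$ as in \eqref{eq:Phin}. Each $\Phi_n^{(l)}$ will be the time-one flow of $G_n^{(l)}$ from \eqref{eq: other lie transforms}. The induction hypothesis at step $n$ is the following. After composing with all previous maps, every $F^{(j)}$, $1\le j\le r-1$, has the form \eqref{hamiltoniana dopo tutte le trasformazioni} with $r$ replaced by $n-1$: integrable parts of even degree $\le n-1$, remainders of type (c)$_{n-1}$ and (d)$_{n-1}$, and a tail of degree $\ge n$. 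The tail coefficients lie in $Y^j$ with $\norm{\cdot}_{Y^j_m}\le \rho^m N^{m-3}$. The cases $n=3,4$ are handled by \Cref{lemma: orders 3 and 4} and \Cref{lemma: h4}, where $\Omega_1$ factorizes.

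\textbf{Analytic part.} For each step, \Cref{lemma: transformations preserve Hs} applies with $\norm{\bm g}_{Y^l_n}\lesssim \rho^n N^{n-3}$. It gives a flow on $B_s(0,\epsilon)$ that moves points by $\lesssim N^{n-2}\norm{u}^{n-1}$. Since $\epsilon\le cN^{-1}$, this is $\lesssim N\norm{u}^2$. The Lipschitz constant is $\exp(CN^{n-2}\epsilon^{n-2})\le 1+O(c)$. Composing finitely many such maps and shrinking radii appropriately yields \eqref{eq: close to the identity} and \eqref{eq: lip_const}, with Lipschitz constant at most $2$ after choosing $c(r,s)$ small. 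The inverse is the backward flow, which obeys the same bounds.

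To expand $F^{(j)}\circ\Phi_n^{(l)}$ we use the Lie series \eqref{eq: taylor formula for Hnew integral remainder}. \Cref{lemma: stabilità parentesi poisson} shows that $\mathrm{ad}_{G}$ maps $Y^j_m$ to $Y^j_{m+n-2}$ with a gain of a factor $Nn^{2j-1}m\norm{\bm g}$. This loses no derivatives, so the series converges in the $M$-weighted class \eqref{M_cond} once $\rho$ is enlarged. Degree counting then shows each new term of degree $m$ carries $N^{m-3}$: each bracket adds $n-2$ to the degree and $N\cdot N^{n-3}=N^{n-2}$ to the size. Terms of degree $\ge r+1$ are collected into $R^{(j)}_{\ge r+1}$. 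Terms supported on $\M^{(\mu_3\ge\cdot)}\cup\M^{(I\ge\cdot)}$ remain of the form (c) and (d), because the bracket with $G$ only adds indices and preserves the large action or the large third index. Here we must check that the thresholds $N^{1/2n}\ge N^{1/2r}$ are compatible.

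\textbf{Algebraic core (main obstacle).} We must show that after $\Phi_n^{(1)},\ldots,\Phi_n^{(n-1)}$, the degree-$n$ part of \emph{every} $F^{(j)}$ is supported on $\mathfrak{J}_{n,N}$. Then \Cref{teorema multiindici rimasti} shows it is integrable (only for $n$ even, via \Cref{cor: system}) or of type (c)/(d). The homological equation gives $\widetilde F^{(j)}_n+\{F_2^{(j)},G_n^{(l)}\}$ restricted to $\mathcal J_{n,l,N}$. This term vanishes for $j=l$ by \Cref{lemma: facts about poisson bracket}(ii). For $j\neq l$ we use the commutation $\{F^{(j)}\circ\Phi,F^{(l)}\circ\Phi\}=0$, which holds by \eqref{eq: hamiltonian flows preserve poisson structure}, and take its degree-$n$ part. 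Ideally this gives \eqref{intro: magic formula}, $\Omega_l\widetilde f^{(j)}_{\k}=\Omega_j\widetilde f^{(l)}_{\k}$, so that $\widetilde f^{(l)}_{\k}=0$ on $\mathcal J_{n,l,N}$ forces $\widetilde f^{(j)}_{\k}=0$ there. The difficulty is extra contributions of degree $n$. These are brackets of lower-degree remainders of type (c)/(d) with one another, and brackets of the integrable parts with them; the integrable-integrable brackets vanish. I would handle them with a version of \Cref{prop: magic formula}, showing that all such brackets are supported in $\M_n^{(\mu_3\ge N^{1/2n})}\cup\M_n^{(I\ge N^{1/2n})}$. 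The reason is that a bracket of two monomials, one having a large action or three large indices, retains either two large indices plus one more, or a large paired action, since the contracted index is removed. Such indices are excluded from $\mathcal J_{n,l,N}$ by \eqref{def: set JnlN}, so on $\mathcal J_{n,l,N}$ the clean identity holds. Finally, $\Phi_n^{(l')}$ for $l'>l$ must not reintroduce terms on $\mathcal J_{n,l,N}$. This holds because it changes degree $n$ only through $\{F_2^{(j)},G_n^{(l')}\}$, which is proportional to $\widetilde f^{(l')}\Omega_j/\Omega_{l'}=\widetilde f^{(j)}$ on the same support.
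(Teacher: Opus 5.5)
Your proposal is correct and follows essentially the paper's proof: a nested induction on the degree $n$ and on $l=1,\ldots,n-1$, with \Cref{lemma: transformations preserve Hs} and \Cref{lemma: stabilità parentesi poisson} controlling the Lie series (as in \Cref{conv taylor series}), the degree-$n$ part of $\{F^{(j)},F^{(l)}\}=0$ together with \Cref{lemma: parentesi tra resti è resto} giving the simultaneous homological equations (\Cref{prop: magic formula}), and \Cref{teorema multiindici rimasti} identifying the leftover support $\mathfrak{J}_{n,N}$. One side claim in your analytic part is false but not needed. A bracket of a (d)-type term $I_m u^{\k}$ with $G_n^{(l)}$ need not stay of type (c)/(d): contracting $u_{\pm m}$ against an index of $\k'\in\mathcal{J}_{n,l,N}$ breaks the action and can leave only two large, unpaired indices. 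This is harmless, because $\mathrm{ad}_{G_n^{(l)}}$ raises the degree by $n-2\ge 1$. Hence the parts of degree $<n$ are unchanged (cf.~\eqref{eq: formula qn}), and all such brackets land in the tail, which is renormalized at later degrees.
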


\begin{rem} \label{rmk: improved normal form}
In \Cref{subsection: cases 3 and 4}, we actually prove the following stronger statement about the remainders \ref{terzo indice grande}--\ref{azione grande}:
 \begin{align}
    R^{(j),(\mu_3 \ge N^{1/2r})}(u) &= \sum_{ \substack{ \k \in \mathcal{D} \\ 5 \le \#\k \le r \\ |k_3| \ge N^{\frac{1}{2r}}} } c_{\k}^{(j)} u^{\k} \quad \text{with $\frac{|c_{\k}^{(j)}|}{\max ( |k_1|,\ldots,|k_{\#\k}| )^{2j-2} } \lesssim_{\#\k} N^{\#\k-3}$}, \tag*{(\ref{hamiltoniana dopo tutte le trasformazioni}f)$_r$}  \\
    R^{(j),(I \ge N^{1/2r})}(u) &= \sum_{m = \lfloor N^{\frac{1}{2r}} \rfloor + 1}^{+\infty} \, \, \sum_{ \substack{ \k \in \mathcal{D} \\ 3 \le \#\k \le r-2} } c_{m,\k}^{(j)} I_m u^{\k} \quad \text{with $\frac{|c_{m,\k}^{(j)}|}{\max ( |m|,|k_1|,\ldots,|k_{\#\k}| )^{2j-2} } \lesssim_{\#\k} N^{\#\k-1}$} \tag*{(\ref{hamiltoniana dopo tutte le trasformazioni}g)$_r$},
\end{align}
namely the fact that there are no terms of degree 3 and 4 in these remainders. 

In order to prove this, we enlarge $\J_{n,l,N}$ in \eqref{def: set JnlN} as follows:
\begin{equation}\label{def: set tildeJnlN}
\widetilde{\mathcal{J}}_{n,l,N} = 
\begin{cases}
\Big \{ \k\in \M_n \setminus \mathcal{R}_n^l \, \big | \, \dfrac{\norm{\k}_{\ell^{\infty}}^{2l-1}}{|\Omega_l(\k)|}  \le N \Big \}, & \text{if $n=3,4,$} \\ 
\vspace{-0.1cm} \\
\Big \{ \k \in \M_n \setminus \mathcal{R}_n^l \, \big | \, \dfrac{\norm{\k}_{\ell^{\infty}}^{2l-1}}{|\Omega_l(\k)|} \le N \Big \} \setminus \left (  \M_n^{(\mu_3 \ge N^{\frac{1}{2n}})} \cup \M_n^{(I \ge N^{\frac{1}{2n}})} \right ), & \text{if $n \ge 5$.}
\end{cases}
\end{equation}
With this definition, all results in \Cref{section: rigorous theory} continue to hold with $\widetilde{\mathcal{J}}$ instead of $\J$. The only proof that must be modified is that of \Cref{prop: magic formula} below, see \Cref{blabla} for additional details.

Then \Cref{lemma: orders 3 and 4}-(ii) and \Cref{prop: magic formula} show that the transformations $\Phi_3=\Phi_{G_3}$ and $\Phi_4=\Phi_{G_4}$, with $G_3$ and $G_4$ defined in \eqref{G3} and \eqref{G4}, remove \emph{all} terms of degree $3$ and $4$ from the KdV hierarchy. In particular, no such terms are present in the remainders \ref{terzo indice grande}--\ref{azione grande}.
\end{rem}

\subsection{Normal form for the KdV hierarchy} \label{subsection: proof of thm}

In this subsection we prove \Cref{prop: justification normal form}. We begin with some preliminary results. 

\paragraph{Convergence of the Taylor series \eqref{eq: taylor formula for Hnew integral remainder}:}
We now state the following proposition which justifies the convergence of the Taylor series \eqref{eq: taylor formula for Hnew integral remainder} under certain conditions on $\H$ and $G$.
The proof, which is an adaptation of \cite[Proposition 3.10]{bernier} can be found in \Cref{app: normal form}.
\begin{prop} \label{conv taylor series}
Let $j \in \N$, $s \ge j$, $N \ge 1$ and let $F^{(j)}$ be a $j$-formal polynomial
\begin{equation} \label{formula Fj prop}
F^{(j)} = F_2^{(j)} + \sum_{\beta \ge 3} F_{\beta}^{(j)} = \pi \sum_{k \in \Z^*} k^{2j} |u_k|^2 + \sum_{\beta \ge 3} \sum_{\k \in \M_{\beta}} c_{\k} u^{\k}, \qquad \norm{(c_{\k})_{\#\k = \beta}}_{Y^{j}_{\beta}} \le \rho^{\beta} N^{\beta-3},
\end{equation}
for some $\rho > 0$.
Consider 
\[
G_n^{(l)} = \sum_{\k \in \J_{n,l,N}} \frac{g_{\k}}{i \Omega_l(\k)} u^{\k}
\]
defined in \eqref{eq: other lie transforms} with $l \in \N$, $n \ge 3$,  and assume that the following conditions hold: 
\begin{align} 
\norm{\bm{g}}_{Y_n^{l}} & \le \rho^n N^{n-3}\label{eq: stima induttiva coeff g} \\ 
\{ F_2^{(j)} , G_n^{(l)} \} & = - \sum_{\k \in \J_{n,l,N}} c_{\k} u^{\k}. \label{hom_eq prop}
\end{align}

Then there exists a constant $c=c(n,s,\rho,j)>0$ such that for all $0<\epsilon\leq c\, N^{-1}$ the time-1 flow of $G_n^{(l)}$ is well-defined on $B_s(0,\epsilon)$ and invertible, in the sense that


\[
\Phi_{G_n^{(l)}} : B_s(0,\epsilon) \rightarrow B_s(0,2\epsilon), \qquad \Phi_{G_n^{(l)}}^{-1} = \Phi_{-G_n^{(l)}} : B_s(0,\epsilon/2) \rightarrow B_s(0,\epsilon), \qquad \Phi_{G_n^{(l)}} \circ \Phi_{G_n^{(l)}}^{-1} = \mathrm{Id}_{B_s(0,\epsilon/2)},
\]
and they both are close to the identity, i.e.
\begin{equation} \label{eq: close to id prop}
\norm{\Phi_{G_n^{(l)}}^{\pm 1}(u)-u}_{\dot{H}^s} \lesssim_{n,s} N \norm{u}_{\dot{H}^s}^2.
\end{equation}
Moreover on $B_s(0,\epsilon)$ we have that $F^{(j)} \circ \Phi_{G_n^{(l)}}$ is again a $j$-formal polynomial,
\begin{equation}\label{formula Fj trans prop}
F^{(j)} \circ \Phi_{G_n^{(l)}} = F_2^{(j)} + \sum_{\beta \ge 3} \sum_{\k \in \M_\beta} q_{\k} u^{\k}.
\end{equation}
Finally there exists $\widetilde{\rho}=\widetilde{\rho}(n,j,\rho)>0$ such that 
\begin{equation}\label{eq: stima coeff q}
    \norm{(q_{\k})_{\#\k = \beta}}_{Y_{\beta}^j} \le \widetilde{\rho}^{\beta} N^{\beta-3}
\end{equation}
and
\begin{equation} \label{eq: formula qn}
q_{\k} =  c_{\k} \quad \mbox{if} \quad \#\k<n,\qquad 
q_{\k} =  c_{\k} \delta(\k \in \M_n \setminus \mathcal{J}_{n,l,N}) \quad \text{for} \quad \#\k=n.
\end{equation}
\end{prop}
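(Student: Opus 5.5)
The plan is to follow the scheme of \cite[Proposition~3.10]{bernier}: first construct the flow, then expand $F^{(j)}\circ\Phi_{G_n^{(l)}}$ as a convergent Lie series, and finally read off the coefficients degree by degree.

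\textbf{Step 1: the flow.} By \eqref{eq: stima induttiva coeff g}, \Cref{lemma: transformations preserve Hs} applies with $\norm{\bm g}_{Y_n^l}\le\rho^nN^{n-3}$. The flow $\Phi^\xi_{G_n^{(l)}}$, $\xi\in[0,2]$, is therefore well defined on $B_s(0,\epsilon)$ whenever $\epsilon\lesssim (N^{n-2}\rho^n)^{-1/(n-2)}\sim_{n,\rho} N^{-1}$. It also satisfies
\[
\norm{\Phi^\xi(u)-u}_{\dot H^s}\lesssim N^{n-2}\norm{u}^{n-1}_{\dot H^s}\le N\norm{u}_{\dot H^s}^2 ,
\]
using $(N\norm{u})^{n-3}\le 1$. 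This gives \eqref{eq: close to id prop} and the mapping $B_s(0,\epsilon)\to B_s(0,2\epsilon)$. The same applies to $-G_n^{(l)}$, whose time-1 flow is the inverse. The composition identity on $B_s(0,\epsilon/2)$ follows from the group property of the flow, once we know that $\Phi_{-G}(B_s(0,\epsilon/2))\subseteq B_s(0,\epsilon)$.

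\textbf{Step 2: iterated brackets.} Write $F^{(j)}\circ\Phi^1=\sum_{\alpha\ge0}\frac1{\alpha!}\mathrm{ad}^\alpha_{G}F^{(j)}$, justified through \eqref{eq: taylor formula for Hnew integral remainder} by showing that the integral remainder tends to zero. The quadratic part is handled using \eqref{hom_eq prop}. Indeed,
\[
\mathrm{ad}_G F_2^{(j)}=\{F_2^{(j)},G\}=-\sum_{\J_{n,l,N}}c_{\k}u^{\k} ,
\]
which is a degree-$n$ homogeneous $j$-formal polynomial with $Y^j_n$-norm at most $\rho^nN^{n-3}$. Hence every higher bracket only involves brackets of homogeneous $j$-formal polynomials $P$ of degree $\beta$ with $G$. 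For these, \Cref{lemma: stabilità parentesi poisson} (with $m=j$) gives a $Y^j_{\beta+n-2}$-bound equal to $\frac{1}{2\pi}Nn^{2j-1}\beta\,\rho^nN^{n-3}\norm{\bm c}$.

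Iterating $\alpha$ times starting from $F_\beta^{(j)}$ produces degree $\beta+\alpha(n-2)$ with coefficient norm at most
\[
\rho^\beta N^{\beta-3}\,\big(C_n\rho^nN^{n-2}\big)^\alpha\,\prod_{i<\alpha}\big(\beta+i(n-2)\big).
\]
The power of $N$ is exactly $N^{\beta+\alpha(n-2)-3}$. The product, divided by $\alpha!$, is at most $C^{\beta+\alpha}$. So after grouping by total degree $\gamma$, and summing over the finitely many pairs $(\beta,\alpha)$ with $\beta+\alpha(n-2)=\gamma$, we obtain \eqref{eq: stima coeff q} with $\widetilde\rho$ depending on $n,j,\rho$.

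\textbf{Step 3: convergence and identification.} Absolute convergence on $B_s(0,\epsilon)$ follows from \Cref{lemma: convergence of hamiltonian function}, since $s\ge j$ and the effective $M=\widetilde\rho N$ requires $\epsilon\le c N^{-1}$. Because the Taylor remainder obeys the same geometric bounds evaluated at $\Phi^\xi(u)\in B_s(0,2\epsilon)$, it vanishes as the order goes to infinity. This yields the expansion \eqref{formula Fj trans prop}.

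For \eqref{eq: formula qn}, observe that every bracket with $G$ raises the degree by $n-2\ge1$. Hence degrees $<n$ are untouched. In degree $n$ the only contributions are $F^{(j)}_n$ and $\mathrm{ad}_GF^{(j)}_2$, whose sum is $\sum c_{\k}\delta(\k\notin\J_{n,l,N})u^{\k}$.

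\textbf{Main obstacle.} I expect the hardest part to be the combinatorial bookkeeping in Step 2. One must check that the factor $\prod(\beta+i(n-2))/\alpha!$ is only geometric in the degree, and that the Jensen factors $n^{2j-1}$ from \eqref{d_est} do not accumulate beyond a geometric rate. Getting this right keeps $\widetilde\rho$ independent of $N$, and it also makes the interchange of the double sum legitimate.
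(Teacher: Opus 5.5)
Your proposal is correct and follows essentially the same route as the paper's proof in the appendix. The paper also obtains the flow and its inverse from \Cref{lemma: transformations preserve Hs} using $N\norm{\bm{g}}_{Y_n^l}\lesssim N^{n-2}$, and justifies the Lie series by showing that the integral remainder of \eqref{eq: taylor formula for Hnew integral remainder} vanishes at points of $B_s(0,2\epsilon)$. It reduces the quadratic part through \eqref{hom_eq prop}, and iterates the Poisson-bracket stability estimate with the factor $\prod_{i=1}^{\alpha}\frac{\beta+(i-1)(n-2)}{i}$ bounded geometrically to get \eqref{eq: stima coeff q}, before identifying \eqref{eq: formula qn} by degree counting exactly as you do.
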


Proving that \eqref{eq: stima induttiva coeff g}  and especially \eqref{hom_eq prop} are satisfied for our choice of auxiliary Hamiltonians $G_n^{(l)}$ is one of the key results of this section.

\begin{rem}
As explained in \cite[~Remark 3.11]{bernier}, the exponent $\#\k-3$ is quite natural because of the loss of one derivative in the Poisson bracket. Indeed, by \Cref{lemma: stabilità parentesi poisson} we have
\[
\left \{ \sum_{\k \in \mathcal{M}_r} c_{\k} u^{\k} , \sum_{\k \in \mathcal{J}_{n,l,N}} \frac{\widetilde{c}_{\k}}{i \Omega_l(\k)} \right \} = \sum_{\k \in \mathcal{M}_{r+n-2}} d_{\k} u^{\k}
\]
with
\[
\norm{\bm{d}}_{Y_{r+n-2}^j} \lesssim N n^{2j-1} r \norm{\bm{c}}_{Y_{\#\k = r}^j} \norm{\widetilde{\bm{c}}}_{Y_{\#\k = n}^{l}} \stackrel{\eqref{formula Fj prop},\eqref{eq: stima induttiva coeff g}}{\lesssim_{n,j,r}} N^{1+n-3+r-3} \lesssim_{n,j,r} N^{(r+n-2)-3}. 
\]
\end{rem}
\begin{rem} \label{rem: easy computations}
Notice that whenever we use \Cref{lemma: transformations preserve Hs} in the proof of \Cref{conv taylor series}, we have to impose $\epsilon \lesssim_{n,s} [N \norm{\bm{g}}_{Y_n^l}]^{-\frac{1}{n-2}}$.
In principle one could fear that, since during the normal form procedure we compute a lot of Poisson brackets, then $\norm{\bm{g}}_{Y_n^l}$ may become huge in $N$. 
However, since we are able to prove the stability property \eqref{eq: stima induttiva coeff g}, then at each step we will always have to impose $\epsilon \lesssim_{n,s} N^{-1}$, a finite number of times. Therefore it is sufficient to choose the smallest constant among a finite number, see the proof of \Cref{prop: justification normal form} below.
\end{rem}

\paragraph{Simultaneous transformation for the hierarchy}
The next result concerns the Poisson bracket between certain classes of formal polynomials like \ref{parte integrabile 2}--\ref{azione grande}. This is useful because we are going to use the fact that the KdV hierarchy commutes \eqref{eq: first integral in involution}, even after some transformations \eqref{eq: hamiltonian flows preserve poisson structure}, but inductively the hierarchy has the form \eqref{hamiltoniana dopo tutte le trasformazioni}.

\begin{lemma} \label{lemma: parentesi tra resti è resto}
Consider $n_1,n_2 \ge 5$, $n_3,n_4 \ge 3$, $n_5 \ge 1$, $M \ge 1$ and let
\[
G_1(u) = \sum_{ \substack{ \k \in \mathcal{D}_{n_1} \\ |k_3| \ge M } } c_{\k}^{(1)} u^{\k}, \quad G_2(u) = \sum_{ \substack{ \k \in \mathcal{D}_{n_2} \\ |k_3| \ge M } } c_{\k}^{(2)} u^{\k}, \quad G_3(u) = \sum_{\tr = M}^{+\infty} \, \, \sum_{\k \in \mathcal{D}_{n_3}} c_{(\tr,-\tr,\k)}^{(3)} I_{\tr} u^{\k},
\]
\[
G_4(u) = \sum_{\tr = M}^{+\infty} \, \, \sum_{\k \in \mathcal{D}_{n_4}} c_{(\tr,-\tr,\k)}^{(4)} I_{\tr} u^{\k}, \quad G_5(u) = \sum_{\k \in (\Z^*)^{n_5}} c_{\k}^{(5)} I^{\k},
\]
with $\bm{c}^{(i)} \in Y_{n_i}^{j_i}$ for $i=1,2$, $\bm{c}^{(i)} \in Y_{n_i+2}^{j_i}$ for $i=3,4$ and 
\[
\sup_{\k \in (\Z^*)^{n_5}} \frac{|c_{\k}^{(5)}|}{\norm{\k}_{\ell^{\infty}}^{2j_5-2} } < +\infty,
\]
where $j_1,\ldots,j_5 \ge 1.$
Then
\[
\{ G_1 , G_2 \} = \sum_{ \substack{ \k \in \mathcal{D}_{n_1+n_2-2} \\ |k_3| \ge M } } c_{\k}^{(1,2)} u^{\k}, \quad \{ G_1 , G_3 \} = \sum_{ \substack{ \k \in \mathcal{D}_{n_1+n_3} \\ |k_3| \ge M } } c_{\k}^{(1,3)} u^{\k}, \quad \{ G_1 , G_5 \} = \sum_{ \substack{ \k \in \mathcal{D}_{n_1+2n_5-2} \\ |k_3| \ge M } } c_{\k}^{(1,5)} u^{\k},
\]
\[
\{ G_3 , G_4 \} = \sum_{\tr = M}^{+\infty} \, \, \sum_{\k \in \mathcal{D}_{n_3+n_4}} c_{(\tr,-\tr,\k)}^{(3,4)} I_{\tr} u^{\k}, \quad \{ G_3 , G_5 \} = \sum_{\tr = M}^{+\infty} \, \, \sum_{\k \in \mathcal{D}_{n_3+2n_5}} c_{(\tr,-\tr,\k)}^{(3,5)} I_{\tr} u^{\k},
\]
\end{lemma}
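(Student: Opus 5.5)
The proof is a structural computation on supports. I would not estimate the coefficients: the hypotheses on $\bm{c}^{(i)}$ are there only so that each bracket is a well-defined formal polynomial, by \Cref{lemma: facts about poisson bracket}-(i). The real claim is about which monomials can occur. Each bracket is bilinear and absolutely convergent, so by Leibniz it suffices to treat single monomials. The key formula is
\[
\{u^{\k},u^{\k'}\} = \sum_{\alpha,\beta} \frac{i k_\alpha}{2\pi}\,\delta(k_\alpha=-k'_\beta)\,\frac{u^{\k}u^{\k'}}{u_{k_\alpha}u_{k'_\beta}}.
\]
The resulting monomial has index vector obtained by concatenating $\k$ and $\k'$ and deleting one pair $k_\alpha,k'_\beta$ with $k_\alpha=-k'_\beta$. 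After reordering by decreasing modulus, I will read off the property required of the resulting vector. This also gives the lengths $n_1+n_2-2$, $n_1+n_3$ (the factor $I_\tr u^{\k}$ has length $n_3+2$), $n_1+2n_5-2$, and so on.

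\emph{Brackets with $G_1$.} Take $\k\in\mathcal{D}_{n_1}$ with $|k_3|\ge M$, so $\k$ has at least three entries of modulus $\ge M$. Deleting one entry from $\k$ leaves at least two such entries. If $k_\alpha$ is one of the large entries, then the deleted partner $k'_\beta=-k_\alpha$ of the other monomial is itself large, so I must check that the other monomial supplies a further large entry.
\begin{itemize}
\item For $G_2$: the other vector also has three large entries, so after deleting one it keeps two. In total the result has at least $2+2\ge 3$ large entries.
\item For $G_3$: the monomial $I_\tr u^{\k''}$ contains $\tr$ and $-\tr$ with $|\tr|\ge M$. If the deleted index is one of $\pm\tr$, the other survives. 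So the result again has at least $2+1=3$ large entries.
\item For $G_5$: the monomial $I^{\k''}$ contains each index $q$ together with $-q$. Deleting $q$ leaves $-q$, which has the same modulus, so the result has at least $2+1=3$ large entries.
\end{itemize}
In every case the third-largest modulus is $\ge M$, which gives membership in $\mathcal{D}$ with $|k_3|\ge M$.

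\emph{Brackets among $G_3$, $G_4$, $G_5$.} For $\{G_3,G_4\}$ I expand both factors as $I_\tr u^{\k}$ and $I_{\tr'}u^{\k'}$. By Leibniz,
\[
\{I_\tr u^{\k}, I_{\tr'}u^{\k'}\} = I_\tr I_{\tr'}\{u^{\k},u^{\k'}\} + I_\tr u^{\k'}\{u^{\k},I_{\tr'}\} + I_{\tr'}u^{\k}\{I_\tr,u^{\k'}\} + u^{\k}u^{\k'}\{I_\tr,I_{\tr'}\}.
\]
The last term vanishes by \Cref{lemma: facts about poisson bracket}-(iii). Each of the remaining terms still contains an intact factor $I_\tr$ or $I_{\tr'}$ with modulus $\ge M$. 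The case $\{G_3,G_5\}$ is handled the same way: there $\{I_\tr,I^{\k''}\}=0$, so the factor $I_\tr$ always survives.

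\emph{Main obstacle.} The step needing care is the bookkeeping in the product $u^{\k}\{I_\tr,u^{\k'}\}$ and its analogues. There $\{I_\tr,\cdot\}$ acts as multiplication by $\pm i\tr/2\pi$ times the count of $\pm\tr$ in $\k'$, so it does not destroy $I_\tr$ but multiplies the monomial by a scalar. I also need to check that the lengths and the zero-momentum condition come out as claimed, and then absorb the factor into $c^{(3,4)}$ after resumming over $\tr\ge M$ and symmetrizing into ordered form. Membership in the stated classes follows from the support argument above, and the coefficient classes follow from \Cref{lemma: facts about poisson bracket}-(i).
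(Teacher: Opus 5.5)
Your proposal is correct and follows essentially the same route as the paper: for the brackets with $G_1$ you track the contracted index vectors and count the surviving entries of modulus $\ge M$, which the paper leaves implicit in its appeal to the coefficient formula, and for $\{G_3,G_4\}$ and $\{G_3,G_5\}$ you use the same Leibniz expansion to show that an action $I_{\tr}$ or $I_{\tr'}$ with index $\ge M$ survives. One small wording slip in your last paragraph: in the term $I_{\tr'}u^{\k}\{I_\tr,u^{\k'}\}$ the factor that survives is $I_{\tr'}$, not $I_\tr$, as you had already correctly stated in the preceding paragraph.
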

\begin{proof}
Poisson brackets between various $G_i$ are formal polynomials in view of 
\Cref{lemma: facts about poisson bracket}.

Let us prove the formula for $\{G_1,G_2\}$. Note that 
\[
\{G_1,G_2\}(u)= \sum_{ \substack{ \bar{\k} \in \mathcal{D}_{n_1+n_2-2}}} c_{\bar{\k} }^{(1,2)} u^{\bar{\k}} 
\]
where $c_{\bar{\k} }^{(1,2)}$ explicitly depends on $\bm{c}^{(1)}$ and $\bm{c}^{(2)}$, cf. \eqref{dk parentesi FG}. 
In particular, if $|\bar{k}_3|<M$, then $c_{\bar{\k} }^{(1,2)} =0$ by \eqref{dk parentesi FG} and the fact that $\bm{c}^{(j)}$, $j=1,2$, are supported  on $|k_3|\geq M$. The same argument works for the Poisson bracket $\{G_1,G_3\}$.

For $\{ G_3,G_4 \}$ it is sufficient to notice that, by the Leibniz rule and \Cref{lemma: facts about poisson bracket}-(iii),
\[
\{ I_{r_1} u^{\k_1} , I_{r_2} u^{\k_2} \} = I_{r_1} I_{r_2} \{ u^{\k_1} , u^{\k_2} \} + I_{r_1} u^{\k_2} \{ u^{\k_1} , I_{r_2} \} + u^{\k_1} I_{r_2} \{ I_{r_1} , u^{\k_2} \},
\]
which ensures that each monomial has at least one action with index $ \ge M.$

In order to justify the formula for $\{G_3,G_5\}$, it suffices to note that
\[
\{ I^{\k} , I_r u^{\k'} \} = I_r \{ I^{\k} , u^{\k'} \}.
\]

Finally, to justify the formula for $\{G_1,G_5\}$, note that, by the Leibniz rule, 
\[
\{ u^{\k}, I^{\k'}\} = \sum_{\alpha=1}^{\#\k} \sum_{\beta=1}^{\#\k'} \{ u_{k_{\alpha}}, I_{k'_{\beta}}\} \, \frac{u^{\k}}{u_{k_{\alpha}}} \, \frac{I^{\k'}}{I_{k'_{\beta}}}.
\]
Moreover, 
\[
\{ u_{k_{\alpha}} , I_{k'_{\beta}} \} = \frac{ik_{\alpha}}{2\pi} \frac{\pa |u_{k'_{\beta}}|^2}{\pa \overline{u_{k_{\alpha}}}} = \frac{i{k_\alpha}}{2\pi} u_{k_{\alpha}}( \delta(k'_{\beta}=k_{\alpha}) + \delta(k'_{\beta}=-k_{\alpha}) ).
\]
As a result, the number of indices which are greater or equal than $M$ (in absolute value) does not decrease.
\end{proof}

Our next result will be the key tool in order to inductively prove that the symplectic transformation $\Phi$ simultaneously puts the KdV hierarchy in normal form up to certain remainders. This result is ultimately based on \cite[Theorem~G.2]{kappeler}, although that theorem cannot be applied directly since our transformation does not satisfy its hypotheses. Indeed, our mapping $\Phi$ does not put the Hamiltonian in full Birkhoff normal form due to the truncation \eqref{def: set JnlN}. See also \cite[Lemma~4.4]{feola} and \cite[Proposition~3.6]{feola2} for similar results in the context of the Degasperis-Procesi equation. 

\begin{prop} \label{prop: magic formula}
Let $r \ge n \ge 3$, $s \ge 2 r$ and $j = 1,\ldots,r-1$.
Let
\begin{equation}\label{eq: pre magic formula}
F^{(j)} = \sum_{\alpha = 1}^{\lfloor \frac{n-1}{2} \rfloor } \widehat{F}_{2\alpha}^{(j)} + R^{(j),(\mu_3 \ge N^{\frac{1}{2(n-1)}})} + R^{(j),(I \ge N^{\frac{1}{2(n-1)}})} + R_{\ge n}^{(j)},
\end{equation}
where each summand on the right-hand side is of the form \ref{parte F2}--\ref{parte integrabile 2} and (\ref{hamiltoniana dopo tutte le trasformazioni}c)$_{n-1}$--(\ref{hamiltoniana dopo tutte le trasformazioni}e)$_{n-1}$.
Assume that
\begin{equation} \label{eq: kdv hierarchy commutes induction}
\{ F^{(j)}  , F^{(l)}  \} = 0 \qquad \mbox{for all} \quad j,l=1,\ldots, r-1.
\end{equation}

\smallskip

Then, recalling that $F_n^{(j)}(u) = \sum_{\k\in\mathcal{M}_n} c_{\k}^{(j)} u^{\k}$, we have that
\begin{equation}
\{ F_2^{(j)} , G_{n}^{(l)} \} = - \sum_{\k \in \mathcal{J}_{n,l,N}} c_{\k}^{(j)} u^{\k} \qquad \mbox{for any}\ j = 1,\ldots,r-1,\ \ l = 1, \ldots , n-1 ,
\label{claim}
\end{equation}
with $G_{n}^{(l)}$ defined in \eqref{eq: other lie transforms}.
\end{prop}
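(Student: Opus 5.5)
We start from the identity supplied by \Cref{lemma: facts about poisson bracket}-(ii). By \eqref{eq: other lie transforms},
\[
\{ F_2^{(j)}, G_n^{(l)}\} = -i\sum_{\k\in\J_{n,l,N}} \frac{\widetilde c^{(l)}_{\k}}{i\Omega_l(\k)}\,\Omega_j(\k)\,u^{\k} = -\sum_{\k\in\J_{n,l,N}} \frac{\Omega_j(\k)}{\Omega_l(\k)}\, c^{(l)}_{\k}\, u^{\k},
\]
where $c^{(l)}_{\k}$ are the degree-$n$ coefficients of $F^{(l)}$ in \eqref{eq: pre magic formula}. The claim \eqref{claim} is therefore equivalent to the ``magic formula''
\begin{equation*}
\Omega_l(\k)\, c^{(j)}_{\k} = \Omega_j(\k)\, c^{(l)}_{\k}\qquad \text{for all } \k\in\J_{n,l,N}.
\end{equation*}
This should be read at the level of symmetrized coefficients $\tc([\k])$. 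Since $\J_{n,l,N}$ and all $\Omega_i$ are invariant under $\mbox{Sym}(n)$, I will first replace every coefficient by its symmetrization; this does not change the polynomials. The plan is to extract this identity from the degree-$n$ homogeneous component of \eqref{eq: kdv hierarchy commutes induction}.

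First I expand $\{F^{(j)},F^{(l)}\}$ as a formal polynomial. This is legitimate by \Cref{lemma: facts about poisson bracket}-(i), since $s\ge 2r \ge j+l+1/2$ on a small ball. The degree-$n$ part receives two kinds of contributions. The first is the linear part
\[
\{F_2^{(j)},F_n^{(l)}\}+\{F_n^{(j)},F_2^{(l)}\} = -i\sum_{\k\in\M_n}\bigl(\Omega_j(\k)c^{(l)}_{\k}-\Omega_l(\k)c^{(j)}_{\k}\bigr)u^{\k},
\]
again by \Cref{lemma: facts about poisson bracket}-(ii); note $\{F_2^{(j)},F_2^{(l)}\}=0$. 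The second is the cross terms $\{F_a^{(j)},F_b^{(l)}\}$ with $a+b-2=n$ and $3\le a,b\le n-1$. By \eqref{eq: pre magic formula}, each such $F_a$ with $a\le n-1$ is a sum of three kinds of pieces: integrable terms \ref{parte integrabile 2}, pieces with $\mu_3\ge N^{1/2(n-1)}$, and pieces with an action $I_m$, $m\ge N^{1/2(n-1)}$.

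I then sort the cross terms case by case. Integrable--integrable brackets vanish by \Cref{lemma: facts about poisson bracket}-(iii). Every other pairing falls into one of the cases of \Cref{lemma: parentesi tra resti è resto} with $M=N^{1/2(n-1)}$: integrable with $\mu_3$-type, integrable with $I$-type, $\mu_3$ with $\mu_3$, $\mu_3$ with $I$-type, and $I$-type with $I$-type. In each case the result is supported on $\M_n^{(\mu_3\ge M)}\cup\M_n^{(I\ge M)}$. Since $N^{1/2(n-1)}\ge N^{1/2n}$, this set is contained in $\M_n^{(\mu_3\ge N^{1/2n})}\cup\M_n^{(I\ge N^{1/2n})}$, which is exactly what \eqref{def: set JnlN} removes from $\J_{n,l,N}$. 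For $n=3,4$ there are no cross terms at all, since no $a,b\ge3$ satisfy $a+b-2\le 4$ with $a,b\le n-1$; this matches the enlarged sets \eqref{def: set tildeJnlN}.

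Finally I apply \Cref{rmk: homogeneity argument} to $\{F^{(j)},F^{(l)}\}\equiv0$ on a ball. It gives that the symmetrized degree-$n$ coefficient vanishes for every class $[\k]$. For $[\k]\subset\J_{n,l,N}$ the cross terms contribute nothing, because the support of each cross term is a union of full $\mbox{Sym}(n)$-classes disjoint from $\J_{n,l,N}$. Hence $\Omega_j(\k)\tc^{(l)}([\k])=\Omega_l(\k)\tc^{(j)}([\k])$, and dividing by $\Omega_l(\k)\neq0$ (recall $\k\notin\cR^l_n$) gives the claim.

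The main obstacle I expect is the support bookkeeping. One must check that the permutation-invariant supports produced by \Cref{lemma: parentesi tra resti è resto}, stated on ordered indices $\mathcal{D}$, really exclude whole classes intersecting $\J_{n,l,N}$. One must also handle the pairing of $\mu_3$-type terms with $I$-type terms, where one argues as for $\{G_1,G_3\}$ via \eqref{dk parentesi FG} and the Leibniz rule, so that a large index or a large action is never lost.
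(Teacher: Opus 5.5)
Your proposal is correct and follows essentially the same route as the paper. You extract the degree-$n$ component of $\{F^{(j)},F^{(l)}\}=0$ via \Cref{rmk: homogeneity argument}, discard the cross terms using \Cref{lemma: facts about poisson bracket}-(iii) and the lemma on brackets between remainders (whose supports are excluded from $\J_{n,l,N}$), and divide by $\Omega_l\neq 0$ at the level of symmetrized coefficients.

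One side remark is wrong: for $n=4$ the cross term $\{F_3^{(j)},F_3^{(l)}\}$ (with $a=b=3$) does occur. It is harmless for the statement as posed, because your general support argument excludes it from $\J_{4,l,N}$. It is not harmless for the enlarged sets $\widetilde{\J}_{4,l,N}$: there you genuinely need that $\Phi_3$ removed all degree-$3$ terms, as the paper explains in \Cref{blabla}.
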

\begin{proof}
By \Cref{lemma: facts about poisson bracket}-(ii) we have 

\begin{equation}\label{pre pre formula magica}
\{ F_2^{(j)} , G_{n}^{(l)} \}  = - \sum_{\k \in \mathcal{J}_{n,l,N}} \frac{c_{\k}^{(l)}}{\Omega_l(\k)} \Omega_j(\k) u^{\k} = - \sum_{[\k] \in \mathcal{J}_{n,l,N}/\mbox{Sym}(n)} \frac{\Omega_j(\k)}{\Omega_l(\k)} \sum_{\j \sim \k} c_{\j}^{(l)} u^{\j}.
\end{equation}
Since $s \ge j+l+\frac{1}{2},$ we can apply \Cref{rmk: homogeneity argument} to \eqref{eq: kdv hierarchy commutes induction}, and isolating the homogeneous term of degree $n$ we obtain
\begin{equation}
\{ F_2^{(j)}, F_{n}^{(l)} \} + \{ F_3^{(j)}, F_{n-1}^{(l)} \} + \ldots + \{ F_{n-1}^{(j)}, F_3^{(l)} \} + \{ F_{n}^{(j)}, F_2^{(l)} \} = 0.
\label{poisson commute order r}
\end{equation}
Notice that
\[
\{ F_2^{(j)}, F_{n}^{(l)} \} + \{ F_{n}^{(j)}, F_2^{(l)} \} = i \sum_{\k \in \M_{n}} \left ( \Omega_l(\k) c_{\k}^{(j)} - \Omega_j(\k) c_{\k}^{(l)} \right ) u^{\k}.
\]
By \Cref{lemma: facts about poisson bracket}-(iii), \Cref{lemma: parentesi tra resti è resto} and \eqref{eq: pre magic formula}, \eqref{poisson commute order r} becomes
\[
\sum_{\k \in \M_{n} \setminus \left ( \M_{n}^{(\mu_3 \ge N^{\frac{1}{2n}})} \cup \M_{n}^{(I \ge N^{\frac{1}{2n}})} \right ) } i \left ( \Omega_l(\k) c_{\k}^{(j)} - \Omega_j(\k) c_{\k}^{(l)} \right ) u^{\k} + \sum_{\k \in \M_{n}^{(\mu_3 \ge N^{\frac{1}{2n}})} \cup \M_{n}^{(I \ge N^{\frac{1}{2n}})}} e_{\k}^{(j,l)} u^{\k} = 0.
\]
Using \Cref{rmk: homogeneity argument}, we obtain the formula
\begin{equation}
\sum_{\k \sim \j} \left [ \Omega_j(\k) c_{\k}^{(l)} - \Omega_l(\k) c_{\k}^{(j)} \right ] = 0 \qquad \forall \j \in \M_{n} \setminus \left ( \M_{n}^{(\mu_3 \ge N^{\frac{1}{2n}})} \cup \M_{n}^{(I \ge N^{\frac{1}{2n}})} \right ),
\label{magic formula}
\end{equation}
and in particular for all $\j \in \mathcal{J}_{n,l,N}.$
Therefore
\[
\sum_{\k \sim \j} c_{\k}^{(j)} = \frac{\Omega_j(\j)}{\Omega_l(\j)} \sum_{\k \sim \j} c_{\k}^{(l)} \qquad \qquad \forall \j \in \mathcal{J}_{n,l,N},
\]
which yields \eqref{claim} in view of \eqref{pre pre formula magica}.
\end{proof}

\begin{rem}
We have shown that if \eqref{hamiltoniana dopo tutte le trasformazioni} holds, then the Hamiltonian $G_n^{(l)}$ in \eqref{eq: other lie transforms} simultaneously satisfies $n-1$ homological equations, cf. \eqref{claim}. We note that this result requires a careful definition of $\mathcal{J}_{n,l,N}$, \eqref{def: set JnlN}, in order to avoid terms coming from interactions between the remainders, cf. \eqref{magic formula}. 
This is one key technical difference with respect to the approach in \cite{bernier}.
\end{rem}

\begin{rem} \label{blabla}
If instead of $\mathcal{J}_{n,l,N}$ we use $\widetilde{\mathcal{J}}_{n,l,N}$ in \Cref{rmk: improved normal form}, then \Cref{prop: magic formula} still holds.
Indeed \eqref{poisson commute order r} for $n=3$ immediately gives \eqref{magic formula} for all $\j \in \widetilde{\J}_{3,l,N}$, since only $\{F_2^{(j)},F_n^{(l)}\}$ and $\{F_n^{(j)},F_2^{(l)}\}$ appear.
The same is true for $n=4$, since \eqref{poisson commute order r} becomes 
\[
\{ F_2^{(j)}, F_{4}^{(l)} \} + \{ F_3^{(j)}, F_{3}^{(l)} \} + \{ F_{4}^{(j)}, F_2^{(l)} \} = 0,
\]
and \Cref{lemma: orders 3 and 4}-(ii) below proves that there are no terms of degree 3 in the KdV hierarchy (at least for $j,l \le r-1)$ after the tranformation $\Phi_3$.
\end{rem}

\paragraph{Proof of \Cref{prop: justification normal form}.}
By \Cref{lem:first_integrals}, the theorem is true for $r=2$, with $\Phi = \mbox{Id}$.
Next fix $r\geq 3$. We fix $r-1$ first integrals $F^{(j)}$ in \eqref{eq: first integrals in fourier}, for $j \in \{ 1, \ldots , r-1 \}$.
The proof is by induction on $n$, the degree up to which we put these first integrals are in normal form (up to remainders). In particular, we will induct from $n=2$ to $n=r$.
At the end of the $(n-1)$-th step, we will have carried out transformations $\Phi_1,\ldots,\Phi_{n-1}$ in such a way that
\begin{equation}\label{pre induction}
F^{(j)} \circ \Phi_2\circ\cdots\circ \Phi_{n-1}= \sum_{\alpha = 1}^{\lfloor \frac{n-1}{2} \rfloor } \widehat{F}_{2\alpha}^{(j)} + R^{(j),(\mu_3 \ge N^{\frac{1}{2(n-1)}})} + R^{(j),(I \ge N^{\frac{1}{2(n-1)}})} + R_{\ge n}^{(j)} ,\quad j=1,\ldots, r-1,
\end{equation}
satisfying \ref{parte F2}--\ref{parte integrabile 2} and (\ref{hamiltoniana dopo tutte le trasformazioni}c)$_{n-1}$--(\ref{hamiltoniana dopo tutte le trasformazioni}e)$_{n-1}$. 

\medskip 

As part of the $n$-th step we perform an additional transformation $\Phi_n$ and we inductively check the following facts:
\begin{enumerate}
    \item For all $\epsilon \le c(r,n,s) N^{-1}$, the symplectic transformation
    $
    \Phi_n : B_s(0,\epsilon) \rightarrow B_s(0,2\epsilon),
    $
    and its inverse 
    $
    \Phi_n^{-1} : B_s(0,\epsilon/2) \rightarrow B_s(0,\epsilon),
    $
    (cf. \eqref{eq:Phi_comp}), are well-defined and both are close to the identity (cf. \eqref{eq: close to the identity}).
    \item Setting
    \[
    \Phi_{\le n-1} = \Phi_2 \circ \ldots \circ \Phi_{n-1},
    \]
    we note that for $j,j_1,j_2 \in \{ 1,\ldots,r-1 \},$
    \[
    F^{(j)} \circ \Phi_{\le n-1} \circ \Phi_{n}, \qquad \text{and} \quad \{ F^{(j_1)} \circ \Phi_{\le n-1} \circ \Phi_{n} , F^{(j_2)} \circ \Phi_{\le n-1} \circ \Phi_{n} \}
    \]
    are respectively a $j$ and a $j_1+j_2+1$ formal polynomial well-defined on $B_s(0,\epsilon)$ and
    \[
    \{ F^{(j_1)} \circ \Phi_{\le n-1} \circ \Phi_{n} , F^{(j_2)} \circ \Phi_{\le n-1} \circ \Phi_{n} \} = 0.
    \]
    \item The transformed Hamiltonian can be written as
    \begin{equation}\label{post induction}
F^{(j)} \circ \Phi_2\circ\cdots\circ \Phi_{n}= \sum_{\alpha = 1}^{\lfloor \frac{n}{2} \rfloor } \widehat{F}_{2\alpha}^{(j)} + R^{(j),(\mu_3 \ge N^{\frac{1}{2n}})} + R^{(j),(I \ge N^{\frac{1}{2n}})} + R_{\ge n+1}^{(j)} ,\quad j=1,\ldots, r-1,
\end{equation}
and it satisfies \ref{parte F2}--\ref{parte integrabile 2} and (\ref{hamiltoniana dopo tutte le trasformazioni}c)$_n$--(\ref{hamiltoniana dopo tutte le trasformazioni}e)$_n$.
\end{enumerate}

We note that the remainders $R^{(j)}$ in \eqref{post induction} are not equal to those in \eqref{pre induction}, but they satisfy (\ref{hamiltoniana dopo tutte le trasformazioni}c)$_n$--(\ref{hamiltoniana dopo tutte le trasformazioni}e)$_n$ instead of (\ref{hamiltoniana dopo tutte le trasformazioni}c)$_{n-1}$--(\ref{hamiltoniana dopo tutte le trasformazioni}e)$_{n-1}$. This choice of notation, which will be kept throughout the proof, is made to avoid cumbersome indices.

Note that properties 1, 2, and 3 are true for $n=2$ since $\Phi_2 = \mbox{Id}$, which is the induction basis. Next we assume that properties 1, 2, and 3 hold for $n-1$ and for all $j \in \{ 1, \ldots , r-1 \}.$ 

\smallskip

The transformation $\Phi_n$ is itself the composition of $n-1$ symplectic transformations generated by 
\begin{equation} \label{vera Gn}
G_n^{(l)} = \sum_{\k \in \J_{n,l,N}} \frac{c_{\k}^{(l)}}{i \Omega_l(\k)} u^{\k} \quad \text{where} \quad \widetilde{F}_n^{(j)} = \left ( F^{(l)} \circ \Phi_1 \circ \ldots \circ \Phi_{n-1} \circ \Phi_{G_n^{(1)}} \circ \ldots \circ \Phi_{G_n^{(l-1)}} \right )_n = \sum_{\k \in \M_n} c_{\k}^{(l)} u^{\k}
\end{equation}
defined in \eqref{eq: other lie transforms} for $l = 1,\ldots,n-1$.
In order to prove points 1--3 above we proceed by induction on $l=1,\ldots,n-1$.
We inductively check the following facts:
\begin{enumerate}
    \item[(i)] For all $\epsilon \le c(l,n,r,s)N^{-1}$, the symplectic transformation 
    $\Phi_{G_n^{(l)}} : B_s(0,\epsilon) \rightarrow B_s(0,2\epsilon),$
    and its inverse 
    $
    \Phi_{-G_n^{(l)}} : B_s(0,\epsilon/2) \rightarrow B_s(0,\epsilon),
    $
    (cf. \eqref{eq:Phin}), are well-defined and both are close to the identity (cf. \eqref{eq: close to the identity}).
    \item[(ii)] Setting
    \[
    \Phi_{\mathrm{old}} = \Phi_2 \circ \ldots \circ \Phi_{n-1} \circ \Phi_{G_n^{(1)}} \circ \ldots \circ \Phi_{G_n^{(l-1)}},
    \]
    we note that for $j,j_1,j_2 \in \{ 1,\ldots,r-1 \},$
    \[
    F^{(j)} \circ \Phi_{\mathrm{old}} \circ \Phi_{G_n^{(l)}}, \qquad \text{and} \quad \{ F^{(j_1)} \circ \Phi_{\mathrm{old}} \circ \Phi_{G_n^{(l)}} , F^{(j_2)} \circ \Phi_{\mathrm{old}} \circ \Phi_{G_n^{(l)}} \}
    \]
    are respectively a $j$ and a $j_1+j_2+1$ formal polynomial well-defined on $B_s(0,\epsilon)$ and
    \[
    \{ F^{(j_1)} \circ \Phi_{\mathrm{old}} \circ \Phi_{G_n^{(l)}} , F^{(j_2)} \circ \Phi_{\mathrm{old}} \circ \Phi_{G_n^{(l)}} \} = 0.
    \]
    \item[(iii)] The transformed Hamiltonian can be written as
\begin{equation}\label{post induction on l}
F^{(j)} \circ \Phi_{\mathrm{old}} \circ \Phi_{G_n^{(l)}} = \sum_{\alpha = 1}^{\lfloor \frac{n-1}{2} \rfloor } \widehat{F}_{2\alpha}^{(j)} + R^{(j),(\mu_3 \ge N^{\frac{1}{2(n-1)}})} + R^{(j),(I \ge N^{\frac{1}{2(n-1)}})} + R_{\ge n}^{(j)} ,\quad j=1,\ldots, r-1,
\end{equation}
and it satisfies \ref{parte F2}--\ref{parte integrabile 2} and (\ref{hamiltoniana dopo tutte le trasformazioni}c)$_{n-1}$--(\ref{hamiltoniana dopo tutte le trasformazioni}e)$_{n-1}$.
Moreover the homogeneous part of degree $n$ in \eqref{post induction on l} changes in the following way:
\[
(F^{(j)} \circ \Phi_{\mathrm{old}})_n = \sum_{\k \in \M_n} c_{\k}^{(j)} u^{\k} \implies \left ( F^{(j)} \circ \Phi_{\mathrm{old}} \circ \Phi_{G_n^{(l)}} \right )_n = \sum_{\k \in \left ( \M_n \smallsetminus \mathcal{J}_{n,l,N} \right )} c_{\k}^{(j)} u^{\k}.
\]
\end{enumerate}

It is clear that properties (i)--(iii) are satisfied before carrying out any transformation at order $n$, i.e. by taking $\Phi_{G_n^{(0)}}=\mbox{Id}$ after defining $G_n^{(0)}=0$.
Indeed, \eqref{pre induction} implies \eqref{post induction on l} for $l=0$ by the induction hypothesis. 

\medskip

Let us prove points (i)--(iii) by induction on $l$. 
They easily follow by \Cref{conv taylor series} and \Cref{prop: magic formula}: let us check that the hypotheses are satisfied.
Notice that \eqref{formula Fj prop}-\eqref{eq: stima induttiva coeff g} follow by the inductive hypotheses, in particular by \eqref{post induction on l}.
On the other hand, \eqref{hom_eq prop} follows by the definition of $G_n^{(l)}$ and by \Cref{prop: magic formula}, whose hypotheses are again satisfied by the inductive hypotheses: in particular \eqref{eq: pre magic formula} follows by \eqref{post induction on l} and \eqref{eq: kdv hierarchy commutes induction} follows by point (ii) at the previous inductive step.
Now we can say that

\smallskip

\noindent $\bullet$ Point (i) follows by \Cref{conv taylor series}, in particular \eqref{eq: close to id prop}.  

\noindent $\bullet$ Point (ii) follows by \Cref{conv taylor series}.
We take $c(l,n,r,s) \leq \frac{c(l-1,n,r,s)}{2}$, so that the composition $\Phi_{\mathrm{old}} \circ \Phi_{G_n^{(l)}}$ is well-defined and so that we can apply \Cref{conv taylor series}.
The fact that the transformed integrals commute follows by \eqref{eq: first integral in involution} and the fact that each transformation is symplectic.


\noindent $\bullet$ Point (iii) easily follows from \Cref{conv taylor series}, in particular \eqref{eq: formula qn}.

\medskip

Finally we complete the induction on $n$ proving points 1--3.

\smallskip

\noindent $\bullet$ Point 1: we take $c(n,r,s)=\min_{l=1,...,n-1} c(l,n,r,s)$ so that the composition $\Phi_{G_n^{(1)}} \circ \ldots \circ \Phi_{G_n^{(n-1)}}$ is well-defined.
A priori the codomain of $\Phi_n$ is not $B_s(0,2\epsilon)$.
However, notice that by the triangle inequality
\begin{equation} \label{real codomain}
\norm{\Phi_n(u)-u}_{\dot{H}^s} \le \sum_{i=1}^{n-1} \norm{\Phi_{G_n^{(i)}} \left ( \Phi_{G_n^{(i+1)}} \circ \ldots \circ \Phi_{G_n^{(n-1)}}(u) \right )  - \Phi_{G_n^{(i+1)}} \circ \ldots \circ \Phi_{G_n^{(n-1)}}(u)}_{\dot{H}^s} \lesssim_{n,s} N \norm{u}_{\dot{H}^s}^2
\end{equation}
which proves that $\Phi_n$ is close to the identity in the sense of \eqref{eq: close to the identity} and also that the codomain is correct, provided that one chooses $c(n,r,s)$ sufficiently small.
The same is true for the inverse $\Phi_n^{-1}$.

\noindent $\bullet$ Point 2: the fact that the composition is well-defined follows again by choosing $c(n,r,s)\leq c(n-1,r,s)/2$, while the other facts follow from \Cref{conv taylor series} and \eqref{eq: first integral in involution} as in point (ii).

\noindent $\bullet$ Point 3: follows by \eqref{eq: formula qn} and \Cref{teorema multiindici rimasti}.

\medskip

This concludes the induction in $n$. Finally, we fix $c(r,s)=\min_{n=2,\ldots,r} c(n,r,s)$ and by an argument similar to \eqref{real codomain}, up to shrinking $c(r,s)$ again we can conclude that $\Phi : B_s(0,\epsilon) \rightarrow B_s(0,2\epsilon)$ for all $\epsilon \le c(r,s) N^{-1}$.
The same is true for the inverse $\Phi^{-1}$. 

\medskip

{\sc Proof of \eqref{eq: lip_const}.} By the contruction of $\Phi^{\pm 1}$ as composition of the maps $\Phi_{\pm G_n^{(l)}}$, it follows that
\[
\begin{split}
\norm{\Phi^{\pm 1}(u)-\Phi^{\pm 1}(\underline{u})}_{\dot{H}^s} & \stackrel{\eqref{eq: lipschitz continuity of phi}}{\le} \exp \left ( C(r,s) \sum_{n=3}^r \norm{\bm{g}}_{Y_n^l} N (\norm{u}_{\dot{H}^s}+\norm{\underline{u}}_{\dot{H}^s})^{n-2} \right ) \norm{u-\underline{u}}_{\dot{H}^s} \\
& \stackrel{(\ref{hamiltoniana dopo tutte le trasformazioni}e)_n,\eqref{eq: other lie transforms}}{\le} \exp \left ( C(r,s) \sum_{n=3}^r N^{n-2} (\norm{u}_{\dot{H}^s}+\norm{\underline{u}}_{\dot{H}^s})^{n-2} \right ) \norm{u-\underline{u}}_{\dot{H}^s} \le 2 \norm{u-\underline{u}}_{\dot{H}^s}
\end{split}
\]
using that $\epsilon N\leq c(r,s)$ with $c(r,s)$ is sufficiently small. \hfill \qedsymbol

\begin{rem}
We highlight that, in the second inductive procedure in the proof of \Cref{prop: justification normal form} (points (i)--(iii)), the order in which we apply the transformations $\Phi_{G_n^{(l)}}$, $l=1,\ldots,n-1$, can be chosen arbitrarily.

\smallskip

Moreover, by slightly modifying \Cref{cor: system}, one can see that at the $n$-th step it is not necessary to choose $l \in \{ 1,\ldots,n-1 \}$, but it is enough to choose $l \in \{ p, p + a, \ldots, p + (n-2) a \}$ for some $p,a \in \N.$
\end{rem}

\begin{rem} We highlight that, in \Cref{prop: justification normal form}, we simultaneously put $r-1$ first integrals in normal form to order $r$ (up to remainder terms). Our proof can be adapted to simultaneously put $m$ first integrals in normal form to order $r$, provided $m\geq r-1$ and $s\geq m+r$. We chose $m=r-1$ in order to minimize the regularity $s$.
\end{rem}

\begin{rem}\label{rk: low regularity}
    Let us highlight that each of the mappings composing $\Phi$ can be defined on $B_{\sigma}(0,\epsilon)$  for any $1\leq \sigma\leq s$, cf. \Cref{lemma: transformations preserve Hs} and \Cref{prop: magic formula}. In particular, we can choose $\sigma$ sufficiently large so that the composition $F^{(j)}\circ\Phi$ and the Poisson brackets $\{ F^{(j)}\circ\Phi, F^{(l)}\circ\Phi\}$ are well-defined, e.g. \eqref{pre induction}. One can thus obtain the algebraic identity \eqref{magic formula}
    by defining $\Phi$ on a sufficiently regular domain. This allows us to derive \eqref{claim}
    in the minimal regularity necessary to define $F^{(j)}$. In particular, this allows us to enunciate \Cref{thm: normal form for ham} for $s\geq 1$, as explained in \Cref{rk: megaremark sotto 210}.
\end{rem}

\subsection{The maps \texorpdfstring{$\Phi_3$ and $\Phi_4$}.} \label{subsection: cases 3 and 4}

In this section we slightly modify the maps $\Phi_3$ and $\Phi_4$ following \Cref{rmk: improved normal form}. 
In particular, we define $\Phi_n = \Phi_{G_n}$ with $G_n$, $n=3,4$, defined in \eqref{G3} and \eqref{G4}, as opposed to the previously defined $G_n^{(l)}$ for $l \leq n-1$. 
In particular, this will allow us to explicitly compute $\widehat{\H}_4$ in \eqref{parte quartica - thm ham} and to prove that there are no terms of degree 3 and 4 in the remainders of the transformed Hamiltonian \eqref{hamiltoniana dopo tutte le trasformazioni - thm ham}. 

First, we prove that third and fourth order resonances are trivial:



\begin{lemma} \label{lemma: orders 3 and 4}
Let $\mathcal{R}_n^1$ be the set of $n$-resonant tuples defined in \eqref{def: resonant indices zero momentum}. 
Then

\smallskip

\noindent (i) $\mathcal{R}_3^1 = \emptyset$ and  
\[
\mathcal{R}_4^1 = \left \{ (k_1,k_2,k_3,k_4)\in\M_4\mid (k_1+k_2)(k_1+k_3)(k_1+k_4) = 0\right \}.
\]
Note that indices in $\mathcal{R}_4^1$ correspond to integrable terms (cf. \Cref{def: functions in normal form}).

\smallskip

\noindent (ii) For $N\geq 1$, $\widetilde{\mathcal{J}}_{3,1,N} = \M_3$ and $\widetilde{\mathcal{J}}_{4,1,N} = \mathcal{M}_4 \setminus \mathcal{R}_4^1$ (cf. \Cref{rmk: improved normal form}).
\end{lemma}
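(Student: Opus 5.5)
The plan rests on the classical factorizations of power sums under the zero-momentum constraint. For $n=3$, with $k_3=-(k_1+k_2)$, one has
\[
k_1^3+k_2^3+k_3^3 = 3k_1k_2k_3 ,
\]
which is the identity $a^3+b^3+c^3=3abc$ when $a+b+c=0$. Since all indices are nonzero, $\Omega_1(\k)\neq 0$ on $\M_3$, so $\mathcal{R}_3^1=\emptyset$.

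For $n=4$, substitute $k_4=-(k_1+k_2+k_3)$. The standard identity then gives
\[
k_1^3+k_2^3+k_3^3+k_4^3 = -3(k_1+k_2)(k_1+k_3)(k_2+k_3).
\]
Because $k_2+k_3=-(k_1+k_4)$, this equals $3(k_1+k_2)(k_1+k_3)(k_1+k_4)$, which yields the stated description of $\mathcal{R}_4^1$. To see that resonant monomials are integrable, suppose for instance $k_1+k_2=0$. Zero momentum then forces $k_3+k_4=0$, so
\[
u^{\k}=u_{k_1}u_{-k_1}u_{k_3}u_{-k_3}=I_{k_1}I_{k_3}.
\]
The other two factors are handled the same way after a permutation.

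For (ii), I show that the quotient condition in \eqref{def: set tildeJnlN} with $l=1$, namely $\norm{\k}_{\ell^\infty}/|\Omega_1(\k)|\le N$, holds automatically for every nonresonant $\k$ when $N\ge1$. For $n=3$, order $|k_1|\ge|k_2|\ge|k_3|$; then
\[
|\Omega_1(\k)|=3|k_1||k_2||k_3|\ge 3|k_1|=3\norm{\k}_{\ell^\infty}\ge\norm{\k}_{\ell^\infty},
\]
so $\widetilde{\mathcal{J}}_{3,1,N}=\M_3$. For $n=4$ with $\k\notin\mathcal{R}_4^1$, all three factors $k_1+k_2$, $k_1+k_3$, $k_1+k_4$ are nonzero integers, so each has absolute value at least $1$. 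It therefore suffices to show that one of them is at least comparable to $\norm{\k}_{\ell^\infty}$.

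WLOG $|k_1|=\norm{\k}_{\ell^\infty}$. The three sums satisfy
\[
(k_1+k_2)+(k_1+k_3)+(k_1+k_4)=2k_1 ,
\]
so one of them has absolute value at least $2|k_1|/3$. Hence $|\Omega_1(\k)|\ge 3\cdot\tfrac23|k_1|=2\norm{\k}_{\ell^\infty}$, and $\widetilde{\mathcal{J}}_{4,1,N}=\M_4\setminus\mathcal{R}_4^1$.

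There is no real obstacle here. The only step needing care is the algebraic identity for $n=4$ together with the sign bookkeeping, and choosing the maximal index before the averaging trick so that a single large factor is guaranteed.
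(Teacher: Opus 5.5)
Your proof is correct and follows essentially the same route as the paper: the factorizations $\Omega_1(\k)=3k_1k_2k_3$ on $\M_3$ and $\Omega_1(\k)=-3(k_1+k_2)(k_1+k_3)(k_2+k_3)=3(k_1+k_2)(k_1+k_3)(k_1+k_4)$ on $\M_4$, followed by showing that one factor is comparable to $\norm{\k}_{\ell^{\infty}}$ while the remaining nonzero integer factors are at least $1$. The only difference is in producing the large factor for $n=4$. The paper splits into cases according to the signs of the four indices, whereas you use the identity $(k_1+k_2)+(k_1+k_3)+(k_1+k_4)=2k_1$ with $k_1$ maximal, which gives a factor of size at least $\frac23\norm{\k}_{\ell^{\infty}}$ without any sign bookkeeping.
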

\begin{proof}
\noindent (i) Note that for $\k \in \M_3$,
\begin{equation}\label{R3}
k_1+k_2+k_3=0 \quad \implies \quad \Omega_1(\k)=k_1^3+k_2^3+k_3^3 = 3k_1k_2k_3,
\end{equation}
and for $\k \in \M_4$,
\begin{equation} \label{R4}
k_1+k_2+k_3+k_4=0 \quad \implies \Omega_1(\k)=k_1^3+k_2^3+k_3^3+k_4^3 = -3(k_1+k_2)(k_1+k_3)(k_2+k_3).
\end{equation}

\noindent (ii) The fact that $\widetilde{\J}_{3,1,N} = \M_3$ follows immediately from the definition of $\widetilde{\J}_{3,1,N}$ (cf. \eqref{def: set tildeJnlN}) and \eqref{R3}.

To prove $\widetilde{\J}_{4,1,N} = \mathcal{M}_4 \setminus \mathcal{R}_4^1$, we distinguish two cases. Since the four indices cannot all have the same sign:
\begin{itemize}
    \item If three of them have the same sign, then the other one, say $k_1$, is maximal, i.e. $\max{(|k_1|,|k_2|,|k_3|,|k_4|)}=|k_1|$. Since $k_1 = -k_2-k_3-k_4$, then for one index, say $k_2$, we have that $|k_2| \le \frac{|k_1|}{3}$. Therefore $|k_1+k_2| \ge \frac{2}{3}|k_1|$ and the thesis follows from \eqref{R4}.
 \item If two indices have the same sign, by symmetry we can assume that $\max{(|k_1|,|k_2|,|k_3|,|k_4|)}=|k_1|$ and that $k_1$ and $k_2$ have the same sign. Therefore $\frac{|k_1|}{|k_1+k_2|} \le 1$ and \eqref{R4}  concludes the proof. 
 \end{itemize}
\end{proof}

We define the transformations $\Phi_3$ and $\Phi_4$ in \eqref{eq:Phi_comp} as time-1 flows of Hamiltonians $G_3$ and $G_4$ which we construct next. 
Using \eqref{R3}, we define
\begin{equation}\label{G3}
G_3 = i \frac{\pi}{3} \sum_{\k \in \M_3} \frac{u^{\k}}{k_1^3 +k_2^3 +k_3^3} = i \frac{\pi}{9} \sum_{\k \in \M_3} \frac{u^{\k}}{k_1k_2k_3}
\end{equation}
which exactly solves the homological equation:
\[
\widehat{\H}_3 = (\H \circ\Phi_3)_3 \stackrel{\eqref{eq: taylor formula for Hnew integral remainder}}{=} \H_3 + \{ \H_2 , G_3 \} = 0
\]
yielding
\[
\H\circ\Phi_3 = (\H\circ\Phi_3)_2 +  \widehat{\H}_3+ \widetilde{\H}_4 + \ldots
\]
where
\[
\widetilde{\H}_4 = (\H\circ\Phi_3)_4 \stackrel{\eqref{eq: taylor formula for Hnew integral remainder}}{=} \frac{1}{2} \{ \{ \H_2,G_3 \},G_3 \} + \{ \H_3,G_3 \} = \frac{1}{2} \{ \H_3,G_3 \} = -\frac{\pi}{12} \sum_{\substack{ \k \in \M_4 \\ k_2+k_3 \neq 0}}  \frac{u^{\k}}{k_2 k_3}.
\]
Then one has to solve the homological equation
\[
\{ \H_2 , G_4 \} + \widetilde{\H}_4 = \widehat{\H}_4
\]
and find 
\begin{equation}\label{G4}
G_4 = -\frac{\pi}{12} \sum_{\k \in \M_4 \smallsetminus \mathcal{R}_4^1}  \frac{u^{\k}}{i k_2 k_3 \Omega_1(\k)}, \qquad \widehat{\H}_4 = -\frac{\pi}{12} \sum_{k \in \Z^*} \frac{|u_k|^4}{k^2}.
\end{equation}
These formulae are the result of straightforward computations which we briefly describe in the following lemma (which also proves \eqref{parte quartica - thm ham}).

\begin{lemma} \label{lemma: h4}
Given the Hamiltonian $\H \circ \Phi_{G_3}$, we have that
\[
\widetilde{\H}_4 = \frac{1}{2} \{ \H_3,G_3 \} = -\frac{\pi}{12} \sum_{\substack{ \k \in \M_4 \\ k_2+k_3 \neq 0}}  \frac{u^{\k}}{k_2 k_3}
\]
and choosing $G_4$ as in \eqref{G4} it follows
\[
\widehat{\H}_4 = -\frac{\pi}{12} \sum_{k \in \Z^*} \frac{|u_k|^4}{k^2}.
\]
\end{lemma}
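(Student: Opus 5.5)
The plan is to verify both formulas by direct computation with the Fourier Poisson bracket \eqref{intro: kdv hamiltonian in fourier}. The quartic part is then split according to \Cref{lemma: orders 3 and 4}-(i): the non-resonant piece is removed by $G_4$ through \Cref{lemma: facts about poisson bracket}-(ii), and the resonant piece reduces to an explicit sum in the actions.

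\emph{Step 1: reduction to $\frac12\{\H_3,G_3\}$.} By \Cref{lemma: facts about poisson bracket}-(ii) and \eqref{R3}, $\{\H_2,G_3\} = -i\sum_{\k\in\M_3}\frac{i\pi}{9}\frac{\Omega_1(\k)}{k_1k_2k_3}u^{\k} = \frac{\pi}{3}\sum_{\k\in\M_3}u^{\k}$. This equals $-\H_3$, so $\{\{\H_2,G_3\},G_3\}=-\{\H_3,G_3\}$. Inserting this into the quartic part of \eqref{eq: taylor formula for Hnew integral remainder} gives $\widetilde\H_4 = \frac12\{\{\H_2,G_3\},G_3\}+\{\H_3,G_3\} = \frac12\{\H_3,G_3\}$.

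\emph{Step 2: computing $\{\H_3,G_3\}$.} Differentiating, $\partial_{u_\tr}\H_3 = -\pi\sum_{a+b=-\tr}u_au_b$. Similarly $\partial_{u_{-\tr}}G_3 = \frac{i\pi}{3}\sum_{c+d=\tr}\frac{u_cu_d}{(-\tr)cd}$, where the factor $3$ from the three slots cancels the $\frac19$ to leave $\frac13$. Multiplying by $\frac{i\tr}{2\pi}$ and summing over $\tr\in\Z^*$, the factor $\tr$ cancels. The constant is $\frac{i\tr}{2\pi}\cdot(-\pi)\cdot\frac{i\pi}{3}\cdot\frac{1}{-\tr} = -\frac{\pi}{6}$, and therefore
\[
\{\H_3,G_3\} = -\frac{\pi}{6}\sum_{\substack{(a,b,c,d)\in\M_4\\ c+d\neq 0}}\frac{u_au_bu_cu_d}{cd}.
\]
The constraint $c+d=\tr\neq0$ is automatic because $\tr\in\Z^*$. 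Relabelling $(a,c,d,b)=(k_1,k_2,k_3,k_4)$ gives the claimed formula for $\widetilde\H_4$. Absolute convergence on a small $\dot H^s$ ball, which justifies exchanging the sums, follows as in the proof of \Cref{lemma: facts about poisson bracket}.

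\emph{Step 3: removing the non-resonant part and identifying $\widehat\H_4$.} For $G_4$ as in \eqref{G4}, \Cref{lemma: facts about poisson bracket}-(ii) gives
\[
\{\H_2,G_4\} = -i\sum_{\k\in\M_4\setminus\cR_4^1}\Bigl(-\frac{\pi}{12}\Bigr)\frac{\Omega_1(\k)}{ik_2k_3\Omega_1(\k)}u^{\k} = \frac{\pi}{12}\sum_{\k\in\M_4\setminus\cR_4^1}\frac{u^{\k}}{k_2k_3}.
\]
This cancels exactly the terms of $\widetilde\H_4$ supported off $\cR_4^1$. Hence $\widehat\H_4$ is the restriction of $\widetilde\H_4$ to $\cR_4^1$.

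On $\cR_4^1$, by \eqref{R4}, $(k_1+k_2)(k_1+k_3)(k_2+k_3)=0$, and we already have $k_2+k_3\neq0$. The only possibilities are therefore
\begin{itemize}
\item[(a)] $k_1=-k_2$ and $k_4=-k_3$;
\item[(b)] $k_1=-k_3$ and $k_4=-k_2$.
\end{itemize}
Each case contributes $\sum_{a+b\neq0}\frac{I_aI_b}{ab}$. The two cases overlap exactly when $k_2=k_3$, which contributes $\sum_a\frac{I_a^2}{a^2}$ and must be subtracted once by inclusion–exclusion. Using $I_{-a}=I_a$, the unrestricted sum satisfies $\sum_{a,b}\frac{I_aI_b}{ab} = \bigl(\sum_a\frac{I_a}{a}\bigr)^2 = 0$, since the summand is odd in $a$. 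The diagonal $b=-a$ contributes $-\sum_a\frac{I_a^2}{a^2}$, so $\sum_{a+b\neq0}\frac{I_aI_b}{ab} = \sum_a\frac{I_a^2}{a^2}$. Consequently
\[
\widehat\H_4 = -\frac{\pi}{12}\Bigl(2\sum_a\frac{I_a^2}{a^2} - \sum_a\frac{I_a^2}{a^2}\Bigr) = -\frac{\pi}{12}\sum_{k\in\Z^*}\frac{|u_k|^4}{k^2}.
\]

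The main obstacle is the bookkeeping in Step 3: counting the resonant configurations (a) and (b) without double counting the overlap $k_2=k_3$, and checking that the excluded diagonal $a+b=0$ is precisely what turns the vanishing full sum into $\sum_a I_a^2/a^2$. Everything else is a routine constant check.
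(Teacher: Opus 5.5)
Your proposal is correct and follows essentially the paper's route: compute $\frac12\{\H_3,G_3\}$ directly from the Fourier bracket, cancel the non-resonant part with $\{\H_2,G_4\}$, and evaluate the remaining sum over $\cR_4^1$. The cancellation is exact because $\M_4\setminus\cR_4^1\subseteq\{k_2+k_3\neq 0\}$ by \eqref{R4}, a point you use without stating. The only other difference is bookkeeping in the last step: the paper isolates the diagonal $k_2=k_3$ and discards the off-diagonal sums over $j\notin\{0,\pm k\}$ by the symmetry $j\mapsto -j$, while you use inclusion--exclusion and the vanishing of $\bigl(\sum_a I_a/a\bigr)^2$, which amounts to the same thing.
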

\begin{proof}
By the linearity of the Poisson bracket we obtain
\[
\begin{split}
\widetilde{\H}_4 & = \frac{1}{2} \left \{ - \frac{\pi}{3} \sum_{\k \in \M_3} u^{\k} , i \frac{\pi}{9} \sum_{\k' \in \M_3} \frac{u^{\k'}}{k_1'k_2'k_3'} \right \} = - \frac{i\pi^2}{54} \sum_{\k,\k' \in \M_3} \frac{1}{k_1'k_2'k_3'} \{ u^{\k}, u^{\k'} \},
\end{split}
\]
where
\[
\begin{split}
\{ u^{\k}, u^{\k'} \} = \frac{i}{2 \pi} \left[  \right. &k_1 u_{k_2}u_{k_3} \left ( \delta(k_1=-k_1')u_{k_2'}u_{k_3'} + \delta(k_1=-k_2')u_{k_1'}u_{k_3'}  + \delta(k_1=-k_3')u_{k_1'}u_{k_2'} \right ) \\
+ &k_2u_{k_1}u_{k_3} \left ( \delta(k_2=-k_1')u_{k_2'}u_{k_3'} + \delta(k_2=-k_2')u_{k_1'}u_{k_3'} + \delta(k_2=-k_3')u_{k_1'}u_{k_2'} \right ) \\
+ &k_3u_{k_1}u_{k_2} \left.  \left ( \delta(k_3=-k_1')u_{k_2'}u_{k_3'} + \delta(k_3=-k_2')u_{k_1'}u_{k_3'} + \delta(k_3=-k_3')u_{k_1'}u_{k_2'} \right ) \right].
\end{split}
\]
By symmetry,
\[
\begin{split}
\widetilde{\H}_4 = - \frac{i\pi^2}{54} \cdot \frac{i}{2 \pi} \cdot 9 \cdot  \sum_{\substack{k_1+k_2+k_3=0 \\ -k_1+k_2'+k_3'=0}} \frac{k_1u_{k_2}u_{k_3}u_{k_2'}u_{k_3'}}{-k_1k_2'k_3'} 
\end{split}
\]
which yields the thesis upon renaming the indices.
Then
\[
\begin{split}
\{ \H_2,G_4 \} + \widetilde{\H}_4 =&\ -\frac{\pi}{12} \sum_{\substack{ \k \in \mathcal{R}_4^1 \\ k_2+k_3 \neq 0}}  \frac{u^{\k}}{k_2k_3} \\
=&\ - \frac{\pi}{12} \left( \sum_{k \in \Z^*} \frac{u_{-k}u_{k}u_{k}u_{-k}}{k^2} - \sum_{\substack{k \in \Z^* \\ j \in \Z \smallsetminus \{ 0, \pm k \} }} \frac{u_{j}u_{-j}u_{k}u_{-k}}{jk} - \sum_{\substack{k \in \Z^* \\ j \in \Z \smallsetminus \{ 0, \pm k \} }} \frac{u_{j}u_{k}u_{-j}u_{-k}}{jk}\right)
\end{split}
\]
using \Cref{lemma: orders 3 and 4}. 
Finally the last two sums vanish by symmetry.
\end{proof}

\medskip

\section{Dynamics} \label{subsection: dynamics}

In this section we exploit the results provided in \Cref{section: rigorous theory} to study the dynamics of the KdV equation. We note that global well-posedness of the KdV equation in $L^2 (\T)$ 
    is well-known (for instance \cite[Chapter 3]{erdogan}).
The following result allows us to approximate the dynamics over arbitrarily long (polynomial) timescales:

\begin{theorem}[Approximate dynamics for long timescales] \label{teorema: approssimazione con fasi lineari in t - con eps}
Let $r \ge 3$. 
Fix $s \ge 12 r^2$, then there exists $\epsilon_1 \lesssim_{r,s} 1$ such that for all $0<\epsilon\leq \epsilon_1$ the following holds. Let $u(0) \in \dot{H}^s$ such that 
$\norm{u(0)}_{\dot{H}^s} \le \epsilon$
and let $u(t)$ be the unique global solution to the KdV equation with initial datum $u(0)$, then for all $t \in [0,T]$ with
\begin{equation}
T = \epsilon^{-\frac{r}{5}}
\label{eq: timescales}
\end{equation}
we have that
\begin{equation}\label{eq: approx fasi lineari}
\norm{u(t)}_{\dot{H}^s} \le 2 \epsilon, \qquad \norm{u_j(t)-e^{i t \theta_j(0)}u_j(0)}_{\dot{H}^1} \le \epsilon^{\frac{8}{5}}, 
\end{equation}
where 
\begin{equation}
\theta_j(t) = j^3 - \frac{1}{6j} |\Phi^{-1}(u(t))_j|^2 + j \sum_{n=3}^{\lfloor \frac{r}{2} \rfloor} \, \, \sum_{\k \in (\Z^*)^{n-1}} b_{(j,\k)} |\Phi^{-1}(u(t)))^{\k}|^2, \qquad \lvert b_{(j,\k)} \rvert \lesssim_{r,s} \epsilon^{-\frac{2\#\k-1}{5}},
\label{def: phases 1}
\end{equation}
where $\Phi$ was defined in \Cref{thm: normal form for ham}.
\end{theorem}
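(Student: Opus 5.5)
We apply \Cref{thm: normal form for ham} with $N=\epsilon^{-2/5}$, so that $\epsilon N=\epsilon^{3/5}\ll c(r,s)$ once $\epsilon\le\epsilon_1$ is small. We then work in the variables $v(t)=\Phi^{-1}(u(t))$. Since $\Phi^{-1}$ is only defined on $B_s(0,\epsilon/2)$, we first rescale: the datum of size $\epsilon$ is handled by applying the theorem at radius $4\epsilon$. Because $(4\epsilon)N\to 0$, this is harmless. By \eqref{eq: close to the identity - thm ham}, $\norm{v(0)-u(0)}_{\dot{H}^s}\lesssim N\epsilon^2=\epsilon^{8/5}$. The equation for $v$ is the Hamiltonian flow of $\H\circ\Phi$. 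By \eqref{hamiltoniana dopo tutte le trasformazioni - thm ham}, it reads $\partial_t v_k=i\theta_k(I(v))v_k+X_{\mathtt R}(v)_k$, where $\mathtt R=R^{(\mu_3\ge N^{1/2r})}+R^{(I\ge N^{1/2r})}+R_{\ge r+1}$. The phase $\theta_k$ comes from $\partial_{I_k}$ of the integrable part. Using \eqref{parte quadratica thm ham}--\eqref{parte integrabile 2 - thm ham} with $|b_{\k}|\lesssim N^{2n-3}=\epsilon^{-(4n-6)/5}$, we recover the formula \eqref{def: phases 1}. The bookkeeping of exponents, $\#\k=n-1$ giving $\epsilon^{-(2\#\k-1)/5}$ after the normalization, is routine.

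\textbf{Step 1 (a priori bound).} We run a bootstrap on $\sup_{[0,t]}\norm{v}_{\dot{H}^s}\le 2\epsilon$. We compute $\partial_t\norm{v}_{\dot{H}^s}^2=\{\norm{\cdot}_{\dot{H}^s}^2,\mathtt R\}(v)$; the integrable part drops out by \Cref{lemma: facts about poisson bracket}(iii). For $R_{\ge r+1}$ we use the coefficient bound $\rho^{\#\k}N^{\#\k-3}$ in the style of \Cref{lemma: convergence of hamiltonian function}: distributing $|k|^{2s+1}$ over the two largest indices via \eqref{eq: jensen inequality} gives a contribution $\lesssim N^{-2}(N\norm{v}_{\dot{H}^s})^{r+1}\lesssim \epsilon^{2+\frac{3}{5}(r-1)}$. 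For the $\mu_3$-remainder we put $s+\tfrac12$ derivatives on each of $k_1,k_2$ and use $|k_3|\ge N^{1/2r}$ to gain $N^{-(s-1)/2r}$ from $\norm{v}_{\dot{H}^s}$. The $I$-remainder is handled similarly: $I_m$ with $m\ge N^{1/2r}$ factors out and gives $N^{-s/r}\epsilon^2$, as in \eqref{intro:apriori2}. With $s\ge 12r^2$ these gains are at least $N^{-6r}=\epsilon^{12r/5}$, which beats every $N^{n-3}$ loss ($n\le r$). Integrating over $t\le T=\epsilon^{-r/5}$ thus gives $\norm{v(t)}^2_{\dot{H}^s}\le \norm{v(0)}^2+o(\epsilon^2)$, which closes the bootstrap. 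Transferring back by $\Phi$ gives $\norm{u(t)}_{\dot{H}^s}\le 2\epsilon$. One subtlety is that we must justify that $\Phi^{-1}(u(t))$ stays in the domain; the bootstrap handles this, and regularity follows from KdV well-posedness with persistence of $H^s$.

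\textbf{Step 2 (vector field in $\dot H^1$, main obstacle).} We bound $\norm{X_{\mathtt R}(v)}_{\dot{H}^1}$ by the analogue of \eqref{intro:bound2}. $R_{\ge r+1}$ contributes $N^{-2}(N\epsilon)^{r}\epsilon$. The other remainders contribute $N^{n-3}\epsilon^{n-2}\norm{v}_{\dot{H}^2_{|k|\ge N^{1/2r}}}\lesssim N^{-(s-2)/2r}\epsilon\cdot N^{r}$. This step is the delicate one, since $X_{\mathtt R}$ loses a derivative and $\theta$ involves $N$-large coefficients. We then compare $w_k(t)=e^{-i\int_0^t\theta_k}v_k(t)$ with $v_k(0)$ and obtain $\norm{w(t)-v(0)}_{\dot{H}^1}\le T\sup\norm{X_{\mathtt R}}_{\dot{H}^1}\ll\epsilon^{8/5}$, using $r$-dependent margins from $s\ge12r^2$.

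\textbf{Step 3 (freezing the phases).} Next we replace $\int_0^t\theta_k(I(v(\tau)))d\tau$ by $t\theta_k(I(v(0)))$. The actions satisfy $\partial_t I_m=\{I_m,\mathtt R\}$, with the same smallness as in Step 1 times weights. Since $\theta_k$ is Lipschitz in $I$ with constant $\lesssim |k|\sum N^{2n-3}\epsilon^{2n-4}$, the phase error is $\lesssim t^2\cdot(\text{tiny})$, and multiplying by $|v_k|$ in $\dot H^1$ keeps it below $\epsilon^{8/5}$. Finally, $\norm{u(t)-v(t)}_{\dot{H}^1}\lesssim N\epsilon^2=\epsilon^{8/5}$ and $\norm{v(0)-u(0)}\lesssim\epsilon^{8/5}$ by \eqref{eq: close to the identity - thm ham}; combined with Steps 2--3, this yields \eqref{eq: approx fasi lineari}, possibly after adjusting constants by shrinking $\epsilon_1$.
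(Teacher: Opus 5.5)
Your architecture is the paper's: conjugate by $\Phi$, bootstrap $\norm{v}_{\dot H^s}$ with the remainder bounds of \Cref{prop: controllo resti}, bound $X_{\mathtt R}$ in $\dot H^1$, freeze the phases, and transfer back via \eqref{eq: close to the identity - thm ham}. The paper packages your Steps 1--3 as \Cref{thm: dynamics in the new variables}. Your Step 3, via $\partial_t I_m=2\,\mathrm{Re}\bigl(\overline{v_m}\,X_{\mathtt R}(v)_m\bigr)$, is equivalent to its argument. Applying the normal form at radius $4\epsilon$, so that $u(t)$ stays in the domain of $\Phi^{-1}$, is sound. The genuine problem is the choice $N=\epsilon^{-2/5}$, which breaks two of the stated conclusions.

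\emph{(a) The bound on $b_{(j,\k)}$.} The only information \Cref{thm: normal form for ham} gives on $\widehat{\H}_{2n}$ is $|b_{\k}|\lesssim N^{2n-3}$. With $n=\#\k+1$ this yields $|b_{(j,\k)}|\lesssim N^{2\#\k-1}=\epsilon^{-2(2\#\k-1)/5}$, twice the exponent in \eqref{def: phases 1}. No ``normalization'' removes that factor $2$. The stated bound is exactly $N^{2\#\k-1}$ with $N=\epsilon^{-1/5}$.

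\emph{(b) The constant in front of $\epsilon^{8/5}$.} With your $N$, the conjugation errors $\norm{u(t)-v(t)}_{\dot H^s}$ and $\norm{u(0)-v(0)}_{\dot H^s}$ are only known to be at most $C(r,s)\,N\norm{u}_{\dot H^s}^2\le 5C(r,s)\,\epsilon^{8/5}$. That is the same power as the target, with an uncontrolled constant. Shrinking $\epsilon_1$ cannot absorb a constant in front of a matching power, so you only obtain $\lesssim_{r,s}\epsilon^{8/5}$, not $\le\epsilon^{8/5}$.

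\emph{The fix.} Take $N=\epsilon^{-1/5}$, as the paper does. The conjugation errors become $O(\epsilon^{9/5})$, the $b$-bound is exactly as stated, and all your remainder estimates still close with room to spare:
\begin{itemize}
\item In Step 1, $T\,(N\epsilon)^{r+1}=\epsilon^{(3r+4)/5}\le\epsilon^{13/5}$, and $T\,N^{-s/(2r)}\le\epsilon^{r}$ since $s\ge 12r^2$.
\item In Step 2, $T\,N^{-2}(N\epsilon)^{r}=\epsilon^{(3r+2)/5}\le\epsilon^{11/5}$.
\item The phase-freezing term costs at most an extra factor $T\epsilon$, giving $\epsilon^{(2r+7)/5}$.
\end{itemize}

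A secondary point concerns Step 1. You describe the mechanism as splitting the $2s+1$ derivatives between $k_1$ and $k_2$ (``$s+\tfrac12$ on each'', or via \eqref{eq: jensen inequality}). Taken literally, that needs $\norm{v}_{\dot H^{s+1/2}}$ and is not controlled by $\norm{v}_{\dot H^s}^2$. What works is the cancellation $\bigl|k_1|k_1|^{2s}+k_2|k_2|^{2s}\bigr|\lesssim_{n,s}|k_1|^s|k_2|^s|k_3|$. This follows from comparable sizes when $k_1k_2>0$ and from the mean value theorem when $k_1k_2<0$, and it is exactly \Cref{prop: controllo resti}(ii). Cite that proposition instead; with it, the bounds you state hold.
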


The theorem is proved in \Cref{subsection: proof of thm dynamics}.

\begin{rem} \label{rem dyn 1}
The choice $s \ge 12 r^2$ is technical.
Choosing a large $N$ in \Cref{thm: normal form for ham} allows us to better control the remainders \eqref{terzo indice grande - thm ham}--\eqref{azione grande - thm ham}, but this gives rise to worse approximations of the dynamics on account of \eqref{eq: close to the identity - thm ham}. We exploit the additional regularity (with respect to \Cref{thm: normal form for ham}) in order to obtain better bounds on the remainder terms \eqref{terzo indice grande - thm ham}--\eqref{azione grande - thm ham} without making $N$ too large.
\end{rem}

\begin{rem} \label{rem dyn 2}
We do not try to optimize the relationship between $r$, $s$ and $T$. For instance, by taking $N\sim \epsilon^{0-}$, then one may potentially reach timescales $T\approx \epsilon^{-r+}$ at the cost of greatly increasing $s$ (as a function of $r$).
\end{rem}

\subsection{Control of the remainders}

In this subsection we control the remainders in \eqref{hamiltoniana dopo tutte le trasformazioni - thm ham}.
The following is a result analogous to \cite[Proposition 3.3]{bernier}, which allows us to estimate the Poisson bracket between the $\dot{H}^s$-norm squared and specific Hamiltonian functions which are like the remainders of the normal form \eqref{terzo indice grande - thm ham}--\eqref{resto grande - thm ham}.
The reason why we need such a result is that, in general, the vector field $X_R = \partial_x \nabla R$ doesn't map $\dot{H}^s$ into itself.
However we can bound the growth of the $\dot{H}^s$-norm along the flow of these remainders. A similar strategy was used in \cite{yuanzhang} in the context of the derivative NLS equation.

\,

\begin{prop} \label{prop: controllo resti}
Let $s \ge 2$, $M > 0$ and $n \ge 3$. 

\medskip

\noindent (i) Let $R = \sum_{\beta \ge r+1} \sum_{\k \in \M_\beta} b_{\k} u^{\k}$, with $r \ge 1$, be a 1-formal polynomial (cf. \Cref{def:formal_pol}) with $|b_{\k}| \le M^{\#\k}$ for all $\k \in \M_n$. 
Then
\[
\left \lvert \left \{ \norm{\cdot}_{\dot{H}^s}^2 , R \right \} (u) \right \rvert \lesssim_{s,r} \left ( M \norm{\cdot}_{\dot{H}^s} \right )^{r+1} \quad \text{for} \quad \norm{u}_{\dot{H}^s} \le \left ( 2 c_0 M \right )^{-1}
\]
where $c_0$ is a universal constant.

\medskip

\noindent (ii) Let $P_n$ be a homogeneous 1-formal polynomial of the form 
\begin{equation}\label{hom_pol}
P_n(u) = \sum_{ \substack{ \k \in \M_n \\ \mu_3(\k) \ge M } } b_{\k} u^{\k},
\end{equation}
where $\mu_3(\k)$ denotes the third largest number among $|k_1|,\ldots,|k_n|$ and $\bm{b}$ are some bounded coefficients. Then
\begin{equation}\label{hom_pol_bound}
\left \lvert \left \{ \norm{\cdot}_{\dot{H}^s}^2 , P_n \right \} (u) \right \rvert \le (2s+1) c_0^n n^{s+4} M^{-s+2} \norm{\bm{b}}_{\ell^{\infty}} \norm{u}_{\dot{H}^s}^n.
\end{equation}

\medskip

\noindent (iii) Let $P_{n+2}$ be a homogeneous 1-formal polynomial of degree $n+2$ of the form
\[
P_{n+2}(u) = \sum_{ \substack{ \k \in \M_n \\ j \ge M } } b_{(j,-j,\k)} I_j u^{\k},
\]
where $\bm{b}$ are some bounded coefficients. Then
\[
\left \lvert \left \{ \norm{\cdot}_{\dot{H}^s}^2 , P_{n+2} \right \} (u) \right \rvert \lesssim_{s,n} M^{-2s} \norm{\bm{b}}_{\ell^{\infty}} \norm{u}_{\dot{H}^s}^{n+2}.
\]
\end{prop}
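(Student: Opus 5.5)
The common starting point for all three parts is the explicit form of the bracket with the $\dot{H}^s$-norm. Since $\norm{u}_{\dot{H}^s}^2 = \sum_{k} |k|^{2s} u_k u_{-k}$, the Poisson bracket \eqref{intro: kdv hamiltonian in fourier} acts on a monomial by a multiplier:
\[
\left\{ \norm{\cdot}_{\dot{H}^s}^2 , u^{\k} \right\} = \frac{i}{\pi}\Big(\sum_{\alpha=1}^{\#\k} k_\alpha |k_\alpha|^{2s}\Big)\, u^{\k}
\]
(up to a harmless constant; this is the analogue of \eqref{eq: facts about poisson bracket} with the odd multiplier $k|k|^{2s}$). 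So each bound reduces to estimating $\sum_{\k} |b_{\k}|\, \big|\sum_\alpha k_\alpha|k_\alpha|^{2s}\big|\, |u^{\k}|$.

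The key algebraic input is a cancellation from zero momentum. Order the indices so that $|k_1|\ge|k_2|\ge\ldots$. Write $k_1 = -k_2 - \ldots - k_n$ and apply the mean value theorem to $x\mapsto x|x|^{2s}$. This gives
\[
\Big|\sum_\alpha k_\alpha|k_\alpha|^{2s}\Big| \lesssim_s n\, |k_1|^{2s}\, |k_3| + \ldots
\]
More precisely, the bound is $(2s+1)\big(|k_1|^{2s}(|k_3|+\ldots+|k_n|) + \sum_{\alpha\ge 3}|k_\alpha|^{2s+1}\big)$. The point is that only one factor of a small index appears next to $|k_1|^{2s}$, and $|k_1|\le n|k_2|$. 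The case split is whether $k_1$ and $k_2$ have opposite signs (then $k_1+k_2$ is small and the two leading terms cancel) or the same sign (then $|k_1|\lesssim n|k_3|$ and everything is comparable).

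For \emph{(i)}, use the crude bound $\lesssim_s n^{2s+1}|k_1|^{s}|k_2|^{s}|k_3|$. Put $|k_1|^s u_{k_1}$ and $|k_2|^s u_{k_2}$ into $\ell^2$ and all remaining factors into $\dot{\FL}^{1,1}$ and $\dot{\FL}^{0,1}$. By Young \eqref{young convolution inequality} and \eqref{eq: embedding hs in fl01} (using $s\ge2$), the degree-$\beta$ contribution is $\lesssim \beta^{2s+2}(c_0 M\norm{u}_{\dot{H}^s})^{\beta}$, exactly as in \Cref{lemma: convergence of hamiltonian function}. Summing the geometric series for $\beta\ge r+1$ with $c_0M\norm{u}\le 1/2$ gives the claim.

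For \emph{(ii)}, the third index carries the gain: $|k_3| = \mu_3(\k)\ge M$. From the same bound, distribute $|k_1|^s|k_2|^s$ onto the two top modes and write the leftover as $|k_3| = |k_3|^{s}\,|k_3|^{1-s}\le M^{1-s}|k_3|^s$. Then use one more derivative of room (so that $|k_3|^{s}|u_{k_3}|$ is controlled via $\dot{\FL}^{0,1}$ with one power of $M^{-1}$ spent on summability of $|k_3|^{-1}$ type weights). This gives $M^{-s+2}$. The lower-order terms $|k_\alpha|^{2s+1}$ with $\alpha\ge3$ are even better, since three indices are $\ge M$ only when $\alpha=3$, and there $|k_3|^{2s+1}\le |k_1|^s|k_2|^s|k_3|$. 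The explicit constants $(2s+1)c_0^n n^{s+4}$ come from the mean value theorem, the Jensen bound \eqref{eq: jensen inequality} and the $n^2$ choices of the ordering.

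For \emph{(iii)}, note that $I_j$ Poisson-commutes with the $\dot{H}^s$-norm (\Cref{lemma: facts about poisson bracket}-(iii)). Hence
\[
\{\norm{\cdot}^2, I_j u^{\k}\} = I_j\{\norm{\cdot}^2,u^{\k}\}.
\]
Bound $I_j \le j^{-2s}\norm{u}_{\dot{H}^s}^2 \le M^{-2s}\norm{u}^2$, and bound $\sum_{\k\in\M_n}|\{\norm{\cdot}^2,u^{\k}\}|$ by part (ii)'s argument without a gain, giving $\lesssim_{s,n}\norm{u}_{\dot{H}^s}^n$. The sum over $j\ge M$ is then handled by summing $I_j$ itself: $\sum_j I_j\le\norm{u}_{L^2}^2$, together with $\sup_{j\ge M} j^{-2s}$ obtained by keeping the weight inside.

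The main obstacle is (ii): extracting $M^{-s+2}$ with only two high modes absorbing $2s+1$ derivatives. The delicate case is the opposite-sign near-cancellation, where I must verify that the mean-value remainder really involves $|k_3|$ and not $|k_2|$.
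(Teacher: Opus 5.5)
Your proposal follows the paper's argument. Both use the multiplier $\sum_\alpha k_\alpha|k_\alpha|^{2s}$ and reorder so that $|k_1|\ge|k_2|\ge|k_3|$ dominate the other indices. Both extract one factor $|k_3|$ from the two top terms via zero momentum, put $s$ derivatives on each of $u_{k_1},u_{k_2}$ in $\ell^2$, and put the remaining factors in $\ell^1$ via \eqref{young convolution inequality} and \eqref{eq: embedding hs in fl01}. Then (iii) comes from the commutation of the actions with the norm, and (i) from the unweighted bound summed over degrees. Your single mean-value estimate (with intermediate point between $k_1$ and $-k_2$)
\[
\bigl|k_1|k_1|^{2s}+k_2|k_2|^{2s}\bigr|\le (2s+1)|k_1|^{2s}|k_1+k_2|\le (2s+1)(n-2)|k_1|^{2s}|k_3|
\]
holds for both sign configurations. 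So the paper's split into $k_1k_2>0$ and $k_1k_2<0$ is unnecessary, a small simplification. Together with $|k_1|\le (n-1)|k_2|$ it gives the bound $\lesssim_s n^{s+1}|k_1|^s|k_2|^s|k_3|$ that you use.

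The step that is wrong as written is exactly the source of $M^{-s+2}$ in (ii). After $|k_3|\le M^{1-s}|k_3|^s$ you are left with $\sum_k|k|^s|u_k|$, which $\norm{u}_{\dot{H}^s}$ does not control. You also cannot trade a power of $M$ to lower $|k_3|^s$ to $|k_3|^{s-1}$: that would need $|k_3|\le M$, the opposite of what you know. The correct step, which is the paper's, is direct: since $|k_3|\ge M$ and $s\ge 2$,
\[
|k_3|=|k_3|^{s-1}|k_3|^{-(s-2)}\le M^{2-s}|k_3|^{s-1},
\qquad
\sum_{\k\in\M_n}|k_1|^s|k_2|^s|k_3|^{s-1}|u^{\k}|\le \norm{u}_{\dot{H}^s}^2\norm{u}_{\dot{\FL}^{s-1,1}}\norm{u}_{\dot{\FL}^{0,1}}^{n-3}\le c_0^{n-2}\norm{u}_{\dot{H}^s}^n,
\]
using $\sum_k|k|^{s-1}|u_k|\le(\sum_{k}|k|^{-2})^{1/2}\norm{u}_{\dot{H}^s}$. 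The lost power, from $M^{1-s}$ to $M^{2-s}$, is the price of putting $u_{k_3}$ in $\ell^1$, not a summability weight.

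Likewise in (iii), the pointwise bound $I_j\le M^{-2s}\norm{u}_{\dot{H}^s}^2$ cannot be summed in $j$. What is needed is $\sum_{j\ge M}I_j\le M^{-2s}\sum_{j\ge M}j^{2s}I_j\le M^{-2s}\norm{u}_{\dot{H}^s}^2$, which is presumably what you meant by keeping the weight inside. With these corrections your proof coincides with the paper's.
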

\begin{proof}
(ii) Let $P_n$ be the homogeneous polynomial in \eqref{hom_pol}. By \Cref{lemma: facts about poisson bracket}-(ii), 
\begin{align}
\left \lvert \left \{ \norm{\cdot}_{\dot{H}^s}^2 , P_n \right \} (u) \right \rvert &\lesssim \norm{\bm{b}}_{\ell^{\infty}} \sum_{ \substack{ \k \in \M_n \\ \mu_3(\k) \ge M } } \left \lvert k_1^{2s+1}+k_2^{2s+1}+\ldots+k_n^{2s+1} \right \rvert \lvert u^{\k} \rvert \nonumber \\
&\lesssim n^3 \norm{\bm{b}}_{\ell^{\infty}} \sum_{ \substack{ \k \in \M_n \\ |k_1| \ge |k_2| \ge |k_3| \ge M,|k_4|,\ldots,|k_n| } } \left ( \left \lvert k_1^{2s+1}+k_2^{2s+1} \right \rvert +(n-2)|k_3|^{2s+1} \right ) \lvert u^{\k} \rvert \label{eq:use_young}
\end{align}
where we reordered the first three indices and used the triangle inequality.
Notice that by \eqref{young convolution inequality}, \eqref{eq: embedding hs in fl01} and the fact that $n \ge 3$ we have that
\begin{equation}\label{eq:young_conseq}
\sum_{\k \in \M_n} |k_1|^s |k_2|^s |k_3|^{s-1} \lvert u^{\k} \rvert \le \norm{|\pa_x|^s u}_{\dot{L}^2}^2 \norm{|\pa_x|^{s-1} u}_{\dot{\FL}^{0,1}} \norm{u}_{\dot{\FL}^{0,1}}^{n-3} \le c_0^{n-2} \norm{u}_{\dot{H}^s}^n
\end{equation}
using the same argument as in \Cref{lemma: convergence of hamiltonian function}.
Since
\[
|k_3|^{2s+1} \le |k_3|^{s-1} |k_2|^s |k_1|^2 \le M^{-s+2} |k_1|^s|k_2|^s|k_3|^{s-1}
\]
we have that the second summand in \eqref{eq:use_young} is bounded by
\[
\begin{split}
\sum_{ \substack{ \k \in \M_n \\ |k_1| \ge |k_2| \ge |k_3| \ge M,|k_4|,\ldots,|k_n| } } (n-2)|k_3|^{2s+1} \lvert u^{\k} \rvert \lesssim n c_0^{n-2} M^{-s+2} \norm{u}_{\dot{H}^s}^n.
\end{split}
\]

In order to bound the first summand in \eqref{eq:use_young}, we distinguish two cases:

\medskip

\noindent (a) If $k_1k_2 > 0$, then by the zero momentum condition $\k \in \M_n$ we obtain
\[
|k_1|,|k_2| \le n|k_3| \quad \text{and} \quad |k_1| \le n|k_2|,
\]
and so
\begin{equation}\label{casea}
\left \lvert k_1|k_1|^{2s} + k_2|k_2|^{2s} \right \rvert \le 2n^{s+1}|k_3|^{s-1}|k_1|^s|k_2|^2 \le 2n^{s+1} M^{-s+2} |k_1|^s|k_2|^s|k_3|^{s-1}.
\end{equation}

\medskip 

\noindent (b) If $k_1k_2 < 0$, since $\partial_x (x|x|^{2s}) = (2s+1)|x|^{2s}$, by the mean value theorem we obtain
\begin{equation}\label{caseb}
\begin{split}
\left \lvert k_1|k_1|^{2s} + k_2|k_2|^{2s} \right \rvert &\le (2s+1) \left \lvert | k_1 | - | k_2 | \right \rvert |k_1|^{2s} \le (2s+1) |k_1|^s n^s |k_2|^s n |k_3| \\
&\le (2s+1) n^{s+1} M^{-s+2} |k_1|^s |k_2|^s |k_3|^{s-1}.
\end{split}
\end{equation}

\medskip

We are ready to bound first summand in \eqref{eq:use_young}
\[
\sum_{ \substack{ \k \in \M_n \\ |k_1| \ge |k_2| \ge |k_3| \ge M,|k_4|,\ldots,|k_n| } } \left \lvert k_1^{2l+1}+k_2^{2l+1} \right \rvert \lvert u^{\k} \rvert \stackrel{\eqref{eq:young_conseq}-\eqref{caseb}}{\le} (2s+1) n^{s+1} c_0^{n-2} M^{-s+2} \norm{u}_{\dot{H}^s}^n
\]
which concludes the proof of point (ii).

\medskip

\noindent (iii) First, we note that $\sum_{j \ge M} I_j \le M^{-2s} \norm{u}_{\dot{H}^s}^2$.
Moreover, since the actions commute with the $\dot{H}^s$-norm, we have
\[
\begin{split}
\left \lvert \left \{ \norm{\cdot}_{\dot{H}^s}^2 , P_{n+2} \right \} (u) \right \rvert =&\ \sum_{j \ge M} I_j \bigg \lvert \bigg \{ \norm{u}_{\dot{H}^s}^2 ,\sum_{\k \in \M_n} b_{(j,-j,\k)}  u^{\k} \bigg \} \bigg \rvert 
\stackrel{\eqref{hom_pol_bound}}{\le} (2s+1) \norm{\bm{b}}_{\ell^{\infty}} n^{s+4} c_0^n \norm{u}_{\dot{H}^s}^{n}\, \sum_{j\geq M} I_j\\
& \le (2s+1) M^{-2s} \norm{\bm{b}}_{\ell^{\infty}} n^{s+4} c_0^n \norm{u}_{\dot{H}^s}^{n+2}
\end{split}
\]
where we used \eqref{hom_pol_bound} with $M=1$.

\medskip 

\noindent (i) Using \eqref{hom_pol_bound} with $M=1$, together with the bound $|b_{\k}| \le M^{\#\k}$, we find
\[
\left \lvert \left \{ \norm{\cdot}_{\dot{H}^s}^2 , R \right \} (u) \right \rvert \lesssim_s \sum_{n \ge r+1} n^{s+4} \left ( c_0 M \norm{u}_{\dot{H}^s} \right )^n\lesssim_{s,r} \left ( c_0 M \norm{u}_{\dot{H}^s} \right )^{r+1}
\]
provided $\norm{u}_{\dot{H}^s} \le \left ( 2 c_0 M \right )^{-1}$.
\end{proof}

\subsection{A priori bounds in the new variables}
We are ready to prove some estimates on the dynamics in the new variables $v$.
We will exploit the special structure of the Hamiltonian \eqref{hamiltoniana dopo tutte le trasformazioni} and \Cref{prop: controllo resti}, where we study exactly how the remainders \ref{azione grande}--\ref{resto grande} have an impact on the $\dot{H}^s$-norm of the solution.

\begin{theorem}[A priori bounds in the new variables] \label{thm: dynamics in the new variables}
Let $r \ge 3,$ $s \ge 2r$ and $N \ge N_0(r)$ (see \Cref{thm: normal form for ham}).
Let $v(t)$ be a solution to the Hamiltonian system associated with \eqref{hamiltoniana dopo tutte le trasformazioni - thm ham}.
There exists positive constants $c(r,s), C(r,s)$ such that, provided
\begin{equation}
\sup_{\tau \in [0,t]} \norm{v(\tau)}_{\dot{H}^s} \le c(r,s) N^{-1},
\label{eq: hp dynam new var}
\end{equation}
then
\begin{equation}
\begin{split}
\norm{v(t)}_{\dot{H}^s}^2 \le \norm{v(0)}_{\dot{H}^s}^2 + t \, C(r,s) \left [ N^{-\frac{s}{2r}} + \left ( N \norm{v}_{L^{\infty}([0,t],\dot{H}^s)} \right )^{r+1}  \right ]. 
\end{split}
\label{eq: controllo norma sobolev}
\end{equation}
Moreover, if for $j \in \Z^*$ we define the real-valued functions
\begin{equation}
\theta_j(t) = \frac{j}{\pi} \partial_{J_j} \left [ \H_2(J(t)) + \widehat{\H}_4(J(t)) + \sum_{n=3}^{\lfloor \frac{r}{2} \rfloor} \widehat{\H}_{2n}(J(t)) \right], \quad \text{where $J_j(t) = |v_j(t)|^2,$}
\label{def: phases 2}
\end{equation}
with $\widehat{\mathcal{H}}_{2n}$ in \Cref{thm: normal form for ham}, then 
\begin{equation}
\begin{split}
\norm{v_j(t)-e^{i t \theta_j(0)}v_j(0)}_{\dot{H}^1} & \le C(r,s)\ t \left ( 1 + t \norm{v(0)}_{\dot{H}^s} \right ) \, \left [ N^{-\frac{s}{2r}} + \left ( N \norm{v}_{L^{\infty}([0,t],\dot{H}^s)} \right )^r\ \right ].
\end{split}
\label{eq: controllo dinamica nuove variabili}
\end{equation}
\end{theorem}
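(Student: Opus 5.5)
The proof splits into an energy estimate for $\norm{v}_{\dot{H}^s}^2$ and a Duhamel argument in action-angle form for the approximation \eqref{eq: controllo dinamica nuove variabili}.

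\textbf{Step 1: the energy estimate \eqref{eq: controllo norma sobolev}.} Along the flow of $\H\circ\Phi$ we have
\[
\pa_t \norm{v}_{\dot{H}^s}^2=\{\norm{\cdot}_{\dot{H}^s}^2,\H\circ\Phi\}(v).
\]
We decompose $\H\circ\Phi$ as in \eqref{hamiltoniana dopo tutte le trasformazioni - thm ham}.

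The integrable part $\sum_n\widehat{\H}_{2n}$ depends only on the actions, so by \Cref{lemma: facts about poisson bracket}-(iii) its bracket with $\norm{\cdot}_{\dot{H}^s}^2=\sum|k|^{2s}I_k$ vanishes.

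For $R^{(\mu_3\ge N^{1/2r})}$, each homogeneous piece of degree $5\le n\le r$ has coefficients $\lesssim N^{n-3}$. \Cref{prop: controllo resti}-(ii) with $M=N^{1/2r}$ bounds its contribution by
\[
N^{n-3}N^{-(s-2)/2r}\norm{v}_{\dot{H}^s}^n .
\]
Under \eqref{eq: hp dynam new var} we have $(N\norm{v})^{n-3}\lesssim 1$. Since $s\ge 2r$, the remaining factor $N^{(2-s)/2r}\norm{v}^3$ can be absorbed into $N^{-s/2r}$ up to constants. Concretely, $\norm{v}\le cN^{-1}$ gains $N^{-3}\le N^{-2/2r}$, which compensates the loss of $M^{2}$.

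For $R^{(I\ge N^{1/2r})}$, \Cref{prop: controllo resti}-(iii) gives $M^{-2s}=N^{-s/r}$ times $N^{n-1}\norm{v}^{n+2}$, which is again $\lesssim N^{-s/2r}$.

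For $R_{\ge r+1}$, \Cref{prop: controllo resti}-(i) with $M\sim\rho N$ gives $(N\norm{v})^{r+1}$. Here the factor $N^{\#\k-3}\le N^{\#\k}$ is handled by the crude coefficient bound.

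Integrating in time yields \eqref{eq: controllo norma sobolev}.

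\textbf{Step 2: the modified equation.} In coordinates the equation reads
\[
\pa_t v_j=i\theta_j(t)v_j+X_{\mathtt{R}}(v)_j ,
\]
where $\theta_j$ is given by \eqref{def: phases 2}. This follows from $\{v_j,f(I)\}=\frac{ij}{2\pi}\pa_{\bar v_j}f=\frac{ij}{\pi}(\pa_{I_j}f)v_j$ (using $I_j=I_{-j}$ symmetrization) and the reality of $\theta_j$.

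We then need the $\dot{H}^1$ bound
\[
\norm{X_{\mathtt{R}}(v)}_{\dot{H}^1}\lesssim N^{-s/2r}+(N\norm{v}_{\dot{H}^s})^r .
\]
This parallels \Cref{prop: controllo resti} but measures the vector field in a low norm. The factor $\pa_x$ costs one derivative plus the coefficient growth, and these are absorbed by $s\ge 2r$ via Young's inequality \eqref{young convolution inequality} and \eqref{eq: embedding hs in fl01}. For the $\mu_3$-terms we place $s-2$ derivatives on the third index to gain $N^{-(s-2)/2r}$. For the $I$-terms, the large action $I_m$ with $m\ge N^{1/2r}$ supplies $N^{-s/r}$, except when the derivative falls on $v_m$ itself; in that case we still gain $m^{-(s-2)}$. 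For $R_{\ge r+1}$ the vector field is of degree $\ge r$ with coefficients $\lesssim N^{\#\k-3}$, giving $(N\norm{v})^r$.

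\textbf{Step 3: Duhamel in action-angle form.} Set
\[
w_j=e^{-i\int_0^t\theta_j}v_j .
\]
Then $\pa_tw_j=e^{-i\int\theta_j}X_{\mathtt{R}}(v)_j$, so $\norm{v_j(t)-e^{i\int_0^t\theta_j}v_j(0)}_{\dot{H}^1}\le tE$, where $E$ is the bound from Step 2.

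Next we replace $\int_0^t\theta_j$ by $t\theta_j(0)$. Since $|e^{ia}-e^{ib}|\le|a-b|$, we need to control $|\theta_j(\tau)-\theta_j(0)|$. By \eqref{def: phases 2} this is linear in $j$ times differences of actions weighted by the $b$-coefficients. The actions vary by
\[
\pa_tJ_k=2\mathrm{Re}\bigl(\bar v_kX_{\mathtt{R}}(v)_k\bigr),
\]
which is bounded by $\norm{v}\,E$. The polynomial growth of $\theta$ in $j$ (one factor $j$, plus the coefficient bounds of \ref{parte integrabile 2}) is absorbed by weighting with $|v_j|$ and working in $\dot{H}^1$ using the $\dot{H}^s$ control. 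The $b$-coefficients are $\lesssim N^{2n-3}$, but they multiply $J^{n-2}$, so they are harmless under \eqref{eq: hp dynam new var}. This produces the extra error $t\cdot t\norm{v(0)}_{\dot{H}^s}E$, and summing gives exactly the factor $t(1+t\norm{v(0)})$ in \eqref{eq: controllo dinamica nuove variabili}.

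I expect the main obstacle to be Step 2, the $\dot{H}^1$ estimate of $X_{\mathtt{R}}$. There the loss of derivatives from $\pa_x$ and from the $Y^j$-growth of the coefficients has to be balanced exactly against the large-index gains, uniformly in the degree.
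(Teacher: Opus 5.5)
Your proposal is correct and follows the paper's proof essentially step for step: the energy identity with \Cref{prop: controllo resti}-(i)--(iii) applied with $M=\rho N$ and $M=N^{1/2r}$, a Duhamel bound resting on the $\dot{H}^1$ estimate of $\pa_x\nabla R$ that gains from a large index surviving the differentiation, and finally the replacement of $\int_0^t\theta_j$ by $t\theta_j(0)$ through control of the action drift. The only cosmetic difference is how you control that drift: you use $\pa_t J_k=2\,\mathrm{Re}(\overline{v_k}\,X_{\mathtt{R}}(v)_k)$, whereas the paper uses the reverse triangle inequality $\big||v_j(\tau)|-|v_j(0)|\big|\le|v_j(\tau)-e^{i\int_0^\tau\theta_j}v_j(0)|$ and recycles the first-term bound; both yield the same $t^2\norm{v(0)}_{\dot{H}^s}$ contribution.
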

\begin{proof} Thanks to \eqref{eq: hp dynam new var} at $t=0$, the fact that $\norm{\Phi (v)}_{\dot{H}^s}\leq 2 \norm{v}_{\dot{H}^s}$ (cf. \Cref{thm: normal form for ham}), and the local well-posedness of the KdV equation in $\dot{H}^s$, we have that the Hamiltonian system associated with \eqref{hamiltoniana dopo tutte le trasformazioni - thm ham} is locally well-posed and its solution is precisely $v(t) = \Phi^{-1}(u(t))$, cf. \cite{arnold, fasano}. 

\medskip

\noindent {\sc Estimate \eqref{eq: controllo norma sobolev}.}
Since $v$ is the solution to the Hamiltonian system associated with \eqref{hamiltoniana dopo tutte le trasformazioni - thm ham},
\begin{equation}\label{eq:Hs_variation}
\norm{v(t)}_{\dot{H}^s}^2 = \norm{v(0)}_{\dot{H}^s}^2 + \int_0^t \{ \norm{\cdot}_{\dot{H}^s}^2 , \H \circ \Phi \}(v(\tau)) \, \mathrm{d} \tau.
\end{equation}
By \Cref{lemma: facts about poisson bracket}-(iii) and \Cref{thm: normal form for ham}, we have that
\[
\{ \norm{\cdot}_{\dot{H}^s}^2 , \H \circ \Phi \} = \{ \norm{\cdot}_{\dot{H}^s}^2 , R^{(\mu_3 \ge N^{1/2r})} \} + \{ \norm{\cdot}_{\dot{H}^s}^2 , R^{(I \ge N^{1/2r})} \} + \{ \norm{\cdot}_{\dot{H}^s}^2 , R_{\ge r+1} \}.
\]
Now we estimate each term separately.
By \eqref{resto grande - thm ham}, \Cref{prop: controllo resti}-(i) applied with $M = \rho N$, and  \eqref{eq: hp dynam new var},
\[
\left \lvert \{ \norm{\cdot}_{\dot{H}^s}^2 , R_{\ge r+1} \}(v(t)) \right \rvert \lesssim_{r,s} \left ( \rho N \norm{v(t)}_{\dot{H}^s} \right)^{r+1} \lesssim_{r,s} \left ( N \norm{v(t)}_{\dot{H}^s} \right)^{r+1}.
\]
By \eqref{terzo indice grande - thm ham} and \Cref{prop: controllo resti}-(ii) applied with $M = N^{\frac{1}{2r}}$, we have
\[
\left \lvert \{ \norm{\cdot}_{\dot{H}^s}^2 , R^{(\mu_3 \ge N^{1/2r})} \}(v(t)) \right \rvert \lesssim_{r,s} N^{-\frac{s}{2r}} \sum_{n=5}^r 
\left ( N \norm{v(t)}_{\dot{H}^s} \right )^n \stackrel{\eqref{eq: hp dynam new var}}{\lesssim_{r,s}} N^{-\frac{s}{2r}}.
\]
Finally, by \eqref{azione grande - thm ham} and \Cref{prop: controllo resti}-(iii) applied with $M = N^{\frac{1}{2r}}$ we have
\[
\left \lvert \{ \norm{\cdot}_{\dot{H}^s}^2 , R^{(I \ge N^{1/2r})} \}(v(t)) \right \rvert \lesssim_{r,s} N^{-\frac{s}{r}} \sum_{n=5}^{r} \left ( N \norm{v(t)}_{\dot{H}^s} \right )^n \stackrel{\eqref{eq: hp dynam new var}}{\lesssim_{r,s}} N^{-\frac{s}{r}}.
\]
Using these estimates on \eqref{eq:Hs_variation}, we obtain \eqref{eq: controllo norma sobolev}.

\medskip

\noindent {\sc Estimate \eqref{eq: controllo dinamica nuove variabili}.} By the triangle inequality, we write
\begin{equation}\label{eq:step0_approx}
 \norm{v_j(t)-e^{i t \theta_j(0)}v_j(0)}_{\dot{H}^1} \leq \norm{v_j(t) - e^{i \int_0^t \theta_j(\tau) \, \mathrm{d} \tau}v_j(0)}_{\dot{H}^1} + \norm{e^{i \int_0^t \theta_j(\tau) \, \mathrm{d} \tau}v_j(0) - e^{i t \theta_j(0)}v_j(0)}_{\dot{H}^1} \ .
\end{equation}

\medskip

\noindent $\bullet$ \underline{First term in \eqref{eq:step0_approx}}. Note that $v_j(t)$ satisfies
\[
\partial_t v_j(t) = i \theta_j(t) v_j(t) + (\partial_x \nabla R)_j(v(t))
\]
where $\theta_j(t)\in\R$ is defined in \eqref{def: phases 2} and $R = R_{\ge r+1} + R^{(\mu_3 \ge N^{1/2r})} + R^{(I \ge N^{1/2r})}$, following \Cref{thm: normal form for ham}. Integrating in time, we find
\begin{equation}\label{eq:step1_approx}
\norm{v_j(t) - e^{i \int_0^t \theta_j(\tau) \, \mathrm{d} \tau}v_j(0)}_{\dot{H}^1} \le t \sup_{t' \in [0,t]} \norm{\nabla R(v(t'))}_{\dot{H}^2}
\end{equation}
We estimate each remainder term separately, but we bound $R^{(\mu_3 \ge N^{1/2r})}$ and $R^{(I \ge N^{1/2r})}$ simultaneously. Setting
$R^{(\mu_2 \ge N^{1/2r})}:= R^{(\mu_3 \ge N^{1/2r})}+ R^{(I \ge N^{1/2r})}$, \eqref{terzo indice grande - thm ham}--\eqref{azione grande - thm ham} yield
\begin{equation}\label{eq:R_mu2}
\left \lvert l^2 \partial_{u_{-l}} R^{(\mu_2 \ge N^{1/2r})}(v(t)) \right \rvert 
\lesssim_{r,s} \sum_{n=5}^r N^{n-3} \sum_{ \substack{ (\k,-l) \in \M_n \\ |k_1| \ge \ldots \ge |k_{n-1}| \\ |k_1| \ge N^{\frac{1}{2r}} } } |l|^2 |v_{k_1}(t)| \ldots |v_{k_{n-1}}(t)|
\end{equation}
and therefore
\[
\begin{split}
\norm{\partial_x \nabla R^{(\mu_2 \ge N^{1/2r})}(v(t))}_{\dot{H}^1} &  \stackrel{\eqref{eq:R_mu2}, \eqref{young convolution inequality}}{\lesssim_{r,s}} \sum_{n=5}^r N^{n-3} \norm{v(t)}_{\dot{H}_{|j| \ge N^{1/2r}}^2} \norm{v(t)}_{\dot{\FL}^{0,1}}^{n-2}  \\
&\lesssim_{r,s} \sum_{n=5}^r N^{n-3} N^{\frac{-s+2}{2r}} \norm{v(t)}_{\dot{H}^s}^{n-1}
\stackrel{\eqref{eq: hp dynam new var}}{\lesssim_{r,s}} N^{-\frac{s}{2r}}.
\end{split}
\]
Finally, we bound the last remainder using \eqref{resto grande - thm ham},
\[
\begin{split}
\norm{\partial_x \nabla R_{\ge r+1}(v(t))}_{\dot{H}^1} & \le  \sum_{n = r+1}^{+\infty} \rho^n N^{n-3} \norm{\sum_{(\k,-l) \in \M_{n}} |l|^2 |v_{k_1}(t)| \ldots |v_{k_{n-1}}(t)|}_{\dot{\ell}_l^2} \\
& \stackrel{\eqref{young convolution inequality}}{\lesssim_{r,s}} N^{-2} \sum_{n = r+1}^{+\infty} n \left ( \rho N c_0 \norm{v(t)}_{\dot{H}^s} \right )^{n-1} \lesssim_{r,s} \left ( N \norm{v(t)}_{\dot{H}^s} \right )^r
\end{split}
\]
provided that $\rho N c_0 \norm{v(t)}_{\dot{H}^s} < 1$, which follows by \eqref{eq: hp dynam new var} by choosing $c(r,s)\leq (\rho c_0)^{-1}$. Combining the remainder bounds with \eqref{eq:step1_approx} yields:
\begin{equation} \label{eq:step2_approx}
\begin{split}
\norm{v_j(t) - e^{i \int_0^t \theta_j(\tau) \, \mathrm{d} \tau}v_j(0)}_{\dot{H}^1} \le t \, C(r,s) \left [ N^{-\frac{s}{2r}} + \left ( N \norm{v}_{L^{\infty}([0,t],\dot{H}^s)} \right )^r \right ].
\end{split}
\end{equation}

\medskip

\noindent $\bullet$ \underline{Second term in \eqref{eq:step0_approx}}. First, we bound:
\begin{equation}\label{eq:step3_approx}
\norm{e^{i \int_0^t \theta_j(\tau) \, \mathrm{d} \tau}v_j(0) - e^{i t \theta_j(0)}v_j(0)}_{\dot{H}^1}^2 \leq \sum_{j\in\Z^{*}} |j|^2 |v_j(0)|^2\, \Big | 1- e^{i\int_0^t [\theta(\tau)-\theta(0)]\, \mathrm{d}\tau}\Big |^2 \ .
\end{equation}
Next we study the phase difference, which,
by \Cref{thm: normal form for ham}, is 
\[
\begin{split}
\int_0^t [\theta (\tau)-\theta (0)]\, \mathrm{d}\tau = \! \! \int_0^t \!\!\! \bigg ( \frac{|v_j(0)|^2-|v_j(\tau)|^2}{6j} \! + \! j \sum_{n=3}^{\lfloor \frac{r}{2} \rfloor} \sum_{\k \in (\Z^*)^{n-1}} \! \!\!\!\!\! \left [ b_{(j,k_1,\ldots,k_{n-1})} + \ldots + b_{(k_1,\ldots,k_{n-1},j)} \right ] \! \left [ J(\tau)^{\k} - J(0)^{\k} \right ] \! \bigg ) \, \frac{\mathrm{d}\tau}{\pi},
\end{split}
\]
hence by \eqref{parte quartica - thm ham}--\eqref{parte integrabile 2 - thm ham},
\[
\begin{split}
\left \lvert  \int_0^t [\theta (\tau)-\theta (0)]\, \mathrm{d}\tau \right \rvert \lesssim_r \int_0^t \left [ \frac{1}{|j|} \left \lvert |v_j(\tau)|^2-|v_j(0)|^2 \right \rvert + |j| \sum_{n=3}^{\lfloor \frac{r}{2} \rfloor} N^{2n-3} \, \, \sum_{\k \in (\Z^*)^{n-1}} \left \lvert J(\tau)^{\k} - J(0)^{\k} \right \rvert \right ] \, \mathrm{d} \tau.
\end{split}
\]
By the reverse triangle inequality
\[
\left \lvert |v_j(\tau)|^2-|v_j(0)|^2 \right \rvert = \left ( |v_j(\tau)|+|v_j(0)| \right ) \big \lvert |v_j(\tau)|-|v_j(0)| \big \rvert \le \left ( |v_j(\tau)|+|v_j(0)| \right ) \left \lvert v_j(\tau) - e^{i \int_0^t \theta_j(\tau) \, \mathrm{d}\tau} v_j(0) \right \rvert\ ,
\]
and the Cauchy-Schwartz inequality
\begin{equation} \label{eq: stima utile in prob}
\begin{split}
\sum_{\k \in (\Z^*)^{n-1}} \left \lvert J(\tau)^{\k} - J(0)^{\k} \right \rvert &\lesssim_r \sup_{\tau' \in [0,\tau]} \norm{v(\tau')}_{\dot{L}^2}^{2n-4} \sum_{k \in \Z^*} \left ( |v_{k}(\tau)|+|v_{k}(0)| \right ) \big | |v_{k}(\tau)|-|v_{k}(0)| \big | \\
&\lesssim_r \sup_{\tau' \in [0,\tau]} \norm{v(\tau')}_{\dot{L}^2}^{2n-3} \norm{v_j(\tau) - e^{i \int_0^t \theta_j(\tau) \, \mathrm{d}\tau} v_j(0)}_{\dot{L^2}},
\end{split}
\end{equation}
we have that
\[
\begin{split}
 \left \lvert  \int_0^t [\theta (\tau)-\theta (0)]\, \mathrm{d}\tau \right\rvert  & \stackrel{\eqref{eq: hp dynam new var}}{\lesssim_r} \int_0^t \left [ \frac{1}{|j|} \left ( |v_j(\tau)|+|v_j(0)| \right ) \left \lvert v_j(\tau) - e^{i \int_0^t \theta_j(\tau) \, \mathrm{d}\tau} v_j(0) \right \rvert + |j| \norm{v_k(\tau) - e^{i \int_0^t \theta_k(\tau) \, \mathrm{d}\tau} v_k(0)}_{\dot{L}^2} \right ] \, \mathrm{d} \tau\\
& \lesssim_r 
t \, |j|\,\sup_{\tau\in [0,t]} \norm{v_k(\tau) - e^{i \int_0^{\tau} \theta_k(\tau') \, \mathrm{d}\tau'} v_k(0)}_{\dot{\ell}^2_k}.
\end{split}
\]

Plugging this bound into \eqref{eq:step3_approx}, we obtain:
\[
\norm{e^{i \int_0^t \theta_j(\tau) \, \mathrm{d} \tau}v_j(0) - e^{i t \theta_j(0)}v_j(0)}_{\dot{H}^1}  \lesssim_r t \!\! \sup_{\tau \in [0,t]} \norm{v_j(\tau) - e^{i \phi_j(\tau)} v_j(0)}_{\dot{H}^s} \!\! \norm{v(0)}_{\dot{H}^s}.
\]
Using \eqref{eq:step2_approx} and \eqref{eq:step0_approx}, we obtain the desired bound \eqref{eq: controllo dinamica nuove variabili}.
\end{proof}

\begin{rem} \label{rem dyn 3}
   It is possible to obtain bounds on the $H^{s-1}$-norm of the approximation instead of the $H^1$-norm in \eqref{eq: controllo dinamica nuove variabili}. To do so, it suffices to include in $\theta (t)$ certain terms from $\pa_x \nabla R^{(I \ge N^{1/2r})}$ coming from differentiating the action, as is done in \cite[Section~7.1.3]{bernier}. However, in our case it suffices to use a norm which controls $L^{\infty}_x$, cf. \Cref{thm: almost global ldp intro}.




It is also possible to obtain approximations over longer timescales (potentially up to \(T = \epsilon^{-r+}\) in \eqref{eq: timescales}) by using nonlinear phases \(\int_0^t \theta_j(\tau)\,\mathrm{d}\tau\) instead of the linear phases \(t\theta_j(0)\). These extended timescales rely on the improved bound \eqref{eq:step2_approx}, in contrast with the quadratic-in-time bound \eqref{eq: controllo dinamica nuove variabili}. This strategy is used in \cite{bertimasperograndestaffilani} for the Gravity Water Waves system (with \(r=3\)).

However, the linear approximation \(t\theta_j(0)\) has the key advantage of depending only on the initial datum, which is essential for propagating statistical information over long times. More precisely, the size of the quasi-synchronization event \eqref{def: intorno} is inversely proportional to the Lipschitz constant of the map sending the initial data to the phases \(t\theta_j(0)\), and this constant grows only linearly in time. By contrast, using the phases \(\int_0^t \theta_j(\tau)\,\mathrm{d}\tau\) may lead to exponential growth of this Lipschitz constant\footnote{These bounds would further deteriorate if one used the phases in \cite[Section~7.1.3]{bernier}, which include non-integrable terms.}, as suggested by standard Gr\"onwall estimates. If sharp, such growth would obstruct the proof of \Cref{thm: almost global ldp - lower bound}.
\end{rem}

\subsection{Approximate dynamics for long timescales} \label{subsection: proof of thm dynamics}



In this subsection we prove \Cref{teorema: approssimazione con fasi lineari in t - con eps}.
We want to exploit the normal form provided in \Cref{thm: normal form for ham}. 
For fixed $r$ and $s$, the theorem allows us to do the normal form transformation in any ball of radius $\epsilon\leq c(r,s) N^{-1}$. In particular, we may fix
\begin{equation}
N = \epsilon^{-\frac{1}{5}},
\label{eq: choice of N}
\end{equation}
provided that $\epsilon\leq \min\{N_0(r)^{-5}, c(r,s)^{\frac54}\}$.
Set $v(0) = \Phi^{-1}(u(0))$ to be the initial datum in the new coordinates. Then
\begin{equation}\label{eq:bootstrap0}
\norm{v(0)}_{\dot{H}^s} \le \norm{u(0)}_{\dot{H}^s} + \norm{u(0)-v(0)}_{\dot{H}^s} \stackrel{\eqref{eq: close to the identity - thm ham}}{\le} \epsilon + C(r,s) \epsilon^{\frac{9}{5}} \le \frac{4}{3} \epsilon
\end{equation}
provided that $\epsilon \lesssim_{r,s} 1$ is sufficiently small.
We proceed with a bootstrap argument.
Let
\[
\begin{split}
I = \bigg \{ t \in [0,T] \, | \, &\sup_{t' \in [0,t]} \norm{u(t')}_{\dot{H}^s} \le 2 \epsilon \bigg \},
\end{split}
\]
where $T=\epsilon^{-\frac{r}{5}}$ as given in \eqref{eq: timescales}. Clearly, $0 \in I$ and $I$ is closed. To show that $I$ is open, fix $t \in I$.
Since $\norm{u(t)}_{\dot{H}^s} \le 2 \epsilon$, we have that
\[
\begin{split}
\norm{v(t)}_{\dot{H}^s} &\le \norm{u(t)}_{\dot{H}^s} + \norm{u(t)-v(t)}_{\dot{H}^s} \stackrel{\eqref{eq: close to the identity - thm ham}}{\le} 2 \epsilon + C(r,s) \epsilon^{-\frac{1}{5}} \norm{u(t)}_{\dot{H}^s}^2 \le C(r,s) \epsilon \ll \epsilon^{\frac{1}{5}} = N^{-1}.
\end{split}
\]
Since $v(t)$ satisfies \eqref{eq: hp dynam new var}, we can apply \eqref{eq: controllo norma sobolev}:
\[
\begin{split}
\norm{v(t)}_{\dot{H}^s}^2 &\le \norm{v(0)}_{\dot{H}^s}^2 + C(r,s) \epsilon^{-\frac{r}{5}} \left [ \epsilon^{\frac{s}{10r}} + \epsilon^{\frac{4}{5}(r+1)} \right] \le \norm{v(0)}_{\dot{H}^s}^2 + C(r,s)\epsilon^{\frac52}
\end{split}
\]
provided that $s \ge 12 r^2$, which implies that
\begin{equation}\label{eq:bootstrap1}
\norm{v(t)}_{\dot{H}^s} \le \norm{v(0)}_{\dot{H}^s} + \frac{1}{3}\epsilon.
\end{equation}
Therefore we have that
\[
\begin{split}
\norm{u(t)}_{\dot{H}^s} & \le \norm{v(t)}_{\dot{H}^s} + \norm{u(t)-v(t)}_{\dot{H}^s} \stackrel{\eqref{eq:bootstrap1},\eqref{eq: close to the identity - thm ham}}{\le} \norm{v(0)}_{\dot{H}^s} + \frac{1}{3}\epsilon + C(r,s) \epsilon^{-\frac{1}{5}} \norm{u(t)}_{\dot{H}^s}^2 \\
& \stackrel{\eqref{eq:bootstrap0}}{\le} \frac{5}{3}\epsilon + C(r,s) \epsilon^{\frac{9}{5}} < 2 \epsilon
\end{split}
\]
provided that $\epsilon \lesssim_{r,s} 1$ is sufficiently small.
This shows that $I=[0,T]$, and thus yields \eqref{eq: controllo norma sobolev} for $t \in [0,T]$.

Finally, let $\theta_j$ be the phases in \eqref{def: phases 2}. Using \eqref{eq: choice of N} and \eqref{parte integrabile 2 - thm ham}, we obtain \eqref{def: phases 1}.
In order to prove \eqref{eq: approx fasi lineari}, we insert \eqref{eq: timescales} and \eqref{eq: choice of N} into \eqref{eq: controllo dinamica nuove variabili}:
\begin{equation}\label{eq:bootstrap2}
\begin{split}
\norm{v_j(t)-e^{i t \theta_j(0)}v_j(0)}_{\dot{H}^1} & \le C(r,s) \epsilon^{-\frac{r}{5}} \left ( 1 + \epsilon^{-\frac{r}{5}+1} \right ) \left [ \epsilon^{\frac{s}{10r}} + \epsilon^{\frac45 r} \right] \le \epsilon^2
\end{split}
\end{equation}
provided that $r \ge 3$, $s \ge 12r^2$ and that $\epsilon \lesssim_{r,s} 1$ is sufficiently small. 
Finally, by the triangle inequality,
\[
\begin{split}
\norm{u_j(t)-e^{i t \theta_j(0)}u_j(0)}_{\dot{H}^1} &\le \norm{u(t)-v(t)}_{\dot{H}^s} + \norm{v_j(t)-e^{i t \theta_j(0)}v_j(0)}_{\dot{H}^1} + \norm{u(0)-v(0)}_{\dot{H}^s} \\
& \stackrel{\eqref{eq: close to the identity - thm ham},\eqref{eq:bootstrap2}}{\le} C(r,s) \left [ \epsilon^{-\frac{1}{5}} \norm{u(t)}_{\dot{H}^s}^2 + \epsilon^{2} \right ] \le \epsilon^{\frac{8}{5}}.
\end{split}
\]

\section{Large deviations principle} \label{subsection: probability}

In this section, we prove an almost global large deviations principle for the KdV equation \eqref{intro: kdv} with random initial data \eqref{intro: random data} satisfying the more flexible condition \eqref{def: coeff ck} below, instead of \eqref{intro: coefficients}.


\begin{theorem}[Almost global LDP for KdV] \label{thm: almost global ldp intro}
Let $n \in \N$, $\lambda > 0$ and $\delta \in (0,1)$. Let $u_{\varepsilon}^{\omega}(t,x)$ be the (a.s.\ global) solution to \eqref{intro: kdv} with initial datum \eqref{intro: random data} satisfying: \begin{equation} \label{def: coeff ck}
0\leq c_k = c_{-k}, \quad \mbox{and} \quad \sum_{k \in \Z^*} |k|^{2\sigma} |c_k|^2 < + \infty \quad \mbox{for some}\  \sigma \ge s = \max \left \{  12 \left \lceil \frac{7n}{1-\delta} \right \rceil^2 , \frac{1}{\delta} \right \}.
\end{equation} 
Then we have that
\begin{equation} \label{eq: limit ldp}
\lim_{\varepsilon\to 0^{+}} \sup_{|t|\leq \varepsilon^{-n}} \left \lvert \varepsilon^{2\delta} \log \P \left( \sup_{x \in \T} u_{\varepsilon}^{\omega}(t,x) \ge \lambda \varepsilon^{1-\delta} \right) + \frac{\lambda^2}{4 \sum_{k\in\N} c_k^2} \right\rvert = 0.
\end{equation}
\end{theorem}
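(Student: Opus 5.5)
The proof splits into an upper bound and a lower bound for $\varepsilon^{2\delta}\log\P(\cdot)$, both uniform in $|t|\le\varepsilon^{-n}$. By time reversibility of KdV ($u(t,x)\mapsto u(-t,-x)$, which preserves the law of the data since $c_k=c_{-k}$), it is enough to treat $t\ge 0$.

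\textbf{Reduction to a deterministic ball.} Choose $r=\lceil 7n/(1-\delta)\rceil$. Then $s\ge 12r^2$, so \Cref{teorema: approssimazione con fasi lineari in t - con eps} applies on timescales $T=\epsilon^{-r/5}$. Take $\epsilon=\varepsilon^{1-\delta/2}$, or more generally $\epsilon=\varepsilon^{1-\gamma}$ with $\gamma<\delta$. Then $T\ge\varepsilon^{-n}$, because $r(1-\gamma)/5\ge n$ follows from the choice of $r$. The event $\{\|u^\omega_\varepsilon(0)\|_{\dot H^s}>\epsilon\}$ is the tail of a Gaussian series of size $\varepsilon$. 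A Gaussian concentration bound (Fernique/Borell) gives probability $\le\exp(-c\varepsilon^{-2\gamma})$, which is negligible against $\exp(-C\varepsilon^{-2\delta})$ as long as $\gamma>\delta$. This conflicts with the requirement $\gamma<\delta$. I would resolve it by taking $\gamma$ slightly larger than $\delta$ and checking that the exponent $r(1-\gamma)/5$ still exceeds $n$, which needs $\gamma<1$ and the factor $7$ in $r$. On the complement, \eqref{eq: approx fasi lineari} gives $\|u(t)-\sum_j e^{it\theta_j(0)}u_j(0)e^{ijx}\|_{L^\infty}\lesssim\epsilon^{8/5}=o(\varepsilon^{1-\delta})$. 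Here we use $\dot H^1\subset L^\infty$ via \eqref{eq: embedding hs in fl01}.

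\textbf{Upper bound.} From the approximation, $\sup_x u(t,x)\le 2\varepsilon\sum_k c_kR_k^\omega+o(\varepsilon^{1-\delta})$, since $|u_j(0)|=\varepsilon c_jR_j$. The tail $\P(2\sum_k c_kR_k\ge\lambda\varepsilon^{-\delta}(1-o(1)))$ equals $\exp(-\lambda^2\varepsilon^{-2\delta}/(4\sum c_k^2)(1+o(1)))$ by the sharp Rayleigh-sum asymptotics of \Cref{prop: grande} (Cauchy--Schwarz gives $\sum c_kR_k\le(\sum c_k^2)^{1/2}(\sum R_k^2\ldots)$, or one can use the cited proposition directly). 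These estimates are uniform in $t$.

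\textbf{Lower bound, the main obstacle.} Set $M=\lfloor\varepsilon^{-\delta}\rfloor$ and write $\eta_k=R_ke^{i\phi_k}$. Define $\varphi_k(\vec\phi)=\phi_k+t\theta_k(J(\vec\phi))$ with $J=|\Phi^{-1}(u(0;\vec\phi))|^2$, where the phases $\vec\phi\in\T^M$ are frozen and everything else is random. The key steps are as follows. (a) Construct, by Brouwer's fixed point theorem, a $\mathcal G$-measurable $\vec\phi^{*}$ solving $\varphi_k(\vec\phi^*)\equiv 0$ for $k\le M$. The map $\vec\phi\mapsto -t\theta(J(\vec\phi))$ is continuous, and the solution can be selected measurably. (b) Show that $\vec\phi\mapsto t\theta_k(J(\vec\phi))$ is Lipschitz in $\ell^\infty$ with constant $\lesssim t\cdot(\text{small})$. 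This uses \eqref{eq: lipschitzianity} for $\Phi^{-1}$ together with the bounds \eqref{def: phases 1} on the $b$'s. The increment of $J$ is $O(\epsilon^2 M^{1/2}|\Delta\phi|)$, and the $b$ coefficients grow only like $\epsilon^{-(2\#\k-1)/5}$. Consequently $\mathcal N(\beta)=\{\|\vec\phi-\vec\phi^*\|_\infty<\beta\}\subseteq\{|\varphi_k|\lesssim\beta\ \forall k\le M\}$ for $\beta\sim(t\varepsilon^{a})^{-1}\cdot\varepsilon^{b}$, i.e. a polynomial in $\varepsilon$. (c) Prove the factorization $\P(\mathcal A\cap\mathcal N(\beta))=(\beta/\pi)^M\P(\mathcal A)$ for $\mathcal A\in\mathcal G$, by conditioning on $\mathcal G$ and using that the $\phi_k$, $k\le M$, are i.i.d. uniform and independent of $\mathcal G$. (d) On $\mathcal N(\beta)$, and with $\cos(\varphi_k)\ge 1-\beta^2$ at $x=0$, we get $\sup_xu\ge 2\varepsilon(1-\beta^2)\sum_{k\le M}c_kR_k-2\varepsilon\sum_{k>M}c_kR_k-o(\varepsilon^{1-\delta})$. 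The tail over $k>M$ is $o(\varepsilon^{-\delta})$ with overwhelming probability because $\sum|k|^{2s}c_k^2<\infty$ and $s\ge1/\delta$. There is a subtlety: that event is $\mathcal G$-measurable, but the small-norm event is not. I would handle this by intersecting with $\mathcal G$-measurable surrogates, bounding $\|u(0)\|$ by quantities depending only on the $R_k$. Then $\P\ge(\beta/\pi)^M\P(2\sum_{k\le M}c_kR_k\ge\lambda\varepsilon^{-\delta}(1+o(1)))$. Since $M\log(\beta/\pi)=O(\varepsilon^{-\delta}\log\varepsilon^{-1})=o(\varepsilon^{-2\delta})$, \Cref{prop: grande} gives the matching lower bound. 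I expect step (b), getting a Lipschitz constant polynomial rather than exponential in $t$, to be the hardest part. Its resolution is that $\theta(0)$ depends only on the initial data through $\Phi^{-1}$.
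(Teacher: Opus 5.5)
Your proposal is correct and follows the paper's proof essentially step by step. The upper bound uses the quasi-conservation of Fourier moduli from \Cref{teorema: approssimazione con fasi lineari in t - con eps} together with \Cref{prop: grande}. The lower bound uses a measurably selected Brouwer fixed point for the phase map, a Lipschitz constant of order $M|t|$ times a positive power of $\varepsilon$, the factorization $\P(\mathcal A\cap\mathcal N(\beta))=(\beta/\pi)^M\P(\mathcal A)$, and $M=\lfloor\varepsilon^{-\delta}\rfloor$; the final appeal to \Cref{prop: grande} is made on $\sum_{k\le S}c_kR_k$ for fixed $S$, with $S\to\infty$ afterwards.

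Two of the obstacles you raise are not real. Nothing forces $\gamma<\delta$: your eventual choice of $\gamma$ above $\delta$ is right, and the paper takes $\epsilon=\varepsilon^{\frac57(1-\delta)}$, i.e.\ $\gamma=\frac27+\frac57\delta>\delta$, which meets both the timescale and the $o(\varepsilon^{1-\delta})$ error requirement. Also, the small-norm event $\{\varepsilon\|\widetilde u_0^\omega\|_{\dot H^s}\le\epsilon\}$ depends only on the moduli $R_k$, so it is already $\mathcal G$-measurable and no surrogate event is needed.
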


\medskip

Note the main theorem \Cref{thm:main_intro} follows from \Cref{thm: almost global ldp intro}. The proof of \Cref{thm: almost global ldp intro} is divided into a matching upper bound (\Cref{thm: almost global ldp}) and lower bound (\Cref{thm: almost global ldp - lower bound}). 

\medskip

The upper bound follows from the approximate solution \eqref{eq: approx fasi lineari} constructed in \Cref{teorema: approssimazione con fasi lineari in t - con eps}, where the moduli of the Fourier coefficients are conserved in time. As a result, its $\FL^{0,1}$-norm, which controls the $L^{\infty}$-norm, is constant in time. This allows us to propagate upper bounds from the initial data to any polynomial time.

The proof of the lower bound is more involved and requires the machinery introduced in \cite{bertimasperograndestaffilani}. The idea is to study the probability that the $L^{\infty}$-norm of the solution is close to its $\FL^{0,1}$-norm, which occurs when many phases are \emph{quasi-synchronized}. This quasi-synchronization event contains the nonlinear structure of the problem, as the phases of the approximate solution are highly nonlinear functions of the initial data (via the normal form transformation), cf. \eqref{def: phases 1}. Proving that the probability of this event is sufficiently large over arbitrarily long polynomial timescales is the main challenge in the proof of the lower bound.

\smallskip

Our next result essentially follows from the proof of \Cref{thm: almost global ldp intro}, and it shows that the dispersive focusing mechanism is dominant in the formation of extreme waves for the KdV equation.
Under the same assumptions as in \Cref{thm: almost global ldp intro}, consider the a.s. global solution $u_{\varepsilon}^{\omega}(t,x)$ in \eqref{intro: exact solution} to the KdV equation \eqref{intro: kdv} with initial datum \eqref{intro: random data} satisfying \eqref{def: coeff ck}.
Recall also the set $\mathfrak{P}(\tN,\delta,\varepsilon)$ in \eqref{intro: fasi piccole}, i.e. the event that the first $\tN$ phases are very close to zero. 

\begin{theorem}[Dispersive Focusing] \label{thm: dispersive focusing}
Under the same assumptions of \Cref{thm: almost global ldp intro} we have 
\begin{equation} \label{eq: dispersive focusing}
\lim_{\varepsilon\to 0^{+}} \sup_{|t|\leq \varepsilon^{-n}} \left \lvert \varepsilon^{2\delta} \log \P \left( \left \{ \sup_{x \in \T} u_{\varepsilon}^{\omega}(t,x) \ge \lambda \varepsilon^{1-\delta} \right \} \cap \mathfrak{P}(\tN,\delta,\varepsilon) \right) + \frac{\lambda^2}{4 \sum_{k\in\N} c_k^2} \right \rvert =0.
\end{equation}
\end{theorem}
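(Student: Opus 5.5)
The upper bound in \eqref{eq: dispersive focusing} is immediate: intersecting with $\mathfrak{P}(\tN,\delta,\varepsilon)$ only decreases the probability, so the $\limsup$ of $\varepsilon^{2\delta}\log\P(\cdot)$ is bounded by the upper bound of \Cref{thm: almost global ldp intro}, uniformly in $|t|\le\varepsilon^{-n}$. The work is in the lower bound. There I would rerun the lower-bound argument of \Cref{thm: almost global ldp intro}, and check that the event built there is already contained, up to a set of negligible probability, in $\mathfrak{P}(\tN,\delta,\varepsilon)$.

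\textbf{Lower-bound event.} The lower bound uses the quasi-synchronization neighborhood $\mathcal{N}(\beta)\subseteq\mathfrak{M}(t;\varepsilon,M)$ from \eqref{intro:nhood}, with $M=\lfloor\varepsilon^{-\delta}\rfloor\ge\tN$ for $\varepsilon$ small. On this set the approximate phases satisfy
\[
\varphi_k=\phi_k^\omega+t\theta_k(\vec J^\omega)\equiv 0 \pmod{2\pi}
\]
up to an error $\varepsilon$, for $k\le M$. The probability estimate \eqref{intro:endgame} then gives
\[
\P\Big(\{\sup_x u\ge\lambda\varepsilon^{1-\delta}\}\cap\mathcal N(\beta)\cap G\Big)\ge \exp\Big(-\tfrac{\lambda^2\varepsilon^{-2\delta}}{4\sum c_k^2}+o(\varepsilon^{-2\delta})\Big),
\]
where $G$ is the good set on which \Cref{teorema: approssimazione con fasi lineari in t - con eps} applies and on which the moduli are controlled. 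So it suffices to show that
\[
\{\sup_x u\ge\lambda\varepsilon^{1-\delta}\}\cap\mathcal N(\beta)\cap G\cap\{R_k^\omega\ge\varepsilon^{a}\ \forall k\le\tN,\ c_k>0\}\subseteq\mathfrak P(\tN,\delta,\varepsilon)
\]
for a suitable small $a>0$. The extra condition on the $R_k^\omega$ has probability at least $1-\O(\varepsilon^{2a})$ for each $k$. By the factorization property \eqref{intro:facprop}, since this condition is $\mathcal G$-measurable, intersecting with it costs only a factor $1-o(1)$, which is harmless at the logarithmic scale $\varepsilon^{-2\delta}$.

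\textbf{Phase comparison.} Fix $k\le\tN$ with $c_k>0$. By \eqref{eq: approx fasi lineari},
\[
|u_k(t)-e^{it\theta_k(0)}u_k(0)|\le\varepsilon_0^{8/5},
\]
where $\varepsilon_0$ is the $\dot H^s$ size of the datum on $G$. On $G$ we may take $\varepsilon_0\sim\varepsilon^{1-\delta'}$ for a small loss $\delta'$, which is the choice of size already made in the lower-bound proof; the exponent $\tfrac{1-\delta}{15}$ in \eqref{intro: fasi piccole} suggests the scale used there. Meanwhile $|u_k(0)|=\varepsilon c_kR_k^\omega\gtrsim\varepsilon^{1+a}$. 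Hence the angle between $u_k(t)$ and $e^{it\theta_k(0)}u_k(0)$ is $\O(\varepsilon_0^{8/5}\varepsilon^{-1-a})$, which is a positive power of $\varepsilon$ once $a$ and $\delta'$ are small. The phase of $e^{it\theta_k(0)}u_k(0)$ is $\phi_k+t\theta_k(0)$. Here $\theta_k(0)$ is exactly $\theta_k(\vec J^\omega)$ evaluated at $v(0)=\Phi^{-1}(u(0))$, so this phase lies within $\varepsilon$ of $0$ mod $2\pi$ on $\mathcal N(\beta)$. Summing the two errors gives
\[
|\psi_k^\omega(t)\bmod 2\pi|\lesssim \varepsilon+\varepsilon^{\frac35-\delta''}\le\varepsilon^{\frac{1-\delta}{15}}
\]
for small $\varepsilon$. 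Negative times are handled by the time-reversal symmetry $x\mapsto-x$, as in the main theorem.

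\textbf{Main obstacle.} The delicate point is the bookkeeping of exponents. The $\dot H^s$ smallness threshold used on $G$ (which may be $\varepsilon^{1-\delta}$-type, chosen for the tail events) must still make $\varepsilon_0^{8/5}/|u_k(0)|$ a positive power of $\varepsilon$ below $\varepsilon^{(1-\delta)/15}$. If the threshold in the main proof is $\varepsilon^{1-\delta}$ itself, then $\varepsilon^{\frac{8}{5}(1-\delta)}\varepsilon^{-1-a}$ may fail to be small when $\delta$ is close to $1$. In that case I would replace $\varepsilon$ by the natural scale $\varepsilon_0=\varepsilon^{1-\delta}$ throughout, use the stronger bound $\epsilon^2$ available in \eqref{eq:bootstrap2} in the $v$ variables, and compare $\psi_k$ to the phase of $v_k$ using $\|u-v\|\lesssim N\epsilon^2$ with $N=\epsilon^{-1/5}$. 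This yields a relative phase error $\epsilon^{9/5}/(\varepsilon R_k)$, and the $1/15$ exponent should leave enough room.
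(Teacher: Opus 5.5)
Your upper bound is correct, and your overall plan matches the paper's: rerun the lower-bound argument and show that the resulting event lies in $\mathfrak{P}(\tN,\delta,\varepsilon)$. But the phase-comparison step fails for most $\delta\in(0,1)$, and your proposed fix does not repair it.

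On the extreme-wave event one has, up to negligible sets, $\norm{u(0)}_{\dot{H}^s}\gtrsim\norm{u(0)}_{\FL^{0,1}}\gtrsim\lambda\varepsilon^{1-\delta}$. So the good set cannot use a threshold $\varepsilon^{1-\delta'}$ with small $\delta'$. The size $\epsilon$ fed into \Cref{teorema: approssimazione con fasi lineari in t - con eps} must satisfy $\epsilon\gtrsim\varepsilon^{1-\delta}$; the paper takes $\epsilon=\varepsilon^{\frac57(1-\delta)}$. The mode-wise error you then have access to is at best $\epsilon^{9/5}$, even going through $v$ as you suggest. Meanwhile $R_k^\omega\ge\varepsilon^{a}$ only gives $|u_k(0)|\gtrsim\varepsilon^{1+a}$. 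Your bound on the relative phase error is therefore only $\varepsilon^{\frac95(1-\delta)-1-a}$, which does not tend to zero once $\delta\ge\frac49$; with the paper's threshold it already fails once $\delta\ge\frac29$. This is not an exponent-bookkeeping issue. A mode of typical size $\varepsilon$ lies below the resolution of the approximation, so its phase cannot be read off from it.

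The missing idea is to restrict to \emph{large} moduli, of the size the extreme wave forces, rather than merely non-degenerate ones. The paper sets $\kappa=\min\{\frac{\delta}{2},\frac{1-\delta}{14}\}$ and $\mathfrak{R}=\{R_k^\omega\ge\varepsilon^{-\delta+\kappa},\ k=1,\ldots,\tN\}$. This set has small probability, but the extreme wave lives on it at the LDP scale. Indeed, on $\{R_k^\omega<\varepsilon^{-\delta+\kappa}\}$ the condition $\sum_{j}c_jR_j^\omega\ge\lambda\varepsilon^{-\delta}(1+o(1))$ forces $\sum_{j\neq\pm k}c_jR_j^\omega\ge\lambda\varepsilon^{-\delta}(1+o(1))$. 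By \Cref{prop: grande} this has rate $\lambda^2/\bigl(4(\sum_{j\in\N}c_j^2-c_k^2)\bigr)$, strictly worse precisely because $c_k>0$; this is where that condition in $\mathfrak{P}$ is used. So this piece can be subtracted from the lower bound. On $\mathfrak{R}$ one has $|u_k(0)|\ge c_k\varepsilon^{1-\delta+\kappa}$, and the relative phase error becomes $\varepsilon^{\frac87(1-\delta)-(1-\delta)-\kappa}\le\varepsilon^{\frac{1-\delta}{14}}$ for every $\delta\in(0,1)$.

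Separately, your claim that intersecting with $\{R_k^\omega\ge\varepsilon^a\}$ costs only a factor $1-o(1)$ ``by the factorization property'' is unjustified. The identity \eqref{eq: factorization} only removes $\mathcal{N}(\beta)$. A union bound is useless here, since $\P(R_k^\omega<\varepsilon^a)\approx\varepsilon^{2a}$ dwarfs the probability $e^{-c\varepsilon^{-2\delta}}$ of the extreme wave. You need either a strictly-worse-rate estimate as above, or a correlation inequality (Harris--FKG) for increasing events in the independent $R_j^\omega$.
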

\noindent This result will be proved in \Cref{subsection: dispersive focusing}.

\paragraph{Preliminary results.} 
Given $\varepsilon > 0$, we study the KdV equation with random initial data:
\begin{equation}
\begin{dcases}
u_t + u_{xxx} + uu_x = 0 \\
u_{\varepsilon}^{\omega}(0,x) = \varepsilon \sum_{k \in \Z^*} c_k \eta_k^{\omega} e^{ikx} = \varepsilon \widetilde{u}_0^{\omega}(x),
\end{dcases}
\label{def: pdc con random iv (preliminari)}
\end{equation}
where the coefficients $(c_k)_{k \in \Z^*}$ satisfy \eqref{def: coeff ck}. 
The sequence $(\eta_k^{\omega})_{k \in \N}$ is made of i.i.d standard complex Gaussian random variables (cf.~\eqref{intro: complex gaussian}) on a complete probability space $(\mathbb{\Omega}, \mathcal{F}, \P)$. 
Moreover we fix $\eta_{-k}^{\omega} = \overline{\eta_k^{\omega}}$ for all $k\in\N$, which together with the fact that $c_k=c_{-k}$ in \eqref{def: coeff ck}, guarantees that the random initial datum is real valued.

It is well known \cite{rayleigh} that $\eta_k^{\omega} = R_k^{\omega} e^{i \phi_k^{\omega}}$ where $R_k^{\omega} \ge 0$ is a Rayleigh r.v. $\sim \mathtt{R} \left ( \frac{1}{\sqrt{2}} \right )$ and $\phi_k^{\omega} \in [0,2\pi)$ is uniformly distributed.
Furthermore $(R_k^{\omega})_{k \in \N}$ and $(\phi_k^{\omega})_{k \in \N}$ are independent.
Therefore we can write
\begin{equation} \label{eq: random data}
\begin{split}
u_{\varepsilon}^{\omega}(0,x) =&\ \varepsilon \sum_{k=1}^{+\infty} c_k R_k^{\omega} \left ( e^{i \phi_k^{\omega} + ikx} + e^{-i \phi_k^{\omega} -ikx} \right ) = 2 \varepsilon \sum_{k=1}^{+\infty} c_k R_k^{\omega} \cos(kx + \phi_k^{\omega}).
\end{split}
\end{equation}

One can easily check that for any fixed $x \in \T$, $u_{\varepsilon}^{\omega}(0,x)$ is itself a centered real Gaussian random variable and that $u_{\varepsilon}^{\omega}(0) \in \dot{H}^{s}(\T)$ for $s$ in \eqref{def: coeff ck} almost surely, since 
\begin{equation} \label{eq: estimates random initial datum}
\mathbb{E}\norm{u_{\varepsilon}^{\omega}(0)}_{\dot{H}^s}^2 = 2 \varepsilon^2 \sum_{k = 1}^{+\infty} |k|^{2s}\, c_k^2, \qquad  \mathbb{E}\norm{u_{\varepsilon}^{\omega}(0)}_{\FL^{0,1}} = \sqrt{\pi}\, \varepsilon\, \sum_{k = 1}^{+\infty} c_k.
\end{equation}

\medskip 

We start with a large deviation result for the initial datum $\norm{u_{\varepsilon}^{\omega}(0)}_{\FL^{0,1}}$, 
which was proved in \cite[Section~2]{grande} for $c_k =\O( e^{-b|k|})$, $b>0$, and in \cite[Proposition~3.1]{liang} for $(c_k)_{k\in\Z}\in \ell^1$.

\begin{prop} \label{prop: grande}
Let $\lambda> 0$ and consider a sequence $(c_k)_{k \in \N}\in\ell^1$. Consider also a sequence
$(R_k^{\omega})_{k \in \N}$ of i.i.d. Rayleigh r.v. $\sim \mathtt{R} \left ( \frac{1}{\sqrt{2}} \right )$ on the same probability space $(\mathbb{\Omega}, \mathcal{F}, \P)$.
Then
\begin{equation} \label{eq: grande}
\lim_{\varepsilon \to 0^+} \varepsilon^2 \log \P \left ( \varepsilon \sum_{k \in \N} c_k R_k^{\omega} \ge \lambda \right ) = - \frac{\lambda^2}{\sum_{k=1}^{+\infty} c_k^2}.
\end{equation}
\end{prop}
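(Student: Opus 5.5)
We plan to prove \eqref{eq: grande} by a Gärtner--Ellis type argument. The upper bound will come from an exponential Chebyshev bound, and the lower bound from an explicit optimal tilt concentrated on finitely many modes, followed by a tail truncation. Set $\Lambda:=\sum_{k} c_k^2$ and $S^{\omega}:=\sum_k c_k R_k^{\omega}$. Since $(c_k)\in\ell^1$ and $\E R_k=\sqrt{\pi}/2$, the series converges a.s. and in $L^1$.

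\textbf{Upper bound.} For $\mu>0$, independence and Chebyshev give
\[
\P(\varepsilon S\ge\lambda)\le e^{-\mu\lambda/\varepsilon}\prod_k \E e^{(\mu/\varepsilon) c_k R_k}.
\]
We need a uniform bound on the Rayleigh moment generating function. With density $2re^{-r^2}$ on $r\ge0$, one has $\E e^{aR}\le 1+ C a + \ldots$. The cleanest route is the bound
\[
\E e^{aR}\le (1+\sqrt{\pi}\,a)\,e^{a^2/4}, \qquad a\ge0,
\]
which follows by completing the square: $\int_0^\infty 2re^{-r^2+ar}\,dr=e^{a^2/4}\int_{-a/2}^\infty 2(y+a/2)e^{-y^2}\,dy$. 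Hence
\[
\log\E e^{aR}\le a^2/4+\log(1+\sqrt{\pi}a)\le a^2/4+\sqrt{\pi}a .
\]
Summing over $k$ with $a=\mu c_k/\varepsilon$ yields
\[
\log\P\le -\mu\lambda/\varepsilon+\mu^2\Lambda/(4\varepsilon^2)+\sqrt{\pi}\,\mu\|c\|_{\ell^1}/\varepsilon .
\]
Choosing $\mu=2\lambda\varepsilon/\Lambda$ gives $\varepsilon^2\log\P\le -\lambda^2/\Lambda+O(\varepsilon)$. This is exactly where $\ell^1$ is used, since the linear term is summable only because of it.

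\textbf{Lower bound.} Fix $\eta>0$ and $K$ large. Write $S\ge S_K:=\sum_{k\le K}c_kR_k$, which is valid since $R_k\ge0$; this is a pleasant feature because no tail truncation error appears. Set $\Lambda_K=\sum_{k\le K}c_k^2$. On the event
\[
\Big\{R_k\ge \frac{(\lambda+\eta) c_k}{\varepsilon\Lambda_K}\ \ \text{for all } k\le K\Big\},
\]
we get $\varepsilon S_K\ge(\lambda+\eta)\ge\lambda$. By independence and the explicit Rayleigh tail $\P(R\ge r)=e^{-r^2}$,
\[
\P(\varepsilon S\ge\lambda)\ge\prod_{k\le K}\exp\Big(-\frac{(\lambda+\eta)^2c_k^2}{\varepsilon^2\Lambda_K^2}\Big)=\exp\Big(-\frac{(\lambda+\eta)^2}{\varepsilon^2\Lambda_K}\Big).
\]
(Modes with $c_k=0$ impose no constraint.) Therefore $\liminf\varepsilon^2\log\P\ge-(\lambda+\eta)^2/\Lambda_K$. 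Letting $\eta\to0$ and then $K\to\infty$, so that $\Lambda_K\uparrow\Lambda$, gives the matching bound.

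\textbf{Main obstacle.} The only delicate step is the uniform moment generating function estimate in the upper bound: the Rayleigh law is not sub-Gaussian with variance proxy exactly $1/2$ around zero, so a linear correction term is unavoidable. Controlling that correction is what forces $\ell^1$ rather than $\ell^2$ summability. The completing-the-square computation above handles it, and the rest is routine. If $\Lambda=0$ the statement is trivially interpreted as $-\infty$, and we assume $c\not\equiv0$.
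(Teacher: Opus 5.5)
The paper gives no proof of this proposition: it quotes \cite[Section~2]{grande} for exponentially decaying $c_k$ and \cite[Proposition~3.1]{liang} for $(c_k)\in\ell^1$. Your argument is a self-contained, elementary substitute that needs no abstract large deviations machinery. The upper bound is a Chernoff bound using the explicit Rayleigh estimate $\E e^{aR}\le(1+\sqrt{\pi}a)e^{a^2/4}$, and the lower bound comes from an explicit product event. The gain is quantitative. The upper bound has an explicit $O(\varepsilon)$ error, in which $\norm{c}_{\ell^1}$ enters only through the linear term. The lower bound holds for every $\varepsilon$, not just asymptotically. You do not even need $\eta$ or the truncation at $K$: by independence and continuity of measure, the event $\bigcap_k\{R_k\ge\lambda c_k/(\varepsilon\Lambda)\}$ has probability exactly $e^{-\lambda^2/(\varepsilon^2\Lambda)}$ and is contained in $\{\varepsilon S\ge\lambda\}$.

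There are two things to fix.

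First, the Chernoff step has a scaling slip. Testing $S\ge\lambda/\varepsilon$ with parameter $\mu/\varepsilon$ gives the prefactor $e^{-\mu\lambda/\varepsilon^{2}}$, not $e^{-\mu\lambda/\varepsilon}$. The optimal choice is then $\mu=2\lambda/\Lambda$, which yields $\log\P(\varepsilon S\ge\lambda)\le-\lambda^2/(\Lambda\varepsilon^2)+2\sqrt{\pi}\lambda\norm{c}_{\ell^1}/(\Lambda\varepsilon)$, exactly the bound you claim. With your display and your choice $\mu=2\lambda\varepsilon/\Lambda$ as written, you only get $\log\P\le-\lambda^2/\Lambda+O(1)$, which says nothing at the $\varepsilon^{-2}$ scale.

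Second, both $S\ge S_K$ and the inclusion of your product event in $\{\varepsilon S_K\ge\lambda+\eta\}$ use $c_k\ge0$, so you should state this hypothesis. It holds in the paper's setting by \eqref{def: coeff ck}, and it is genuinely needed. For $c=(1,-1,0,\ldots)$ the limit is $-\lambda^2$, not $-\lambda^2/2$, since $R_2$ can be taken near zero at no cost.
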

\begin{rem}
   We note that the Fourier coefficients of $u_{\varepsilon}^{\omega}(0)$ are not independent, since $\eta_k = \overline{\eta_{-k}}$. As a result, whenever we use \Cref{prop: grande}, we will have $\norm{u_{\varepsilon}^{\omega}(0)}_{\FL^{0,1}} = \sum_{k\in\Z^*} c_k R_k^{\omega} = 2 \sum_{k\in\N} c_k R_k^{\omega}$, since $c_k=c_{-k}$. This explains the factor $4$ appearing in \eqref{eq: limit ldp}.
\end{rem}

This result can be easily extended to the linear flow of \eqref{def: pdc con random iv (preliminari)} which remains Gaussian. However, the nonlinear dynamics are only well-approximated by the linear flow for timescales $|t|\ll \varepsilon^{-2(1-\delta)}$. In order to go beyond these timescales, we use the approximation developed in \Cref{teorema: approssimazione con fasi lineari in t - con eps}:
\begin{equation}\label{eq: random approximate solution}
u_{\mathrm{app},\varepsilon,r}^{\omega}(t,x) = 2 \, \varepsilon \sum_{j \in \N} c_j R_j^{\omega} \cos\left( t \theta_j(u_{\varepsilon}^{\omega}(0)) + \phi_j^{\omega} + jx\right), \qquad r \ge \frac{7n}{1-\delta}
\end{equation}
where the phases $\theta_j$ are defined in \eqref{def: phases 1} and the choice of $r$ will be justified in \Cref{rem: r n delta e tutti gli amici}.
We highlight that, for $\sigma \ge 0$ and $p \in [1,+\infty]$,
\begin{equation}\label{eq:conserved_app_norm}
\norm{u_{\mathrm{app},\varepsilon,r}^{\omega}(t)}_{\dot{\FL}^{\sigma ,p}} = \norm{u_{\varepsilon}^{\omega}(0)}_{\dot{\FL}^{\sigma ,p}} = \varepsilon \norm{\widetilde{u}_0^{\omega}}_{\dot{\FL}^{\sigma,p}}.
\end{equation}

\subsection{Upper bound} \label{subsection: upper bound}

We are now ready to prove the upper bound of the almost global LDP for the KdV equation.

\begin{theorem} \label{thm: almost global ldp}
Consider the Cauchy problem with random initial data \eqref{def: pdc con random iv (preliminari)}.
Let $u_{\varepsilon}^{\omega}(t,x)$ be the corresponding unique a.s.\ global solution.
Fix $n \in \N$, $\delta \in (0,1)$, $\lambda > 0$. Then
\begin{equation}\label{eq:limsup}
\limsup_{\varepsilon \to 0^{+}} \sup_{|t|\leq \varepsilon^{-n}} \varepsilon^{2\delta} \log \P \left( \sup_{x \in \T} u_{\varepsilon}^{\omega}(t,x) \ge \lambda \varepsilon^{1-\delta} \right) \le - \frac{\lambda^2}{4 \sum_{k=1}^{+\infty} c_k^2}.
\end{equation}
\end{theorem}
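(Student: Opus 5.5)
The plan is to reduce the event $\{\sup_x u_\varepsilon^\omega(t,x)\ge\lambda\varepsilon^{1-\delta}\}$ to a large-deviation event for the initial $\FL^{0,1}$-norm, which is conserved by the approximate dynamics. First split on the size of the initial datum. Fix $r=\lceil 7n/(1-\delta)\rceil$ and $s$ as in \eqref{def: coeff ck}. Let $\epsilon=\varepsilon^{1-\delta/2}$ (any exponent strictly between $1-\delta$ and $1$ should work). Define the good set
\[
\mathcal{B}_\varepsilon=\{\omega : \norm{u_\varepsilon^\omega(0)}_{\dot H^s}\le \epsilon\}.
\]
Since $\norm{\widetilde u_0^\omega}_{\dot H^s}^2=2\sum_{k\in\N}|k|^{2s}c_k^2R_k^2$ is a Gaussian chaos of order two with finite variance (using $\sigma\ge s$), standard Gaussian tail bounds give
\[
\P(\mathcal{B}_\varepsilon^c)\le \exp(-c\,\epsilon^2\varepsilon^{-2})=\exp(-c\,\varepsilon^{-\delta}).
\]
This decays faster than $\exp(-C\varepsilon^{-2\delta})$ only if $\delta<2\delta$, which holds. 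More precisely, after multiplying the logarithm by $\varepsilon^{2\delta}$ we get $-c\,\varepsilon^{-\delta}\cdot\varepsilon^{2\delta}\cdot\varepsilon^{-2\delta}\to-\infty$. So $\mathcal{B}_\varepsilon^c$ is negligible.

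On $\mathcal{B}_\varepsilon$ we invoke \Cref{teorema: approssimazione con fasi lineari in t - con eps} with this $\epsilon$. We have to check that $T=\epsilon^{-r/5}\ge\varepsilon^{-n}$, i.e. $r(1-\delta/2)/5\ge n$. This follows from $r\ge 7n/(1-\delta)$, and it is the reason for that choice of $r$. Negative times are handled by the time-reversal symmetry $u(t,x)\mapsto u(-t,-x)$ of KdV, which preserves the law of the data since $c_k=c_{-k}$. Then, for $|t|\le\varepsilon^{-n}$,
\[
\sup_x u_\varepsilon^\omega(t,x)\le \norm{u_\varepsilon^\omega(t)}_{\FL^{0,1}}\le \norm{u_{\mathrm{app}}(t)}_{\FL^{0,1}}+\norm{u(t)-u_{\mathrm{app}}(t)}_{\FL^{0,1}}.
\]
By \eqref{eq:conserved_app_norm}, the first term equals $2\varepsilon\sum_{k\in\N} c_kR_k^\omega$. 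For the second term we use $\norm{\cdot}_{\FL^{0,1}}\lesssim\norm{\cdot}_{\dot H^1}$ together with \eqref{eq: approx fasi lineari}, which bounds it by $C\epsilon^{8/5}=C\varepsilon^{\frac85(1-\delta/2)}$. This is $o(\varepsilon^{1-\delta})$, because $\frac85-\frac45\delta>1-\delta$. Hence, uniformly in $t$, for any $\eta>0$ and $\varepsilon$ small,
\[
\P\big(\sup_x u\ge\lambda\varepsilon^{1-\delta}\big)\le \P\Big(2\varepsilon^{\delta}\sum_{k\in\N} c_kR_k^\omega\ge\lambda-\eta\Big)+\P(\mathcal{B}_\varepsilon^c).
\]

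Apply \Cref{prop: grande} with $\varepsilon^\delta$ in place of $\varepsilon$ and threshold $(\lambda-\eta)/2$; note that $(c_k)\in\ell^1$ by \eqref{def: coeff ck} and Cauchy–Schwarz. This gives the limit $-(\lambda-\eta)^2/(4\sum c_k^2)$. The right-hand side does not depend on $t$, so the supremum over $t$ passes through, and letting $\eta\to0$ yields \eqref{eq:limsup}.

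The main obstacle is compatibility of the parameters. The deterministic theorem needs a smallness radius $\epsilon$ that simultaneously (a) makes the bad set super-exponentially small on the $\varepsilon^{-2\delta}$ scale, (b) makes the error $\epsilon^{8/5}$ negligible against $\varepsilon^{1-\delta}$, and (c) makes $\epsilon^{-r/5}$ cover $\varepsilon^{-n}$. I would check carefully that the choice $\epsilon=\varepsilon^{1-\delta/2}$ satisfies all three, adjusting the exponent if needed.
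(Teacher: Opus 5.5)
There is a genuine gap: your condition (a) fails for the choice $\epsilon=\varepsilon^{1-\delta/2}$, and the inequality you wrote to verify it runs the wrong way. The complement $\mathcal B_\varepsilon^c=\{\norm{\widetilde u_0^\omega}_{\dot H^s}>\varepsilon^{-\delta/2}\}$ has probability of order $\exp(-c\,\varepsilon^{-\delta})$, and this is sharp. For any $k$ with $c_k>0$,
$\P(\mathcal B_\varepsilon^c)\ge\P(\sqrt2\,|k|^sc_kR_k^\omega>\varepsilon^{-\delta/2})=\exp\big(-\varepsilon^{-\delta}/(2|k|^{2s}c_k^2)\big)$.
Since $\varepsilon^{-\delta}\ll\varepsilon^{-2\delta}$, this is exponentially \emph{larger} than the target $\exp(-C\varepsilon^{-2\delta})$, not smaller. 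The correct computation is $\varepsilon^{2\delta}\log\P(\mathcal B_\varepsilon^c)\ge -C\varepsilon^{\delta}\to0$; your product contains a spurious extra factor $\varepsilon^{-2\delta}$. Consequently your bound $\P(\sup_x u_\varepsilon^\omega(t,x)\ge\lambda\varepsilon^{1-\delta})\le\P(2\varepsilon^\delta\sum c_kR_k^\omega\ge\lambda-\eta)+\P(\mathcal B_\varepsilon^c)$ only yields $\limsup\le 0$. The same computation shows the admissible range is the opposite of the one you suggest. To make the complement negligible on the scale $\varepsilon^{-2\delta}$ you need $\epsilon/\varepsilon\gg\varepsilon^{-\delta}$, i.e.\ $\epsilon=\varepsilon^{a}$ with $a<1-\delta$, not $a>1-\delta$.

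Take $\epsilon=\varepsilon^{a}$ with $\frac57(1-\delta)\le a<1-\delta$; the paper uses $a=\frac57(1-\delta)$. With the given $r=\lceil 7n/(1-\delta)\rceil$ and $s$, all three conditions then hold:
\begin{itemize}
\item[(a)] $\varepsilon^{2\delta}\log\P(\mathcal B_\varepsilon^c)\lesssim-\varepsilon^{-2(1-\delta-a)}\to-\infty$.
\item[(b)] The error is $\epsilon^{8/5}=\varepsilon^{8a/5}$ with $\frac{8a}{5}\ge\frac87(1-\delta)>1-\delta$, hence $o(\varepsilon^{1-\delta})$.
\item[(c)] $\epsilon^{-r/5}=\varepsilon^{-ar/5}\ge\varepsilon^{-n}$, because $ar\ge\frac57(1-\delta)\cdot\frac{7n}{1-\delta}=5n$. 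This is exactly where the constant $7$ in the choice of $r$ comes from (\Cref{rem: r n delta e tutti gli amici}).
\end{itemize}
With this correction your argument is essentially the paper's. The paper splits on $\{\norm{u_\varepsilon^\omega(t)-u_{\mathrm{app},\varepsilon,r}^\omega(t)}_{L^\infty_x}\ge\varepsilon^{\frac87(1-\delta)}\}$ and includes that event in a large-initial-data event, which is equivalent to your split on the size of $u_\varepsilon^\omega(0)$. It then uses the conserved $\FL^{0,1}$-norm of the approximation and \Cref{prop: grande} at scale $\varepsilon^{\delta}$. Your time-reversal argument $u(t,x)\mapsto u(-t,-x)$ for $t<0$ is correct, and it makes explicit a point the paper leaves implicit.
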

\begin{proof}
Consider the approximation $u_{\mathrm{app},\varepsilon,r}^{\omega}$ in \eqref{eq: random approximate solution} and define:
\[
\mathcal A_{\varepsilon}  = \left \{ \omega \in \mathbb{\Omega} \, | \,  \sup_{x\in\T} u_{\varepsilon}^{\omega}(t,x) \ge \lambda \varepsilon^{1-\delta} \right \}, \quad
\mathcal B_{\varepsilon}  = \left \{ \omega \in \mathbb{\Omega} \, | \,  \norm{u_{\varepsilon}^{\omega}(t) - u_{\mathrm{app},\varepsilon,r}^{\omega}(t)}_{L_x^{\infty}} \ge \varepsilon^{\frac{8}{7} (1-\delta)} \right \}.
\]
First, we write
\begin{equation}\label{eq:intersection0}
\P(\A_{\varepsilon}) = \P(\A_{\varepsilon}\cap \B_{\varepsilon})+\P(\A_{\varepsilon}\cap \B_{\varepsilon}^c).
\end{equation}
By the reverse triangle inequality, we have that
\[
\begin{split}
\P( \A_{\varepsilon} \cap \B_{\varepsilon}^c) 
& \le \P \left( \left \{  \sup_{x\in\T} u_{\varepsilon}^{\omega}(t,x) \ge \lambda \varepsilon^{1-\delta} \right \} \cap \left \{  \sup_{x\in\T} u_{\varepsilon}^{\omega}(t,x)- \norm{u_{\mathrm{app},\varepsilon,r}^{\omega}(t)}_{L_x^{\infty}} < \varepsilon^{\frac{8}{7}(1-\delta)} \right \} \right) \\
& \le \P \left( \norm{u_{\mathrm{app},\varepsilon,r}^{\omega}(t)}_{L_x^{\infty}} \ge \lambda \varepsilon^{1-\delta} - \varepsilon^{\frac{8}{7}(1-\delta)} \right)\stackrel{\eqref{eq:conserved_app_norm}}{\leq} \P \left( \norm{u_{\varepsilon}^{\omega}(0)}_{\FL^{0,1}} \ge \lambda \varepsilon^{1-\delta} - \varepsilon^{\frac{8}{7}(1-\delta)} \right).
\end{split}
\]
In particular, \Cref{prop: grande} and the fact that $\delta<1$ imply that
\begin{equation}\label{eq:intersection1}
    \log \P( \A_{\varepsilon} \cap \B_{\varepsilon}^c) = -\frac{\lambda^2 \varepsilon^{-2\delta}}{4\sum_{k\in\N} c_k^2} + o(\varepsilon^{-2\delta}) \qquad \mbox{as}\ \varepsilon\rightarrow 0.
\end{equation}

Next we bound the second term in \eqref{eq:intersection0}. 
By \Cref{teorema: approssimazione con fasi lineari in t - con eps} applied with $\epsilon=\varepsilon^{\frac57 (1-\delta)}$, and the embeddings $L^{\infty}(\T) \supseteq H^s(\T) \supseteq \FL^{s,1}(\T)$ for $s > \frac{1}{2}$,
we have that $\B_{\varepsilon} \subset \{ \omega\in\mathbb{\Omega} \mid \norm{u^{\omega}_{\varepsilon}(0)}_{\FL^{s,1}}\geq \varepsilon^{\frac57 (1-\delta)}\}$. As a result, 
\begin{equation}\label{eq:intersection2}
\begin{split}
\log \P(\A_{\varepsilon} \cap \B_{\varepsilon}) \le \log \P(\B_{\varepsilon}) \leq \log \P\left(\norm{u^{\omega}_{\varepsilon}(0)}_{\FL^{s,1}}\geq \varepsilon^{\frac57 (1-\delta)}\right) \leq - \frac{ \varepsilon^{-\frac47 - \frac{10}{7} \delta}}{8\sum_{k\in\N} k^{2s} c_k^2}
\end{split}
\end{equation}
provided $\varepsilon$ is small enough, where in the last inequality we used \eqref{eq: grande}, with $c_k \mapsto k^s c_k$ (cf. \eqref{def: norm flsp}). Note that when using \Cref{teorema: approssimazione con fasi lineari in t - con eps}, we must impose condition \eqref{def: coeff ck} on the regularity $s$ in terms of $\delta$ and $n$, which we explain in detail in \Cref{rem: r n delta e tutti gli amici}.

\smallskip

Finally, in view of  \eqref{eq:intersection1}--\eqref{eq:intersection2}, for sufficiently small $\varepsilon$ we can arrange $\P(\A_{\varepsilon} \cap \B_{\varepsilon}^c) \leq \P(\A_{\varepsilon} \cap \B_{\varepsilon})$. Then \eqref{eq:intersection0} and \eqref{eq:intersection1} yield \eqref{eq:limsup}, since the errors are uniform in time.
\end{proof}

\begin{rem}\label{rem: r n delta e tutti gli amici}
We highlight that, when using \Cref{teorema: approssimazione con fasi lineari in t - con eps} with $\epsilon=\varepsilon^{\frac57 (1-\delta)}$, we can reach timescales $T=( \varepsilon^{\frac57 (1-\delta)})^{-\frac{r}{5}}$. Imposing that such times are $T\geq \varepsilon^{-n}$ leads to fixing some $r\geq \frac{7n}{1-\delta}$. Moreover, \Cref{teorema: approssimazione con fasi lineari in t - con eps} requires $s\geq 12 r^2$, which explains \eqref{def: coeff ck}.
\end{rem}

\subsection{Lower bound} \label{subsection: the lower bound}

Controlling the sup-norm of the solution via the $\FL^{0,1}$-bound allowed us to exploit the quasi-conservation of the Fourier moduli in time, which was the key to deriving the upper bound \Cref{thm: almost global ldp}. In order to obtain a lower bound, however, we cannot disregard the dynamics of the phases of the Fourier coefficients. 

In view of \eqref{eq: random approximate solution}, these nonlinear phases are given by 
$\phi_j^{\omega} + t\,\theta_j (u_{\varepsilon}^{\omega}(0))$. While $\phi_j^{\omega}$ is uniformly distributed in $[0,2\pi)$, the nonlinear phases are not on account of the nontrivial way in which $\theta_j (u_{\varepsilon}^{\omega}(0))$ depend on the initial datum (and thus on the initial phases). Roughly speaking, the key idea is to study the set of quasi-synchronized phases (cf. \eqref{eq: stability})
\begin{equation}\label{eq:quasi-synch intro}
\{ \omega\in\mathbb{\Omega} \mid |\phi_j^{\omega} + t\,\theta_j (u_{\varepsilon}^{\omega}(0))|\ll 1 \quad \mbox{for many}\ j\}.
\end{equation}
Within this set, we can replace the $L^{\infty}$-norm by the $\FL^{0,1}$-norm, which allows us to exploit the quasi-invariance of Fourier moduli once again. 
One of the main challenges is therefore to show that the probability of the quasi-synchronization set does not become too small 
over very long timescales $|t| \le \varepsilon^{-n}$, $n\in\N$. In order to prove this property we start by constructing phases which are precisely synchronized. 
To do so, we think of the initial datum as a (deterministic) function of a finite (but growing) number of phases, cf. \eqref{def: partially rid}.

\paragraph{Partially randomized initial datum.}
Let $M\in\N$ to be fixed later, and define
\begin{equation}
\Pi_{> M} : \dot{L}^2(\T,\R) \longrightarrow \dot{L}^2(\T,\R), \qquad \Pi_{> M} \left ( \sum_{j \in \Z^*} f_j e^{ijx} \right ) = \sum_{|j| > M} f_j e^{ijx}.
\end{equation}
For $\vec{\phi} \in \R^M$ consider the \textit{partially randomized initial datum} (cf. \eqref{eq: random data})
\begin{equation} \label{def: partially rid}
\widetilde{\tu}_0^{\omega}(x ; \vec{\phi} ) = 2 \sum_{j = 1}^M c_j R_j^{\omega} \cos(jx + \phi_j) + \underbrace{ 2 \sum_{j=M+1}^{+\infty} c_j R_j^{\omega} \cos(jx + \phi_j^{\omega}) }_{ \Pi_{>M} \widetilde{\tu}_0^{\omega}(x) }
\end{equation}
where $(\phi_j^{\omega})_{j > M}$ are i.i.d. uniformly distributed r.v. in $[0,2\pi)$, $(R_j^{\omega})_{j \in \N}$ are i.i.d. Rayleigh r.v. $\sim \mathtt{R} \left ( \frac{1}{\sqrt{2}} \right )$, and $(\phi_j^{\omega})_{j > M}$ and $(R_j^{\omega})_{j \in \N}$ are independent.

\begin{rem}
If $\vec{\phi}^{\omega} = (\phi_j^{\omega})_{j=1}^M$ are i.i.d. uniformly distributed r.v. in $[0,2\pi)$ and independent of $(R_j^{\omega})_{j \in N}$ and $(\phi_j^{\omega})_{j > M}$, then our initial datum coincides with \eqref{def: pdc con random iv (preliminari)}, namely
    \[
    \widetilde{\tu}_0^{\omega}(x ; \vec{\phi}^{\omega} ) = \widetilde{u}_0^{\omega}(x).
    \]
\end{rem}

For $\varepsilon > 0$, $\delta \in (0,1)$, and $s$ in \eqref{def: coeff ck}, and recalling \eqref{def: pdc con random iv (preliminari)}, we define the event
\begin{equation} \label{def: event small size initial datum}
\begin{split}
\mathcal{B}(\delta,\varepsilon,s) & = \left \{ \omega \in \mathbb{\Omega} \ \Big | \ \varepsilon \norm{\widetilde{u}^{\omega}_0}_{\dot{H}^s} = \varepsilon \norm{\widetilde{\tu}^{\omega}_0}_{\dot{H}^s} \le \varepsilon^{\frac57 (1-\delta)} \right \}.
\end{split}
\end{equation}
Notice that this definition does not depend on $M$, as the $H^s$-norm does not depend on the phases.

We also introduce the $\sigma$-algebra $\mathcal{G}$ generated by the random moduli and the high-Fourier modes, i.e. 
    \begin{equation} \label{def: sigma algebra senza fasi basse}
    \mathcal{G} = \sigma ( (R_j^{\omega})_{j \in \N} , (\phi_j^{\omega})_{j > M} ). 
    \end{equation}
Note that the event $\mathcal{B}(\delta,\varepsilon,s)$  depends only on the random variables $(R_j^{\omega})_{j \in \N}$. As a result, $\mathcal{B}(\delta,\varepsilon,s)$ is $\mathcal{G}$-measurable. It will also be convenient to define  the restriction of $\mathcal{G}$ to the event $\mathcal{B}$ denoted by 
\begin{equation}\label{def:tildeG}
    \widetilde{\mathcal{G}} = \{ G \cap \mathcal{B}(\delta,\varepsilon,s) \, \lvert \, G \in \mathcal{G} \}\subseteq \mathcal{G}.
\end{equation}
Finally, we introduce the \textit{restricted probability space} $(\mathcal{B}(\delta,\varepsilon,s) , \widetilde{\mathcal{G}} , \widetilde{\P} )$, where $\widetilde{\P}(G) = \P(G)$ for all $G \in \widetilde{\mathcal{G}}$. Without loss of generality, we can assume that $(\mathbb{\Omega},\mathcal{G},\mathbb{P})$ and $(\mathcal{B}(\delta,\varepsilon,s), \widetilde{\mathcal{G}},\widetilde{\P})$ are complete and independent of the initial uniformly distributed phases $\vec{\phi}^{\omega}$.

\paragraph{Random fixed point.}

For any $\omega\in\mathcal{B}(\delta,\varepsilon,s)$, we may write the phases in \eqref{eq: random approximate solution} explicitly (cf. \eqref{def: phases 1} and recall $\epsilon=\varepsilon^{\frac57 (1-\delta)}$) as:
\begin{equation}\label{eq: approx_phases random}
\theta_j(u_{\varepsilon}^{\omega}(0)) = j^3 - \frac{1}{6j} |\Phi^{-1}(u_{\varepsilon}^{\omega}(0))_j|^2 + j \sum_{m=3}^{\lfloor \frac{r}{2} \rfloor} \, \, \sum_{\k \in (\Z^*)^{m-1}} b_{(j,\k)} |\Phi^{-1}(u_{\varepsilon}^{\omega}(0))^{\k}|^2,  \qquad |b_{(j,\k)}| \lesssim_r \varepsilon^{-\frac{2\#k-3}{7}(1-\delta)},
\end{equation}
where $\Phi^{-1}$ are defined in \Cref{thm: normal form for ham}, and $r \ge \frac{7n}{1-\delta}$. This choice of $r$ is made so that $u_{\mathrm{app},\varepsilon,r}^{\omega}$  in \eqref{eq: random approximate solution} is a good approximation of the KdV dynamics under long timescales, as explained in \Cref{rem: r n delta e tutti gli amici}.

\medskip

We thus define the random operator
\begin{equation} \label{def: random phase nonlinear operator}
\begin{split}
& \mathcal{T} : \R^M \times \mathcal{B}(\delta,\varepsilon,s) \rightarrow \R^M,  \\
& \mathcal{T}_j(\vec{\phi},\omega) = -t \theta_j(\varepsilon\, \widetilde{\tu}_0^{\omega}(\cdot ; \vec{\phi})), \qquad j=1,\ldots,M.
\end{split}
\end{equation}

This operator allows us to rewrite the problem of finding a point in \eqref{eq:quasi-synch intro} where phases exactly synchronize into a fixed point problem for the mapping $\mathcal{T}$. Below we will prove that for each $\omega\in\mathcal{B}(\delta,\varepsilon,s)$ there exists 
$\vec{\phi}^{*,\omega} \in \R^M$ such that 
\[
t \theta_j(\varepsilon \widetilde{\tu}_0^{\omega}(\cdot ; \vec{\phi}^{*,\omega})) + \vec{\phi}^{*,\omega} = 0 \qquad \forall \, \, 1 \le j \le M.
\]
The key difficulty is proving that $\vec{\phi}^{*,\omega}$ is $\widetilde{\mathcal{G}}$-measurable. We prove this via a random Brouwer fixed point theorem \cite{fixed point}, in the spirit of classical measurable selection theorems such as Kuratowski–Ryll-Nardzewski, see also \cite[Chapter~3.2]{measurable selection}.
The following proposition is an adaptation of \cite[Proposition 5.3]{bertimasperograndestaffilani}. 

\begin{prop}[Random fixed point] \label{prop: properties of random fixed point}
Let $M,n \in \N$, $\delta \in (0,1)$, 
and $s$ in \eqref{def: coeff ck}.
There exists $\varepsilon_0 = \varepsilon_0(n,\delta) > 0$ such that for any $\varepsilon \in (0,\varepsilon_0)$ and for any $|t| \le \varepsilon^{-n}$, the random operator $\mathcal{T}$ defined in \eqref{def: random phase nonlinear operator} satisfies the following properties:

\medskip




\smallskip

\noindent (i) {\sc Fixed point:} There exists a random variable $\vec{\phi}^{*,\omega} = ( \phi_j^{*,\omega} )_{j=1}^M$ in $(\mathbb{\Omega},\mathcal{G},\mathbb{P})$ such that for every $\omega\in \mathcal{B}(\delta,\varepsilon,s)$
\[
\mathcal{T} ( \vec{\phi}^{*,\omega} , \omega ) = \vec{\phi}^{*,\omega}.
\]

\smallskip

\noindent (ii) {\sc Neighborhood of the fixed point:} If $\vec{\phi}^{\omega} = (\phi_j^{\omega})_{j=1}^M$ are i.i.d uniformly distributed r.v. in $[0,2\pi)$ and independent of $\mathcal{G}$ in \eqref{def: sigma algebra senza fasi basse}, $\beta \in (0,\pi)$ and we define
\begin{equation} \label{def: intorno}
\mathcal{N}(\beta) = \{ \omega \in \mathbb{\Omega} \, \big \lvert \, | \phi_j^{\omega} - \phi_j^{*,\omega} | < \beta \quad \forall j = 1,\ldots,M \},
\end{equation}
then $\mathcal{N}(\beta)$ is $\mathcal{F}$-measurable and $\P(\mathcal{N}(\beta)) = \left ( \frac{\beta}{\pi} \right )^M$.

\smallskip

\noindent (iii) {\sc Stability property:} For any $\omega \in \mathcal{N}(\beta) \cap \mathcal{B}(\delta,\varepsilon,s)$ we have
\begin{equation} \label{eq: stability}
| \phi_j^{\omega} - \mathcal{T} ( \phi_j^{\omega} , \omega ) | \le (1+M|t|)\beta, \qquad j = 1,\ldots,M.
\end{equation}

\smallskip

\noindent (iv) {\sc Factorization property:} For any measurable set $\A \in \mathcal{G}$,
\begin{equation} \label{eq: factorization}
\P (\A \cap \mathcal{N}(\beta)) = \left ( \frac{\beta}{\pi} \right )^M \P ( \A ).
\end{equation}
\end{prop}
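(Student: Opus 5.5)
The plan is to follow the structure of \cite[Proposition 5.3]{bertimasperograndestaffilani}, replacing the Lipschitz bound of the nonlinear flow by the Lipschitz bound on the map $\vec{\phi}\mapsto t\,\theta(\varepsilon\widetilde{\tu}_0^{\omega}(\cdot;\vec{\phi}))$. This map only involves $\Phi^{-1}$ from \Cref{thm: normal form for ham} and the integrable phases \eqref{eq: approx_phases random}, so its Lipschitz constant grows linearly in $t$.

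For (i), I would view the phases modulo $2\pi$. For fixed $\omega\in\mathcal{B}(\delta,\varepsilon,s)$, the map $\vec{\phi}\mapsto \mathcal{T}(\vec{\phi},\omega)$ is continuous: $\vec{\phi}\mapsto\varepsilon\widetilde{\tu}_0^{\omega}(\cdot;\vec{\phi})$ is Lipschitz into $\dot{H}^s$ with constant $\lesssim\varepsilon\norm{\widetilde{\tu}_0^\omega}_{\dot H^s}$, $\Phi^{-1}$ is $2$-Lipschitz by \eqref{eq: lipschitzianity}, and $\theta_j$ is a polynomial in the actions. It is also $2\pi$-periodic in each $\phi_j$. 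Hence $\vec{\phi}\mapsto \mathcal{T}(\vec{\phi},\omega)\ \mathrm{mod}\ 2\pi$, suitably lifted, maps the cube $[-\pi,\pi]^M$ (or rather $\vec{\phi}\mapsto \mathcal{T}(\vec\phi,\omega)+2\pi\Z^M$ restricted to a fundamental domain) continuously into itself. I would then apply the random Brouwer fixed point theorem \cite{fixed point}. It requires the map to be a Carathéodory function, i.e.\ continuous in $\vec{\phi}$ and $\widetilde{\mathcal G}$-measurable in $\omega$. Measurability holds because $\mathcal{T}$ depends on $\omega$ only through $(R_j^\omega)_j,(\phi_j^\omega)_{j>M}$, via continuous operations. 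The theorem yields a $\widetilde{\mathcal{G}}$-measurable selection $\vec{\phi}^{*,\omega}$, which I extend by $0$ off $\mathcal{B}$ to get a $\mathcal{G}$-measurable variable.

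For (ii) and (iv), the key fact is that $\vec{\phi}^{\omega}$ (the first $M$ phases) is independent of $\mathcal{G}$ while $\vec{\phi}^{*,\omega}$ is $\mathcal{G}$-measurable. Measurability of $\mathcal{N}(\beta)$ is clear. For $\A\in\mathcal{G}$, I would condition on $\mathcal{G}$ (freezing lemma):
\[
\P(\A\cap\mathcal N(\beta))=\E\Big[\mathbbm 1_{\A}\, \P\big(|\phi^\omega_j-a_j|<\beta\ \forall j\big)\big|_{a=\vec{\phi}^{*,\omega}}\Big].
\]
Here distances are taken mod $2\pi$. The inner probability equals $(2\beta/2\pi)^M=(\beta/\pi)^M$ for every $a$, by the uniformity of the phases on the circle. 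This gives (iv), and taking $\A=\mathbb{\Omega}$ gives (ii).

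For (iii), I would write $\phi_j^\omega-\mathcal{T}_j(\vec\phi^\omega,\omega)=(\phi_j^\omega-\phi_j^{*,\omega})+(\mathcal{T}_j(\vec\phi^{*,\omega},\omega)-\mathcal{T}_j(\vec\phi^\omega,\omega))$. The first term is $<\beta$. The second I bound by $|t|\,\mathrm{Lip}(\theta_j)\,\norm{\vec\phi^\omega-\vec\phi^{*,\omega}}_{\ell^1}\le |t| M\beta\cdot\mathrm{Lip}$, and the main work is to show that $\mathrm{Lip}\le1$. Changing $\phi_k$ by $\Delta$ changes $u$ in $\dot H^s$ by at most $2\varepsilon c_k R_k|k|^s|\Delta|\le \varepsilon^{\frac57(1-\delta)}|\Delta|$ on $\mathcal B$. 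Crucially, the linear part $j^3$ of $\theta_j$ does not depend on $\vec\phi$. The term $-\frac1{6j}|\Phi^{-1}(u)_j|^2$ has derivative $\lesssim \epsilon\cdot\epsilon$, using \eqref{eq: lipschitzianity} and $\norm{\Phi^{-1}(u)}\le 2\epsilon$ with $\epsilon=\varepsilon^{\frac57(1-\delta)}$. The higher terms contribute $j\,|b|\,\epsilon^{2m-1}\cdot\epsilon\lesssim \epsilon^{2m}\epsilon^{-(2m-3)/5}\ll1$, using the coefficient bound in \eqref{eq: approx_phases random} and the $\ell^1$ summability of actions. Since $\theta_j$ depends only on moduli of $\Phi^{-1}(u)$, which is not a gauge-invariant map, some care is needed. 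It suffices, though, that $\Phi^{-1}$ is Lipschitz, and that factors of $j\le M$ are absorbed because $\epsilon^2 M\le \varepsilon^{\frac{10}{7}(1-\delta)-\delta}$, which is small if $\delta$ is small. Otherwise I would use that $j$-weights are controlled by the $\dot H^s$ norm, $s\ge1$. This Lipschitz estimate, uniform in $\omega\in\mathcal B$ and in $j\le M$, is the main obstacle; the measurable selection in (i) is the other delicate point.
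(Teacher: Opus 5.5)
Your (ii) and (iv) match the paper's freezing-lemma argument, and reading the distance in $\mathcal{N}(\beta)$ modulo $2\pi$ is the right interpretation. The step you call the main obstacle in (iii), however, does not close as proposed. You bound $|\mathcal{T}_j(\vec{\phi}^{*,\omega},\omega)-\mathcal{T}_j(\vec{\phi}^{\omega},\omega)|$ by a single-coordinate Lipschitz constant times $\norm{\vec\phi^{\omega}-\vec\phi^{*,\omega}}_{\ell^1}\le M\beta$. This uses up the factor $M$ allowed in \eqref{eq: stability}, so you then need the Lipschitz constant of $\theta_j$ in each single phase to be $\le 1$ uniformly in $j\le M$.

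That fails because $\theta_j$ has a prefactor $j$ in front of $\sum_{\k}b_{(j,\k)}|\Phi^{-1}(u)^{\k}|^2$. These coefficients do not decay in $j$, and the actions involved belong to arbitrary modes. The factor $|j|\le M$ is therefore uncompensated, and your fallback of absorbing it with $\dot H^s$-weights has nothing to act on. Your route yields \eqref{eq: stability} only when $M$ times a fixed power of $\epsilon=\varepsilon^{\frac57(1-\delta)}$ is small. This fails for general $M$, since the statement allows any $M\in\N$ with $\varepsilon_0=\varepsilon_0(n,\delta)$. It also fails, as you note, for $M=\lfloor\varepsilon^{-\delta}\rfloor$ with $\delta$ close to $1$.

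The missing ingredient is essentially one you wrote in (i), once the norm on $\vec\phi$ is taken to be $\ell^\infty$. By orthogonality of the Fourier modes, on $\mathcal{B}(\delta,\varepsilon,s)$ one has $\norm{\varepsilon\widetilde{\tu}_0^{\omega}(\cdot;\vec\phi)-\varepsilon\widetilde{\tu}_0^{\omega}(\cdot;\underline{\vec\phi})}_{\dot H^s}\le\varepsilon\norm{\widetilde{\tu}_0^{\omega}}_{\dot H^s}\norm{\vec\phi-\underline{\vec\phi}}_{\ell^\infty}\le\epsilon\norm{\vec\phi-\underline{\vec\phi}}_{\ell^\infty}$. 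So moving all $M$ phases by less than $\beta$ costs $\epsilon\beta$, not $\epsilon M\beta$. Combine this with \eqref{eq: lipschitzianity}, with $\norm{\Phi^{-1}(u)}_{\dot H^s}\le 2\norm{u}_{\dot H^s}$, and with the coefficient bounds in \eqref{eq: approx_phases random}. This gives the $\ell^\infty$-Lipschitz estimate \eqref{lipschitz} with constant $M|t|\varepsilon^{\frac{10}{7}(1-\delta)}$. Here $M$ comes only from $|j|\le M$, and \eqref{eq: stability} follows for every $M$.

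There is also a flaw in (i). Reducing $\mathcal{T}(\cdot,\omega)$ modulo $2\pi$ into $[-\pi,\pi]^M$ gives a map that is discontinuous on the faces of the cube. The induced map on the torus $(\R/2\pi\Z)^M$ has no fixed-point property, since translations have none. Conversely, $\mathcal{T}(\cdot,\omega)$ itself does not map $[-\pi,\pi]^M$ into itself, because its linear part $-tj^3$ has size up to $M^3|t|$. What works, and what the paper does, is to use the uniform bound \eqref{eq: trivial bound}: $|\mathcal{T}_j(\vec\phi,\omega)|\le 2M^3|t|$ for $\omega\in\mathcal{B}(\delta,\varepsilon,s)$. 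One then applies the random Brouwer theorem on the convex compact cube $K=[-2M^3|t|,2M^3|t|]^M$, which $\mathcal{T}(\cdot,\omega)$ maps continuously into itself. Periodicity in $\vec\phi$ is only needed afterwards, to reduce $\vec\phi^{*,\omega}$ modulo $2\pi$. That reduction is what makes your circular-distance version of $\mathcal{N}(\beta)$ have probability exactly $(\beta/\pi)^M$, and makes (iii) hold modulo $2\pi$.
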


It is worth to notice that the factorization property \eqref{eq: factorization} holds even if $\mathcal{N}(\beta)$ is not independent of $\mathcal{G}$.
The proof of \Cref{prop: properties of random fixed point} is postponed to \Cref{subsection: proof prop fixed point}. 
In particular the proof of \eqref{eq: factorization} relies on \Cref{lemma: properties of conditional expectation} below.
Finally we prove the lower bound of the large deviations principle which, together with \Cref{thm: almost global ldp}, yields \Cref{thm: almost global ldp intro}.

\begin{theorem} \label{thm: almost global ldp - lower bound}
Consider the Cauchy problem \eqref{def: pdc con random iv (preliminari)} with random initial data satisfying \eqref{def: coeff ck}, which a.s. admits a unique global solution $u_{\varepsilon}^{\omega}(t,x)$.
Fix $n \in \N$, $\delta \in (0,1)$, $\lambda > 0$. 
Then 
\begin{equation}\label{eq: almost global ldp - lower bound}
\liminf_{\varepsilon \to 0^{+}}  \inf_{|t|\leq \varepsilon^{-n}} \varepsilon^{2\delta} \log \P \left( \sup_{x \in \T} u_{\varepsilon}^{\omega}(t,x) \ge \lambda \varepsilon^{1-\delta} \right) + \frac{\lambda^2}{4 \sum_{k=1}^{+\infty} c_k^2} \geq 0.
\end{equation}
\end{theorem}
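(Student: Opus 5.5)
The plan is to bound the probability from below by restricting to the quasi-synchronization neighborhood $\mathcal{N}(\beta)$ of \Cref{prop: properties of random fixed point}, intersected with the small-data event $\mathcal{B}(\delta,\varepsilon,s)$. I then use the factorization property \eqref{eq: factorization} to decouple the phases from the moduli. Fix $|t|\le\varepsilon^{-n}$ and $r\ge \frac{7n}{1-\delta}$, and set $M=\lfloor\varepsilon^{-\delta}\rfloor$ and $\beta = \varepsilon^{\gamma}/(1+M|t|)$ for a small $\gamma>0$ (say $\gamma=\frac{1-\delta}{15}$, matching the threshold in $\mathfrak{P}$). Then $\log(\beta/\pi)\gtrsim -\log\varepsilon^{-1}$, so that
\[
M\log(\beta/\pi) \gtrsim -\varepsilon^{-\delta}\log(\varepsilon^{-1}) = o(\varepsilon^{-2\delta}).
\]
This is exactly where the linear-in-$t$ stability \eqref{eq: stability} is essential: $\beta$ decays only polynomially in $\varepsilon$.

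\textbf{Step 1 (deterministic lower bound on the good event).} For $\omega\in\mathcal{N}(\beta)\cap\mathcal{B}(\delta,\varepsilon,s)$, \eqref{eq: stability} gives $|\phi_j^\omega+t\theta_j(u^\omega_\varepsilon(0))|\le \varepsilon^{\gamma}$ modulo $2\pi$ for $j\le M$. Hence, evaluating \eqref{eq: random approximate solution} at $x=0$,
\[
u_{\mathrm{app},\varepsilon,r}^\omega(t,0)\ \ge\ 2\varepsilon(1-\varepsilon^{2\gamma})\sum_{j=1}^M c_jR_j^\omega-2\varepsilon\sum_{j>M}c_jR_j^\omega.
\]
On $\mathcal{B}$, \Cref{teorema: approssimazione con fasi lineari in t - con eps} with $\epsilon=\varepsilon^{\frac57(1-\delta)}$ gives $\|u_\varepsilon^\omega(t)-u^\omega_{\mathrm{app}}(t)\|_{L^\infty}\lesssim \varepsilon^{\frac87(1-\delta)}=o(\varepsilon^{1-\delta})$, via $\dot H^1\subset \FL^{0,1}$. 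The tail is bounded by
\[
\varepsilon\sum_{j>M}c_jR_j\le \varepsilon M^{1/2-s}\|\widetilde u_0\|_{\dot H^s}\le M^{1/2-s}\varepsilon^{\frac57(1-\delta)},
\]
which is $o(\varepsilon^{1-\delta})$ since $s\ge 1/\delta$ and $M\sim\varepsilon^{-\delta}$. Therefore
\[
\{\sup_x u_\varepsilon^\omega(t,x)\ge\lambda\varepsilon^{1-\delta}\}\supseteq \mathcal{N}(\beta)\cap\mathcal{B}\cap\mathcal{A}_\varepsilon,\qquad \mathcal{A}_\varepsilon=\Big\{2\sum_{j=1}^M c_jR_j^\omega\ge (\lambda+\eta)\varepsilon^{-\delta}\Big\}
\]
for any fixed $\eta>0$ and $\varepsilon$ small, uniformly in $t$.

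\textbf{Step 2 (factorization and tails).} Since $\mathcal{A}_\varepsilon\cap\mathcal{B}\in\mathcal{G}$, \eqref{eq: factorization} gives
\[
\P(\ldots)\ge(\beta/\pi)^M\big[\P(\mathcal{A}_\varepsilon)-\P(\mathcal{B}^c)\big].
\]
By \Cref{prop: grande}, $\P(\mathcal{B}^c)\le\exp(-c\,\varepsilon^{-\frac47-\frac{10}7\delta})$, which is negligible against $\exp(-C\varepsilon^{-2\delta})$ because $\frac47+\frac{10}{7}\delta>2\delta$ for $\delta<1$.

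For $\P(\mathcal{A}_\varepsilon)$ I need a lower tail bound for a truncated sum with $M$ growing. I would compare with a fixed truncation $K$: for fixed $K$, $\P(2\sum_{j\le M}c_jR_j\ge a)\ge\P(2\sum_{j\le K}c_jR_j\ge a)$ since $R_j\ge0$. Applying \Cref{prop: grande} to the finite sequence $(c_j)_{j\le K}$ with $\varepsilon^{\delta}$ in place of $\varepsilon$ yields the rate $-(\lambda+\eta)^2/(4\sum_{j\le K}c_j^2)$. Letting $K\to\infty$ and then $\eta\to0$ recovers $-\lambda^2/(4\sum c_k^2)$. Uniformity in $t$ is automatic, since the only $t$-dependence is in $\beta$, and that has been absorbed into the $o(\varepsilon^{-2\delta})$ term.

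\textbf{Main obstacle.} The hard part is Step 1's use of \Cref{prop: properties of random fixed point}: the stability estimate with constant $(1+M|t|)$. Concretely, this is a Lipschitz bound
\[
|\mathcal{T}_j(\vec\phi)-\mathcal{T}_j(\vec\phi')|\le |t|\sum_{k\le M}|\partial_{\phi_k}\theta_j|\,|\phi_k-\phi'_k|,
\]
where $\partial_{\phi_k}\theta_j=O(1)$ uniformly on $\mathcal{B}$. This requires combining the $2$-Lipschitz property \eqref{eq: lipschitzianity} of $\Phi^{-1}$ with the coefficient bounds on $b_{(j,\k)}$ against the smallness of the actions. Since the proposition is assumed, the remaining delicacy is bookkeeping of the exponents so that all errors are $o(\varepsilon^{1-\delta})$ and $M\log\beta=o(\varepsilon^{-2\delta})$.
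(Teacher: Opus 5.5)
Your proposal is correct and follows essentially the same route as the paper. You restrict to $\mathcal{N}(\beta)\cap\mathcal{B}(\delta,\varepsilon,s)$, evaluate at $x=0$, and use the stability estimate with $\cos y\ge 1-y^2/2$ and $M=\lfloor\varepsilon^{-\delta}\rfloor$; you then factorize via \eqref{eq: factorization} and recover the rate from \Cref{prop: grande} through a fixed truncation. The differences are cosmetic: your $t$-dependent $\beta=\varepsilon^{\gamma}/(1+M|t|)$ versus the paper's $\beta=M^{-1}\varepsilon^{n+\frac12}$, your slightly sharper $M^{1/2-s}$ tail bound versus the paper's $M^{1-s}$, and a fixed margin $\eta$ in place of the paper's $o(\varepsilon^{-\delta})$.
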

\begin{proof}
Recall $\B(\delta,\varepsilon,s)$ in \eqref{def: event small size initial datum} and the approximate solution $u_{\mathrm{app},\varepsilon,r}^{\omega}$ in \eqref{eq: random approximate solution}.
By the reverse triangle inequality, the embedding $\dot{H}^s(\T) \subseteq L^{\infty}(\T)$, and \Cref{teorema: approssimazione con fasi lineari in t - con eps} applied with $\epsilon=\varepsilon^{\frac57 (1-\delta)}$, we have
\begin{align}
\P \left( \sup_{x \in \T} u_{\varepsilon}^{\omega}(t,x) \ge \lambda \varepsilon^{1-\delta} \right) \ge \P & \left( \left \{ \sup_{x \in \T} u_{\mathrm{app},\varepsilon,r}^{\omega}(t,x) \ge \lambda \varepsilon^{1-\delta} + C_s \norm{u_{\varepsilon}^{\omega}(t)-u_{\mathrm{app},\varepsilon,r}^{\omega}(t)}_{\dot{H}^1} \right \} \cap \B(\delta,\varepsilon,s) \right) \nonumber \\
\stackrel{\eqref{def: event small size initial datum},\eqref{eq: approx fasi lineari}}{\ge} & \P  \left( \left \{ \sup_{x \in \T} u_{\mathrm{app},\varepsilon,r}^{\omega}(t,x) \ge \lambda \varepsilon^{1-\delta} + C_s \varepsilon^{\frac{8}{7}(1-\delta)} \right \} \cap \B(\delta,\varepsilon,s) \right ) \label{for dispersive focusing} \\
\ge \P & \left( \left \{ u_{\mathrm{app},\varepsilon,r}^{\omega}(t,0) \ge \lambda \varepsilon^{1-\delta} + C_s \varepsilon^{\frac{8}{7}(1-\delta)} \right \} \cap \B(\delta,\varepsilon,s) \cap \mathcal{N}(\beta)  \right ), \label{third inequality}
\end{align}
where $\mathcal{N}(\beta)$ was defined in \eqref{def: intorno}. For $\omega \in \B(\delta,\varepsilon,s) \cap \mathcal{N}(\beta)$, and the estimate $\cos y\geq 1-y^2/2$, we have
\[
\begin{split}
u_{\mathrm{app},\varepsilon,r}^{\omega}(t,0) & \stackrel{\eqref{eq: random approximate solution},\eqref{def: random phase nonlinear operator}}{\ge} 2 \varepsilon \sum_{k=1}^M c_k R_k^{\omega} \cos(\phi_k^{\omega} - \mathcal{T}_k(\vec{\phi}^{\omega},\omega))  - \left \lvert \varepsilon \sum_{|k| > M} c_k R_k^{\omega} \cos(\phi_k^{\omega} + t \theta_k^{\omega}) \right \rvert \\
& \stackrel{\eqref{eq: stability}}{\ge} 2 \varepsilon \left ( 1 - \frac{(1 + M |t|)^2\,\beta^2}{2} \right ) \sum_{k=1}^M c_k R_k^{\omega} - C \norm{u_{\varepsilon}^{\omega}(0)}_{H_{|k| \ge M}^1} \\
& \stackrel{\eqref{def: event small size initial datum}}{\ge} 2 \varepsilon \left ( 1 - M^2\varepsilon^{-2n}\beta^2 \right ) \sum_{k=1}^M c_k R_k^{\omega} - C M^{1-s} \varepsilon^{\frac{5}{7} (1-\delta)}.
\end{split}
\]
We fix $\beta= M^{-1}\, \varepsilon^{n+\frac12}$, and continue with the lower bound:
\[
\begin{split}
\eqref{third inequality} & \ge \P \left( \underbrace{\left \{ \sum_{k=1}^M c_k R_k^{\omega} \ge \frac{ \lambda \varepsilon^{1-\delta} + C_s \varepsilon^{\frac{8}{7}(1-\delta)} + C M^{1-s} \varepsilon^{\frac{5}{7}(1-\delta)} }{ 2\varepsilon\,(1 - \varepsilon)} \right \} \cap \B(\delta,\varepsilon,s)}_{\mathcal{A} \ \mathrm{in\ \eqref{eq: factorization}}} \cap \, \, \mathcal{N}(\beta) \right ) \\
& \stackrel{\eqref{eq: factorization}}{\ge} \left ( \frac{\beta}{\pi} \right )^M \P \left( \left \{ \sum_{k=1}^M c_k R_k^{\omega} \ge \frac{ \lambda \varepsilon^{1-\delta} + C_s \varepsilon^{\frac{8}{7}(1-\delta)} + C M^{1-s} \varepsilon^{\frac{5}{7}(1-\delta)} }{2 \varepsilon \left ( 1 - \varepsilon\right )} \right \} \cap \B(\delta,\varepsilon,s) \right ) \\
& \stackrel{\eqref{def: event small size initial datum}}{\ge} \left ( \frac{\beta}{\pi} \right )^M \left[ \P \left( \sum_{k=1}^M c_k R_k^{\omega} \ge \frac{ \lambda \varepsilon^{1-\delta} + C_s \varepsilon^{\frac{8}{7}(1-\delta)} + C M^{1-s} \varepsilon^{\frac{5}{7}(1-\delta)} }{2 \varepsilon \left ( 1 - \varepsilon\right )} \right ) - \P \left ( \norm{\tilde{u}_0^{\omega}}_{\dot{H^s}} > \varepsilon^{-\frac{2}{7}-\frac{5}{7}\delta} \right ) \right].
\end{split}
\]
Let us choose $M = \lfloor \varepsilon^{-\delta} \rfloor$. As a result of this choice, we have that
\begin{equation}\label{eq : pre liminf}
\begin{split}
\varepsilon^{2\delta} & \log \P \bigg ( \sup_{x \in \T} u_{\varepsilon}^{\omega}(t,x) \ge \lambda \varepsilon^{1-\delta} \bigg ) \ge M \varepsilon^{2\delta} \log \left ( \frac{\beta}{\pi} \right ) \\
& + \varepsilon^{2\delta} \log \left[ \P \left( \sum_{k=1}^M c_k R_k^{\omega} \ge \frac{ \lambda \varepsilon^{1-\delta} + C_s \varepsilon^{\frac{8}{7}(1-\delta)} + C M^{1-s} \varepsilon^{\frac{5}{7}(1-\delta)} }{2 \varepsilon \left ( 1 - \varepsilon\right )} \right ) - \P \left ( \norm{\tilde{u}_0^{\omega}}_{\dot{H^s}} > \varepsilon^{-\frac{2}{7}-\frac{5}{7}\delta} \right ) \right],
\end{split}
\end{equation}
with
\begin{equation}\label{eq: quasisynch zero}
0\geq \lim_{\varepsilon\to 0^{+}} M \varepsilon^{2\delta} \log \left ( \frac{\beta}{\pi} \right ) \ge \lim_{\varepsilon\to 0^{+}} \varepsilon^{\delta} \log \left( \frac{\varepsilon^{n+\frac{1}{2}-\delta}}{\pi} \right) = 0.
\end{equation}
Recall that $s \geq \frac{1}{\delta}>\frac{2+5\delta}{7\delta}$ by \eqref{def: coeff ck}. This choice of $s$ ensures that $M^{1-s}\varepsilon^{\frac57 (1-\delta)} \ll \varepsilon^{1-\delta}$. As a result,
\[
\frac{ \lambda \varepsilon^{1-\delta} + C_s \varepsilon^{\frac{8}{7}(1-\delta)} + C M^{1-s} \varepsilon^{\frac{5}{7}(1-\delta)} }{2 \varepsilon \left ( 1 - \varepsilon\right )} = 
\lambda \varepsilon^{-\delta} + o(\varepsilon^{-\delta})
\]
Finally, for any $S \in \N$ fixed, \eqref{eq : pre liminf} yields
\[
\begin{split}
& \liminf_{\varepsilon \to 0^+} \inf_{|t|\leq \varepsilon^{-n}} \varepsilon^{2\delta} \log \P \bigg ( \sup_{x \in \T} u_{\varepsilon}^{\omega}(t,x) \ge \lambda \varepsilon^{1-\delta} \bigg )\\
& \stackrel{\eqref{eq: quasisynch zero}}{\ge}  \liminf_{\varepsilon \to 0^+} \inf_{|t|\leq \varepsilon^{-n}} \varepsilon^{2\delta} \log \left [ \P \left ( 2 \sum_{k=1}^S c_k R_k^{\omega} \ge \lambda \varepsilon^{-\delta} + o(\varepsilon^{-\delta}) \right ) - \P \left ( \norm{\tilde{u}_0^{\omega}}_{\dot{H^s}} > \varepsilon^{-\frac{2}{7}-\frac{5}{7}\delta} \right ) \right ]  \stackrel{\eqref{eq: grande},\eqref{eq:intersection2}}{=}  - \frac{\lambda^2}{4 \sum_{k=1}^S c_k^2},
\end{split}
\]
and we obtain \eqref{eq: almost global ldp - lower bound} by letting $S \to +\infty.$
\end{proof}

\smallskip

\subsection{Dispersive focusing} \label{subsection: dispersive focusing}

In this subsection we prove \Cref{thm: dispersive focusing}.
Notice that the upper bound for the $\limsup$ in \eqref{eq: dispersive focusing} follows immediately by \eqref{eq: limit ldp}.
Let us prove the lower bound.
First let $\kappa = \min \left \{ \frac{\delta}{2} , \frac{1-\delta}{14} \right \}$ and consider
\begin{equation} \label{eq: moduli grandi}
\mathfrak{R}(\tN, \kappa, \varepsilon) = \left \{ \omega \in \mathbb{\Omega} \, | \, R_j^{\omega} \ge \varepsilon^{-\delta + \kappa}, \, j=1,\ldots,\tN \right \}.
\end{equation}
We claim that
\begin{itemize}
\item[(i)] Outside the set $\mathfrak{R}(\tN,\kappa,\varepsilon)$, extreme waves are less likely to appear,
\begin{equation} \label{disp foc 1}
 \limsup_{\varepsilon \to 0^+} \sup_{|t|\leq \varepsilon^{-n}} \varepsilon^{2\delta} \log \P \left( \left \{ \sup_{x \in \T} u_{\mathrm{app},\varepsilon}^{\omega}(t,x) \ge \lambda \varepsilon^{1-\delta} + o(\varepsilon^{1-\delta}) \right \} \cap \mathfrak{R}(\tN,\kappa,\varepsilon)^c \right ) < - \frac{\lambda^2}{4 \sum_{k \in \N} c_k^2};
\end{equation}
    \item[(ii)] For $\varepsilon \lesssim_{\tN,\delta} 1$ sufficiently small,
\begin{equation} \label{inclusion}
\mathcal{N}(\beta) \cap \mathfrak{R}(\tN, \kappa, \varepsilon) \cap \B(\delta,\varepsilon,s) \subseteq \mathfrak{P}(\tN,\delta,\varepsilon) \cap \B(\delta,\varepsilon,s),
\end{equation}
where $\mathcal{N}(\beta)$ is defined in \eqref{def: intorno}, with $M = \lfloor \varepsilon^{-\delta} \rfloor$ and $\beta=M^{-1} \varepsilon^{n+\frac12}$ as in \Cref{thm: almost global ldp - lower bound}, and $\B(\delta,\varepsilon,s)$ is defined in \eqref{def: event small size initial datum}.
\end{itemize}
Assume for the moment these two claims, and recall the approximate solution $u_{\mathrm{app},\varepsilon,r}^{\omega}$ in \eqref{eq: random approximate solution}. Then 
\[
\begin{split}
& \P \left( \left \{ \sup_{x \in \T} u_{\varepsilon}^{\omega}(t,x) \ge \lambda \varepsilon^{1-\delta} \right \} \cap \mathfrak{P}(\tN,\delta,\varepsilon) \right) \\
& \stackrel{\eqref{for dispersive focusing}}{\ge} \P \left( \left \{ u_{\mathrm{app},\varepsilon,r}^{\omega}(t,0) \ge \lambda \varepsilon^{1-\delta} + o(\varepsilon^{1-\delta}) \right \} \cap \B(\delta,\varepsilon,s) \cap \mathfrak{P}(\tN,\delta,\varepsilon)  \right ) \\
& \stackrel{\eqref{inclusion}}{\ge} \P \left( \left \{ u_{\mathrm{app},\varepsilon,r}^{\omega}(t,0) \ge \lambda \varepsilon^{1-\delta} + o(\varepsilon^{1-\delta}) \right \} \cap \mathcal{N}(\beta) \cap \mathfrak{R}(\tN, \kappa, \varepsilon) \cap \B(\delta,\varepsilon,s)  \right ) \\
& \ge \P \left( \left \{ u_{\mathrm{app},\varepsilon,r}^{\omega}(t,0) \ge \lambda \varepsilon^{1-\delta} + o(\varepsilon^{1-\delta}) \right \} \cap \mathcal{N}(\beta) \cap \B(\delta,\varepsilon,s)  \right ) - \P \left( \left \{ u_{\mathrm{app},\varepsilon,r}^{\omega}(t,0) \ge \lambda \varepsilon^{1-\delta} + o(\varepsilon^{1-\delta}) \right \} \cap \mathfrak{R}(\tN, \kappa, \varepsilon)^c  \right ) \\
& \stackrel{\eqref{disp foc 1}}{\ge} \exp \left ( -\frac{\lambda^2 \varepsilon^{-2\delta}}{4 \sum_{k \in \N} c_k^2} + o(\varepsilon^{-2\delta}) \right )
\end{split}
\]
proceeding as in \Cref{thm: almost global ldp - lower bound}. As a result, we need only prove the two claims (i) and (ii).

\medskip

\noindent {\sc Claim }(i). It is sufficient to notice that
\[
\begin{split}
& \P \left( \left \{ \sup_{x \in \T} u_{\mathrm{app},\varepsilon}^{\omega}(t,x) \ge \lambda \varepsilon^{1-\delta} + o(\varepsilon^{1-\delta}) \right \} \cap \mathfrak{R}(\tN,\kappa,\varepsilon)^c \right) \\
\stackrel{\eqref{eq: moduli grandi}}{\le} & \sum_{k=1}^{\tN} \P \left( \left \{ \sup_{x \in \T} u_{\mathrm{app},\varepsilon}^{\omega}(t,x) \ge \lambda \varepsilon^{1-\delta} + o(\varepsilon^{1-\delta}) \right \} \cap \left \{ R_k^{\omega} < \varepsilon^{-\delta + \kappa} \right \} \right) \\
\stackrel{\eqref{eq: random approximate solution}}{\le} & \sum_{k=1}^{\tN} \P \left( \left \{ \sum_{j \in \Z^*} c_j R_j^{\omega} \ge \lambda \varepsilon^{-\delta} + o(\varepsilon^{-\delta}) \right \} \cap \left \{ R_k^{\omega} < \varepsilon^{-\delta + \kappa} \right \} \right) \le \sum_{k=1}^{\tN} \P \left( \sum_{j \in \Z^* \smallsetminus \{ \pm k \} } c_j R_j^{\omega} \ge \lambda \varepsilon^{-\delta} + o(\varepsilon^{-\delta}) \right) \\
\stackrel{\eqref{eq: grande}}{\le} & \tN \exp \left ( -\frac{\lambda^2 \varepsilon^{-2\delta}}{4 \sum_{k \in \N} c_k^2 - 4 \min \{ c_k^2,  \, \, k=1,\ldots,\tN \}} + o(\varepsilon^{-2\delta}) \right )
\end{split}
\]
and \eqref{disp foc 1} follows by taking logarithms, taking the supremum in $|t|\leq \varepsilon^{-n}$ and taking $\varepsilon\rightarrow 0^{+}$.

\medskip

\noindent {\sc Claim }(ii). Let $\omega \in \mathcal{N}(\beta) \cap \mathfrak{R}(\tN, \kappa, \varepsilon) \cap \B(\delta,\varepsilon,s)$.
We claim that $\omega \in \mathfrak{P}(\tN,\delta,\varepsilon)$, cf. \eqref{intro: fasi piccole}.
By the triangle inequality we obtain (cf. \eqref{eq: random approximate solution})
\begin{equation}\label{disp foc 2}
\begin{split}
\varepsilon \sum_{j \in \Z^*} c_j R_j^{\omega} \left \lvert e^{i \psi_j^{\omega}(t)} - e^{i \phi_j^{\omega} + i t \theta_j(u_0^{\omega})} \right \rvert &\le \varepsilon \sum_{j \in \Z^*} \left \lvert  c_j R_j^{\omega} - |u_j(t)| \right \rvert + \varepsilon \sum_{j \in \Z^*} \left \lvert c_j R_j^{\omega} e^{i \phi_j^{\omega} + i t \theta_j(u_0^{\omega})} - u_j(t) \right \rvert \stackrel{\eqref{eq: approx fasi lineari}}{\le} 2 \varepsilon^{\frac{8}{7}(1-\delta)}
\end{split}
\end{equation}
using \Cref{teorema: approssimazione con fasi lineari in t - con eps} with $\epsilon = \varepsilon^{\frac{5}{7}(1-\delta)}$, cf. \eqref{def: event small size initial datum}.
Within $\mathcal{N}(\beta)$ we have 
\begin{equation} \label{stima fasi approx}
\left \lvert \phi_j^{\omega} + t \theta_j(u_0^{\omega}) \right \rvert \stackrel{\eqref{eq: stability}}{\le} (1+M \varepsilon^{-n}) \beta \le 2 \varepsilon^{\frac12},
\end{equation}
since $M = \lfloor \varepsilon^{-\delta} \rfloor$ and $\beta=M^{-1} \varepsilon^{n+\frac12}$. Using the fact that $c_j\neq 0$ in \eqref{intro: fasi piccole},
\[
\left \lvert e^{i \psi_j^{\omega}(t)} - e^{i \phi_j^{\omega} + i t \theta_j(u_0^{\omega})} \right \rvert \stackrel{\eqref{eq: moduli grandi}, \eqref{disp foc 2}}{\lesssim_{\tN}} \varepsilon^{\frac{8}{7}(1-\delta)} \varepsilon^{-1+\delta-\kappa} \le \varepsilon^{\frac{1-\delta}{14}}
\]
and by the triangle inequality
\[
\left \lvert e^{i \psi_j^{\omega}(t)} - 1 \right \rvert \stackrel{\eqref{stima fasi approx}}{\le} \varepsilon^{\frac{1-\delta}{14}} + \varepsilon^{\frac12} \le \frac12 \varepsilon^{\frac{1-\delta}{15}}
\]
provided that $\varepsilon \lesssim_{\tN,\delta} 1$ is sufficiently small. 
Finally, we use the inequality $\frac{|x|}{2} \le |\sin(x)| \le |e^{ix}-1|$ for $|x| \ll 1$. 

\hfill \qedsymbol

\subsection{Random Brouwer fixed point} \label{subsection: proof prop fixed point}

\begin{lemma}[Properties of conditional expectation] \label{lemma: properties of conditional expectation}
Let $(\mathbb{\Omega}, \mathcal{F},\P)$ be a probability space.
Let $\mathcal{G} \subseteq \mathcal{F}$ be a sub $\sigma$-algebra.

\noindent 1) If $X \in L^1(\mathbb{\Omega},\mathcal{F},\P)$, then
\begin{equation} \label{tower property}
\E \left [ \E \left [ X \lvert \mathcal{G} \right ] \right ] = \E \left [ X \right ].
\end{equation}
2) If $X \in L^1(\mathbb{\Omega},\mathcal{F},\P)$ and $Y \in L^{\infty}(\mathbb{\Omega},\mathcal{F},\P)$ is $\mathcal{G}$-measurable, then
\begin{equation} \label{take out what is known}
\E \left [ XY \lvert \mathcal{G} \right ] = Y \E \left [ X \lvert \mathcal{G} \right ].
\end{equation}
3) If $X : \mathbb{\Omega} \rightarrow \R^n$ is a $\mathcal{G}$-measurable random variable, $Y : \mathbb{\Omega} \rightarrow \R^m$ is a random variable independent of $\mathcal{G}$ 
and $\Psi : \R^n \times \R^m \rightarrow \R$ is a Borel function such that $\Psi(X,Y)$ and $\Psi(x,Y)$ are integrable ($\forall x \in \R^n$), then
\begin{equation} \label{eq: resnick}
\E \left [ \Psi(X,Y) \lvert \mathcal{G} \right ] = \E \left [ \Psi(x,Y) \right ]_{\lvert x=X}.
\end{equation}
\end{lemma}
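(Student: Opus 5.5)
These are the three standard properties of conditional expectation, and I would prove each from the defining property: $\E[X\mid\mathcal{G}]$ is the ($\P$-a.s.\ unique) $\mathcal{G}$-measurable integrable random variable $Z$ such that $\E[Z\mathbbm{1}_G]=\E[X\mathbbm{1}_G]$ for every $G\in\mathcal{G}$.

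For 1), take $G=\mathbb{\Omega}\in\mathcal{G}$ in the defining identity; this gives \eqref{tower property} directly.

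For 2), I would check that $Y\,\E[X\mid\mathcal{G}]$ satisfies the defining property of $\E[XY\mid\mathcal{G}]$. It is $\mathcal{G}$-measurable, and it is integrable because $Y\in L^\infty$. First let $Y=\mathbbm{1}_{G'}$ with $G'\in\mathcal{G}$. For $G\in\mathcal{G}$ we have $G\cap G'\in\mathcal{G}$, so
\[
\E[\mathbbm{1}_{G'}\E[X\mid\mathcal{G}]\mathbbm{1}_G]=\E[X\mathbbm{1}_{G\cap G'}]=\E[XY\mathbbm{1}_G].
\]
Linearity extends this to simple $\mathcal{G}$-measurable $Y$. A general bounded $\mathcal{G}$-measurable $Y$ is a uniform limit of simple $Y_n$ with $|Y_n|\le\|Y\|_\infty$. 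Dominated convergence applies because $|X|$ and $|\E[X\mid\mathcal{G}]|$ are integrable, so both sides of the identity pass to the limit, and uniqueness concludes.

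For 3), which is the main point, I would use a monotone class argument. First let $\Psi(x,y)=\mathbbm{1}_A(x)\mathbbm{1}_B(y)$ with $A,B$ Borel. Then $\Psi(X,Y)=\mathbbm{1}_A(X)\mathbbm{1}_B(Y)$. By 2), together with the fact that $\E[\mathbbm{1}_B(Y)\mid\mathcal{G}]=\P(Y\in B)$ by independence, we get
\[
\E[\Psi(X,Y)\mid\mathcal{G}]=\mathbbm{1}_A(X)\P(Y\in B)=\E[\Psi(x,Y)]_{|x=X}.
\]
The class of bounded Borel $\Psi$ satisfying \eqref{eq: resnick} is a vector space containing constants. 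It is closed under bounded monotone limits: apply the conditional monotone convergence theorem on the left, and ordinary monotone convergence pointwise in $x$ on the right, noting that $x\mapsto\E[\Psi(x,Y)]$ is Borel by Fubini. The rectangles $A\times B$ form a $\pi$-system generating $\mathcal{B}(\R^n\times\R^m)$. The functional monotone class theorem therefore gives \eqref{eq: resnick} for all bounded Borel $\Psi$.

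To pass to integrable $\Psi$, split $\Psi=\Psi^+-\Psi^-$ and truncate, $\Psi^\pm\wedge k\uparrow\Psi^\pm$, then apply monotone convergence on both sides. The integrability hypotheses on $\Psi(X,Y)$ and on $\Psi(x,Y)$ for every $x$ ensure that the limits are finite and that the subtraction is legitimate.

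The only delicate step is this last one. One has to check that $x\mapsto\E[\Psi(x,Y)]$ is a measurable, finite function so that composing it with $X$ makes sense; Fubini–Tonelli for the product law of $(X,Y)$, which exists because $X$ and $Y$ are independent, handles this.
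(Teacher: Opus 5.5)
Your proof is correct. It is the standard textbook argument: the defining property for part 1, approximation by indicators and simple functions for part 2, and a monotone class argument over rectangles followed by truncation for part 3. The paper gives no argument of its own and simply cites Resnick, Chapter 10, for these facts.

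One small point: measurability of $x\mapsto\E[\Psi(x,Y)]$ already follows from Tonelli with respect to the law of $Y$ alone. So the independence of $X$ and $Y$ (the product law) is not needed for that step.
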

\begin{proof}
See \cite[Chapter 10]{resnick}.
\end{proof}

For the existence of the fixed point we use \Cref{thm: random fixed point} below, whose proof may be found in \cite[Theorem~10]{fixed point}.

\begin{theorem}[Random Brouwer fixed point] \label{thm: random fixed point}
Let $(\mathbb{\Omega} , \mathcal{F} , \P )$ be a complete probability space and $K$ be a convex and compact subset of $\R^M$.
Let $\mathcal{T} : K \times \mathbb{\Omega}  \longrightarrow K$ be such that:
\begin{enumerate}
    \item for any fixed $\omega \in \mathbb{\Omega}$, the map $\mathcal{T}(\cdot,\omega) : K \longrightarrow K$ is continuous;
    \item for any $\phi \in K$, the map $\mathcal{T}(\phi,\cdot) : \mathbb{\Omega} \longrightarrow K$ is measurable.
\end{enumerate}
Then there exists a random variable $\phi^{\omega}$ such that $\mathcal{T}(\phi^{\omega},\omega) = \phi^{\omega}$ almost surely in $\mathbb{\Omega}.$
\end{theorem}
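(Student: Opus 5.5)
The plan is to reduce the statement to a measurable selection problem for the multifunction of fixed points, and then apply the Kuratowski--Ryll-Nardzewski selection theorem (cf.\ \cite[Chapter~3.2]{measurable selection}). For each $\omega\in\mathbb{\Omega}$ we define
\[
F(\omega)=\{\phi\in K \,:\, \mathcal{T}(\phi,\omega)=\phi\}\subseteq K .
\]
Since $K\subseteq\R^M$ is convex and compact and $\mathcal{T}(\cdot,\omega)$ is continuous by hypothesis 1, the classical Brouwer theorem gives $F(\omega)\neq\emptyset$. The same continuity shows that $F(\omega)$ is closed in $K$, hence compact. It therefore suffices to prove that $\omega\mapsto F(\omega)$ is a measurable multifunction. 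Kuratowski--Ryll-Nardzewski then provides a $\mathcal{F}$-measurable $\phi^{\omega}$ with $\phi^{\omega}\in F(\omega)$ for every $\omega$, which is stronger than the almost sure statement.

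For measurability we introduce the auxiliary function
\[
g(\phi,\omega)=|\mathcal{T}(\phi,\omega)-\phi| .
\]
It is continuous in $\phi$ for fixed $\omega$ by hypothesis 1. It is measurable in $\omega$ for fixed $\phi$ by hypothesis 2, since $\mathcal{T}(\phi,\cdot)$ is measurable with values in $\R^M$ and the norm is continuous. So $g$ is a Carathéodory function. Fix a closed set $C\subseteq K$; it is compact. Choose a countable dense subset $D_C\subseteq C$, which exists because $\R^M$ is separable. The key identity I would establish is
\[
\{\omega \,:\, F(\omega)\cap C\neq\emptyset\} = \bigcap_{m\in\N}\ \bigcup_{\phi\in D_C}\{\omega \,:\, g(\phi,\omega)<1/m\}.
\]
The inclusion ``$\subseteq$'' holds because $g(\cdot,\omega)$ vanishes at some point of $C$ and is continuous there, so it is small at nearby points of $D_C$. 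The inclusion ``$\supseteq$'' holds because $\inf_{D_C}g(\cdot,\omega)=0$ implies $\inf_C g(\cdot,\omega)=0$. Compactness of $C$ and continuity of $g(\cdot,\omega)$ then force this infimum to be attained, giving a fixed point in $C$. The right-hand side is a countable intersection of countable unions of measurable sets, so $F$ is measurable in the strong sense. Since $K$ is a compact metric space, this implies the weak measurability (open sets $U$) needed by the selection theorem: every open $U\subseteq K$ is a countable union of closed sets.

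I expect the main obstacle to be exactly this measurability step. The map $\omega\mapsto F(\omega)$ is defined implicitly, and the naive argument fails because $F(\omega)$ need not be a singleton. The way around it is to pass to the countable dense family $D_C$ and to use compactness to upgrade ``approximate fixed points'' to genuine fixed points. Completeness of $(\mathbb{\Omega},\mathcal{F},\P)$ is not needed for this construction. It only matters for harmlessly modifying $\phi^\omega$ on null sets, for instance when the hypotheses hold only almost surely, as in the application to $\mathcal{B}(\delta,\varepsilon,s)$ in \Cref{prop: properties of random fixed point}.

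A fallback, in case one prefers to avoid the general selection theorem, is to pick the lexicographically minimal point of $F(\omega)$ coordinate by coordinate. Each minimal coordinate is $\inf\{q\in\Q : F(\omega)\cap\{\phi_1\le q\}\cap\cdots\neq\emptyset\}$, which is measurable by the displayed identity applied to the closed sets $C=K\cap\{\phi_1\le q,\dots\}$. This yields an explicit measurable selection.
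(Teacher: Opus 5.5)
Your proof is correct. The paper itself does not prove this statement. It quotes it from \cite[Theorem~10]{fixed point}, remarking only that it is in the spirit of Kuratowski--Ryll-Nardzewski measurable selection. Your argument is a self-contained version of exactly that route: nonempty compact fixed-point sets from Brouwer, measurability of $\omega\mapsto F(\omega)$ through the countable-dense-set identity for the Carath\'eodory function $g$, then the selection theorem.

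Your route actually gives slightly more than the quoted statement. The selector is a fixed point for \emph{every} $\omega$, and it is measurable with respect to whatever $\sigma$-algebra makes $\mathcal{T}(\phi,\cdot)$ measurable, with no completeness needed. This is precisely the form used in \Cref{prop: properties of random fixed point}(i): there one needs $\mathcal{T}(\vec{\phi}^{*,\omega},\omega)=\vec{\phi}^{*,\omega}$ for all $\omega\in\mathcal{B}(\delta,\varepsilon,s)$ together with $\widetilde{\mathcal{G}}$-measurability. The almost-sure version only yields this after a modification on a null set, which is where the paper's completeness assumption enters.

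One small repair is needed in your lexicographic fallback. After the first coordinate, the constraint $\phi_1=a_1(\omega)$ depends on $\omega$, so the displayed identity cannot be applied to a fixed closed set $C$. Apply it instead to the Carath\'eodory function $g(\phi,\omega)+|\phi_1-a_1(\omega)|$, which is measurable in $\omega$ because $a_1$ is, and iterate coordinate by coordinate.
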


\medskip

\noindent {\bf Proof of \Cref{prop: properties of random fixed point}.} We divide the proof in several points as in the statement.

\medskip 

\noindent (i) {\sc Fixed point.} In order to use \Cref{thm: random fixed point}, we prove the following two properties:

\begin{itemize}
\item[(i.1)] \underline{Lipschitzianity with respect to $\vec{\phi}$:} For any $\omega \in \mathcal{B}(\delta,\varepsilon,s)$, $\vec{\phi},\underline{\vec{\phi}} \in \R^M$, and $j = 1, \ldots, M$,
\begin{equation} \label{lipschitz}
\lvert \mathcal{T}_j(\vec{\phi},\omega) - \mathcal{T}_j(\underline{\vec{\phi}},\omega) \rvert \le M |t| \varepsilon^{\frac{10}{7}(1-\delta)} \, \norm{\vec{\phi}-\underline{\vec{\phi}}}_{\ell^{\infty}}.
\end{equation}

\noindent \emph{Proof.} 
A straightforward computation as in \Cref{thm: dynamics in the new variables} yields, using \eqref{eq: approx_phases random} and \eqref{def: random phase nonlinear operator},
\begin{equation} \label{eq: lip estimate}
\begin{split}
\lvert \mathcal{T}_j(\vec{\phi},\omega) & - \mathcal{T}_j(\underline{\vec{\phi}},\omega) \rvert \le \frac{|t|}{6|j|} \left \lvert |\Phi^{-1}(\varepsilon \widetilde{\tu}_0^{\omega}(\cdot ; \vec{\phi}))_j|^2 - |\Phi^{-1}(\varepsilon \widetilde{\tu}_0^{\omega}(\cdot ; \underline{\vec{\phi}}))_j|^2 \right \rvert \\
& + C(n,\delta) |t j| \sum_{m=3}^{\lfloor \frac{r}{2} \rfloor} \varepsilon^{-\frac{2m-3}{7}(1-\delta)} \varepsilon^{\frac{10}{7} (m-2) (1-\delta)} \sum_{k \in \Z^*} \left \lvert |\Phi^{-1}(\varepsilon \widetilde{\tu}_0^{\omega}(\cdot ; \vec{\phi}))_k|^2 - |\Phi^{-1}(\varepsilon \widetilde{\tu}_0^{\omega}(\cdot ; \underline{\vec{\phi}}))_k|^2 \right \rvert 
\end{split}
\end{equation}
where we used the fact that $\omega \in \mathcal{B}(\delta,\varepsilon,s)$ (cf. \eqref{def: event small size initial datum}) and that $\norm{\Phi^{-1}(u)}_{\dot{H}^s}\leq 2 \norm{u}_{\dot{H}^s}$.

By the reverse triangle inequality and \eqref{eq: lip_const}, we have
\begin{equation} \label{eq: stima fasi}
\begin{split}
& \sum_{k \in \Z^*} \left \lvert |\Phi^{-1}(\varepsilon \widetilde{\tu}_0^{\omega}(\cdot ; \vec{\phi}))_k|^2 - |\Phi^{-1}(\varepsilon \widetilde{\tu}_0^{\omega}(\cdot ; \underline{\vec{\phi}}))_k|^2 \right \rvert \le 4\varepsilon^{\frac{12}{7} - \frac57 \delta} \norm{\widetilde{\tu}_0^{\omega}(\cdot ; \vec{\phi}) - \widetilde{\tu}_0^{\omega}(\cdot ; \underline{\vec{\phi}})}_{\dot{H}^s}.
\end{split}
\end{equation}
Inserting \eqref{eq: stima fasi} in \eqref{eq: lip estimate}, using that $\omega\in \mathcal{B}(\delta,\varepsilon,s)$, provided that $\varepsilon \lesssim_{n,\delta} 1$ is sufficiently small, we obtain
\[
\begin{split}
\lvert \mathcal{T}_j(\vec{\phi},\omega) - \mathcal{T}_j(\underline{\vec{\phi}},\omega) \rvert & \le M |t| \varepsilon^{\frac{12}{7} - \frac57 \delta} \, \norm{\widetilde{\tu}_0^{\omega}(\cdot ; \vec{\phi}) - \widetilde{\tu}_0^{\omega}(\cdot ; \underline{\vec{\phi}})}_{\dot{H}^s} \le M |t| \varepsilon^{\frac{12}{7} - \frac57 \delta} \,\norm{\widetilde{\tu}_0^{\omega}(\cdot ; \vec{\phi})}_{\dot{H}^s} \norm{\vec{\phi} - \underline{\vec{\phi}} }_{\ell^{\infty}} \\
& \le M |t| \varepsilon^{\frac{10}{7} (1-\delta)} \,  \norm{\vec{\phi} - \underline{\vec{\phi}} }_{\ell^{\infty}}.
\end{split}
\]

\item[(i.2)] \underline{Measurability with respect to $\omega$:} For any $\vec{\phi} \in \R^M$, the map
\[
\mathcal{T}(\vec{\phi},\cdot) : ( \mathcal{B}(\delta,\varepsilon,s) , \widetilde{\mathcal{G}} ) \longrightarrow ( \R^M , \mathfrak{B}(\R^M))
\]
is measurable, where $\mathfrak{B}(\R^M)$ denotes the Borel $\sigma$-algebra of $\R^M$.

\noindent \emph{Proof.} Define the deterministic set
\[
\widetilde{\mathcal{B}}(\delta,\varepsilon,s,M) = \left \{ v = (\vec{r},\widetilde{u}_{0,M}) \in \R^M \times \dot{H}^s \, \big \lvert \, \Pi_{\le M} \widetilde{u}_{0,M} = 0, \, \, \norm{v}_{\widetilde{\mathcal{B}}} \le \varepsilon^{-\frac27-\frac57 \delta} \right \},
\]
where
\[
\norm{v}_{\widetilde{\mathcal{B}}} := \left ( 2 \sum_{j=1}^M j^{2s} c_j^2 r_j^2 + \norm{\widetilde{u}_{0,M}}_{\dot{H}^s}^2 \right )^{\frac{1}{2}} .
\]
We also introduce the deterministic operator
\begin{equation} \label{def: deterministic phase nonlinear operator}
\begin{split}
& \widetilde{\mathcal{T}}_j : \R^M \times \widetilde{\mathcal{B}}(\delta,\varepsilon,s,M) \rightarrow \R,  \qquad j=1,\ldots,M,\\
& \widetilde{\mathcal{T}}_j(\vec{\phi},v) = -t \theta_j(\widetilde{u}_0(v,\vec{\phi})), \qquad  \quad \widetilde{u}_0(v,\vec{\phi}) = 2 \sum_{j=1}^M c_j r_j \cos(jx+\phi_j) + \widetilde{u}_{0,M}(x),
\end{split}
\end{equation}
and the random map
\begin{equation} \label{def: iota}
\iota : \mathcal{B}(\delta,\varepsilon,s) \subseteq \mathbb{\Omega} \longrightarrow \widetilde{\mathcal{B}}(\delta,\varepsilon,s,M) \subseteq \R^M \times \dot{H}^s, \quad \iota(\omega) = \left ( (R_j^{\omega})_{j = 1}^M , \Pi_{> M} \widetilde{u}_0^{\omega} )  \right ).
\end{equation}

Then we have the following facts.
\begin{itemize}
    \item The random map $\iota$ defined in \eqref{def: iota} is measurable.
    Indeed it is weakly measurable, in the sense that given a functional $f$ in the dual of $\R^M \times \dot{H}^s$, then $f \circ \iota$ is measurable. 
    Since $\R^M \times \dot{H}^s$ is a separable Banach space, by the Pettis measurability theorem \cite{pettis}, $\iota$ is also strongly measurable.
    \item The deterministic map $\widetilde{\mathcal{T}}$ is Lipschitz-continuous. 
    Indeed, proceeding as in \eqref{eq: lip estimate}--\eqref{eq: stima fasi}, we find
    \[
    \lvert \widetilde{\mathcal{T}}_j(\vec{\phi},v) - \widetilde{\mathcal{T}}_j(\underline{\vec{\phi}},\underline{v}) \rvert \le M\, |t| \varepsilon^{\frac{12}{7} - \frac57 \delta}\, \left ( \norm{v-\underline{v}}_{\widetilde{\mathcal{B}}} + \varepsilon^{-\frac27-\frac57 \delta} \norm{\vec{\phi} - \underline{\vec{\phi}} }_{\ell^{\infty}} \right ).
    \]
    \item The map $\mathcal{T}$ is given by the composition $\mathcal{T}(\vec{\phi},\omega) = \widetilde{\mathcal{T}}(\vec{\phi},\iota(\omega))$.
\end{itemize}
The measurability of $\mathcal{T}(\vec{\phi},\cdot)$ follows from the fact that it is the composition the measurable map $\iota$ and the continuous map $\widetilde{\mathcal{T}}(\vec{\phi},\cdot)$.
\end{itemize}

\noindent In order to set up the random fixed point argument, we first note that, for $\varepsilon\lesssim_{n,\delta} 1$ sufficiently small,
\begin{equation}\label{eq: trivial bound}
\begin{split}
\lvert \mathcal{T}_j(\vec{\phi},\omega) \rvert \stackrel{\eqref{eq: approx_phases random}}{\le}&\ |t||j|^3 + \frac{|t|}{6|j|} |\Phi^{-1}(\varepsilon \widetilde{\tu}_0^{\omega}(\cdot ; \vec{\phi}))_j|^2 + C(n,\delta) |t j| \sum_{m=3}^{\lfloor \frac{r}{2} \rfloor} \varepsilon^{-\frac{2m-3}{7}(1-\delta)} \norm{\Phi^{-1}(\varepsilon \widetilde{\tu}_0^{\omega}(\cdot ; \vec{\phi}))}_{\dot{H}^s}^{2m-2} \\
\le&\ M^3 |t| + \frac{|t|}{6|j|} \varepsilon^{\frac{10}{7}(1-\delta)} + M |t| C(n,\delta) \sum_{m=3}^{\lfloor \frac{r}{2} \rfloor} \varepsilon^{-\frac{2m-3}{7}(1-\delta)} (2 \varepsilon^{\frac57(1-\delta)})^{2m-2} \le 2 M^3 |t|.
\end{split}
\end{equation}

Using (i.1) and (i.2), we can apply \Cref{thm: random fixed point} on the convex and compact subset $K= [-2M^3 |t|, 2M^3 |t|]^M \subseteq \R^M$. 
In particular, note that $\mathcal{T}(\cdot,\omega) : K \longrightarrow K$, using \eqref{eq: trivial bound} and the fact that $\mathcal{T}$ is $2\pi$-periodic in $\vec{\phi}$. We note that the fixed point $\vec{\phi}^{*,\omega}$ is defined in the restricted probability space $(\mathcal{B}(\delta,\varepsilon,s), \widetilde{\mathcal{G}},\widetilde{\P})$ and can be extended to the probability space $(\mathbb{\Omega},\mathcal{G},\mathbb{P})$ by setting $\vec{\phi}^{*,\omega}=0$ for $\omega\in \mathbb{\Omega}\setminus \mathcal{B}(\delta,\varepsilon,s)$.

\medskip

\noindent (ii) {\sc Neighborhood of the fixed point:} The measurability of $\mathcal{N}(\beta)$ follows from the fact that $\omega \in \mathbb{\Omega} \mapsto (\vec{\phi}^{\omega} , \vec{\phi}^{*,\omega})$ is measurable and $(a,b) \in \R^M \times \R^M \mapsto \norm{a-b}_{\ell^{\infty}}$ is continuous. 
Its probability is a consequence of \eqref{eq: factorization} with $\A = \mathbb{\Omega}.$

\medskip

\noindent (iii) {\sc Stability property:} By step (i),
$\mathcal{T}(\vec{\phi}^{*,\omega},\omega) - \vec{\phi}^{*,\omega} = 0$. For any $\omega\in \mathcal{N}(\beta)$,
\[
\left \lvert \phi_j - \mathcal{T}_j(\vec{\phi},\omega) \right \rvert  \le \left \lvert \phi_j - \phi_j^{*,\omega} \right \rvert + \left \lvert \mathcal{T}_j(\vec{\phi}^{*,\omega},\omega) - \mathcal{T}_j(\vec{\phi},\omega) \right \rvert  \stackrel{\eqref{lipschitz},\eqref{def: intorno}}{\le} \left ( 1 + M |t| \varepsilon^{\frac{10}{7}(1-\delta)}  \right ) \beta .
\]

\medskip

\noindent (iv) {\sc Factorization property:} Notice that for any $\mathcal{G}$-measurable event $\A$:
\[
\P (\A \cap \mathcal{N}(\beta) ) = \E \left [ \mathbbm{1}_{\A} \mathbbm{1}_{\mathcal{N}(\beta)} \right ] \stackrel{\eqref{tower property}}{=} \E \left [ \E \left [ \mathbbm{1}_{\A} \mathbbm{1}_{\mathcal{N}(\beta)} \lvert \mathcal{G} \right ] \right ] \stackrel{\eqref{take out what is known}}{=} \E \left [ \mathbbm{1}_{\A} \E \left [  \mathbbm{1}_{\mathcal{N}(\beta)} \lvert \mathcal{G} \right ] \right ]
\]

Now we apply \eqref{eq: resnick} with $X = \vec{\phi}^{*,\omega}$ (which is $\mathcal{G}$-measurable by point (i)), $Y = \vec{\phi}^{\omega}$ (which is $\mathcal{G}$-independent) and $\Psi(x,y) = \mathbbm{1}_{\mathcal{N}(\beta ; x)}(y)$, where
\[
\mathcal{N}(\beta ; x) = \{ y \in \R^M \, \lvert \, \norm{x-y}_{\ell^{\infty}} < \beta \}.
\]
We find that
\[
\P (\A \cap \mathcal{N}(\beta) ) = \E \left [ \mathbbm{1}_{\A} \E \left [  \mathbbm{1}_{\mathcal{N}(\beta ; x)}(\vec{\phi}^{\omega}) \right ]_{\lvert x = \vec{\phi}^{*,\omega}} \right ] = \left ( \frac{\beta}{\pi} \right )^M \P (\A).
\]
$\hfill \qedsymbol$

\,

\appendix

\section{Technical results}

\subsection{The KdV hierarchy} \label{proof lemma first integrals}

In this appendix we prove \Cref{lem:first_integrals} characterizing the KdV hierarchy.
We first introduce the notion of \textit{rank} of a first integral given in \cite{miura}.
Given a monomial $P$ in $u$ and its derivatives, we define
\begin{equation}\label{def_rank}
P(u) = \prod_{k=0}^r (\pa_x^k u)^{a_k},\qquad 
\mathrm{rank}(P) = \sum_{k = 0}^r \left ( 1+\frac{k}{2} \right ) a_k,
\end{equation}
where $k$ is the order of derivation and $a_k$ the corresponding exponent. Moreover, $A= \sum_{k=0}^r a_k$ is the \emph{degree} of $P(u)$ as a monomial in $u$ and $B=\sum_{k=1}^{r} k\, a_k$ is the total number of derivatives in $P(u)$.

\begin{lemma} \label{lemma: scaling}
For $j=1,\ldots,n$ let $m_j \in \N$, $s > \max_{j=1,\ldots,n} m_j + \frac{1}{2}$ and
\[
P_j(u) = \prod_{i=0}^{m_j} (\pa_x^i u)^{a_i^{(j)}} \quad A_j = \sum_{i=0}^{m_j} a_i^{(j)}, \quad B_j = \sum_{i=1}^{m_j} i a_i^{(j)}.
\]
If $u \in \dot{H}^s(\T)$ then each $P_j(u)$ is a continuous function of $x \in \T$ and if $\sum_{j=1}^n P_j(u) = 0$ on $\dot{H}^s$, then for any fixed $A,B \in \R,$
\[
\sum_{ \substack{ j \in \{ 1,\ldots,n \} \\ A_j = A, \, B_j = B } } P_j(u) = 0 \qquad \text{on $\dot{H}^s(\T).$}
\]
\end{lemma}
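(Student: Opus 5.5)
The plan is a scaling argument in two parameters, exploiting the fact that the degree $A$ and the number of derivatives $B$ of a monomial transform differently under amplitude scaling and under a dilation of frequencies. First the continuity claim: for $u\in\dot H^s$ with $s>\max_j m_j+\frac12$, every $\pa_x^i u$ with $i\le m_j$ lies in $\dot H^{s-i}\subseteq \FL^{0,1}$ by \eqref{eq: embedding hs in fl01}, hence is continuous on $\T$. Each $P_j(u)$ is therefore a finite product of continuous functions.

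For the splitting, fix $u\in\dot H^s$ and consider the two-parameter family
\[
u_{\lambda,q}(x)=\lambda\, u(qx),\qquad \lambda\in\R,\ q\in\N .
\]
This family stays in $\dot H^s(\T)$: it is real, has mean zero, is $2\pi$-periodic, and its Fourier support is $q\Z^*$. Directly,
\[
\pa_x^i u_{\lambda,q}(x)=\lambda q^i (\pa_x^i u)(qx),
\]
which gives
\[
P_j(u_{\lambda,q})(x)=\lambda^{A_j}q^{B_j}P_j(u)(qx).
\]
Grouping the indices $j$ by the pair $(A,B)$, and writing $Q_{A,B}(u)=\sum_{A_j=A,B_j=B}P_j(u)$, the hypothesis applied to $u_{\lambda,q}$ yields
\[
\sum_{(A,B)}\lambda^{A}q^{B}\,Q_{A,B}(u)(qx)=0\qquad\forall x\in\T,\ \lambda\in\R,\ q\in\N .
\]
Only finitely many pairs $(A,B)$ occur, all with nonnegative integer entries.

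The main obstacle is that the argument $qx$ also depends on $q$, so $q$ does not enter purely as a coefficient. To handle this, substitute $y=qx$. Since $x$ ranges over all of $\T$ and each $Q_{A,B}(u)$ is $2\pi$-periodic, $y$ ranges over all of $\T$ as well. This gives
\[
\sum_{(A,B)}\lambda^{A}q^{B}\,Q_{A,B}(u)(y)=0\qquad\forall y\in\T,\ \lambda\in\R,\ q\in\N .
\]
Now fix $y$. The left-hand side is a polynomial in $\lambda$ that vanishes for every real $\lambda$, so the coefficient of each power $\lambda^A$ must be zero; that coefficient is $\sum_B q^B Q_{A,B}(u)(y)$. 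This is in turn a polynomial in $q$ vanishing at infinitely many points $q\in\N$, so each of its coefficients is zero as well. Hence $Q_{A,B}(u)(y)=0$ for all $y$, all pairs $(A,B)$, and every $u\in\dot H^s$, which is the claim. For pairs $(A,B)$ that do not occur among the $P_j$, the sum is empty and there is nothing to prove.
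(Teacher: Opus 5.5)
Your proof is correct. It uses the same two scalings as the paper, $u\mapsto\lambda u$ and $u\mapsto u(q\,\cdot)$, but separates the homogeneous pieces by a different mechanism.

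The paper works asymptotically. It divides by the largest power and lets $\lambda\to\infty$, resp.\ $m\to\infty$, using boundedness of the $P_j(u)$ to kill the lower-order terms. This isolates only the group with maximal $A$ (resp.\ maximal $B$), so reaching every group requires an iteration that the paper leaves implicit ("we may thus assume that all the $P_j$'s have the same degree"). In the frequency step the argument $mx$ moves with $m$. The paper therefore restricts to rational points $x=2\pi p/q$ and the subsequence $m\equiv 1$ (mod $q$), so that $mx\equiv x$, and then extends to all of $\T$ by continuity.

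You instead fix $q$ first and note that $x\mapsto qx$ maps $\T$ onto $\T$ (take $x=y/q$). The moving argument thus disappears before $q$ is varied. The hypothesis then becomes a genuine polynomial identity in $(\lambda,q)\in\R\times\N$ with distinct monomials $\lambda^A q^B$. Vanishing of all its coefficients gives $Q_{A,B}(u)=0$ for every pair $(A,B)$ at once.

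What your route buys: no limits, no peeling off of top-order groups, and no density or continuity argument in the separation step. Continuity is needed only for the first assertion of the lemma. The paper's asymptotic extraction reaches the same conclusion, but less directly.
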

\begin{proof}
The continuity (hence the boundedness) of $P_j(u)$ follows by the Sobolev embedding and the algebra property for $\dot{H}^s$ with $s > \frac12$.
For any $\lambda > 0$ we have
\[
0 = \sum_{j=1}^n P_j(\lambda u) = \sum_{j=1}^n \lambda^{A_j} P_j(u).
\]
Without loss of generality we can assume $A_1 \ge A_2,\ldots,A_n$ and so
\[
0 = \sum_{j=1}^n \lambda^{A_j-A_1} P_j(u).
\]
Taking $\lambda \to \infty$ we obtain
\[
\sum_{ \substack{ j \in \{ 1,\ldots,n \} \\ A_j = A} } P_j(u) = 0.
\]
We may thus assume that all the $P_j$'s have the same degree $A$ from now on. Let $m \in \N$ and consider $u_m(x)=u(mx)$.
We obtain
\[
0 = \sum_{j=1}^n P_j(u_m) = \left ( \sum_{j=1}^n m^{B_j} P_j(u) \right )(mx).
\]
Again, without loss of generality, we can assume $B_1 \ge B_2,\ldots,B_n$. Then for any $x\in\T$,
\begin{equation}\label{m_limit}
\lim_{m \to +\infty} \left ( \sum_{ \substack{ j \in \{1,\ldots,n\} \\ B_j = B_1 } } P_j(u) \right ) \left (mx\right ) = 0.
\end{equation}

In particular, setting $x=\frac{2\pi p}{q}$, with $p,q\in\N$, $p<q$, then the sequence $\{mx\}_{m\in\N}$ takes $q$ values. 
Taking the subsequence $m\equiv 1$ (mod $q$) in \eqref{m_limit}, we find that $\sum_{ \substack{ j \in \{1,\ldots,n\} \\ B_j = B_1 } } P_j(u)$ vanishes on $2\pi\Q\cap \T$, and thus in $\T$ by continuity.
\end{proof}

\begin{proof}[Proof of \Cref{lem:first_integrals}]
Consider a monomial $P$ in \eqref{def_rank}.
The following properties are easy to prove: (i) the operator $\pa_x$ increases the rank by $\frac{1}{2}$, (ii) integration by parts leaves the rank constant, and (iii) $\nabla_{L^2}$ decreases the rank by $1$.

\medskip

\noindent{\sc Rank of monomials in $F^{(j)}$.}
As proved in \cite{lax,magri}, the first integrals $F^{(j)}$ in \eqref{eq: pre first integrals} satisfy the following recursive formula:
\begin{equation}\label{eq:recursiveF}
J_1 \nabla_{L^2} F^{(j+1)} = J_2 \nabla_{L^2} F^{(j)}, \qquad J_1 = \pa_x, \quad J_2 =  -\left( \partial_{xxx} + \frac{2}{3} u \partial_x + \frac{1}{3} u_x \right).
\end{equation}
Using properties (i)-(iii), we want to show that each monomial in $F^{(j)}$ has rank $j+2$, $j\in\N$. Indeed, $F^{(1)}=\H$ in \eqref{intro: kdv hamiltonian in fourier} only has monomials of rank 3 and the only one of degree $2$ up to a constant is $\frac{1}{2} \int_{\T} u_x^2 \, \mathrm{d}x$. 
Assuming by induction that each monomial in $F^{(j)}$ has rank $j+2$ and that the only monomial of degree $2$ up to a constant is $\frac{1}{2} \int_{\T} (\pa_x^j u)^2 \, \mathrm{d}x$, we show that monomials in $F^{(j+1)}$ must have rank $j+3$ and that the only monomial of degree 2 up to a constant is $\frac{1}{2} \int_{\T} (\pa_x^{j+1} u)^2 \, \mathrm{d}x$.
By properties (i)-(iii) above, all non-zero monomials in $J_2 \nabla_{L^2} F^{(j)}$ have rank $j+2+\frac{1}{2}$, and the only monomial of degree 1 is, up to a constant, $\pa_x^{2j+3} u$. 
Since the latter is the only monomial of degree 1, \Cref{lemma: scaling} implies that $J_2 \nabla_{L^2} F^{(j)}$ is not identically zero.
If any monomials in $J_1 \nabla_{L^2} F^{(j+1)}$ had rank different from $j+2+\frac{1}{2}$, \eqref{eq:recursiveF} would yield an equality between sums of monomials of different ranks, which,  by \Cref{lemma: scaling}, implies that the sum of any terms of rank different from  $j+2+\frac{1}{2}$ vanishes. 
As a result, monomials in $J_1 \nabla_{L^2} F^{(j+1)}$ must have rank $j+2+\frac{1}{2}$, which, given that $J_1 \nabla_{L^2}$ decreases the rank by $\frac12$, must come from monomials of $F^{(j+1)}$ of rank $j+3$.
Moreover, the only monomial of degree 2 in $F^{(j+1)}$ is, up to a constant, $\frac{1}{2} \int_{\T} (\pa_x^{j+1} u)^2 \, \mathrm{d}x$.

\medskip

\noindent {\sc Structure of $F^{(j)}$.}
Each monomial $P$ in $F^{(j)}$, $j\in\N$, has rank
\[
j+2 = \sum_{k=0}^r \left ( 1+\frac{k}{2} \right )a_k.
\]
If we isolate the term of degree $n=\sum_{k=0}^r a_k$, we have
\[
\sum_{k=0}^r k a_k = 2(j+2-n),
\]
which proves that we cannot have terms of degree $n> j+2$.
Moreover, if $n=2$, by integrating by parts we can always assume $a_k=0$ for $k \neq j$ and $a_j=2$.
From now on assume $n \ge 3$.
If $u$ appears in a monomial of $F^{(j)}(u)$ of degree $n$, with $r \geq j$ derivatives, then it has to appear with degree 1, because 
\[
r a_r \leq \sum_{k=0}^r k a_k = 2(j+2-n) \quad \implies \quad  a_r \leq \frac{2(j+2-n)}{r}< 2.
\]
Suppose that there exists at least one nonzero $a_r$ for $r\geq j$. 
Then we have shown that $a_r=1$ and 
\[
\sum_{k=1}^{r-1} k a_k = 2(j+2-n)-r\leq 2j+4-2n -r \leq 2j-2-r.
\]
Since $n\geq 3$, we may write $P(u)=(\pa_x^r u)\, Q(u)$ where $Q$ is another monomial with at most two factors. Moreover, the total number of derivatives among the factors of $Q$ is at most $2j-2-r$. 
Integrating by parts $r-(j-1)$ times, we find that 
\[
\int_{\T} (\pa_x^r u)\, Q(u) \,\mathrm{d}x = (-1)^{r-(j-1)} \int_{\T} (\pa_x^{j-1} u)\, \pa_x^{r-j+1}Q(u) \,\mathrm{d}x.
\]
The total number of derivatives in $\pa_x^{r-j+1}Q(u)$ is thus $2j-2-r + (r-j+1) = j-1$.

Since $F^{(j)}$ is the sum of a degree 2 monomial plus monomials of degree $n\geq 3$, we have proved that, up to a multiplicative constant, $F^{(j)}$ has the form:
\[
F^{(j)}(u)  = \frac{1}{2} \int_{\T} u_j^2 \, \mathrm{d} x + \int_{\T} \sum_{n=3}^{j+2} P_n^{(j)}(u,u_1,\ldots,u_{j-1}) \, \mathrm{d} x = \pi \sum_{k \in \Z^*} k^{2j} |u_k|^2 + \sum_{n=3}^{j+2} \sum_{\k \in \M_n} c_{\k}^{(j)} u^{\k}.
\]
Passing to Fourier variables, notice that in principle $c_{\k}^{(j)} \in \mathbb{Q}(i)[k_1,\ldots,k_{\#\k}]$, but since the rank is an integer number and each space derivative counts as $\frac{1}{2}$ in the rank, we can conclude that there is an even number of space derivatives and that $c_{\k}^{(j)} \in \mathbb{Q}[k_1,\ldots,k_{\#\k}].$
Finally notice that if $n \ge 3$ and $\k \in \M_n$, then the total degree of $c_{\k}^{(j)}$ is $\sum
_{k=0}^{j} ka_k = 2(j-n+2) \le 2j-2.$
\end{proof}

\subsection{Intersection of resonant manifolds} \label{app: proof system}

In this subsection we prove \Cref{cor: system}, which follows from the stronger \Cref{thm: system for resonant 2} below by taking $\alpha_1=\ldots=\alpha_n=1$.

\begin{theorem} \label{thm: system for resonant 2}
Given $n\in\N$, consider the system of $n$ equations
\begin{equation}
\begin{cases}
\alpha_1 k_1 + \alpha_2 k_2+\ldots+ \alpha_n k_n & = 0 \\
\alpha_1 k_1^3 + \alpha_2 k_2^3 + \ldots+ \alpha_n k_n^3 & = 0 \\
& \vdots \\
\alpha_1 k_1^{2n-1} + \alpha_2 k_2^{2n-1} +\ldots + \alpha_n k_n^{2n-1} & =0,
\end{cases}
\label{system 3}
\end{equation}
where $\alpha_j \in \N$, $j=1,\ldots, n$. We require that $k_j \neq 0$ for all $j=1,\ldots,n$. Then:

\begin{itemize}
\item If $S = \alpha_1 + \alpha_2 + \ldots + \alpha_n$ is odd, there is no integer solution to \eqref{system 3}. 

\item If $S$ is even, all the solutions $(k_1,k_2,\ldots,k_n)$ are \emph{weightedly paired}, meaning that if we build the vector 
\[
(k_1',\ldots,k_S')=(k_{1,1},\ldots,k_{1,\alpha_1},k_{2,1},\ldots,k_{2,\alpha_2},\ldots,k_{n,1},\ldots,k_{n,\alpha_n}) \in \Z^{S},
\]
where $k_{j,1}=k_{j,2}=\ldots=k_{j,\alpha_j}=k_j$, there exists a permutation $\sigma \in Sym(S)$ such that 
\[
k_{\sigma(2i-1)}'+k_{\sigma(2i)}'=0 \qquad \text{for $i=1,\ldots,\frac{S}{2}.$}
\]
\end{itemize}
\end{theorem}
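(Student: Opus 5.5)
We reduce the weighted system to a statement about signed multisets and prove it with power sums and Newton's identities. Expand the weights: the vector $(k_1',\ldots,k_S')$ satisfies $\sum_{i=1}^S (k_i')^{2p-1}=0$ for $p=1,\ldots,n$. Group the entries by absolute value. Let $a_1,\ldots,a_m>0$ be the distinct values among $|k_j|$, with $m\le n$. Set $w_i=\#\{\text{entries equal to } a_i\}-\#\{\text{entries equal to } -a_i\}\in\Z$. Since $x^{2p-1}$ is odd, the system becomes
\[
\sum_{i=1}^m w_i\, a_i^{2p-1}=0,\qquad p=1,\ldots,n.
\]

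Write this as $V\,\mathrm{diag}(a_i)\,w=0$, where $V=(a_i^{2(p-1)})_{p,i}$. Because $m\le n$, the first $m$ rows give a Vandermonde matrix in the nodes $a_1^2,\ldots,a_m^2$, which are pairwise distinct and positive, so that block is invertible. The matrix $\mathrm{diag}(a_i)$ is also invertible since $a_i\neq0$. Hence $w_i=0$ for every $i$.

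So for each absolute value $a_i$, the number of copies of $a_i$ in $(k_1',\ldots,k_S')$ equals the number of copies of $-a_i$. This yields the pairing: match each copy of $a_i$ with a copy of $-a_i$, and assemble the permutation $\sigma$ from these matchings. In particular $S$ is the sum over $i$ of twice the multiplicity of $a_i$, so $S$ is even. This proves both bullets at once: if $S$ is odd there is no solution, and if $S$ is even every solution is weightedly paired.

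The only point needing care is that the number of distinct absolute values, $m$, is at most the number of equations, $n$. This holds because there are only $n$ original variables. Nonvanishing $k_j\neq 0$ is used to make $\mathrm{diag}(a_i)$ invertible and to ensure no entry is its own negative. I expect no genuine obstacle beyond this bookkeeping. The argument needs only the first $m$ equations, and the positive integer weights $\alpha_j$ enter only through the integrality of the $w_i$.
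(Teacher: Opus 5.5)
Your argument is correct, and it takes a genuinely different route from the paper.

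The paper works with the original $n\times n$ matrix $(k_j^{2p-1})$. Since $(\alpha_1,\ldots,\alpha_n)$ is a nonzero kernel vector, the determinant $k_1\cdots k_n\prod_{i<j}(k_i-k_j)(k_i+k_j)$ must vanish, so some pair satisfies $k_i=\pm k_j$. It then inducts on $n$, with base cases $n=1,2$ and three reduction cases, each preserving the parity of $S$:
\begin{itemize}
\item merge $k_i=k_j$ by adding their weights;
\item cancel $k_i=-k_j$ with $\alpha_i\neq\alpha_j$ by subtracting their weights;
\item delete an opposite pair $k_i=-k_j$ with equal weights.
\end{itemize}
The weights are exactly what make that induction close, since merging equal entries changes them. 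This is why the paper proves the weighted statement rather than \Cref{cor: system} directly.

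You perform all of these reductions in one step. Grouping by absolute value into signed multiplicities $w_i$ and inverting the $m\times m$ Vandermonde block in the distinct nodes $a_i^2$ gives $w=0$ at once, with no induction or case analysis. Your route also shows slightly more:
\begin{itemize}
\item only the first $m$ equations are used, where $m$ is the number of distinct values $|k_j|$;
\item the vanishing of the signed weight sums holds for arbitrary real weights, so positivity and integrality of the $\alpha_j$ enter only when you read $w_i=0$ as a pairing and deduce that $S$ is even;
\item the unweighted \Cref{cor: system} follows just as directly, without passing through the weighted generalization.
\end{itemize}
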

\begin{proof}
First, we write \eqref{system 3} as a system:
\[
\begin{bmatrix}
k_1 & k_2  & \dots  & k_n \\
k_1^3 & k_2^3 & \dots  & k_n^3 \\
\vdots & \vdots & \ddots & \vdots \\
k_1^{2n-1} & k_2^{2n-1} & \dots  & k_n^{2n-1}
\end{bmatrix} 
\cdot
\begin{bmatrix}
\alpha_1 \\
\alpha_2 \\
\vdots \\
\alpha_n
\end{bmatrix}
=
\begin{bmatrix}
0 \\
0 \\
\vdots \\
0
\end{bmatrix}.
\]
Since $\alpha_1,\ldots,\alpha_n\neq 0$, we must have a singular matrix: 
\begin{equation} 
\begin{split}
0 & = \mathrm{det}
\begin{bmatrix}
k_1 & k_2  & \dots  & k_n \\
k_1^3 & k_2^3 & \dots  & k_n^3 \\
\vdots & \vdots & \ddots & \vdots \\
k_1^{2n-1} & k_2^{2n-1} & \dots  & k_n^{2n-1}
\end{bmatrix}
= k_1k_2\dots k_n \cdot \mathrm{det}
\begin{bmatrix}
1 & 1  & \dots  & 1 \\
k_1^2 & k_2^2 & \dots  & k_n^2 \\
\vdots & \vdots & \ddots & \vdots \\
k_1^{2n-2} & k_2^{2n-2} & \dots  & k_n^{2n-2}
\end{bmatrix} \\
& = k_1k_2\dots k_n \prod_{1 \le i < j \le n} (k_i^2-k_j^2) = k_1k_2\dots k_n \prod_{1 \le i < j \le n}(k_i-k_j)(k_i+k_j)=0,
\label{vandermonde2}
\end{split}
\end{equation}
where we used the explicit formula for the determinant of the Vandermonde matrix.
This implies that there exist $i,j$ such that $k_i+k_j=0$ or $k_i-k_j=0$. 
The rest of the proof proceeds by induction, in particular we will prove that if the theorem is true for a system of dimension $n-2$ and $n-1$, then it is true for $n$. We need two base cases.

\smallskip

\noindent {\sc Base Case 1}: If $n=1$, then $\alpha_1 k_1 \neq 0$ by hypothesis but \eqref{system 3} forces $\alpha_1 k_1 = 0$, hence there is no solution.

\smallskip

\noindent {\sc Base Case 2:} If $n=2$, \eqref{system 3}  becomes:
\begin{equation}
\begin{cases}
\alpha_1 k_1 + \alpha_2 k_2 = 0 \\
\alpha_1 k_1^3 + \alpha_2 k_2^3 = 0, 
\end{cases} \qquad \qquad \mbox{with}\ \alpha_1,\alpha_2 \in \N.
\label{finale 1,1}
\end{equation}
If we have a solution $(k_1,k_2)$ to \eqref{finale 1,1} with $k_1k_2 \neq 0$, then \eqref{vandermonde2} implies $k_1^2-k_2^2=0$. However, we cannot have $k_1=k_2$, since otherwise \eqref{finale 1,1} implies that $\alpha_1 + \alpha_2 = 0$, which is not possible. Therefore $k_1=-k_2 \neq 0$. Now we distinguish the two cases presented in the statement:
\begin{itemize}
    \item If $S=\alpha_1+\alpha_2$ is odd, there exist no solutions by Base Case 1. 
    \item If $S=\alpha_1+\alpha_2$ is even, then $(k_1,k_2)$ is a solution if and only if $\alpha_1=\alpha_2,$ which means that $(k_1,k_2)$ is weightedly paired.
\end{itemize}

\medskip

\noindent {\sc General Case:} Next we consider \eqref{system 3} with $n\geq 3$.
Let $\k = (k_1,\ldots,k_n)$ be a solution to \eqref{system 3} with $\prod_{j=1}^n k_j \neq 0$.
By \eqref{vandermonde2}, there are three possibilities:

\begin{itemize}
\item If $k_i-k_j=0$, up to a permutation of indices we can assume $k_{n-1}=k_n$ and we arrive to 
\begin{equation}\label{system_ind}
\begin{cases}
\alpha_1' k_1 + \alpha_2' k_2+\ldots+ \alpha_{n-1}' k_{n-1} & = 0 \\
\alpha_1' k_1^3 + \alpha_2' k_2^3 + \ldots+ \alpha_{n-1}' k_{n-1}^3 & = 0 \\
& \vdots \\
\alpha_1' k_1^{2n-3} + \alpha_2' k_2^{2n-3} +\ldots + \alpha_{n-1}' k_{n-1}^{2n-3} & =0,
\end{cases}
\end{equation}
with $\alpha_{n-1}' = \alpha_{n-1}+\alpha_{n}$ and $\alpha_i' = \alpha_i$ for $i=1,\ldots,n-2$. 
By the inductive hypothesis, since $S'=S$, if $S$ is even, the system \eqref{system_ind} has weightedly paired solutions, therefore $\k$ is weightedly paired. If $S$ is odd we have no solutions to \eqref{system_ind}, and thus $\k$ doesn't solve \eqref{system 3}. 

\item If $k_i+k_j=0$ with $\alpha_i-\alpha_j \neq 0$, up to a permutation of indices we arrive to 
\begin{equation} \label{system_ind2}
\begin{cases}
\alpha_1' k_1 + \alpha_2' k_2+\ldots+ \alpha_{n-1}' k_{n-1} & = 0 \\
\alpha_1' k_1^3 + \alpha_2' k_2^3 + \ldots+ \alpha_{n-1}' k_{n-1}^3 & = 0 \\
& \vdots \\
\alpha_1' k_1^{2n-3} + \alpha_2' k_2^{2n-3} +\ldots + \alpha_{n-1}' k_{n-1}^{2n-3} & =0,
\end{cases}
\end{equation}
with $\alpha_{n-1}' = \alpha_{n-1}-\alpha_{n} \in \N$ (without loss of generality) and $\alpha_i' = \alpha_i$ for $i=1,\ldots,n-2$. Notice that the indices we have removed are already weightedly paired. 
Since $S'=S-2\alpha_n$, $S-S' \equiv 0 \, (\mathrm{mod} \, \, 2)$, hence by the inductive hypothesis the system \eqref{system_ind2} has weightedly paired solutions if $S$ is even (and the same holds for $\k$), and no solutions if $S$ is odd. 

\item If $k_i+k_j=0$ with $\alpha_i-\alpha_j = 0$, up to a permutation of indices can assume $k_{n-1}+k_n=0$ and we arrive to 
\begin{equation} \label{syst_ind3}
\begin{cases}
\alpha_1' k_1 + \alpha_2' k_2+\ldots+ \alpha_{n-2}' k_{n-2} & = 0 \\
\alpha_1' k_1^3 + \alpha_2' k_2^3 + \ldots+ \alpha_{n-2}' k_{n-2}^3 & = 0 \\
& \vdots \\
\alpha_1' k_1^{2n-5} + \alpha_2' k_2^{2n-5} +\ldots + \alpha_{n-2}' k_{n-2}^{2n-5} & =0.
\end{cases}
\end{equation}
with $\alpha_j'=\alpha_j$ for $j=1,\ldots,n-2$.
Notice that $k_i$ and $k_j$ that we have removed are already weightedly paired. Again $S-S' \equiv 0 \quad (mod \, \, 2)$, hence by the inductive hypothesis the system \eqref{syst_ind3} has weightedly paired solutions if $S$ is even (and the same holds for $\k$), and no solutions if $S$ is odd. 
\end{itemize}
\end{proof}



\subsection{Support of the remainders}
\label{app: resti}

We dedicate this section to the proof of \Cref{cor: azione grande o tre indici grandi}, which is an adaptation of \cite[Lemma 3.7 and Corollary 3.8]{bernier}. We start with a key technical lemma:

\begin{lemma} \label{lemma algebrico bernier}
If $\bm{k} \in \mathcal{D}_n$ with $n \ge 3$ and satisfies $k_1+k_2 \neq 0$, then for every $l \in \N$ we have
\[
\max \left ( (n-2)^{2l} |k_3|^{2l+1} , \left \lvert \sum_{j=1}^n k_j^{2l+1} \right \rvert \right ) \ge \frac{|k_1|^{2l}}{2}.
\]
\end{lemma}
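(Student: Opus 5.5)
We argue by contradiction: assume both $(n-2)^{2l}|k_3|^{2l+1} < \frac{|k_1|^{2l}}{2}$ and $|\Omega_l(\k)| < \frac{|k_1|^{2l}}{2}$. The plan is to split $\Omega_l(\k) = (k_1^{2l+1}+k_2^{2l+1}) + \sum_{j\ge 3} k_j^{2l+1}$. We bound the tail by $(n-2)|k_3|^{2l+1}$, since $\k\in\mathcal{D}_n$ orders the moduli. We then show that the head $k_1^{2l+1}+k_2^{2l+1}$ is at least $|k_1|^{2l}$ in modulus whenever $k_1+k_2\neq 0$. The reverse triangle inequality then gives $|\Omega_l(\k)| \ge |k_1|^{2l} - (n-2)|k_3|^{2l+1}$, and the first assumption makes the subtracted term smaller than $|k_1|^{2l}/2$. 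This contradicts the second assumption.

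The key algebraic step is the head bound. Since $x\mapsto x^{2l+1}$ is odd, we may assume $k_1>0$. If $k_2>0$, then $k_1^{2l+1}+k_2^{2l+1}\ge k_1^{2l+1}\ge k_1^{2l}$. If $k_2<0$, write $k_2=-m$ with $0<m\le k_1$ and $m\ne k_1$ (this is exactly where $k_1+k_2\neq 0$ enters). Then
\[
k_1^{2l+1}-m^{2l+1} = (k_1-m)\sum_{i=0}^{2l} k_1^{2l-i}m^{i} \ge 1\cdot k_1^{2l},
\]
because $k_1-m$ is a positive integer and the sum contains the term $k_1^{2l}$ plus nonnegative terms. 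So in all cases $|k_1^{2l+1}+k_2^{2l+1}|\ge |k_1|^{2l}$.

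For the tail, $\bigl|\sum_{j\ge3}k_j^{2l+1}\bigr|\le (n-2)|k_3|^{2l+1}$, and since $n\ge 3$ and $l\ge1$ we have $(n-2)\le (n-2)^{2l}$. Hence $(n-2)|k_3|^{2l+1} \le (n-2)^{2l}|k_3|^{2l+1} < |k_1|^{2l}/2$, which is the bound needed above. The extra power $(n-2)^{2l}$ in the statement is not needed for this argument; it is presumably chosen so that the lemma plugs directly into the proof of \Cref{cor: azione grande o tre indici grandi}.

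The only delicate point is the integrality gain in the case $k_2<0$. A mean value estimate would only give a factor $|k_1|^{2l}$ times $|k_1+k_2|$, and it is the fact that $|k_1+k_2|\ge 1$ is an integer that makes the bound work. No earlier result of the paper is needed.
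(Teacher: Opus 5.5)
Your proof is correct. In the opposite-sign case $k_1>0>k_2$ it matches the paper's argument. There you factor $k_1^{2l+1}+k_2^{2l+1}=(k_1+k_2)\sum_{i=0}^{2l}k_1^{2l-i}|k_2|^i$, use the integrality $|k_1+k_2|\ge 1$, and compare with the tail $(n-2)|k_3|^{2l+1}$ via the reverse triangle inequality.

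The difference is in the same-sign case $k_1,k_2>0$. The paper invokes the zero-momentum condition, $k_1\le k_1+k_2=-(k_3+\ldots+k_n)\le (n-2)|k_3|$, and raises it to the power $2l$ to make the first entry of the maximum large. You instead note that the head is trivially $\ge k_1^{2l+1}\ge k_1^{2l}$ there as well, and run the same reverse-triangle argument as in the other case.

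Your route is more uniform and slightly more general. It never uses $k_1+\ldots+k_n=0$, only the ordering of the moduli and $k_j\neq 0$. It also yields the conclusion with $(n-2)$ in place of $(n-2)^{2l}$.

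This shows your guess about the exponent is off. The factor $(n-2)^{2l}$ is simply what the paper's momentum-based treatment of the same-sign case produces. Nothing downstream needs it: in \Cref{secondo lemma algebrico bernier} and \Cref{cor: azione grande o tre indici grandi}, any constant depending only on $n$ is absorbed into $N_0(n)$.
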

\begin{proof}
Without loss of generality we can assume that $k_1$ is positive. 
Then we have two cases:
\begin{itemize}
\item If $k_2$ is positive, then
\[
k_1 \le k_1+k_2 = -(k_3+\ldots+k_n) \le (n-2)|k_3|,
\]
which yields
\[
(n-2)^{2l} |k_3|^{2l+1} \ge \frac{|k_1|^{2l}}{2}.
\]
\item If $k_2$ is negative, then 
\[
k_1^{2l+1} + k_2^{2l+1} = k_1^{2l+1} - |k_2|^{2l+1} = (k_1+k_2) \left ( \sum_{j=0}^{2l} k_1^{2l-j}|k_2|^j \right ).
\]
Dividing by $k_1+k_2 \neq 0$ we obtain
\begin{equation}\label{eq:k1k3}
\left \lvert \sum_{j=1}^n k_j^{2l+1} \right \rvert \ge \left \lvert k_1^{2l+1} + k_2^{2l+1} \right \rvert - (n-2)|k_3|^{2l+1} \ge k_1^{2l} - (n-2) |k_3|^{2l+1}.
\end{equation}
Thus, if $\left \lvert \sum_{j=1}^n k_j^{2l+1} \right \rvert \le k_1^{2l}/2$, then \eqref{eq:k1k3} yields $(n-2)^{2l}|k_3|^{2l+1} \ge (n-2)|k_3|^{2l+1} \ge k_1^{2l}/2$.
\end{itemize}
\end{proof}

\begin{cor} \label{secondo lemma algebrico bernier}
Let $\k \in \mathcal{D}_n \cap (\mathcal{M}_n \setminus \mathcal{R}_n^{l})$ for some $n \ge 3$ and $l \in \N$. 
If $N > 2$ is such that 
\[
\left \lvert \frac{k_1^{2l-1}}{k_1^{2l+1}+\ldots+k_n^{2l+1}} \right \rvert \ge N
\]
then either there exists $\k' \in \M_{n-2}$ such that 
\begin{equation}\label{eq:action_case1}
u^{\k} = |u_a|^2 u^{\k'} \quad \text{where} \quad a \ge N^{\frac{1}{2l-1}}
\end{equation}
or 
\begin{equation}\label{eq:index_case2}
|k_3|^{2l+1} \ge \frac{N^{\frac{2l}{2l-1}}}{2(n-2)^{2l}}.
\end{equation}
\end{cor}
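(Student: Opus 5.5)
We split according to whether $k_1+k_2=0$ or not, using Lemma~\ref{lemma algebrico bernier} in the second case. Throughout, $\k\in\mathcal{D}_n$, so $|k_1|=\norm{\k}_{\ell^\infty}$. Since $\k\notin\mathcal{R}_n^l$, we have $\Omega_l(\k)\neq 0$. Because $\Omega_l(\k)$ is an integer, $|\Omega_l(\k)|\ge 1$.

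\emph{Preliminary size bound.} The hypothesis gives $|k_1|^{2l-1}\ge N|\Omega_l(\k)|\ge N$. Hence $|k_1|\ge N^{\frac{1}{2l-1}}$, and the hypothesis can be rewritten as
\[
|\Omega_l(\k)|\le \frac{|k_1|^{2l-1}}{N}.
\]

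\emph{Case $k_1+k_2=0$.} Here $u^{\k}=u_{k_1}u_{-k_1}u^{\k'}=|u_{k_1}|^2u^{\k'}$ with $\k'=(k_3,\dots,k_n)$. The vector $\k'$ has zero momentum since $k_1+k_2=0$, so $\k'\in\M_{n-2}$ (when $n-2\ge 1$; this is trivially fine as a formal index set). Setting $a=|k_1|\ge N^{\frac{1}{2l-1}}$ gives \eqref{eq:action_case1}, using $u_{-a}=\overline{u_a}$ to take $a>0$.

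\emph{Case $k_1+k_2\neq 0$.} Lemma~\ref{lemma algebrico bernier} gives that either $(n-2)^{2l}|k_3|^{2l+1}\ge |k_1|^{2l}/2$ or $|\Omega_l(\k)|\ge |k_1|^{2l}/2$.
\begin{itemize}
\item[(a)] In the second alternative, combining with the size bound gives $|k_1|^{2l}/2\le |k_1|^{2l-1}/N$, i.e. $|k_1|\le 2/N<1$ because $N>2$. This contradicts $k_1\in\Z^*$, so this alternative cannot occur.
\item[(b)] In the first alternative, $|k_3|^{2l+1}\ge \frac{|k_1|^{2l}}{2(n-2)^{2l}}$. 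Inserting $|k_1|^{2l}\ge N^{\frac{2l}{2l-1}}$ from the size bound yields \eqref{eq:index_case2}.
\end{itemize}

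No step is a real obstacle. The only points needing care are ruling out alternative (a) via integrality of $\Omega_l$ together with $N>2$, and checking that $\k'$ lies in $\M_{n-2}$.
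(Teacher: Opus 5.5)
Your proof is correct and follows the paper's argument essentially verbatim. You derive $|k_1|\ge N^{\frac{1}{2l-1}}$ from $|\Omega_l(\k)|\ge 1$, treat the case $k_1+k_2=0$ directly, and otherwise exclude the second alternative of Lemma~\ref{lemma algebrico bernier} via $|\Omega_l(\k)|\le |k_1|^{2l-1}/N<|k_1|^{2l}/2$ (using $N>2$) before substituting the lower bound on $|k_1|$; the only difference is that you spell out the integrality of $\Omega_l(\k)$, which the paper calls ``clear''.
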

\begin{proof}
It is clear that $|k_1| \ge N^{\frac{1}{2l-1}}$. 
If $k_1+k_2=0$, then \eqref{eq:action_case1} is satisfied. Otherwise, since 
\[
|k_1^{2l+1}+\ldots+k_n^{2l+1}| \le \frac{|k_1|^{2l-1}}{N} < \frac{|k_1|^{2l}}{2},
\]
then by \Cref{lemma algebrico bernier},
\[
|k_3|^{2l+1} \ge \frac{|k_1|^{2l}}{2(n-2)^{2l}} \ge \frac{N^{\frac{2l}{2l-1}}}{2(n-2)^{2l}}.
\]
\end{proof}

\begin{proof}[Proof of \Cref{cor: azione grande o tre indici grandi}]
By \Cref{secondo lemma algebrico bernier}, we have \eqref{eq:action_case1} or \eqref{eq:index_case2}. \eqref{eq: azione grande} follows from \eqref{eq:action_case1} and the inequality $2n \ge 2l -1$. In order to prove \eqref{eq: terzo indice grande}, we use \eqref{eq:index_case2}:
\[
\left ( \frac{N^{\frac{2l}{2l-1}}}{2(n-2)^{2l}} \right )^{\frac{1}{2l+1}} \ge \frac{N^{\frac{2l}{4l^2-1}}}{2^{\frac{1}{2l+1}}(n-2)^{\frac{2l}{2l+1}}} \ge N^{\frac{1}{2n}}
\]
provided that $N \geq N_0 (n)$ is sufficiently large.
\end{proof}

\subsection{Proof of \texorpdfstring{\Cref{conv taylor series}}.} \label{app: normal form}

We first prove the existence of the map $\Phi_{G_n^{(l)}}^{\xi}$ and its inverse. By \eqref{eq: stima induttiva coeff g}, $N \norm{\bm{g}}_{Y_n^l} \lesssim_{\rho,n} N^{n-2}$ and by \Cref{lemma: transformations preserve Hs} there exists $\epsilon_0= c(n,s,\rho) N^{-1}$ such that for any
\begin{equation}\label{eq:eps0}
\epsilon \leq c(n,s,\rho)  N^{-1}
\end{equation}  
there exists an invertible canonical transformation
\begin{equation}
\Phi_{G_n^{(l)}}^{\xi} : B_s(0,\epsilon) \rightarrow B_s(0,2\epsilon), \qquad \Phi_{-G_n^{(l)}}^{\xi} : B_s(0,\epsilon/2) \rightarrow B_s(0,\epsilon)
\label{eq: dominio e codominio phi e phi_inv}
\end{equation}
for $\xi \in [0,2]$. 
Next we show \eqref{eq: close to id prop}: $\forall \xi \in [0,2]$,
\[
\norm{\Phi_{\pm G_n^{(l)}}^{\xi}(u)-u}_{\dot{H}^s} \stackrel{\eqref{eq: close to the identity 1}}{\lesssim_{n,s}} N \norm{\bm{g}}_{Y_n^l} \norm{u}_{\dot{H}^s}^{n-1} \stackrel{\eqref{eq: stima induttiva coeff g}}{\lesssim_{n,s,\rho}} N^{n-2} \norm{u}_{\dot{H}^s}^{n-1} \stackrel{\eqref{eq:eps0}}{\lesssim_{n,s,\rho}} N \norm{u}_{\dot{H}^s}^2.
\]
Moreover by construction
\[
\Phi_{G_r^{(l)}}^{\xi} \circ \Phi_{-G_r^{(l)}}^{\xi}(u) = u \qquad \forall u \in B_s(0,\epsilon/2).
\]

\medskip

\noindent {\sc Taylor formula for $F^{(j)} \circ \Phi_{G_n^{(l)}}^{\xi}$}: Firstly we remark that by \Cref{lemma: convergence of hamiltonian function}, since $s \ge j$ we have that $F^{(j)}(u)$ is a smooth function in $B_s(0, \frac{1}{c_s \rho N})$.
Up to reducing $\epsilon_0$ in \eqref{eq:eps0}, we have that $F^{(j)} \circ \Phi_{G_n^{(j)}}^{\xi}$ is well-defined in $B_s(0,\epsilon)$. 
Remember that by \eqref{eq: taylor formula for Hnew integral remainder}, we have the Taylor formula
\begin{equation}\label{eq:Taylor_Fj}
F^{(j)} \circ \Phi_{G_n^{(l)}}^1 = \sum_{\alpha = 0}^m \frac{1}{\alpha !} \mathrm{ad}_{G_n^{(l)}}^{\alpha} F^{(j)} + \int_0^1 \frac{(1-t)^m}{m!} \mathrm{ad}_{G_n^{(l)}}^{m+1}(F^{(j)} \circ \Phi_{G_n^{(l)}}^{\xi}) \, \mathrm{d} \xi
\end{equation}
where $\mathrm{ad}_{G_n^{(l)}} = \{ \cdot, G_n^{(l)} \}$, each term being well defined by \Cref{lemma: facts about poisson bracket}.
We want to prove that the Taylor series converges, i.e. that the integral remainder 
\[
R_m = \int_0^1 \frac{(1-t)^m}{m!} \mathrm{ad}_{G_n^{(l)}}^{m+1}(F^{(j)} \circ \Phi_{G_n^{(l)}}^{\xi}) \, \mathrm{d} \xi
\]
tends to zero as $m \to \infty$, uniformly in $B_s(0,\epsilon).$
By \eqref{formula Fj prop}, we have to prove that
\begin{equation}\label{Rmbeta}
\sum_{\beta \ge 2} R_{m,\beta} \longrightarrow_{m \to +\infty} 0, \quad \text{where} \quad R_{m,\beta} = \int_0^1 \frac{(1-t)^m}{m!} \mathrm{ad}_{G_n^{(l)}}^{m+1} (F_{\beta}^{(j)} \circ \Phi_{G_n^{(l)}}^{\xi}) \, \mathrm{d} \xi.
\end{equation}

\medskip

\noindent {\sc Estimates on the coefficients}: We begin by studying $\frac{1}{\alpha !} \mathrm{ad}_{G_n^{(l)}}^{\alpha} F_{\beta}^{(j)}$ in \eqref{eq:Taylor_Fj}.

\medskip

\noindent $\bullet$ $\beta \ge 3$. By \Cref{lemma: stabilità parentesi poisson} it is a homogeneous polynomial of degree $\beta + \alpha(n-2)$ of the form
\begin{equation}\label{def: dk alpha beta}
\frac{1}{\alpha !} \mathrm{ad}_{G_n^{(l)}}^{\alpha} F_{\beta}^{(j)} = \frac{1}{\alpha !} \sum_{\k \in \mathcal{M}_{\beta + \alpha(n-2)}} d_{\k}(\alpha,\beta) u^{\k}, \qquad \mathrm{with} \, \,  d_{\k} (0,\beta)=c_{\k}\ \mathrm{for}\ \#\k=\beta,
\end{equation}
where, using the recursive formula $\mathrm{ad}_{G_n^{(l)}}^{\alpha} F_{\beta}^{(j)} = \{ \mathrm{ad}_{G_n^{(l)}}^{\alpha-1} F_{\beta}^{(j)} , G_n^{(l)} \}$, we have the recursive bound\footnote{From now on we omit the subscript $n$ in the space $Y^{\theta}_n$ whenever clear from the context.}
\[
\norm{\bm{d}(\alpha,\beta)}_{Y^j} \stackrel{\eqref{d_est}}{\le} \frac{1}{2\pi} N n^{2j-1} [ \beta + (\alpha-1)(n-2) ] \norm{\bm{g}}_{Y^l} \norm{\bm{d}(\alpha-1,\beta)}_{Y^j}
\]
which in turn gives
\begin{align}
\frac{1}{\alpha !} \norm{\bm{d}(\alpha,\beta)}_{Y^j} &\le \frac{1}{\alpha !} \norm{ ( c_{\k} )_{\#\k=\beta} }_{Y^j} \left ( \frac{1}{2\pi} N n^{2j-1} \norm{\bm{g}}_{Y^l}  \right )^{\alpha} \prod_{i=1}^{\alpha} \left [ \beta + (i-1)(n-2) \right] \nonumber\\
& \stackrel{\eqref{formula Fj prop}-\eqref{eq: stima induttiva coeff g}}{\le} \rho^{\beta} N^{\beta-3} \left ( \frac{1}{2\pi} N n^{2j-1} \rho^n N^{n-3} \right )^{\alpha}\,  \prod_{i=1}^{\alpha} \frac{\beta + (i-1)(n-2)}{i}\nonumber \\
& \le \rho^{\beta} N^{\beta-3} \left ( \frac{1}{2\pi} N n^{2j-1} \rho^n N^{n-3} \right )^{\alpha} n^{\alpha} e^{\beta-1} \nonumber
\end{align}
where the last inequality may be found in page 1166 of \cite{bernier}.

Therefore
\begin{equation}
\frac{1}{\alpha !} \norm{\bm{d}(\alpha,\beta)}_{Y^j} \le e^{\beta-1} \left ( \frac{n^j \rho}{\sqrt{2\pi}} \right )^{2\alpha} (\rho N)^{\beta + \alpha(n-2)}N^{-3}.
\label{eq: prima stima sui d}
\end{equation}

\medskip 

\noindent $\bullet$ $\beta=2$. By \eqref{hom_eq prop}, it is a homogeneous $j$-formal polynomial of degree $2 + \alpha(n-2)$ of the form
\[
\frac{1}{\alpha} \frac{1}{(\alpha-1)!} \mathrm{ad}_{G_n^{(l)}}^{\alpha-1} \{ F_2^{(j)} , G_n^{(l)} \} = \frac{1}{\alpha !} \sum_{\k \in \M_{2 + \alpha(n-2)}} d_{\k}(\alpha,2) u^{\k}
\]
where, using \eqref{eq: prima stima sui d} with $\alpha-1$ instead of $\alpha$ and $n=\beta$, 
\begin{equation}
\frac{1}{\alpha !} \norm{\bm{d}(\alpha,2)}_{Y^j} \le \frac{1}{\alpha} e^{n-1} \left ( \frac{n^j \rho}{\sqrt{2\pi}} \right )^{2(\alpha-1)} (\rho N)^{n+(\alpha-1)(n-2)} N^{-3}.
\label{eq: seconda stima sui d}
\end{equation}


\medskip

\noindent {\sc Convergence of the Taylor series}:

\medskip 

\noindent $\bullet$ $\beta \ge 3$.
Proceeding as in \eqref{key_bound_Y}--\eqref{Fn_growth} (with $M=1$),
\begin{equation}\label{eq:est:ad_alpha_F_beta}
\begin{split}
\left \lvert \frac{1}{\alpha!} \mathrm{ad}_{G_n^{(l)}}^{\alpha} F_{\beta}^{(j)}(u) \right \rvert &\le \frac{1}{\alpha!} \sum_{\k \in \M_{\beta + \alpha (n-2)}} |d_{\k}(\alpha,\beta) u^{\k}| \\
&\le \frac{1}{\alpha!} [\beta + \alpha (n-2)]^j \norm{\bm{d}(\alpha,\beta)}_{Y^j} (c_s \norm{u}_{\dot{H}^s})^{\beta+\alpha (n-2)} \\
& \stackrel{\eqref{eq: prima stima sui d}}{\le} [\beta+\alpha (n-2)]^j e^{\beta-1} \left ( \frac{n^j \rho}{\sqrt{2\pi}} \right )^{2\alpha} (c_s \norm{u}_{\dot{H}^s} \rho N)^{\beta+\alpha(n-2)} N^{-3}. 
\end{split}
\end{equation}
Using this estimate with $\alpha=m+1$, we bound $R_{m,\beta}(u)$ in \eqref{Rmbeta}.
\[
\begin{split}
| R_{m,\beta}(u) | &= \left \lvert \int_0^1 \frac{(1-t)^m}{m!} ( \mathrm{ad}_{G_n^{(l)}}^{m+1} F_{\beta}^{(j)} )(\Phi_{G_r^{(l)}}^{\xi}(u)) \, \mathrm{d} \xi \right \rvert \\
&\stackrel{\eqref{eq: dominio e codominio phi e phi_inv}}{\le} [\beta + (m+1)(n-2)]^j e^{\beta-1} \left ( \frac{n^j \rho}{\sqrt{2\pi}} \right )^{2(m+1)} (2 \norm{u}_{\dot{H}^s} c_s \rho N)^{\beta+(m+1)(n-2)} N^{-3}. 
\end{split}
\]

Further restricting $\epsilon$ in \eqref{eq:eps0} such that
\begin{equation}\label{eq:eps0_1}
2 \norm{u}_{\dot{H}^s} c_s \rho N < 2 \epsilon\, c_s \rho N < e^{-1},
\end{equation}
it follows that the series $\sum_{\beta \ge 3} R_{m,\beta} (u)$ is absolutely convergent. Moreover, by further restricting $\epsilon$ such that
\begin{equation}\label{eq:eps0_2}
\left ( \frac{n^j \rho}{\sqrt{2\pi}} \right )^2 (2 \norm{u}_{\dot{H}^s} c_s \rho N)^{n-2} < \left ( \frac{n^j \rho}{\sqrt{2\pi}} \right )^2 (2 \epsilon c_s \rho N)^{n-2}  < 1,
\end{equation}
we have that
\[
\lim_{m \to +\infty} \sum_{\beta \ge 3} R_{m,\beta}(u) = 0.
\]

\noindent $\bullet$ $\beta=2$. Proceeding as in \eqref{key_bound_Y}--\eqref{Fn_growth} (with $M=1$),
\begin{equation}\label{eq:est:ad_alpha_F_2}
\begin{split}
\left \lvert \frac{1}{\alpha!} \mathrm{ad}_{G_n^{(l)}}^{\alpha} F_2^{(j)}(u) \right \rvert &\le \frac{1}{\alpha!} \sum_{\k \in \M_{2+(n-2)\alpha}} |d_{\k}(\alpha,2) u^{\k}| \\
&\le \frac{1}{\alpha!} [n+(\alpha-1)(n-2)]^j \norm{\bm{d}(\alpha,2)}_{Y^j} (c_s \norm{u}_{\dot{H}^s})^{n+(\alpha-1)(n-2)} \\
&\stackrel{\eqref{eq: seconda stima sui d}}{\le} [n+(\alpha-1)(n-2)]^j \frac{1}{\alpha} e^{n-1} \left ( \frac{n^j \rho}{\sqrt{2\pi}} \right )^{2(\alpha-1)} (c_s \norm{u}_{\dot{H}^s} \rho N)^{n+(\alpha-1)(n-2)} N^{-3}.
\end{split}
\end{equation}
Hence, taking $\alpha=m+1$,
\[
\begin{split}
| R_{m,2}(u) | &= \left \lvert \int_0^1 \frac{(1-t)^m}{m!} ( \mathrm{ad}_{G_n^{(l)}}^{m+1} F_2^{(j)} )(\Phi_{G_n^{(l)}}^{\xi}(u)) \, \mathrm{d} \xi \right \rvert \\
&\le [n+m(n-2)]^j \frac{1}{m+1} e^{n-1} \left ( \frac{n^j \rho}{\sqrt{2\pi}} \right )^{2m} (2 \norm{u}_{\dot{H}^s} c_s \rho N)^{n+m(n-2)} N^{-3}. 
\end{split}
\]
Using \eqref{eq:eps0_2}, it follows that 
$\lim_{m \to +\infty} R_{m,2}(u) = 0$.

\medskip

We may therefore take $m\rightarrow\infty$ in \eqref{eq:Taylor_Fj}, which yields
\begin{equation}\label{eq:taylor_Fj}
F^{(j)} \circ \Phi_{G_n^{(l)}}^1 (u) = \sum_{\alpha = 0}^{+\infty} \frac{1}{\alpha !} \mathrm{ad}_{G_n^{(l)}}^{\alpha}F^{(j)} (u) = \sum_{\alpha = 0}^{+\infty} \frac{1}{\alpha !} \mathrm{ad}_{G_n^{(l)}}^{\alpha} F_2^{(j)}(u) + \sum_{\alpha = 0}^{+\infty} \sum_{\beta \ge 3} \frac{1}{\alpha !} \mathrm{ad}_{G_n^{(l)}}^{\alpha} F_{\beta}^{(j)}(u).
\end{equation}

Using \eqref{eq:est:ad_alpha_F_beta} and \eqref{eq:est:ad_alpha_F_2}, one quickly sees that \eqref{eq:taylor_Fj} converges absolutely in $\alpha$ and $\beta$ provided \eqref{eq:eps0_1} and \eqref{eq:eps0_2} are satisfied.


\medskip

\noindent {\sc Estimates on the new coefficients}: We first isolate the homogeneous term of degree $m \ge 3$ in \eqref{eq:taylor_Fj}:
\[
Q_m^{(j)}(u) = \sum_{ \substack{ \alpha \ge 0, \, \, \beta \ge 3 \\ \beta + \alpha(n-2) = m} } \left [ \frac{1}{\alpha !} \mathrm{ad}_{G_n^{(l)}}^{\alpha} F_{\beta}^{(j)}(u) + \frac{\delta(\beta=n)}{(\alpha+1)!} \mathrm{ad}_{G_n^{(l)}}^{\alpha+1} F_2^{(j)}(u) \right ] = \sum_{\k \in \M_m} q_{\k} u^{\k}
\]
where, using \eqref{def: dk alpha beta},
\begin{equation}\label{eq: formula q}
q_{\k} = \sum_{\substack{ \alpha \ge 0, \, \, \beta \ge 3 \\ \beta + \alpha(n-2) = \#\k}} \left [ \frac{1}{\alpha !} d_{\k}(\alpha,\beta) + \frac{\delta(\beta=n)}{(\alpha+1)!} d_{\k}(\alpha+1,2) \right ].
\end{equation}
We have therefore proved \eqref{formula Fj trans prop}, i.e. on  $B_s(0,\epsilon)$ we have
\[
F^{(j)} \circ \Phi_{G_n^{(l)}}^1 = F_2^{(j)} + \sum_{m \ge 3} Q_m^{(j)}.
\]

Finally, we must prove the estimates on the coefficients $q_{\k}$ in \eqref{eq: stima coeff q}.

\begin{itemize}
\item If $\# \k< n$ then it follows from \eqref{eq: formula q} that $q_{\k} = d_{\k}(0,\#\k)= c_{\k}$ by \eqref{def: dk alpha beta}, and \eqref{eq: stima coeff q} follows from hypothesis \eqref{formula Fj prop}. 
\item If $\# \k =n$, then $Q_n^{(j)} = F_n^{(j)} + \{ F_2^{(j)} , G_n^{(l)} \}$ and it follows from \eqref{eq: formula q} that
\[
q_{\k}= d_{\k} (0, \#\k) + d_{\k} (1,2) \stackrel{\eqref{hom_eq prop}}{=} c_{\k} \delta(\k \in \M_n \setminus \mathcal{J}_{n,l,N}),
\]
proving \eqref{eq: formula qn}. Estimate \eqref{eq: stima coeff q} trivially follows from hypothesis \eqref{formula Fj prop}. 

\item If $\#\k > n$, we use \eqref{eq: prima stima sui d}--\eqref{eq: seconda stima sui d} together with \eqref{eq: formula q}:
\[
\begin{split}
\norm{\bm{q}}_{Y_{\#\k}^{j}} &\le 2 \!\!\!\!\!\!\!\!\! \sum_{ \substack{ \alpha \ge 0, \, \, \beta \ge 3 \\ \beta+\alpha(n-2)=\#\k } } \!\!\!\!\!\!\!\!\! e^{\beta-1} \left ( \frac{n^j\rho}{\sqrt{2\pi}} \right )^{2\alpha} \!\!\! (\rho N)^{\#\k} N^{-3} = N^{-3} \!\!\! \sum_{\alpha=0}^{\lfloor \frac{\#\k}{n-2} \rfloor} e^{\#\k-\alpha(n-2)} \left ( \frac{n^j\rho}{\sqrt{2\pi}} \right )^{2\alpha} \!\!\! (\rho N)^{\#\k} \leq \tilde{\rho}^{\#\k} N^{\#\k -3}
\end{split}
\]
where 
\[
\tilde{\rho} \le \, \max\left\lbrace e^{\frac{n-1}{n-2}}\rho , \left( \frac{e n^{2j}\rho^n}{2\pi}\right)^{\frac{1}{n-2}} \right\rbrace \lesssim_{n,j,\rho} 1.
\]
\end{itemize}
This proves \eqref{eq: stima coeff q} and concludes the proof of \Cref{conv taylor series}. \hfill \qedsymbol

\end{document}